\documentclass[a4paper,reqno]{amsart}

\usepackage[foot]{amsaddr}

\usepackage{amsmath,amssymb,amsfonts,amsthm}
\usepackage[colorlinks=true,citecolor=blue,hypertexnames=false]{hyperref}
\usepackage[utf8]{inputenc}
\usepackage[american]{babel}
\usepackage{enumerate}
\usepackage{tikz,calc}
\usetikzlibrary{decorations.pathreplacing,arrows}
\tikzset{>=stealth}
\usepackage{multicol}
\usepackage{bm}
\usepackage{float}
\usepackage{booktabs}
\floatstyle{ruled}
\newfloat{algorithm}{htb}{alg}
\floatname{algorithm}{Algorithm}
\usepackage[section]{placeins}
\usepackage{cleveref}
\usepackage{siunitx}

\usepackage[
maxbibnames=99,
style=ext-numeric,
backend=bibtex,
articlein=false,
sorting=nyt,
sortcites,
natbib=false,
doi=false,
url=false,
isbn=false,
eprint=false]{biblatex}

\DefineBibliographyStrings{american}{phdthesis = {Ph.D. thesis}}

\oddsidemargin  0pt
\evensidemargin 0pt
\marginparwidth 40pt
\marginparsep 0pt
\topmargin 10pt
\headsep 10pt
\textheight 8.7in
\textwidth 6.6in

\makeatletter
\newcommand{\neutralize}[1]{\expandafter\let\csname c@#1\endcsname\count@}
\makeatother

\theoremstyle{plain}
\newtheorem{theorem}{Theorem}

\newtheorem{corollary}{Corollary}
\newtheorem{proposition}{Proposition}
\newtheorem{lemma}{Lemma}[section]
\newtheorem{claim}{Claim}
\newtheorem{conjecture}{Conjecture}

\theoremstyle{definition}
\newtheorem{definition}{Definition}

\newtheorem{example}{Example}
\newtheorem{observation}{Observation}
\newtheorem{remark}{Remark}
\newtheorem{problem}{Problem}
\newtheorem{question}{Question}

\newcommand{\nats}{\mathbb{N}}

\newcommand{\F}{\mathcal{F}}

\newcommand{\vect}[1]{\bm{#1}}
\newcommand{\abs}[1]{\left\lvert#1\right\rvert}

\let\leq\leqslant
\let\geq\geqslant
\let\phi\varphi
\let\le\leq
\let\ge\geq

\addbibresource{../../antichains.bib}

\title{The Saturation Spectrum for Antichains of Subsets}

\author[J.R. Griggs]{Jerrold R. Griggs$^1$}
\thanks{The research of Jerrold Griggs is supported in part by grant \#282896 from the Simons Foundation.}
\author[T. Kalinowski]{Thomas Kalinowski$^{2,5}$}
\author[U. Leck]{Uwe Leck$^3$}
\author[I.T. Roberts]{Ian T. Roberts$^4$}
\author[M. Schmitz]{Michael Schmitz$^3$}

\address{$^1$Department of Mathematics, University of South Carolina, Columbia, SC  USA }
\address{$^2$Fakult\"at f\"ur angewandte Computer- und Biowissenschaften, Hochschule Mittweida, Germany}
\address{$^3$Institut f\"ur Mathematik, Europa-Universit\"at Flensburg, Germany}
\address{$^4$College of Education, Charles Darwin University, Darwin, NT, Australia}
\address{$^5$School of Science and Technology, University of New England, Armidale, NSW, Australia}

\email[J.R.~Griggs]{griggs@math.sc.edu}
\email[T.~Kalinowski]{kalinows@hs-mittweida.de}
\email[U.~Leck]{Uwe.Leck@uni-flensburg.de}
\email[I.T.~Roberts]{Ian.Roberts@cdu.edu.au}
\email[M.~Schmitz]{Michael.Schmitz@uni-flensburg.de}

\date{\today}

\begin{document}

\begin{abstract}
  Extending a classical theorem of Sperner, we characterize the integers $m$ such that there exists
  a maximal antichain of size $m$ in the Boolean lattice $B_n$, that is, the power set of
  $[n]:=\{1,2,\dots,n\}$, ordered by inclusion. As an important ingredient in the proof, we initiate
  the study of an extension of the Kruskal-Katona theorem which is of independent interest. For
  given positive integers $t$ and $k$, we ask which integers $s$ have the property that there exists
  a family $\mathcal F$ of $k$-sets with $\abs{\mathcal F}=t$ such that the shadow of $\mathcal F$
  has size $s$, where the shadow of $\mathcal F$ is the collection of $(k-1)$-sets that are
  contained in at least one member of $\mathcal F$. We provide a complete answer for $t\leq
  k+1$. Moreover, we prove that the largest integer which is not the shadow size of any family of
  $k$-sets is $\sqrt 2k^{3/2}+\sqrt[4]{8}k^{5/4}+O(k)$.
\end{abstract}

\maketitle

\section{Introduction}\label{sec:intro}

In the Boolean lattice $B_n$ of all subsets of $[n]:=\{1,2,\dots,n\}$, ordered by inclusion, an
\emph{antichain} is a family of subsets such that no one contains any other one. By a classical
theorem of Sperner~\cite{Sperner1928}, the maximum size of an antichain in $B_n$ is
$\binom{n}{\lfloor n/2 \rfloor}$, and it is obtained only by the collections
$\binom{[n]}{\lfloor n/2\rfloor}$ and $\binom{[n]}{\lceil n/2\rceil}$, where for any set $M$ we write
$\binom{M}{k}$ for the collection of all $k$-element subsets of $M$.

Of course there are antichains in $B_n$ of every size $s$ up to $\binom{n}{\lfloor n/2 \rfloor}$,
since any $s$ subsets of size $\lfloor n/2\rfloor$ will do.  Trotter (pers. comm. 2016) wondered
what is the second largest size of a \emph{maximal} antichain of subsets in $B_n$?  Here, an
antichain $\mathcal A$ is maximal if no subset can be added to it without destroying the antichain
property. Looking at examples, a natural candidate for $\mathcal A$ is obtained by taking the
collection $\binom{[n]}{\lceil n/2\rceil}$, add in the set $\{1,\dots,{\lceil n/2\rceil}-1\}$, and
then remove the sets above it. Using methods in the field, one can show that this is indeed a
maximal antichain with the second largest size, which is
$\binom{n}{\lfloor n/2 \rfloor}-{\lfloor n/2 \rfloor}$.  Notice the gap between the first and second
largest sizes of maximal antichains in $B_n$.  What can be said about, say, the third largest size?
Indeed, what can we say in general about the possible sizes of maximal antichains in $B_n$?  Our
group set out to investigate this natural question, and we found that things get increasingly
complicated the closer we look.  We can now describe where the gaps are (intervals of values for
which there is no maximal antichain), and construct antichains for all sizes below the gaps, thus
answering the following question.
\begin{question}\label{q:MAC_sizes}
  For which integers $m$ with $1\leq m\leq\binom{n}{\lfloor n/2\rfloor}$ does there exist a maximal
  antichain of size $m$?
\end{question}
In other words, we determine the set
\[S(n)=\left\{\abs{\mathcal A}\,:\,\mathcal A\text{ is a maximal antichain in }B_n\right\}.\] If
$\mathcal A$ contains only sets of a single size $k$, then we must take all of them to get a maximal
antichain, and this gives us $\binom{n}{k}\in S(n)$ for $k=1,2,\dots,\lfloor n/2\rfloor$. As a next
step we consider maximal antichain sizes obtained from antichains $\mathcal A$ that are contained in
\emph{two} consecutive levels of $B_n$, that is,
$\mathcal A\subseteq\binom{[n]}{k}\cup\binom{[n]}{k-1}$, for some $k$. Such antichains are called
\emph{flat}, and we are also interested in the following more restricted version of
Question~\ref{q:MAC_sizes}.
\begin{question}\label{q:MFAC_sizes}
  For which integers $m$ with $1\leq m\leq\binom{n}{\lfloor n/2\rfloor}$ does there exist a maximal flat
  antichain of size $m$?
\end{question}
In the present paper we make a first step towards answering this question by establishing that the
sizes close to the maximum $\binom{n}{\lfloor n/2\rfloor}$ are achieved by flat maximal
antichains. This is complemented in~\cite{Griggs2021c} by showing that all the sizes, with some
exceptions below $\binom{n}{2}$, can be obtained by flat antichains, thus completing the answer to
Question~\ref{q:MFAC_sizes}.

These investigations are analogous to the study of the \emph{saturation spectrum} in graphs: For a
family $\mathcal H$ of graphs, one is interested in the possible edge numbers of saturated
$\mathcal H$-free graphs on $n$ vertices, where saturation means that adding any edge creates a copy
of a member of $\mathcal H$. Starting with \cite{Barefoot_1995}, this natural extension of Tur\'an
type problems (asking for the maximal number of edges in an $\mathcal H$-free graph) has been
studied for various families $\mathcal H$, see~\cite{Gould2019,Faudree2011} for overviews. More recently,
similar saturation problems have been studied for posets in $B_n$: For a given poset $P$, what are
the possible sizes of saturated $P$-free families $\mathcal F\subseteq 2^{[n]}$, that is, such that
no subset can be added to $\mathcal F$ without creating a copy of $P$?
In~\cite{Morrison_2014,Gerbner2012,Keszegh_2021} the focus is on the lower end of the saturation
spectrum, that is, the minimum size of a saturated $P$-free family, while the classical Sperner
theory looks at the upper end of the spectrum, that is, the maximum size of a $P$-free family. A
natural next step is to extend this by asking for a characterization of all possible sizes of
saturated $P$-free families. Our work on maximal antichains can be seen as a first step in this
direction by looking at the special case where $P$ is a chain of length 2. Another related direction
is the study of stability questions: What can be said about the structure of a $P$-free family whose
size is close to the maximum possible? For instance, in~\cite{Patkos2015} a stability result for $P$
being a chain of length 3 has been used in connection with supersaturation results for the butterfly
poset. Stability results are related to our problem as these structural statements can imply gaps at
the upper end of the spectrum, and our arguments in \Cref{sec:MAC_sizes} have this flavor.

As an intermediate step towards characterizing the maximal antichain sizes very close to
$\binom{n}{\lfloor n/2\rfloor}$, we investigate a problem related to another classical result, known
as the Kruskal-Katona Theorem~\cite{Kruskal1963,Katona1968}. This theorem answers the following
question: Given positive integers $t$ and $k$, how can one select a family $\F$ of $t$ $k$-sets in
order to minimize the size of the family $\Delta\F$ of $(k-1)$-subsets that are each contained in
some set in $\F$? The family $\Delta\F$ is called the \emph{shadow} of $\F$. It is then natural to
ask the following question.
\begin{question}\label{q:shadow_spectrum}
  Given $t$ and $k$, what is the \emph{shadow spectrum}
  \[\sigma(t,k):=\left\{\abs{\Delta\mathcal F}\,:\,\mathcal
  F\text{ is a family of $k$-sets with }\abs{\mathcal F}=t\right\}\,\text{?}\]
\end{question}
The Kruskal-Katona Theorem characterizes the smallest element of $\sigma(t,k)$, and the largest
element is obviously $tk$. \Cref{q:MFAC_sizes} and \Cref{q:shadow_spectrum} are closely related,
since a maximal flat antichain $\mathcal A$ which consists of $k$-sets and $(k-1)$-sets is
determined by its collection of $k$-sets, $\mathcal F:=\mathcal A\cap\binom{[n]}{k}$, namely
$\mathcal A=\mathcal F\cup\binom{[n]}{k-1}\setminus\Delta\mathcal F$. However, it is important to
note that an antichain of this form is not necessarily maximal, as there can be $k$-sets not in
$\mathcal F$ which are not supersets of any $(k-1)$-set outside $\Delta\mathcal F$. To be more
precise, we introduce the notion of the \emph{shade} (or \emph{upper shadow}) of a family
$\mathcal{G} \subseteq \binom{[n]}{\ell}$ which is defined to be the family
$\nabla \mathcal{G} = \{G \cup \{x\} \,:\, G \in \mathcal{G},~ x \in [n] \setminus G\}$. Then
$\mathcal A=\mathcal F\cup\mathcal G$ with $\mathcal F\subseteq\binom{[n]}{k}$ and
$\mathcal G\subseteq\binom{[n]}{k-1}$ is a maximal antichain if and only if
$\mathcal G=\binom{[n]}{k-1}\setminus\Delta\mathcal F$ and
$\mathcal F=\binom{[n]}{k}\setminus\nabla\mathcal G$.

\subsection*{Related work} The paper~\cite{Gruettmueller2009} asks for the minimum size of a maximal
flat antichain for given $n$ and $k$. The case $k=3$ is settled by showing that the minimum size of
a maximal flat antichain consisting of 2-sets and 3-sets is $\binom{n}{2}-\lfloor (n+1)^2/8\rfloor$,
and all extremal antichains are determined. A stability version of this result is proved
in~\cite{Gerbner2012}, together with bounds for general $k$ which depend on the Tur\'an densities
for complete $k$-uniform hypergraphs. For $k\geq 4$ these bounds are very far away from the best
known constructions, and a lot of work remains to be done. Further results are given
in~\cite{Kalinowski2013} for the general setting of maximal antichains of subsets, where set sizes
belong to some given set $K$. The flat case corresponds to $K=\{k,k-1\}$. In~\cite{Griggs2021} the
focus is on (not necessarily maximal) antichains of the form
$\mathcal F\cup\binom{[n]}{k-1}\setminus\Delta\mathcal F$, where $\mathcal F$ is a family that
minimizes the shadow size among all families of $\abs{\mathcal F}$ $k$-sets. It has been observed
several times, for instance in~\cite{Frankl1995}, that the size
$\abs{\mathcal F\cup\binom{[n]}{k-1}\setminus\Delta\mathcal F}$ is a rather complicated function of
$\abs{\mathcal F}$, and~\cite{Griggs2021} studies the structure of (weighted variants of) this
function. The sets $\sigma(t,k)$ were first considered in~\cite{Leck1995}, where certain classes of
triples $(s,t,k)$ with $s\not\in\sigma(t,k)$ were characterized.

\subsection*{Main Results}\label{subsec:contributions}

\begin{theorem}\label{thm:main_result}
  Let $n$ and $m$ be positive integers with $m \le \binom{n}{\lceil n/2\rceil}$.
  \begin{enumerate}[(i)]
  \item For $\displaystyle m>\binom{n}{\lceil n/2\rceil}-\left\lceil\frac{n}{2}\right\rceil^2$, there exists a maximal antichain of
    size $m$ if and only if
    \[m=\binom{n}{\lceil n/2\rceil}-tl+\binom{a}{2}+\binom{b}{2}+c\] for some integers $l\in\{\lceil n/2\rceil,\lfloor n/2 \rfloor\}$,
    $t\in\{0,\dots,\lceil n/2\rceil\}$, $a\geq b\geq c\geq 0$, $1\leq a+b\leq t$. Moreover, every such $m$ is
    obtained as the size of a maximal flat antichain.
  \item If
    $\displaystyle m \le \binom{n}{\lceil
      n/2\rceil}-\left\lceil\frac{n}{2}\right\rceil\left\lceil\frac{n+2}{4}\right\rceil$, then there
    is a maximal antichain of size $m$ in $B_n$.
  \end{enumerate}
\end{theorem}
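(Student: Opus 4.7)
The guiding observation is the bijection between flat maximal antichains at levels $\{k-1,k\}$ and families $\mathcal F\subseteq\binom{[n]}{k}$ satisfying $\mathcal F=\binom{[n]}{k}\setminus\nabla(\binom{[n]}{k-1}\setminus\Delta\mathcal F)$. Writing $t=|\mathcal F|$, $l=k-1$, and $r=tk-|\Delta\mathcal F|$, the antichain size becomes $\binom{n}{l}-tl+r$, matching the form in (i) precisely when $r=\binom{a}{2}+\binom{b}{2}+c$. Part (i) thus reduces to a question about the shadow spectrum $\sigma(t,k)$ for small $t$: which values of $r$ are attained, and by families whose flat completion is actually maximal?

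For the achievability half of (i), I would exhibit explicit families realizing every valid deficit. The key building block is ``$j$ of the $k+1$ $k$-subsets of a fixed $(k+1)$-set'', which contributes exactly $\binom{j}{2}$ to the deficit, since any two such $k$-subsets share precisely one $(k-1)$-face. Two disjointly supported clusters of sizes $a$ and $b$ yield deficit $\binom{a}{2}+\binom{b}{2}$; the $+c$ term comes from attaching $c$ further sets that each share a single $(k-1)$-face with the existing configuration; and up to $t-a-b$ pairwise disjoint filler sets fix the total size at $t$. The bound $c\leq b$ reflects where the attachments can be distributed consistently. For each construction one verifies that no $k$-set outside $\mathcal F$ has all $(k-1)$-faces in $\Delta\mathcal F$, which is a finite check on the small support when $n$ is large compared with $t$.

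For the impossibility half of (i), the first step is a stability lemma: any maximal antichain of size exceeding $\binom{n}{\lceil n/2\rceil}-\lceil n/2\rceil^2$ must be flat and contained in two consecutive middle levels $\{l,l+1\}$ with $l\in\{\lfloor n/2\rfloor,\lceil n/2\rceil\}$. I would prove this by combining LYM-type estimates (weight on non-middle levels is too costly) with a Kruskal--Katona-type bound that rules out three or more populated levels. Once flatness is secured, the inequality $|\mathcal F|\leq\lceil n/2\rceil$ puts us in the regime $t\leq k$, where the shadow spectrum $\sigma(t,k)$ is completely described (the $t\leq k+1$ classification announced in the abstract). The identity $r=\sum_{G\in\Delta\mathcal F}(\deg_{\mathcal F}(G)-1)$ combined with a careful case analysis on the degree sequence in $\Delta\mathcal F$ then forces $\mathcal F$ into the cluster-plus-filler shape and rules out every other $r$.

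Part (ii) is a construction problem: build a maximal antichain of every size $m\leq\binom{n}{\lceil n/2\rceil}-\lceil n/2\rceil\lceil(n+2)/4\rceil$. I would combine flat antichains obtained from colex-initial segments of $\binom{[n]}{k}$ (whose shadow sizes are controlled exactly by Kruskal--Katona) with local swap moves at the boundary of the segment that adjust the antichain size by $1$, iterating over $k$ and mixing in lower-level corrections to sweep all residues. The main obstacle is the impossibility direction of (i): ruling out every non-conforming integer $m$ in $(\binom{n}{\lceil n/2\rceil}-\lceil n/2\rceil^2,\binom{n}{\lceil n/2\rceil}]$ requires both a sharp stability reduction and a sharp classification of $\sigma(t,k)$ for $t\leq\lceil n/2\rceil$; the latter, being the novel shadow-spectrum problem introduced in this paper, is where the genuinely new combinatorial work lies---for instance, it must explain why $r=5$ cannot be realized with $t=4$ despite lying in $[0,\binom{4}{2}]$.
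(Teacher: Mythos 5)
Your overall plan—translating flat maximal antichains into shadow-spectrum data, building $\mathcal F$ from star-like clusters of $k$-subsets of a $(k+1)$-set, and getting deficits $\binom{a}{2}+\binom{b}{2}+c$—matches the paper's strategy (the paper encodes exactly this via the graphs $G(a,b,c)$ and takes complements of their edges). But there is a genuine error in your impossibility half of (i), and it is not a small one.

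You assert a stability lemma that ``any maximal antichain of size exceeding $\binom{n}{\lceil n/2\rceil}-\lceil n/2\rceil^2$ must be flat and contained in two consecutive middle levels.'' That claim is \emph{false}. Take $n=2k$, pick one $(k+1)$-set $A$ and one $(k-1)$-set $B$ with $\Delta A\cap\nabla B=\emptyset$, and set $\mathcal A=\{A,B\}\cup\binom{[n]}{k}\setminus(\Delta A\cup\nabla B)$. This is a maximal antichain on \emph{three} levels of size $\binom{n}{k}-2k$, comfortably inside your ``forced-flat'' regime. The paper only proves that large antichains are confined to three central levels (Lemmas~\ref{lem:A_is_in_the_middle_1} and~\ref{lem:A_is_in_the_middle_2}), then that few sets lie off the middle level (Lemma~\ref{lem:few_sets_out_of_big_level}), and finally—crucially—that the \emph{size} of such a three-level antichain is always also realized by a flat antichain (Lemma~\ref{lem:two_levels}). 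That last step is a nontrivial ``sums of shadow sizes are shadow sizes'' statement (Lemma~\ref{lem:two_levels_suffice}) which your outline has no analogue of. Without it, your classification of admissible $m$ only rules out flat antichains of the wrong size and leaves open the possibility of stray three-level sizes.

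A second, smaller gap: your description of where the $+c$ deficit comes from (``attaching $c$ further sets'') suggests a family of $a+b+c$ sets and a constraint $a+b+c\le t$, which is not the form $a\ge b\ge c\ge 0$, $1\le a+b\le t$ that the theorem states. In the paper $c$ counts \emph{shared pendant vertices} between the two stars of $G(a,b,c)$, so the family has only $a+b$ members; the constraint $c\le b$ then falls out of the graph structure rather than from a vague ``where the attachments can be distributed consistently.'' You also correctly flag, but do not resolve, the key shadow-spectrum upper bound (the inductive argument that $5\notin I(4)$, etc.); that is exactly Lemmas~\ref{lem:bound} and~\ref{lem:superset} in the paper, and absent it the ``only if'' direction is still unproven.
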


The last statement in part (i) of this theorem is an initial step towards an answer for
Question~\ref{q:MFAC_sizes}: The sizes close to the maximum are already obtained by flat maximal
antichains, and in \cite{Griggs2021c} we can focus on the case $m\leq\binom{n}{\lceil n/2\rceil}-\left\lceil\frac{n}{2}\right\rceil^2$.

One direction of the equivalence in \Cref{thm:main_result}(i) will be proved by
showing that, for any $n,l,t,a,b,c$ as in the theorem and $k=\lceil n/2\rceil$, we can obtain a maximal antichain
$\mathcal A\subseteq\binom{[n]}{l}\cup\binom{[n]}{l+1}$ of size
$\abs{\mathcal A}=\binom{n}{k}-tl+\binom{a}{2}+\binom{b}{2}+c$ by taking a suitable family
$\mathcal F\subseteq\binom{[n]}{l+1}$ with $\abs{\mathcal F}=t$ and setting
$\mathcal A=\mathcal F\cup\left(\binom{[n]}{l}\setminus\Delta\mathcal F\right)$. To establish this
result, we study the sets $\sigma(t,k)$. Our proof of part $(i)$ of Theorem~\ref{thm:main_result} in
Section~\ref{sec:MAC_sizes} is based on the following characterization of the sets $\sigma(t,k)$ for
$t\leq k+1$, which will be established in Section~\ref{sec:shadow_spectrum}.
\begin{theorem}\label{thm:shadow_spectrum}
  For integers $k$ and $t$ with $k\geq 2$ and $1\leq t\leq k+1$,
  \begin{equation}\label{eq:shadow_spectrum}
  \sigma(t,k)=\left\{tk-\binom{a}{2}-\binom{b}{2}-c\,:\,a\geq b\geq c\geq
  0,\,1\leq a+b\leq t\right\}.
  \end{equation}
  Moreover, for every $s\in\sigma(t,k)$, there exists a family $\mathcal F\subseteq\binom{[k+4]}{k}$
  with $\abs{\mathcal F}=t$ and $\abs{\Delta\mathcal F}=s$.
\end{theorem}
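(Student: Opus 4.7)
The plan is to split the proof into two complementary parts: the realization direction ($\supseteq$, together with the ``moreover'') and the structural containment direction ($\subseteq$).

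For the realization, my strategy is to exhibit explicit families $\mathcal{F} \subseteq \binom{[k+4]}{k}$ of size $t$ for every admissible triple $(a,b,c)$. The fundamental building block is the observation that any $r$-element subfamily of $\binom{U}{k}$ for a fixed $(k+1)$-set $U$ consists of pairwise ``one-swap'' related sets whose $\binom{r}{2}$ pairwise $(k-1)$-shadows are distinct, yielding shadow-deficiency exactly $\binom{r}{2}$. I would place one such ``dual-star'' of size $a$ inside $U_1 = [k+1]$ and a second one of size $b$ inside $U_2 = \{3,4,\dots,k+3\}$, choose which $k$-subsets of each $U_i$ to include so that exactly $c$ further $(k-1)$-sets inside $U_1 \cap U_2$ become shared between the two components (contributing an extra $c$ to the defect), and adjoin any remaining $t-a-b$ sets as pairwise non-adjacent members using the free coordinates $\{k+2,k+3,k+4\}$. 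Verifying that this fits inside $[k+4]$ and realizes the claimed shadow size is a direct counting exercise.

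For the upper bound, my plan is structural. Let $G=G(\mathcal{F})$ be the difference graph on $\mathcal{F}$ in which two sets are adjacent when their intersection has size $k-1$; label each edge by its common $(k-1)$-shadow. Writing $d(S)$ for the number of $F\in\mathcal{F}$ containing $S$, the defect $D = tk - \abs{\Delta\mathcal{F}}$ equals $\sum_{S}(d(S)-1)$. I would first prove a short structure lemma: fixing a reference vertex $F_0$ in a clique of $G$ of size $\geq 3$ and writing the other members of the clique as $(F_0\setminus\{\alpha\})\cup\{\beta\}$, a two-case check shows the clique is either a \emph{star} (all sets share a common $(k-1)$-set, contributing $r-1$ to $D$) or a \emph{dual-star} (all sets lie in $\binom{U}{k}$ for a common $(k+1)$-set $U$, contributing $\binom{r}{2}$); moreover any two cliques of these types share at most one member of $\mathcal{F}$.

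The bulk of the work is then a case analysis built on this lemma together with the constraint $t\leq k+1$. The goal is to argue that the nontrivial ``shadow-sharing'' structure of $\mathcal{F}$ is supported by at most two dual-star cliques of sizes $a\geq b$, while the remaining excess defect $c\leq b$ comes from at most one additional edge-type interaction, giving $D=\binom{a}{2}+\binom{b}{2}+c$ with $a+b\leq t$. The main obstacle I anticipate is ruling out the ``gap'' values just below $\binom{t}{2}$ --- for instance $D=5$ is forbidden when $t=4$ --- which amounts to showing that when $D$ forces an almost-complete clique in $G$, the family must actually be contained in a single dual-star. I expect to handle this step by induction on $t$: remove a vertex $F\in\mathcal{F}$ and track the drop $\alpha\in\{0,1,\dots,k\}$ in $D$, where $\alpha=\abs{\{S\subset F\colon S\in\Delta(\mathcal{F}\setminus\{F\})\}}$, then verify case by case that every admissible $\alpha$ can be absorbed into a legitimate update of the triple $(a,b,c)$. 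The hypothesis $t\leq k+1$ enters precisely here to match the combinatorial freedom of size-$t$ families of $k$-sets with the room allowed by the admissible triples.
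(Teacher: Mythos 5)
Your realization construction has a concrete error. With $U_1=[k+1]$ and $U_2=\{3,\dots,k+3\}$ you get $\abs{U_1\cap U_2}=k-1$, so $U_1\cap U_2$ itself is the \emph{only} $(k-1)$-set contained in it; there is no way to ``choose which $k$-subsets of each $U_i$ to include so that exactly $c$ further $(k-1)$-sets become shared'' when $c\geq 2$. Even being generous, all the cross-component overlap would be concentrated on a single $(k-1)$-set, and since $D=\sum_S(d(S)-1)$ already charges $1$ to that $S$ from each within-component pair, the extra cross contribution is capped at $1$, not $c$. To make your idea work you need $\abs{U_1\cap U_2}=k$ (e.g.\ $U_2=\{2,\dots,k+2\}$): then the $c$ sharings occur on $c$ \emph{distinct} $(k-1)$-subsets of $U_1\cap U_2$, which is precisely the edge-complement of the paper's graph $G(a,b,c)$ (two stars sharing $c$ pendants on $\leq k+2$ vertices). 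Also, ``adjoin any remaining $t-a-b$ sets as pairwise non-adjacent members using the free coordinates $\{k+2,k+3,k+4\}$'' is not a direct counting exercise: with only $k+4$ elements available, producing a set whose shadow is disjoint from everything already placed needs an argument (the paper's appending lemma, which for $k=3$ even invokes Mantel's theorem).

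For the containment $\sigma(t,k)\subseteq\{tk-x\}$ you are in the right neighborhood (extract a maximal ``dual-star'', i.e.\ maximal subfamily inside a $(k+1)$-set, and induct), but the clique-classification lemma plus one-vertex-at-a-time removal is substantially heavier and riskier than what is actually needed, and as sketched it is a gap rather than a proof. The paper does not classify cliques at all: it takes $\mathcal A\subseteq\mathcal F$ of maximum size with $\abs{\bigcup_{X\in\mathcal A}X}\leq k+1$, sets $\mathcal B=\mathcal F\setminus\mathcal A$, and uses three short bounds --- $\abs{\Delta\mathcal A}=lk-\binom{l}{2}$ exactly, $(t-l)k-\binom{t-l}{2}\leq\abs{\Delta\mathcal B}\leq(t-l)k$ from Kruskal--Katona, and crucially $\abs{\Delta\mathcal A\cap\Delta\mathcal B}\leq\abs{\mathcal B}$, which follows directly from the maximality of $\mathcal A$ --- then recurses on $\mathcal B$ \emph{as a block} (not one vertex at a time), together with the interval description $I(t)=\bigcup_j I_j(t)$ and a one-variable numerical lemma to close the recursion. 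Your vertex-removal induction would have to justify both which vertex to remove and why the drop $\alpha$ in $D$ always lands in an admissible triple for $t-1$, and you have not indicated how to rule out the problematic ``gap'' cases; that is exactly the part the paper's block recursion plus \Cref{prop:interval_description} handles cleanly. I would encourage you to replace the clique-classification/vertex-removal plan with the maximal-$\mathcal A$ decomposition and the intersection bound $\abs{\Delta\mathcal A\cap\Delta\mathcal B}\leq\abs{\mathcal B}$, which shortens the argument considerably.
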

To get a better idea of the structure of the sets $\sigma(t,k)$, and also as a useful tool in the
proof of \Cref{thm:shadow_spectrum}, we write the right-hand side of~(\ref{eq:shadow_spectrum})
as a union of pairwise disjoint intervals. To state this precisely, set
\begin{equation}\label{eq:def_It}
  I(t)=\left\{\binom{a}{2}+\binom{b}{2}+c\,:\,a\geq b\geq c\geq 0,\,1\leq a+b\leq t\right\},
\end{equation}
so that \Cref{thm:shadow_spectrum} says $\sigma(t,k)=\{tk-x\,:\,x\in I(t)\}$. For a positive integer
$t$, we define $j^*(t)$ to be the smallest non-negative integer $j$ with $\binom{j+3}{2}\geq t$. An
easy calculation (see Lemma~\ref{lem:jstar} below) shows that
$j^*(t)=\left\lceil\sqrt{2t}-5/2\right\rceil$. We define intervals
\begin{align*}
  I_j(t) &= \left[\binom{t-j}{2},\,\binom{t-j}{2}+\binom{j+1}{2}\right]&&\text{for }j=0,1,\dots,j^*(t)-1,\\
  I_j(t) &= \left[0,\,\binom{t-j}{2}+\binom{j+1}{2}\right] &&\text{for }j=j^*(t).
\end{align*}
The set $I(t)$ is the union of the intervals $I_j(t)$.
\begin{proposition}\label{prop:interval_description}
  For every positive integer $t$, $\displaystyle I(t)=\bigcup_{j=0}^{j^*(t)}I_j(t)$.
\end{proposition}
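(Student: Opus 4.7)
The plan is to decompose $I(t)$ according to the value of the leading parameter $a$ and to glue the pieces back into the intervals $I_j(t)$. For each $a\in\{1,\dots,t\}$, set
\[
  J_a = \left\{\binom{a}{2}+\binom{b}{2}+c \,:\, 0 \leq c \leq b \leq \min(a,t-a)\right\},
\]
so that $I(t)=\bigcup_{a=1}^{t}J_a$ directly from the definition. A standard cascade argument shows $\{\binom{b}{2}+c \,:\, 0\leq c\leq b\leq k\}=[0,\binom{k+1}{2}]$, because the subintervals $[\binom{b}{2},\binom{b+1}{2}]$ meet at their common endpoint. Consequently each $J_a$ is itself the integer interval
\[
  J_a=\left[\binom{a}{2},\; \binom{a}{2}+\binom{\min(a,t-a)+1}{2}\right].
\]

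A short preliminary check yields $j^*(t)\leq t/2$ for every $t\geq 1$, starting from the bound $j^*(t)\leq\sqrt{2t}-3/2$ implied by the closed form for $j^*(t)$. As a consequence, $\min(t-j,j)=j$ for all $j\in\{0,\dots,j^*(t)\}$, and direct comparison of endpoints gives $J_{t-j}=I_j(t)$ for every $j\in\{0,\dots,j^*(t)-1\}$. The crux of the proof is then the identity $\bigcup_{a=1}^{t-j^*(t)}J_a=I_{j^*(t)}(t)$. Its endpoints are easy: $\min J_1=0$, and $\max J_{t-j^*(t)}=\binom{t-j^*(t)}{2}+\binom{j^*(t)+1}{2}$. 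What remains is to verify that no gap opens up between consecutive pieces, i.e.\ that $\max J_a\geq\min J_{a+1}-1$ for $1\leq a\leq t-j^*(t)-1$.

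This connectivity condition unwinds to $\binom{\min(a,t-a)+1}{2}\geq a-1$, which is immediate for $a\leq t/2$. For $a>t/2$, setting $a'=t-a$ and applying the identity $\binom{a'+1}{2}+(a'+1)=\binom{a'+2}{2}$, the inequality becomes equivalent to $\binom{a'+2}{2}\geq t$; since the range $a\leq t-j^*(t)-1$ forces $a'\geq j^*(t)+1$, this follows at once from the defining property $\binom{j^*(t)+3}{2}\geq t$. This reduction---tying the gap-free condition directly to the defining inequality of $j^*(t)$---is the main technical obstacle, and it is precisely what forces the cutoff to land at $j^*(t)$ and no earlier. Putting everything together,
\[
  I(t)=\bigcup_{a=1}^{t}J_a=\bigcup_{a=1}^{t-j^*(t)}J_a \,\cup\, \bigcup_{a=t-j^*(t)+1}^{t}J_a=I_{j^*(t)}(t)\,\cup\,\bigcup_{j=0}^{j^*(t)-1}I_j(t)=\bigcup_{j=0}^{j^*(t)}I_j(t).
\]
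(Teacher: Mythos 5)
Your proof is correct and follows essentially the same route as the paper's: decompose $I(t)$ over the leading parameter $a$, observe each piece $J_a$ is an interval, match $J_{t-j}$ with $I_j(t)$ for $j<j^*(t)$, and close the gaps for $a\le t-j^*(t)$ by the defining inequality $\binom{j^*(t)+3}{2}\ge t$ (your rearrangement $\binom{a'+2}{2}\ge t$ is the same bound the paper uses in chained form). One cosmetic quibble: the intermediate bound $j^*(t)\le\sqrt{2t}-3/2$ actually fails at $t=1$ since $j^*(1)=0>\sqrt{2}-3/2$, though the conclusion $j^*(t)\le t/2$ you draw from it is nonetheless true for every $t\ge 1$ and is all that the argument needs.
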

As a consequence, we obtain $\sigma(t,k)$ as a union of intervals, the rightmost of them ending at
$tk$. This is illustrated in the following example.
\begin{example}\label{ex:Sigma}
  \Cref{fig:Sigma_50} shows the sets $\sigma(t,50)$ for $t=1,\dots,14$.
  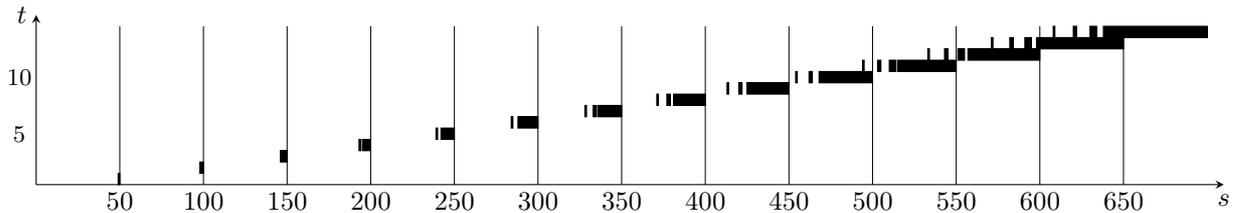
\begin{figure}[htb]
    \centering
    \begin{tikzpicture}[xscale=.22,yscale=.15]
      \draw[->] (0,0) -- (0,15) node[anchor=east] {$t$};
      \draw[->] (0,0) -- (71,0) node[anchor=north] {$s$};
      \foreach \i in {1,...,13}
      {
        \draw[very thin] (5*\i,0) -- (5*\i,14);
        \pgfmathtruncatemacro{\label}{50*\i}
        \node at (5*\i,-1.5) {$\label$};
      }
      \node at (-1,4.5) {{\small $5$}};
      \node at (-1,9.5) {{\small $10$}};
      \draw[fill] (4.9,0) rectangle (5,1);
      \draw[fill] (9.8,1) rectangle (10,2);
      \draw[fill] (14.6,2) rectangle (15,3);
      \draw[fill] (19.3,3) rectangle (19.4,4);
      \draw[fill] (19.5,3) rectangle (20,4);
      \draw[fill] (23.9,4) rectangle (24,5);
      \draw[fill] (24.2,4) rectangle (25,5);
      \draw[fill] (28.4,5) rectangle (28.5,6);
      \draw[fill] (28.8,5) rectangle (30,6);
      \draw[fill] (32.8,6) rectangle (32.9,7);
      \draw[fill] (33.3,6) rectangle (33.5,7);
      \draw[fill] (33.6,6) rectangle (35,7);
      \draw[fill] (37.1,7) rectangle (37.2,8);
      \draw[fill] (37.7,7) rectangle (37.9,8);
      \draw[fill] (38.1,7) rectangle (40,8);
      \draw[fill] (41.3,8) rectangle (41.4,9);
      \draw[fill] (42,8) rectangle (42.2,9);
      \draw[fill] (42.5,8) rectangle (45,9);
      \draw[fill] (45.4,9) rectangle (45.5,10);
      \draw[fill] (46.2,9) rectangle (46.4,10);
      \draw[fill] (46.8,9) rectangle (50,10);
      \draw[fill] (49.4,10) rectangle (49.5,11);
      \draw[fill] (50.3,10) rectangle (50.5,11);
      \draw[fill] (51,10) rectangle (51.4,11);
      \draw[fill] (51.5,10) rectangle (55,11);
      \draw[fill] (53.3,11) rectangle (53.4,12);
      \draw[fill] (54.3,11) rectangle (54.5,12);
      \draw[fill] (55.1,11) rectangle (55.5,12);
      \draw[fill] (55.7,11) rectangle (60,12);
      \draw[fill] (57.1,12) rectangle (57.2,13);
      \draw[fill] (58.2,12) rectangle (58.4,13);
      \draw[fill] (59.1,12) rectangle (59.5,13);
      \draw[fill] (59.8,12) rectangle (65,13);
      \draw[fill] (60.8,13) rectangle (60.9,14);
      \draw[fill] (62,13) rectangle (62.2,14);
      \draw[fill] (63,13) rectangle (63.4,14);
      \draw[fill] (63.8,13) rectangle (70,14);
     \end{tikzpicture}
    \caption{The sets $\sigma(1,50),\sigma(2,50),\dots,\sigma(14,50)$.}
    \label{fig:Sigma_50}
  \end{figure}
\end{example}

In our last result, we investigate the largest integer which is not the shadow size of a $k$-uniform
set family, that is, the quantity $\psi(k)=\max\nats\setminus\Sigma(k)$ where
$\Sigma(k)=\bigcup_{t=1}^{\infty}\sigma(t,k)$. For instance, \Cref{fig:Sigma_50} suggests
$\psi(50)=12\times 50-\binom{12-3}{2}-\binom{4}{2}-1=557$. In the next theorem, we provide an
asymptotic formula for $\psi(k)$, and combining this with Theorem~\ref{thm:main_result}, we obtain an
asymptotic formula for the smallest positive integer which is not the size of a maximal antichain, that
is, for $\phi(n)=\min\left\{m\in\nats\,:\,m\not\in S(n)\right\}$.

\begin{theorem}\label{thm:asymptotics}\hfill
  \begin{enumerate}[(i)]
  \item\label{asymptotics:item_i} For $k\to\infty$, $\displaystyle\psi(k)=\sqrt 2k^{3/2}+\sqrt[4]{8}k^{5/4}+O(k)$.
  \item\label{asymptotics:item_ii} For $n\to\infty$, $\displaystyle\phi(n)=\binom{n}{\lfloor n/2\rfloor}-\left(\frac12+o(1)\right)n^{3/2}$.
  \end{enumerate}
\end{theorem}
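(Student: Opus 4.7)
The plan is to use Theorem \ref{thm:shadow_spectrum} and Proposition \ref{prop:interval_description} to write each $\sigma(t,k)$, for $1\le t\le k+1$, as an explicit union of intervals, and then analyse the top of $\Sigma(k)=\bigcup_{t}\sigma(t,k)$. Setting $L(t)=\binom{t-j^*(t)}{2}+\binom{j^*(t)+1}{2}$, the rightmost interval of $\sigma(t,k)$ is $[tk-L(t),\,tk]$, and $j^*(t)=\lceil\sqrt{2t}-5/2\rceil$ yields $L(t)=t^2/2-\sqrt{2}\,t^{3/2}+O(t)$. For the upper bound in (i), I would observe that once $L(t+1)\ge k$ the rightmost intervals of $\sigma(t,k)$ and $\sigma(t+1,k)$ overlap, because $(t+1)k-L(t+1)\le tk$. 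Letting $t_0=t_0(k)$ be the smallest integer with $L(t_0)\ge k$, the union $\bigcup_{t\ge t_0}\sigma(t,k)$ contains $[t_0k-L(t_0),\infty)\cap\nats$. Solving $L(t_0)\approx k$ asymptotically gives $t_0=\sqrt{2k}+2^{3/4}k^{1/4}+O(1)=\sqrt{2k}+\sqrt[4]{8}\,k^{1/4}+O(1)$, whence $t_0k-L(t_0)=\sqrt{2}\,k^{3/2}+\sqrt[4]{8}\,k^{5/4}+O(k)$ bounds $\psi(k)$ from above.

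For the matching lower bound I would propose the explicit witness $s^*=(t_0-1)k-L(t_0-1)-1$, the integer immediately below the rightmost interval of $\sigma(t_0-1,k)$. To verify $s^*\notin\sigma(t,k)$ for every $t$, one argues in four ranges. If $t\le t_0-2$, then $\max\sigma(t,k)=tk\le(t_0-2)k<s^*$, using the asymptotic estimate $L(t_0-1)\sim k-\sqrt{2k}$, which is at most $k-2$ for $k$ large. If $t=t_0-1$, then $s^*\in\sigma(t_0-1,k)$ would require $L(t_0-1)+1\in I(t_0-1)$; but Proposition \ref{prop:interval_description} shows $L(t_0-1)+1$ sits precisely in the gap just above $I_{j^*(t_0-1)}(t_0-1)$, the defining inequality $t_0-1>\binom{j^*(t_0-1)+2}{2}$ being the gap condition. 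If $t_0\le t\le k+1$, the extremal case $(a,b,c)=(t,0,0)$ of Theorem \ref{thm:shadow_spectrum} gives $\min\sigma(t,k)=tk-\binom{t}{2}$, and since this is increasing on $[1,k+1]$ it exceeds $s^*$ for all $t\ge t_0$. Finally, if $t>k+1$, the classical Kruskal--Katona inequality forces $\min\sigma(t,k)\ge\binom{k+1}{2}\sim k^2/2\gg s^*\sim\sqrt{2}\,k^{3/2}$. Plugging in the expansions above gives $s^*=\sqrt{2}\,k^{3/2}+\sqrt[4]{8}\,k^{5/4}+O(k)$, matching the upper bound.

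For part (ii), I reduce to (i) via Theorem \ref{thm:main_result}(i). Setting $d=\binom{n}{\lceil n/2\rceil}-m$, that theorem states that for $d\le\lceil n/2\rceil^2$, $m\in S(n)$ iff $d\in D(n):=\bigcup_{l\in\{\lceil n/2\rceil,\lfloor n/2\rfloor\}}\{tl-x:1\le t\le\lceil n/2\rceil,\,x\in I(t)\}$. Each inner set $\{tl-x:x\in I(t)\}$ equals $\sigma(t,l)$ for $t\le l+1$ by Theorem \ref{thm:shadow_spectrum}, so $D(n)$ agrees with $\Sigma(\lceil n/2\rceil)\cup\Sigma(\lfloor n/2\rfloor)$ in the relevant range (the constraint $t\le\lceil n/2\rceil$ is non-binding, since the critical $t$ is of order $\sqrt{n}$). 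Hence $\phi(n)=\binom{n}{\lfloor n/2\rfloor}-d^*$ where $d^*=\max\{\psi(\lceil n/2\rceil),\psi(\lfloor n/2\rfloor)\}+o(n^{3/2})$, and part (i) with $k=\lceil n/2\rceil\sim n/2$ gives $\psi(k)\sim\sqrt{2}\,(n/2)^{3/2}=n^{3/2}/2$, so $\phi(n)=\binom{n}{\lfloor n/2\rfloor}-(\tfrac12+o(1))n^{3/2}$. The main obstacle is the refined second-order analysis producing the constant $\sqrt[4]{8}$ in (i): pinning down $t_0$ to $O(1)$ accuracy via the quartic arising from $L(t_0)\approx k$, verifying that the witness $s^*$ lies in the correct gap of the decomposition of $I(t_0-1)$, and handling $t>k+1$ via classical Kruskal--Katona lower bounds on the minimum shadow.
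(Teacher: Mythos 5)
Your overall plan matches the paper's: use the interval decomposition of $I(t)$ from \Cref{prop:interval_description}, locate a threshold $t$ where the rightmost intervals $[tk-L(t),tk]$ of $\sigma(t,k)$ begin to chain together, take the integer just below that point as a witness for $\psi(k)$, and then extract the two-term asymptotics. The paper does exactly this with $f(t)=L(t)$, $t^*(k)=\max\{t:f(t)\le k-2\}$, Lemma~\ref{lem:all_large_s} for the ``everything $\ge k^2$ is attained'' step, Lemma~\ref{lem:nonoverlap} for your case $t_0\le t\le k+1$, Lemma~\ref{lem:psi} for the identification $\psi(k)=t^*k-f(t^*)-1$, and Lemmas~\ref{lem:f_asym},~\ref{lem:tstar_asymp} for the expansion.

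However, there is a genuine gap in your threshold definition. You set $t_0=\min\{t: L(t)\ge k\}$, so $t_0-1$ is the largest $t$ with $L(t)\le k-1$. For the step ``if $t\le t_0-2$ then $tk<s^*$'' you need $L(t_0-1)\le k-2$, and you try to get this from the assertion $L(t_0-1)\sim k-\sqrt{2k}$. But that asymptotic does not follow from the definition of $t_0$: all the definition gives you is $L(t_0-1)\le k-1$ and $L(t_0)\ge k$, and since the increments $L(t)-L(t-1)$ are of size $\Theta(\sqrt k)$, the value $L(t_0-1)$ can lie anywhere in $[k-\Theta(\sqrt k),\,k-1]$. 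In particular $L(t_0-1)=k-1$ occurs for infinitely many $k$ (take $k=L(t)+1$ for any $t$). When $L(t_0-1)=k-1$ your witness becomes $s^*=(t_0-1)k-(k-1)-1=(t_0-2)k$, which \emph{is} a shadow size (a family of $t_0-2$ pairwise shadow-disjoint $k$-sets), so $s^*\in\Sigma(k)$ and your lower-bound argument collapses. The fix is exactly the paper's choice $t^*=\max\{t:f(t)\le k-2\}$ in place of your $t_0-1$: this builds the needed slack of $2$ into the definition and makes $t^*k-f(t^*)-1$ a genuine non-shadow-size. The asymptotic answer is not affected, because in the bad case $t^*=t_0-2$ and the difference between your $s^*$ and the true $\psi(k)$ is $O(k)$, but the intermediate claim ``$s^*\notin\Sigma(k)$'' is false as written.

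Two smaller points. First, your claim that $\bigcup_{t\ge t_0}\sigma(t,k)\supseteq[t_0k-L(t_0),\infty)\cap\nats$ uses the description $[tk-L(t),tk]\subseteq\sigma(t,k)$ which \Cref{thm:shadow_spectrum} only supplies for $t\le k+1$; for $t>k+1$ you need an argument like the paper's Lemma~\ref{lem:all_large_s} (adding a shadow-disjoint $k$-set gives $\sigma(t+1,k)\supseteq\sigma(t,k)+k$) rather than a direct appeal to the interval decomposition. Second, in part (ii) you write $d^*=\max\{\psi(\lceil n/2\rceil),\psi(\lfloor n/2\rfloor)\}+o(n^{3/2})$; since the witness must avoid \emph{both} $\Sigma(\lceil n/2\rceil)$ and $\Sigma(\lfloor n/2\rfloor)$ this should be a $\min$, and a common witness for the two spectra has to be exhibited --- this is precisely what the paper's Lemma~\ref{lem:non_shadow_size} does, and you should not treat it as automatic. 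Again, the final $(\tfrac12+o(1))n^{3/2}$ is unaffected because the two $\psi$'s differ by $o(n^{3/2})$, but the step needs to be proved rather than asserted.
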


\subsection*{Outline of the paper}\label{subsec:outline}
\begin{enumerate}
\item In \Cref{sec:shadow_spectrum}, we prove \Cref{thm:shadow_spectrum} and
  \Cref{prop:interval_description}. Using a construction based on certain graphs we show
  $\{tk-x\,:\,x\in I(t)\}\subseteq\sigma(t,k)$ (\Cref{lem:subset}), then we prove
  \Cref{prop:interval_description}, and use it to establish
  $\sigma(t,k)\subseteq\{tk-x\,:\,x\in I(t)\}$ (\Cref{lem:superset}).
\item The proof of \Cref{thm:main_result}(i) is the content of \Cref{sec:MAC_sizes}. The existence of
  maximal antichains with the claimed sizes is derived from \Cref{thm:shadow_spectrum} using the
  construction described just after the statement of \Cref{thm:main_result}, and this is
  complemented by showing that all antichains of the relevant sizes necessarily come from this
  construction.
\item \Cref{sec:lower_part} contains the proof of \Cref{thm:main_result}(ii).
\item The asymptotic results in \Cref{thm:asymptotics} are derived in \Cref{sec:asymptotics}, and we
  conclude by listing a few open problems in \Cref{sec:open}.
\end{enumerate}

A common theme in Sections~\ref{sec:MAC_sizes} and~\ref{sec:lower_part} is the construction of
families of maximal antichains whose sizes form intervals, and then establishing that these
intervals overlap to yield all the required sizes. Both steps tend to get easier as $n$ grows: For
small $n$, the general constructions leave a few gaps, and even when there are no gaps it turns out
that the straightforward arguments for the inequalities, which verify the overlaps, sometimes only work
for, say, $n\geq 200$. In both situations one can try to optimize the constructions and
the proofs to work for smaller $n$. As this would lead to a further blow-up of various case
distinctions and computations (on which the present version is already rather heavy), we decided to
take the easy way out, and treat the small cases by ad-hoc constructions and inequality
verifications (either by hand or using a computer).

\section{The shadow spectrum}\label{sec:shadow_spectrum}
The aim of this section is to prove \Cref{thm:shadow_spectrum}. To this end we will construct families of
$k$-sets by taking the complements of the edges of certain graphs whose vertex sets are contained in
$[k+2]$. This class of graphs is defined as follows.

\begin{definition}\label{def:G(a,b,c)}
  Let $a,b,c$ be integers with $a \ge b \ge c \ge 0$ and $a \ge 1$. For $b=0$ let $G(a,b,c)$ be an
  $a$-star, that is, a complete bipartite graph $K_{1,a}$, on the vertex set $[a+1]$. For $b \ge 1$
  let $G(a,b,c)$ be the graph on the ground set $[a+b-c+2]$ that consists of one $a$-star and one
  $b$-star that share exactly $c$ pendant vertices (see \Cref{fig:G(abc)}).
\end{definition}
\begin{figure}[htb]
	\centering
	\begin{tikzpicture}[scale=1.5,every node/.style={draw,circle,fill,outer sep=1pt,inner
			sep=1pt}]
		\node (a) at (0,1.5) {};
		\node (b) at (3,1.5) {};
		\node (v1) at (-.5,0) {};
		\node[draw=none,fill=none] at (0,0) {$\ldots$};
		\node (v2) at (.5,0) {};
		\node (v3) at (1,0) {};
		\node[draw=none,fill=none] at (1.5,0) {$\ldots$};
		\node (v4) at (2,0) {};
		\node (v5) at (2.5,0) {};
		\node[draw=none,fill=none] at (3,0) {$\ldots$};
		\node (v6) at (3.5,0) {};
		\draw[thick] (a) -- (v1);
		\draw[thick] (a) -- (v2);
		\draw[thick] (a) -- (v3);
		\draw[thick] (a) -- (v4);
		\draw[thick] (b) -- (v3);
		\draw[thick] (b) -- (v4);
		\draw[thick] (b) -- (v5);
		\draw[thick] (b) -- (v6);
		\draw [thick,decoration={brace,mirror,raise=0.1cm},decorate] (.95,0) -- (2.05,0)
		node [fill=none,draw=none,pos=0.5,anchor=north,yshift=-0.2cm] {$c$};
		\draw[thick,domain=-120:-25] plot ({0.4*cos(\x)},{1.5+0.4*sin(\x)});
		\node[fill=none,draw=none] at (-.3,1.2) {$a$};
		\draw[thick,domain=-150:-60] plot ({3+0.4*cos(\x)},{1.5+0.4*sin(\x)});
		\node[fill=none,draw=none] at (3.3,1.2) {$b$};
	\end{tikzpicture}
	\caption{The graph $G(a,b,c)$.}\label{fig:G(abc)}
\end{figure}
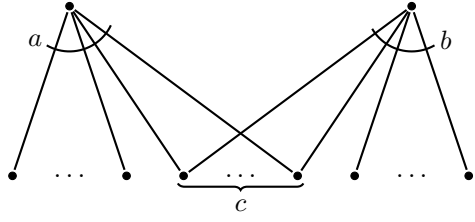

The graph $G(a,b,c)$ has $a+b$ edges and $\binom{a}{2}+\binom{b}{2}+c$ pairs of adjacent edges.

\begin{definition}\label{def:F(a,b,c)}
  For integers $k$ and $a, b, c$ with $a\ge b \ge c \ge 0$, let $\mathcal{F}_k(a,b,c)$ be the family
  \[\mathcal{F}_k(a,b,c) = \left\{[k+2]\setminus e\,:\, e \text{ is an edge of } G(a,b,c)\right\}.\]
\end{definition}

\begin{remark}\label{re:F(abc)}
  If $b=0$ and $k\geq a-1$ or $b\geq 1$ and $k\geq a+b-c$ then the vertex set of $G(a,b,c)$ is
  contained in $[k+2]$, and consequently, $\mathcal F_k(a,b,c)$ is a family of $k$-sets with
  $\abs{\mathcal{F}_k(a,b,c)} = a+b$. Moreover, in this situation, the size of its shadow equals its own
  size multiplied by $k$ reduced by the number of pairs of sets that have a shadow element in
  common. As the latter equals the number of pairs of adjacent edges of $G(a,b,c)$, we obtain
  \[\abs{\Delta \mathcal{F}_k(a,b,c)} = (a+b)k - \binom{a}{2} - \binom{b}{2} - c.\]
\end{remark}

Before we present rigorous proofs for every detail that is needed to obtain
\Cref{thm:shadow_spectrum}, we sketch how the different expressions in~(\ref{eq:shadow_spectrum})
arise and how they are related. A $t$-family $\mathcal{F}=\{A_1, \ldots, A_t\}$ of $k$-sets can be
regarded as built by successively adding its elements $A_1, \ldots, A_t$ in this order. Its shadow
size $s=|\Delta \mathcal{F}|$ is the sum of the \emph{marginal shadow sizes} (or
\emph{new-shadow sizes})
\[s_i= \abs{\Delta \{A_1,\dots,A_i\}} - \abs{\Delta \{A_1,\dots,A_{i-1}\}} \quad \text{ for } i = 1,
  2, 3, \dots, t.\] We let $\vect{s} = (s_1,\dots, s_t)$ be the so called \textit{marginal shadow
  vector} (or \emph{new-shadow vector}). By the Kruskal-Katona Theorem, the shadow size
$s=\sum_{i=1}^t s_i$ is minimal if $\mathcal{F}$ consists of the first $t$ elements of
$\binom{\nats}{k}$ in \emph{squashed order} $<_S$, which is defined by
\[A <_S B \iff \max\left[(A\setminus B) \cup (B\setminus A)\right] \in B.\]
For more on squashed
order and the Kruskal-Katona Theorem see e.g. \parencite[][pp. 112-124]{Anderson2002}. For
$2 \le t\le k+1$ taking the first $t$ elements in squashed order means choosing only subsets of
$[k+1]$. The marginal shadow vector is then $\vect{s} = (k, k-1, \ldots, k-(t-1))$ and the shadow size
is given by $s=tk-\sum_{i=1}^{t-1} i = tk- \binom{t}{2}$.

How can one increase this minimal shadow size by the least possible amount? Clearly, by introducing
exactly one new element, say $k+2$, within the last set $A_t$. Then the marginal shadow vector
becomes $\vect{s}=(k,k-1, \ldots, k-(t-2);k-1)$, where the semicolon indicates at which point we first
used a set that is not a subset of $[k+1]$. The resulting shadow size is
$s=tk - \binom{t-1}{2} - \binom{2}{2}$. By introducing two new elements instead of one within $A_t$
we can get the marginal shadow vector $\vect{s}= (k,k-1, \dots, k-(t-2);k)$ and the shadow size
$s=tk - \binom{t-1}{2}$.

In order to increase the shadow size again by the least possible amount, we have to introduce a new
element one step earlier, that is within $A_{t-1}$. From there we proceed in squashed order and
obtain the marginal shadow vector $\vect{s}=(k,k-1, \ldots, k-(t-3);k-1,k-2)$, and the shadow size
$s=tk - \binom{t-2}{2} - \binom{3}{2}$. It is easy to see in this special case and will be shown in
general later that we can increase the shadow size from this value in single steps up to
$s=tk-\binom{t-2}{2}$, which corresponds to the marginal shadow vector
$\vect{s}=(k,k-1, \ldots, k-(t-3);k,k)$.

The next increase by the least possible amount is achieved by introducing a new element already
within $A_{t-2}$ and results in $\vect{s}=(k,k-1, \dots, k-(t-4);k-1,k-2,k-3)$, having the sum
$s=tk - \binom{t-3}{2} - \binom{4}{2}$. Again, we can increase the shadow size from this value in
single steps up to $s=tk-\binom{t-3}{2}$, corresponding to the marginal shadow vector
$\vect{s}=(k,k-1, \dots, k-(t-4);k,k,k)$, and so on.

The general scheme is having a marginal shadow vector of length $t$, that is split up into one
(longer) part of length $t-j$ and one (shorter) part of length $j$, and has the form
\[\vect{s}=(k,k-1, \ldots, k-(t-j-1);k-1,k-2,\ldots,k-j),\]
and the sum $s=tk - \binom{t-j}{2} - \binom{j+1}{2}$. From here we can increase the shadow size in
single steps up to the marginal shadow vector $\vect{s}=(k,k-1, \ldots, k-(t-j-1);k,\ldots,k)$, and
thereby we obtain all shadow sizes in the interval
\[tk-I_j(t) =\left\{tk-x\,:\,\binom{t-j}{2}\leq x\leq\binom{t-j}{2}+\binom{j+1}{2}\right\}.\]
Then comes a
gap before we get all shadow sizes in $tk-I_{j+1}(t)$, and so on. It is easily seen that the gaps
between the intervals $I_j(t)$ and $I_{j+1}(t)$ close at some point for relatively small $j$. This happens
exactly when
\[\binom{t-j-1}{2} + \binom{j+2}{2} + 1 \ge \binom{t-j}{2},\]
and a simple computation yields that this is the case if and only if $\binom{j+3}{2} \ge
t$. Therefore, we chose $j^*(t)$ to be the smallest non-negative integer with this property.

How is this related to the graphs $G(a,b,c)$? We have seen that it is crucial to increase the shadow
size from $s=tk - \binom{t-j}{2} - \binom{j+1}{2}$ up to $s=tk - \binom{t-j}{2}$ in single
steps. Therefore, we rewrite
\[s=tk - \binom{t-j}{2} - \binom{j+1}{2}=tk - \binom{t-j}{2} - \binom{j}{2}  -j,\]
and this has the form $tk-\binom{a}{2} - \binom{b}{2} - b$ with $a\ge b\ge 0$. This shadow size corresponds to
$G(a,b,b)$, or more precisely to $\mathcal{F}_k(a,b,b)$ extended by $t-(a+b)$ sets with marginal
shadow $k$, because this graph has exactly $\binom{a}{2} + \binom{b}{2} + b$ pairs of adjacent
edges. To increase the shadow size in single steps we use the graphs $G(a,b,b-1), G(a,b,b-2),
\dots, G(a,b,0)$ with $\binom{a}{2} + \binom{b}{2} + b-1,~ \binom{a}{2} + \binom{b}{2} + b-2,
\dots, \binom{a}{2} + \binom{b}{2}$ pairs of adjacent edges. These are only $b=j$ single steps, but
we can use an inductive argument for the missing steps later.

\medskip

Now we turn to the details. We start by using the families $\mathcal{F}_k(a,b,c)$ to prove that the
right-hand side of~(\ref{eq:shadow_spectrum}) is contained in the left-hand side.
\begin{lemma}\label{lem:subset}
  For integers $k$ and $t$ with $k\geq 2$ and $1\leq t\leq k+1$, $\left\{tk-x\,:\,x\in I(t)\right\}\subseteq\sigma(t,k)$.
\end{lemma}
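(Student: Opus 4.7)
Given $x=\binom{a}{2}+\binom{b}{2}+c\in I(t)$ with $a\geq b\geq c\geq 0$ and $1\leq a+b\leq t$, the plan is to exhibit a family $\mathcal{F}$ of $t$ $k$-sets with $|\Delta\mathcal{F}|=tk-x$. The basic template will be
\[\mathcal{F}=\mathcal{F}_k(a,b,c)\cup\{E_1,\dots,E_m\},\quad m=t-(a+b),\]
in which the $E_i$ are $k$-sets placed on $m$ pairwise disjoint blocks of $k$ fresh elements, each block taken outside $[k+2]$ and outside the other blocks. When the vertex set of $G(a,b,c)$ is contained in $[k+2]$, Remark~\ref{re:F(abc)} gives $|\mathcal{F}_k(a,b,c)|=a+b$ and $|\Delta\mathcal{F}_k(a,b,c)|=(a+b)k-\binom{a}{2}-\binom{b}{2}-c$. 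Each $E_i$ has all $k$ of its $(k-1)$-subsets sitting inside its own private block, so the shadows of the $E_i$ are pairwise disjoint, disjoint from $\Delta\mathcal{F}_k(a,b,c)\subseteq\binom{[k+2]}{k-1}$, and each contributes exactly $k$ new elements. Adding up yields $|\Delta\mathcal{F}|=(a+b)k-\binom{a}{2}-\binom{b}{2}-c+mk=tk-x$ with $|\mathcal{F}|=t$, as desired.

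The main obstacle is the opposite case, where $G(a,b,c)$ does \emph{not} fit in $[k+2]$: by Remark~\ref{re:F(abc)} this requires $b\geq 1$ and $a+b-c>k$, and combined with $a+b\leq t\leq k+1$ it forces $c=0$, $a+b=k+1$, and in particular $t=k+1$. I will bypass this obstruction by reparametrizing $x$ via the identity $\binom{b}{2}=\binom{b-1}{2}+(b-1)$: replace $(a,b,c)$ with $(a',b',c')=(a,b-1,b-1)$ when $b\geq 2$, and with $(a',b',c')=(a,0,0)$ when $b=1$. Both replacements preserve the value $\binom{a'}{2}+\binom{b'}{2}+c'=x$ while reducing $a'+b'$ to $k$, and since $a=k+1-b\leq k$ the new triple satisfies the embedding condition ($a'+b'-c'=a\leq k$ if $b'\geq 1$, or $a'\leq k+1$ if $b'=0$). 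Applying the generic construction above to the alternative triple, now with $m'=t-(a'+b')=1$ fresh extra $k$-set, delivers the required family and completes the proof.
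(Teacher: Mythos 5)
Your proposal is correct and follows essentially the same approach as the paper's proof: both reduce to the generic construction $\mathcal{F}_k(a,b,c)$ plus $t-(a+b)$ extra shadow-disjoint $k$-sets, and both handle the edge case $a+b=t=k+1$, $c=0$ by the reparametrization $\binom{b}{2}=\binom{b-1}{2}+(b-1)$ (so $(a,b,0)\mapsto(a,b-1,b-1)$). Your analysis that the true obstruction forces $b\geq 1$, $c=0$, $a+b=t=k+1$ is slightly cleaner than the paper's case split, which treats $b=0$, $a=t=k+1$ inside the special case (where in fact $\mathcal{F}_k(k+1,0,0)$ already embeds in $[k+2]$ with no extra sets needed, as you observe); otherwise the two arguments coincide.
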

\begin{proof}
We fix $s=tk-\binom{a}{2}-\binom{b}{2}-c$ with $a \ge b \ge c \ge 0$ with $1 \le a+b \le t$,
and show that there is a $t$-family of $k$-sets with shadow size $s$.
\begin{description}
\item[Case 1] $a+b=t=k+1$ and $c=0$.
  \begin{description}
  \item[Case 1.1] $b \ge 2$. Then we let $\mathcal{F} = \mathcal{F}_1 \cup \{A\}$, where
    $\mathcal{F}_1 = \mathcal{F}_k(a,b-1,b-1)$, and is $A$ is a $k$-set that is shadow-disjoint to
    all sets in $\mathcal{F}_1$. As the ground set of $\mathcal{F}_1$ is $[a+2]$, which is a subset
    of $[k+2]$, $\mathcal F_1$ is a family of $k$-sets of size $a+b-1$. Using \Cref{re:F(abc)} we obtain
    \[\abs{\Delta \mathcal{F}} = \abs{\Delta \mathcal{F}_1} + k = (a+b-1)k - \binom{a}{2} -
      \binom{b-1}{2} - (b-1) + k = tk - \binom{a}{2} - \binom{b}{2}.\]
\item[Case 1.2] $b \in \{0,1\}$. Note that as $c=0$ we are then looking for a set system with shadow
  size $tk-\binom{a}{2}$. If $b=0$, we have $a=t$ and use $\mathcal{F}=\mathcal{F}_k(a,0,0)$ with
  shadow size $ak- \binom{a}{2} = tk-\binom{a}{2}$. If $b=1$, we use
  $\mathcal{F}= \mathcal{F}_1 \cup \{A\}$, where $\mathcal{F}_1 = \mathcal{F}_k(a,0,0)$ and $A$ is
  shadow disjoint to all members of $\mathcal{F}_1$, and obtain
\[\abs{\Delta \mathcal{F}} = ak - \binom{a}{2} + k = tk-\binom{a}{2}.\]
  \end{description}
\item[Case 2] $a+b < t$ or $t \le k$ or $c\ge 1$. Then, we have $a+b-c\le k$ and we can use
  $\mathcal{F}=\mathcal{F}_1 \cup \mathcal{F}_2$, where $\mathcal{F}_1 = \mathcal{F}_k(a,b,c)$, and
  $\mathcal{F}_2$ consists of $t-(a+b)$ pairwise shadow-disjoint sets such that
  $\Delta \mathcal{F}_1 \cap \Delta \mathcal{F}_2 = \emptyset$. By \Cref{re:F(abc)} we
  obtain
\[\abs{\Delta \mathcal{F}} = (a+b)k - \binom{a}{2} - \binom{b}{2} - c + (t-a-b)k = tk - \binom{a}{2} - \binom{b}{2} - c.\qedhere\]
\end{description}
\end{proof}

In the proof of the other inclusion in~(\ref{eq:shadow_spectrum}) we will
use \Cref{prop:interval_description}, so we first prove this result.

\begin{proof}[Proof of \Cref{prop:interval_description}]
  For every $a\in\{1,2,\dots,t\}$,
  \begin{multline*}
    \bigcup_{b=0}^{\min\{a,t-a\}}\left\{\binom{a}{2}+\binom{b}{2}+c\,:\,0\leq c\leq
      b\right\}=\bigcup_{b=0}^{\min\{a,t-a\}}\left[\binom{a}{2}+\binom{b}{2},\,\binom{a}{2}+\binom{b+1}{2}\right]\\
    =\left[\binom{a}{2},\,\binom{a}{2}+\binom{\min\{a,t-a\} +1}{2}\right],
  \end{multline*}
  and as a consequence,
  \begin{equation}\label{eq:interval_split}
    I(t)=\bigcup_{a=1}^t\left[\binom{a}{2},\,\binom{a}{2}+\binom{\min\{a,t-a\} +1}{2}\right].
  \end{equation}
  We split this union into the intervals for $a\geq t-j^*(t)+1$ and the ones for $a\leq t-j^*(t)$.
  For $a\geq t-j^*(t)+1$, it follows from $2j^*(t)\leq t+2$ that $a\geq t-a$. Hence
  \begin{equation}\label{eq:small_j}
    \left[\binom{a}{2},\,\binom{a}{2}+\binom{\min\{a,t-a\}
        +1}{2}\right]=\left[\binom{a}{2},\,\binom{a}{2}+\binom{t-a
        +1}{2}\right]=I_{t-a}(t).
  \end{equation}
  For $a\leq t-j^*(t)-1$, it follows from $j^*(t)=\min\{j\,:\,\binom{j+3}{2}\geq t\}$ that
  \[\binom{t-a+1}{2}\geq\binom{j^*(t)+2}{2}=\binom{j^*(t)+3}{2}-(j^*(t)+2)\geq t-j^*(t)-2\geq a-1.\]
  Together with $\binom{a+1}{2}\geq a-1$, we obtain that there is no gap between the interval for
  $a$ and the interval for $a+1$:
  \[\binom{a}{2}+\binom{\min\{a,t-a\}+1}{2}\geq\binom{a}{2}+(a-1)=\binom{a+1}{2}-1.\]
  Since the function $a\mapsto\binom{a}{2}+\binom{\min\{a,t-a\}+1}{2}$ is non-decreasing,
  $\binom{a}{2}+\binom{\min\{a,t-a\}+1}{2}\leq\binom{t-j^*(t)}{2}+\binom{j^*(t)+1}{2}$ for all $a$, and we obtain
  \begin{equation}\label{eq:jstar}
    \bigcup_{a=0}^{t-j^*(t)}\left[\binom{a}{2},\,\binom{a}{2}+\binom{\min\{a,t-a\}
      +1}{2}\right]
  =\left[0,\,\binom{t-j^*(t)}{2}+\binom{j^*(t)+1}{2}\right]=I_{j^*(t)}(t).
\end{equation}
Combining the ingredients, we conclude the proof as follows:
\begin{multline*}
  \bigcup_{j=0}^{j^*(t)}I_j(t)=\left(\bigcup_{j=0}^{j^*(t)-1}I_j(t)\right)\cup
  I_{j^*(t)}(t)=\left(\bigcup_{a=t-j^*(t)+1}^{t}I_{t-a}(t)\right)\cup I_{j^*(t)}(t)\\
  \stackrel{(\ref{eq:small_j}),(\ref{eq:jstar})}{=}\bigcup_{a=t-j^*(t)+1}^{t}\left[\binom{a}{2},\,\binom{a}{2}+\binom{\min\{a,t-a\}
        +1}{2}\right]\cup \bigcup_{a=0}^{t-j^*(t)}\left[\binom{a}{2},\,\binom{a}{2}+\binom{\min\{a,t-a\}
        +1}{2}\right]\\
    \stackrel{~(\ref{eq:interval_split})}{=}I(t).\qedhere
\end{multline*}
\end{proof}
To complete the proof of \Cref{thm:shadow_spectrum}, we need the following bound.
\begin{lemma}\label{lem:bound}
  For positive integers $t$, $l$ and $l'$ with $t/2>l\geq l'\geq\left\lceil\frac{t-l}{2}\right\rceil$ and $l>1$,
  \[\binom{l}{2}+\binom{l'}{2}+\binom{t-l-l'+1}{2}+(t-l)\leq\binom{\left\lceil
        t/2\right\rceil}{2}+\binom{\left\lfloor t/2\right\rfloor+1}{2}.\]
\end{lemma}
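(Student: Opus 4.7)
The plan is to reduce the inequality to an extremal case by two monotonicity steps, and then to verify the reduced inequality by a parity case split. Setting $d:=t-2l$, the hypothesis $l<t/2$ gives $d\ge 1$, and combining $l\ge l'\ge\lceil(t-l)/2\rceil$ yields $3l\ge t$, equivalently $l\ge d$.

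First, I would fix $l$ and treat the LHS as a function of $l'$ on the integer interval $\lceil(t-l)/2\rceil\le l'\le l$. The map $\phi(l'):=\binom{l'}{2}+\binom{t-l-l'+1}{2}$ has constant second forward difference equal to $2$, so it is discretely convex and its maximum on this interval is attained at one of the two endpoints. A direct computation gives
\[
\phi(l)-\phi\bigl(\lceil(t-l)/2\rceil\bigr)=\begin{cases}(l-d)(l-d-2)/4,&\text{if }l+d\text{ is even,}\\(l-d-1)^2/4,&\text{if }l+d\text{ is odd.}\end{cases}
\]
Both expressions are non-negative: the odd-parity case is a square, while in the even-parity case the value $l-d=1$ is incompatible with $l+d$ being even, so $l-d\in\{0\}\cup\{2,3,\dots\}$. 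Hence the LHS is maximized over $l'$ at the upper endpoint $l'=l$.

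Second, I would set $l'=l$ and analyze $g(l):=2\binom{l}{2}+\binom{t-2l+1}{2}+(t-l)$ on $\lceil t/3\rceil\le l\le\lceil t/2\rceil-1$. The forward difference simplifies to $g(l+1)-g(l)=2(l-d)\ge 0$, so $g$ is non-decreasing and attains its maximum on this range at $l=\lceil t/2\rceil-1$.

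Finally, I would plug this extremal $l$ into $g$ and finish by parity, using the identity $\binom{n}{2}+\binom{n+1}{2}=n^2$ to rewrite the RHS. For $t$ even, $l=t/2-1$ yields $g(l)=t^2/4-t+6$ against RHS $=t^2/4$, so the bound holds whenever $t\ge 6$. For $t$ odd, $l=(t-1)/2$ yields $g(l)=(t^2-2t+9)/4$ against RHS $=(t^2-1)/4$, so the bound holds whenever $t\ge 5$. Since $l>1$ and $l<t/2$ already force $t\ge 5$, these two regimes exhaust all cases, and the extremes $t\in\{5,6\}$ simply yield equality. The hardest step is the first reduction: the generic convexity argument alone is inconclusive near the boundary $l=\lceil t/3\rceil$, and it is precisely the parity case split in the explicit difference formula that confirms the upper endpoint $l'=l$ always dominates.
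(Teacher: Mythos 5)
Your proof is correct and follows the same overall structure as the paper's: fix $l$ and reduce in $l'$ to the case $l'=l$, then reduce in $l$ to the extremal value $l=\lceil t/2\rceil-1$, and finish with a parity case split. The only difference is in the first reduction: you invoke discrete convexity of $\phi(l')$ to place the maximum at an endpoint and then compare the two endpoints via an explicit formula with a parity case analysis, whereas the paper simply observes that $\phi(l'+1)-\phi(l')=2l'-(t-l)\ge 0$ throughout the interval (since $l'\ge\lceil(t-l)/2\rceil$), i.e.\ $\phi$ is already nondecreasing there, which delivers $l'=l$ in one line and eliminates the endpoint comparison. So the step you flag as hardest is actually trivial if you compute the first difference directly; the rest of your computation, including the observation that $l>1$ and $l<t/2$ force $t\ge 5$ and that $t\in\{5,6\}$ give equality, matches the paper.
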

\begin{proof}
  Note that the assumptions imply $t\geq 5$. Fix $t\geq 5$ and set
  $f(l,l')=\binom{l}{2}+\binom{l'}{2}+\binom{t-l-l'+1}{2}+(t-l)$. If
  $l>l'\geq\left\lceil\frac{t-l}{2}\right\rceil$ then
  \[f(l,l'+1)-f(l,l')=l'-(t-l-l')=2l'-(t-l)\geq 0.\]
  Hence, we can assume $l'=l$, and our task is reduced to showing that
  \[2\binom{l}{2}+\binom{t-2l+1}{2}+(t-l)\leq \binom{\left\lceil
        t/2\right\rceil}{2}+\binom{\left\lfloor t/2\right\rfloor+1}{2}\]
  whenever $t/2>l\geq \left\lceil\frac{t-l}{2}\right\rceil$. Now
  \[f(l+1,l+1)-f(l,l)=2l-(t-2l)-(t-2l-1)-1=6l-2t\geq 0,\]
  hence, we can assume $l=\lceil t/2\rceil-1$, and our task is reduced to verifying
  \[2\binom{\lceil t/2\rceil-1}{2}+\binom{t-2\lceil t/2\rceil+3}{2}+\left\lfloor\frac{t}{2}\right\rfloor+1\leq \binom{\left\lceil
        t/2\right\rceil}{2}+\binom{\left\lfloor t/2\right\rfloor+1}{2}.\]
  For even $t$, this follows from
 \[2\binom{t/2-1}{2}+3+\frac{t}{2}+1=\binom{t/2}{2}+\binom{t/2-1}{2}+5\leq
   \binom{t/2}{2}+\binom{t/2+1}{2},\]
 where we used $t\geq 6$. For odd $t$, we conclude by noticing
 \[2\binom{\frac{t-1}{2}}{2}+1+\frac{t-1}{2}+1=\binom{\frac{t+1}{2}}{2}+\binom{\frac{t-1}{2}}{2}+2\leq 2\binom{\frac{t+1}{2}}{2}.\qedhere\]
\end{proof}
\begin{lemma}\label{lem:superset}
  For integers $k$ and $t$ with $k\geq 2$ and $1\leq t\leq k+1$,
  $\sigma(t,k)\subseteq\{tk-x\,:\,x\in I(t)\}$.
\end{lemma}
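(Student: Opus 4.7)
The goal is to show that $d := tk - \abs{\Delta\mathcal{F}} \in I(t)$ for every family $\mathcal{F}$ of $t$ distinct $k$-sets with $2 \le t \le k+1$, which by \Cref{prop:interval_description} amounts to placing $d$ in one of the intervals $I_j(t)$. The plan is to induct on $t$, with the trivial base $t=1$ giving $d=0\in I(1)$.

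For the inductive step I would first dispose of the easy subcase in which some $A\in\mathcal{F}$ is \emph{shadow-disjoint} from $\mathcal{F}\setminus\{A\}$, meaning no $(k-1)$-subset of $A$ lies in $\Delta(\mathcal{F}\setminus\{A\})$. In that case $\abs{\Delta\mathcal{F}}=\abs{\Delta(\mathcal{F}\setminus\{A\})}+k$, so $d$ coincides with the deficit of the $(t-1)$-family $\mathcal{F}\setminus\{A\}$, and the induction hypothesis combined with the inclusion $I(t-1)\subseteq I(t)$ (immediate from the definition) yields $d\in I(t)$.

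The main case is when every set of $\mathcal{F}$ shares a $(k-1)$-subset with another, so that every marginal shadow is at most $k-1$. Here I would order $\mathcal{F}=\{A_1,\dots,A_t\}$ greedily, mimicking the squashed order locally, and analyze the marginal shadow vector $\vect{s}=(s_1,\dots,s_t)$, aiming to match it to one of the canonical shapes described in the discussion preceding \Cref{lem:subset}: an initial long block of length $t-j$ producing marginals $(k,k-1,\dots,k-(t-j-1))$, followed by a short block of length $j$ producing marginals of the form $(k-1,k-2,\dots,k-j)$ together with the single-step refinements interpolating toward $(k,k,\dots,k)$ that are also described there. Reading the deficit off from this shape directly yields $d=\binom{t-j}{2}+\binom{b}{2}+c$ for some $b\le j$ and $c\le b$, placing $d$ in $I_j(t)$.

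The main obstacle is to rule out values of $d$ lying in the gap between $I_{j+1}(t)$ and $I_j(t)$ for some $j<j^*(t)-1$. This is precisely where I expect \Cref{lem:bound} to be applied: borderline configurations threatening to produce a gap value give rise to an expression of the form $\binom{l}{2}+\binom{l'}{2}+\binom{t-l-l'+1}{2}+(t-l)$ satisfying the hypotheses of \Cref{lem:bound}, which caps this quantity by $\binom{\lceil t/2\rceil}{2}+\binom{\lfloor t/2\rfloor+1}{2}$. Since $a+b=t+1$ is the same for this cap and for the sum $\binom{t-j^*(t)}{2}+\binom{j^*(t)+1}{2}$ defining the top of $I_{j^*(t)}(t)$, and the latter is the more unbalanced (hence larger) of the two, the cap sits inside $I_{j^*(t)}(t)=[0,\binom{t-j^*(t)}{2}+\binom{j^*(t)+1}{2}]$, so such a $d$ is forced into $I_{j^*(t)}(t)$ and cannot escape $I(t)$. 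Verifying that every extremal configuration of $\mathcal{F}$ falls into one of the patterns handled by \Cref{lem:bound} or by the inductive hypothesis is the technical heart of the argument.
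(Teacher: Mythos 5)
Your high-level plan (induct on $t$, dispose of a set with private shadow, then use \Cref{prop:interval_description} and \Cref{lem:bound} to fence in the deficit) is pointed in the right direction, and the final sanity check — that $\binom{\lceil t/2\rceil}{2}+\binom{\lfloor t/2\rfloor+1}{2}\leq\binom{t-j^*(t)}{2}+\binom{j^*(t)+1}{2}$ because $j^*(t)\leq\lfloor t/2\rfloor$ and the summands become larger as the pair becomes more unbalanced — is correct. But the middle of your argument has a genuine gap.

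The "main case" is where the proof actually has to happen, and your proposal there is not a proof but a hope. You assert that after a greedy local-squashed ordering, the marginal shadow vector of an arbitrary family must "match one of the canonical shapes" described before \Cref{lem:subset}. Those shapes are produced by explicit constructions in the \emph{easy} direction of the theorem; there is no a priori reason the new-shadow vector of an arbitrary $t$-family should decompose into a clean long block of strictly decreasing entries followed by a short interpolating block, no matter how you order the sets. Indeed the families $\mathcal F_k(a,b,c)$ themselves yield more irregular new-shadow vectors. Without a lemma forcing such a shape — which you do not supply — the step "reading the deficit off from this shape directly yields $d=\binom{t-j}{2}+\binom{b}{2}+c$" has no basis. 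You also acknowledge that "verifying every extremal configuration falls into one of the patterns" is "the technical heart" without sketching how to do it, which is the clearest sign of the gap.

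The paper sidesteps marginal shadow vectors entirely and uses a structural decomposition instead: take a maximal subfamily $\mathcal A\subseteq\mathcal F$ with $\bigl\lvert\bigcup_{X\in\mathcal A}X\bigr\rvert\leq k+1$, set $l=\abs{\mathcal A}$, $\mathcal B=\mathcal F\setminus\mathcal A$, and observe that $\abs{\Delta\mathcal A}=lk-\binom l2$ exactly, $\abs{\Delta\mathcal B}$ is sandwiched by Kruskal--Katona, and the overlap $\abs{\Delta\mathcal A\cap\Delta\mathcal B}\leq t-l$ because maximality of $\mathcal A$ forces each $B\in\mathcal B$ to contain an element outside $\bigcup_{X\in\mathcal A}X$, leaving only one candidate shadow set in common with $\Delta\mathcal A$. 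Combining these three bounds immediately pins $\abs{\Delta\mathcal F}$ into $\{tk-x\,:\,x\in I_{t-l}(t)\}$ when $l\geq t/2$; and for $l<t/2$ the proof iterates the same decomposition once more on $\mathcal B$ and invokes \Cref{lem:bound} on the resulting $\binom l2+\binom{l'}2+\binom{t-l-l'+1}2+(t-l)$ expression — which is exactly the formula you expected to see, but it arises from this nested decomposition, not from any ordering of $\mathcal F$. So your proposal identifies the right ingredients at the end but is missing the single key idea — extracting a maximal subfamily on a small ground set and controlling the cross-shadow — that turns the sketch into a proof.
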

\begin{proof}
  The claim is trivial for $t=1$, and we assume $t>1$.  Let $\mathcal{F}$ be a $t$-family of
  $k$-sets. By \Cref{prop:interval_description}, it is sufficient to show that
  $\abs{\Delta \mathcal{F}} \in \{tk-x\,:\,x\in I_{j}(t)\}$ for some $j$. Let
  $\mathcal{A} \subseteq \mathcal{F}$ be a subset of maximum cardinality subject to
  $\abs{\bigcup_{X\in \mathcal{A}} X} \le k+1$. Further, let $l=\abs{\mathcal{A}}$ and
  $\mathcal{B}=\mathcal{F} \setminus \mathcal{A}$, so that $\abs{\mathcal{B}} = t-l$. If $l=1$, then $|\Delta \mathcal{F}|=tk$ is in $\{tk-x\,:\,x\in I(t)\}$ because $0 \in I_{j^*(t)}(t) \subseteq I(t)$. We now assume
  $l>1$. For the shadow sizes we have the bounds
  \begin{align}
    \abs{\Delta \mathcal{A}} &= lk - \binom{l}{2}, \label{eq:A_shadow}\\
    (t-l)k - \binom{t-l}{2} \le \abs{\Delta \mathcal{B}} &\le (t-l)k,\label{eq:B_shadow}\\
    \abs{\Delta \mathcal{A} \cap \Delta \mathcal{B}} &\le t-l.\label{eq:common_shadow}
  \end{align}
  We use induction on $t$ to show the following claim.
  \begin{claim}\label{claim:shadow_bound}
    If $l\geq t/2$, then
    $tk-\binom{l}{2}-\binom{t-l+1}{2}\leq\abs{\Delta\mathcal F}\leq tk-\binom{l}{2}$, and if $l<t/2$,
    then
    $\abs{\Delta \mathcal{F}} \ge tk - \binom{\lceil t/2 \rceil}{2} - \binom{\lfloor t/2 \rfloor
      +1}{2}$.
  \end{claim}
  Before proving this claim, we show how it can be used to deduce the statement of the lemma. For
  $l\geq t-j^*(t)+1$, the claim implies $\abs{\Delta \mathcal{F}} \in \{tk-x\,:\,x\in
  I_{t-l}(t)\}$. For $l\leq t-j^*(t)$, we obtain
  $\abs{\Delta \mathcal{F}} \in \{tk-x\,:\,x\in I_{j^*(t)}(t)\}$ as follows. We have
  $I_{j^*(t)}(t)=[0,f(j^*(t))]$ for the function $f:j\mapsto\binom{t-j}{2}+\binom{j+1}{2}$ which is
  non-increasing for $j\in\{0,1,\dots,\lfloor t/2\rfloor\}$. For $t/2\leq l\leq t-j^*(t)$, 
  \[\abs{\Delta\mathcal F}\geq tk-\binom{l}{2}-\binom{t-l+1}{2}=tk-f(t-l)\geq tk-f(\lfloor
    t/2\rfloor).\]
  For $l<t/2$, $\abs{\Delta\mathcal F}\geq tk-f(\lfloor t/2\rfloor)$ and $\abs{\Delta\mathcal F}\geq
  tk-f(j^*(t))$ follows from $j^*(t)\leq \lfloor t/2\rfloor$.

  It remains to establish \Cref{claim:shadow_bound}. The base case $t=1$ is
  trivial, so we assume $t \ge 2$. Combining~(\ref{eq:A_shadow}),~(\ref{eq:B_shadow})
  and~(\ref{eq:common_shadow}), we obtain
  \[tk-\binom{l}{2} - \binom{t-l}{2} - (t-l)\le\abs{\Delta \mathcal{F}} \le tk - \binom{l}{2}.\] As
  the left-hand side equals $tk-\binom{l}{2}-\binom{t-l+1}{2}$ this concludes the argument for
  $l\geq t/2$. For $l < t/2$, let $\mathcal{C} \subseteq \mathcal{B}$ be a subset of maximum
  cardinality subject to $\abs{\bigcup_{X\in \mathcal{C}} X} \le k+1$. Further, let
  $l'=\abs{\mathcal{C}}$ and $\mathcal{D}:=\mathcal{B} \setminus \mathcal{C}$, so that
  $\abs{\mathcal{D}}=t-l-l'$. The maximality of $\mathcal{A}$ implies $l'\le l$.
  \begin{description}
  \item[Case 1] $l' \ge \frac{t-l}{2}$. Applying the induction hypothesis to $\mathcal{B}$, we obtain
    \[(t-l)k-\binom{l'}{2}-\binom{t-l-l'+1}{2}\leq\abs{\Delta\mathcal B}\leq (t-l)k-\binom{l'}{2}.\]
    Combining this with~(\ref{eq:A_shadow}) and~(\ref{eq:common_shadow}),
    \[\abs{\Delta \mathcal{F}} \ge tk - \binom{l}{2} - \binom{l'}{2} - \binom{t-l-l'+1}{2} -(t-l).\]
  \item[Case 2] $l' < \frac{t-l}{2}$. By induction,
    \[\abs{\Delta \mathcal{B}} \ge (t-l)k - \binom{\left\lceil \frac{t-l}{2}\right \rceil}{2} -
      \binom{\left\lfloor \frac{t-l}{2}\right \rfloor + 1}{2},\]
    and together with~(\ref{eq:A_shadow}) and~(\ref{eq:common_shadow}),
    \[\abs{\Delta \mathcal{F}} \ge tk - \binom{l}{2} - \binom{\left\lceil \frac{t-l}{2}
        \right\rceil}{2}
      - \binom{\left\lfloor \frac{t-l}{2}\right \rfloor + 1}{2} -(t-l).\]
  \end{description}
   In both cases, the claim follows with \Cref{lem:bound}.
\end{proof}
Combining \Cref{lem:subset,lem:superset}, we have
proved~(\ref{eq:shadow_spectrum}). The next lemma provides the final part of \Cref{thm:shadow_spectrum}.
\begin{lemma}\label{lemma:ground_set_small}
  Let $k \ge 2$ and $1\le t \le k+1$ be integers. Then for every $s \in \sigma(t,k)$ there exists a
  set system $\mathcal{F} \subseteq \binom{[k+4]}{k}$ such that $|\mathcal{F}|=t$ and
  $\abs{\Delta \mathcal{F}} = s$.
\end{lemma}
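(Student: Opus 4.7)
The plan is to revisit the proof of \Cref{lem:subset} and verify that, in each case, the constructed family can be realized within $\binom{[k+4]}{k}$.

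First I would note that the base family $\mathcal{F}_k(a,b,c)$ uses the vertex set $[a+b-c+2] \subseteq [k+3]$, since $a+b-c+2 \le a+b+2 \le t+2 \le k+3$. Hence $\mathcal{F}_k(a,b,c) \subseteq \binom{[k+2]}{k} \subseteq \binom{[k+4]}{k}$, which handles Case~1.2 with $b=0$ and the Case~2 subcases in which $a+b=t$ so no extras are required. For Case~1.1 and Case~1.2 with $b=1$, where a single $k$-set $A$ must be adjoined to $\mathcal F_1$, I would take $A = [k+4] \setminus X$ for any 4-subset $X \subseteq [k+2]$ (which exists because $k \ge 2$). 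The direct computation $|A \cap ([k+2] \setminus e)| = |[k+2] \setminus (X \cup e)| \le k+2-|X| = k-2$ confirms that $A$ is shadow-disjoint from every member of $\mathcal F_1$ for every edge $e$ of the underlying star graph.

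The main work is Case~2 with $a+b<t$, where $t-(a+b)$ pairwise shadow-disjoint $k$-sets must be adjoined, each shadow-disjoint from $\mathcal F_k(a,b,c)$, all inside $[k+4]$. Viewing $k$-subsets of $[k+4]$ as complements of 4-subsets, this reduces to finding 4-subsets $X_1,\ldots,X_{t-(a+b)} \subseteq [k+4]$ such that $|X_i \cap X_j|\le 2$ for $i\ne j$ and $|X_i \cap (e\cup\{k+3,k+4\})|\le 2$ for every edge $e$ of $G(a,b,c)$. I would construct the $X_i$'s from a palette of three types: (A) 4-subsets of $[k+2]$, which automatically satisfy the second condition; (C) $X = \{k+3,k+4\}\cup\{u,v\}$ with $\{u,v\}$ drawn from a matching in the isolated set $I := [k+2] \setminus V(G(a,b,c))$; and (B) 4-sets containing exactly one of $k+3,k+4$ together with an independent 3-set of $G(a,b,c)$.

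The main obstacle is the case analysis that guarantees enough mutually compatible $X_i$ across all parameter regimes. Since $t-(a+b) \le k$, the number of required extras is moderate; when $|I|$ is large, Type~(C) alone suffices, and when $|I|$ is small, one invokes Types~(A) and~(B). The structural facts $|E(G(a,b,c))| = a+b$ and that the independence number of $G(a,b,c)$ is at least $a+b-c$, combined with the fact that the ground set $[k+4]$ has $k+4 \ge 6$ elements and that 4-subsets of $[k+2]$ pairwise intersecting in at most~$2$ are plentiful once $k$ is not too small, make a greedy packing argument work in all generic cases. A handful of small boundary cases (e.g. $k \in \{2,3\}$ combined with $(a,b,c)$ near the extremes, where $I$ may be empty) can be verified by direct construction.
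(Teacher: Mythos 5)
Your plan diverges from the paper's: the paper proves this by induction on $t$, using an auxiliary ``append'' lemma (the paper's Lemma~\ref{lemma:append}) which shows that any $\mathcal{F}\subseteq\binom{[k+4]}{k}$ with $\abs{\mathcal{F}}\le k$ can be extended by one $k$-set with marginal shadow exactly $k$. You instead try to realize the entire construction from Lemma~\ref{lem:subset} at once inside $[k+4]$, reducing to a direct 4-set packing problem. This is plausible in spirit, but as written it is not a proof, for two reasons.

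First, a small but real logical error: you argue that the vertex set of $G(a,b,c)$ satisfies $a+b-c+2\le t+2\le k+3$ and conclude ``hence $\mathcal{F}_k(a,b,c)\subseteq\binom{[k+2]}{k}$.'' This does not follow; for $\mathcal{F}_k(a,b,c)$, which complements edges in $[k+2]$, to be a family of $k$-sets you need the vertex set inside $[k+2]$, i.e.\ $a+b-c\le k$, and $a+b-c\le k+1$ is genuinely too weak (the bad boundary value $a+b-c=k+1$ is precisely the one excluded by Case~1 in Lemma~\ref{lem:subset}). In the subcases you cite the bound $a+b-c\le k$ does hold, but it comes from the Case~2 hypothesis ($a+b<t$, or $t\le k$, or $c\ge 1$), not from your displayed chain of inequalities.

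Second, and more seriously, the heart of your argument — that one can always find $t-(a+b)$ (up to $k$) pairwise compatible 4-subsets of $[k+4]$ satisfying both your conditions simultaneously — is asserted rather than proven. You yourself observe that Type~(C) alone fails when $I$ is small, and you propose mixing in Types~(A) and~(B) via a ``greedy packing argument'' that you claim ``works in all generic cases,'' deferring ``a handful of small boundary cases'' to inspection. That greedy step is exactly the content that needs a proof: one must show the types are mutually compatible in sufficient quantity across all admissible $(a,b,c,t,k)$. The paper avoids this combinatorial case analysis entirely: when $a+b\le t-1$ it reduces to $\sigma(t-1,k)$ by induction and appends one set via a short global count ($\abs{\nabla\Delta\mathcal{F}}\le\abs{\mathcal{F}}+4\abs{\Delta\mathcal{F}}\le k+4k^2<\binom{k+4}{k}$ for $k\ge 4$, with Mantel's theorem handling $k=3$ and a trivial check for $k=2$), and when $a+b=t$ it applies the already-constructed families $\mathcal{F}_k(a,b,c)$ or $\mathcal{F}_k(a,b-1,b-1)$, which live in $[k+2]$. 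To salvage your approach you would either need to carry out the packing case analysis in full, or replace it with a one-set-at-a-time extension argument — which is essentially what the paper's append lemma does, more efficiently.
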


To prove this we need one more lemma which says that for a $(t-1)$-family of $k$-sets on the ground
set $[k+4]$ we can always add a $k$-set that yields $k$ new shadow elements.

\begin{lemma}\label{lemma:append}
  Let $k\geq 2$ and $t$ be integers with $1\leq t\leq k+1$, and assume that
  $\mathcal F\subseteq\binom{[k+4]}{k}$ satisfies $\abs{\mathcal F}=t-1$. Then there exists a set
  $A\in\binom{[k+4]}{k}$ such that $\mathcal F'=\mathcal F\cup\{A\}$ satisfies
  $\abs{\Delta\mathcal F'}=\abs{\Delta\mathcal F}+k$.
\end{lemma}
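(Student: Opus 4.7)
The condition $\abs{\Delta\mathcal{F}'}=\abs{\Delta\mathcal{F}}+k$ is equivalent to saying that every one of the $k$ sets in $\binom{A}{k-1}$ lies outside $\Delta\mathcal{F}$, which in turn translates to $\abs{A\cap F}\leq k-2$ for every $F\in\mathcal{F}$. Call such an $A\in\binom{[k+4]}{k}$ \emph{good}. My plan is to establish the existence of a good $A$ by a union bound for $k\geq 4$, and to dispatch the remaining small cases $k\in\{2,3\}$ by hand.

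For the counting step, I would first observe that, for a fixed $F\in\binom{[k+4]}{k}$, the $k$-subsets $A$ of $[k+4]$ with $\abs{A\cap F}\geq k-1$ are precisely $F$ itself together with the sets of the form $(F\setminus\{y\})\cup\{x\}$ with $y\in F$ and $x\in[k+4]\setminus F$, giving exactly $4k+1$ ``$F$-bad'' candidates. Since $\abs{\mathcal{F}}=t-1\leq k$, a union bound caps the total number of bad sets by $k(4k+1)=4k^2+k$, and a short polynomial calculation confirms $\binom{k+4}{4}>4k^2+k$ for every $k\geq 4$ (at $k=4$ the two sides are $70$ and $68$, and the gap grows rapidly for larger $k$). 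Hence a good $A$ exists whenever $k\geq 4$.

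For $k=2$ the condition simply asserts that $A$ is disjoint from every $F\in\mathcal{F}$, and since $\abs{\bigcup\mathcal{F}}\leq 2(t-1)\leq 4<6$, I can take $A$ to be any $2$-subset of the complement of $\bigcup\mathcal{F}$ in $[6]$. The main obstacle is $k=3$, where the counting bound misses by a narrow margin. I would treat it by viewing $\Delta\mathcal{F}\subseteq\binom{[7]}{2}$ as the edge set of a graph on $[7]$ with at most $3\cdot\binom{3}{2}=9$ edges: a good $3$-set is then exactly a triangle in the complement graph $H$, which has at least $21-9=12$ edges. By Tur\'an's theorem, the unique triangle-free graph on $7$ vertices with $12$ edges is $K_{3,4}$, so if $H$ had no triangle, then $\Delta\mathcal{F}=K_7\setminus K_{3,4}=K_3\cup K_4$. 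Since each $F\in\mathcal{F}$ contributes the triangle $\binom{F}{2}$ to $\Delta\mathcal{F}$, this would express $K_3\cup K_4$ as the edge-union of three triangles, requiring the triangle $\binom{\{1,2,3\}}{2}$ on the $K_3$-component plus two further triangles covering all six edges of the $K_4$-component. But any two triangles in $K_4$ share an edge and hence together cover only five edges, a contradiction. This yields the desired $A$ for $k=3$ as well, completing the plan.
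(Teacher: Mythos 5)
Your proof is correct and takes essentially the same approach as the paper: the same case split into $k=2$, $k=3$, and $k\ge 4$; the same numerical bound $4k^2+k<\binom{k+4}{4}$ for $k\ge 4$ (the paper writes it as $\abs{\nabla\Delta\mathcal F}\le\abs{\mathcal F}+4\abs{\Delta\mathcal F}$ while you phrase it as a union bound over $F\in\mathcal F$, but the count is identical); and the same Mantel/Tur\'an argument for $k=3$, where you just spell out the final ``union of three triangles'' contradiction in a bit more detail than the paper does.
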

\begin{proof} We have to show, that for $\mathcal F\subseteq\binom{[k+4]}{k}$ with $\abs{\mathcal
    F}\leq k$, there exists an $A\in\binom{[k+4]}{k}\setminus\nabla\Delta\mathcal F$
  (taking the shade in the ground set $[k+4]$). For $k=2$, there are two elements $a,b\in[k+4]=[6]$,
  which do not appear in any $X\in\mathcal F$, and then $A=\{a,b\}$ does the job. For $k=3$, assume
  for the sake of contradiction, that $\nabla\Delta\mathcal F=\binom{[7]}{3}$. Then
  $E=\binom{[7]}{2}\setminus\Delta\mathcal F$ is the edge set of a triangle-free graph on the vertex
  set $[7]$ with $\abs{E}=21-\abs{\Delta\mathcal F}\geq 12$. By Mantel's theorem, it follows that
  $\Delta\mathcal F=\binom{A}{2}\cup\binom{B}{2}$ for some partition $[7]=A\cup B$ with $\abs{A}=4$
  and $\abs{B}=3$. But then $\abs{F}>3$, which is the required contradiction. 
  For $k\geq 4$, the claim is immediate from
  \[\abs{\nabla\Delta\mathcal F}\leq \abs{\mathcal F}+4\abs{\Delta\mathcal F}\leq k+4k^2<\binom{k+4}{k},\]
  where the last inequality holds because $k\ge 4$. 
\end{proof}

\begin{proof}[Proof of \Cref{lemma:ground_set_small}]
  We proceed by induction on $t$. The base case $t=1$ is trivial, and we assume $t \ge 2$. Let
  $s \in \sigma(t,k)$. By~(\ref{eq:shadow_spectrum}), $s=tk-\binom{a}{2}-\binom{b}{2}-c$ for
  integers $a$, $b$, $c$ satisfying $a\geq b\geq c\geq 0$ and $1\leq a+b\leq t$. If $a+b\leq t-1$
  then~(\ref{eq:shadow_spectrum}) implies that
  $s-k=s=(t-1)k-\binom{a}{2}-\binom{b}{2}-c\in\sigma(t-1,k)$. By induction, there exists
  $\mathcal F'\subseteq\binom{[k+4]}{k}$ with $\abs{\mathcal F'}=t-1$ and
  $\abs{\Delta\mathcal F'}=s-k$, and then, by \Cref{lemma:append}, we can add one more $k$-set
  to obtain $\mathcal F\subseteq\binom{[k+4]}{k}$ with $\abs{\mathcal F}=t$ and
  $\abs{\Delta\mathcal F}=s$. Only the case $a+b=t$ remains. If $c\geq 1$, then $a+b-c\leq t-1\leq k$,
  and \Cref{re:F(abc)} ensures that $\mathcal F_k(a,b,c)\subseteq\binom{[k+2]}{k}$ does the
  job. If $c=0$ then we can assume that $b\geq 1$ because $\binom{1}{2}=\binom{0}{2}$. But then
  $s=tk-\binom{a}{2}-\binom{b}{2}-0=tk-\binom{a}{2}-\binom{b-1}{2}-(b-1)$, and we can take $\mathcal
  F_k(a,b-1,b-1)$.
\end{proof}

Now we put together the ingredients to prove \Cref{thm:shadow_spectrum}.
\begin{proof}[Proof of \Cref{thm:shadow_spectrum}]
  From \Cref{lem:subset,lem:superset}, we
  obtain~(\ref{eq:shadow_spectrum}). The final claim is \Cref{lemma:ground_set_small}.
\end{proof}

\begin{remark}\label{rem:small_ground_set}
  It follows easily from the proofs of \Cref{lemma:append,lemma:ground_set_small}
  that for $t\leq\frac{k+2}{2}$ even the ground set $[k+2]$ is sufficient. For the general statement, the bound
  $k+4$ cannot be improved, because for $k=2$ and $t=3$ the ground set must have at least $6$
  elements to allow a shadow of size $tk=6$.
\end{remark}

\section{Sizes of large maximal antichains}\label{sec:MAC_sizes}
In this section, we establish Theorem~\ref{thm:main_result}(i) by proving the following result which,
in view of Theorem~\ref{thm:shadow_spectrum}, is an equivalent reformulation.
\begin{theorem}\label{thm:large_sizes_reformulation}
  Let $n$ be a positive integer, set $k=\lceil n/2\rceil$ and let $m$ be a positive integer with
  $\binom{n}{k}-k^2 \le m \le \binom{n}{k}$. There exists a maximal antichain of size $m$ in $B_n$ if and only if
  \[\binom{n}{k}-m\in\bigcup_{t=0}^k\left(\sigma(t,k)\cup\sigma(t,n-k)\right).\]
\end{theorem}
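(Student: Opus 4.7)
The plan is to prove the two implications separately: an explicit construction plus maximality check for the ``if'' direction, and a structural classification of large maximal antichains for the ``only if'' direction.

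For the ``if'' direction, let $s = \binom{n}{k}-m$ and suppose $s\in\sigma(t,l)$ with $l\in\{k,n-k\}$ and $t\le k$. By \Cref{thm:shadow_spectrum}, write $s=tl-x$ for some $x\in I(t)$. Applying \Cref{thm:shadow_spectrum} again with $k$ replaced by $l+1$ (whose hypothesis $t\le(l+1)+1$ holds since $t\le k\le l+1$) together with the small-ground-set refinement \Cref{lemma:ground_set_small}, we obtain $\mathcal F\subseteq\binom{[l+5]}{l+1}$ with $|\mathcal F|=t$ and $|\Delta\mathcal F|=t(l+1)-x=s+t$. The flat family
\[
\mathcal A=\mathcal F\cup\left(\binom{[n]}{l}\setminus\Delta\mathcal F\right)
\]
has size $t+\binom{n}{l}-(s+t)=\binom{n}{k}-s=m$, since $\binom{n}{l}=\binom{n}{k}$. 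Maximality is the crux: any $(l+1)$-set $B\notin\mathcal F$ that uses an element outside $[l+5]$ automatically has an $l$-subset also outside $[l+5]$, hence outside $\Delta\mathcal F$ and therefore in $\mathcal A$; the $B\subseteq[l+5]$ with no such subset must be ruled out by direct inspection of the explicit graph-based families $\mathcal F_{l+1}(a,b,c)$ from \Cref{def:F(a,b,c)}. Sets at other levels are handled analogously, using that $\binom{[n]}{l}\setminus\Delta\mathcal F$ is nearly all of $\binom{[n]}{l}$, and small values of $n$ are treated by ad hoc arguments as mentioned in the paper's outline.

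For the ``only if'' direction, let $\mathcal A$ be a maximal antichain with $|\mathcal A|\ge\binom{n}{k}-k^2$. The heart of the argument is to show that $\mathcal A$ is \emph{flat}, concentrated at two consecutive levels $l$ and $l+1$ with $l\in\{k,n-k\}$ (possibly after replacing $\mathcal A$ by the family of complements). The LYM inequality $\sum_i |\mathcal A\cap\binom{[n]}{i}|/\binom{n}{i}\le 1$ combined with the size lower bound forces the profile of $\mathcal A$ to concentrate near the middle level; the presence of an outlier $O$ at a level with $|i-k|\ge 2$ would force the exclusion from $\mathcal A$ of all middle-level sets comparable to $O$, and a shadow/shade counting argument shows that this exclusion would cost more than $k^2$ middle-level sets, contradicting the hypothesis. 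Once $\mathcal A\subseteq\binom{[n]}{l}\cup\binom{[n]}{l+1}$ is established, maximality pins down $\mathcal A=\mathcal F\cup(\binom{[n]}{l}\setminus\Delta\mathcal F)$ where $\mathcal F=\mathcal A\cap\binom{[n]}{l+1}$: for any $l$-set $B\notin\mathcal A$ a comparable partner in $\mathcal A$ must be an $(l+1)$-superset, so $B\in\Delta\mathcal F$. Setting $t=|\mathcal F|$, we compute $\binom{n}{k}-m=|\Delta\mathcal F|-t$, which by \Cref{thm:shadow_spectrum} lies in $\{t(l+1)-x-t:x\in I(t)\}=\{tl-x:x\in I(t)\}=\sigma(t,l)\subseteq\sigma(t,k)\cup\sigma(t,n-k)$; the bound $t\le k$ is immediate from $\binom{n}{k}-m\le k^2$.

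The main obstacle is the concentration/outlier step above: bare LYM is not sharp enough to exclude outliers at levels like $k\pm 2$, since for $k$ near $n/2$ one has $\binom{n}{k\pm 2}/\binom{n}{k}\approx 1$. Converting the size hypothesis $|\mathcal A|\ge\binom{n}{k}-k^2$ into outright confinement of $\mathcal A$ to two adjacent levels therefore requires a careful maximality-based accounting of shadow and shade sizes, and the finitely many small-$n$ cases where the asymptotic arguments fail must be dispatched separately.
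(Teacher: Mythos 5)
Your high-level template is right (explicit construction for the ``if'' direction, structural analysis for ``only if''), but the structural half contains a genuine error that the paper has to work around, and the ``if'' half's maximality check is needlessly brittle.

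The central gap: you assert that a maximal antichain $\mathcal A$ with $\abs{\mathcal A}\geq\binom{n}{k}-k^2$ must be \emph{flat}, i.e.\ confined to two consecutive levels, ``possibly after replacing $\mathcal A$ by the family of complements.'' This is false, and the paper never claims it. For example, with $n=10$, $k=5$, take $\mathcal A_1=\{\{1,\dots,6\}\}$, $\mathcal A_2=\{\{7,8,9,10\}\}$, $\mathcal A_0=\binom{[10]}{5}\setminus(\Delta\mathcal A_1\cup\nabla\mathcal A_2)$; then $\mathcal A=\mathcal A_0\cup\mathcal A_1\cup\mathcal A_2$ is a maximal antichain of size $242>\binom{10}{5}-\binom{6}{2}+2$, living on three levels, and complementation does not make it flat. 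Consequently your step ``maximality pins down $\mathcal A=\mathcal F\cup(\binom{[n]}{l}\setminus\Delta\mathcal F)$ with $\mathcal F=\mathcal A\cap\binom{[n]}{l+1}$'' has no valid starting point. The paper's route (Lemmas~\ref{lem:A_is_in_the_middle_1}--\ref{lem:two_levels}) only confines $\mathcal A$ to three central levels with at most $k+1$ sets outside level $k$, and then proves the substantially different and nontrivial statement (Lemma~\ref{lem:two_levels}, resting on Lemma~\ref{lem:two_levels_suffice}) that $\abs{\Delta\mathcal A_1}+\abs{\nabla\mathcal A_2}$ is itself a single shadow size for some flat family~$\mathcal F$; it never asserts $\mathcal A$ is flat. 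Your proposal is missing exactly this ``sums of shadow sizes are shadow sizes'' lemma, which is where the real work lives, and the related gluing construction $G(a,b,c)\oplus G(a',b',c')$.

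Two further issues. First, the structural confinement lemmas need the stronger hypothesis $m\geq\binom{n}{k}-\binom{k+1}{2}+2$, not merely $m\geq\binom{n}{k}-k^2$; the paper first disposes of the gap $[\binom{n}{k}-k^2,\binom{n}{k}-\binom{k+1}{2}+1]$ by showing (Lemma~\ref{lem:very_large}, via interval coverage in $\Sigma(k)$) that these values are all realized, making the ``only if'' vacuous there. Your proposal has no such reduction, and the LYM/NMP style arguments you gesture at really will not get you to three levels from $k^2$ alone. Second, for the ``if'' direction, your maximality check splits into $(l+1)$-sets with a vertex outside $[l+5]$ (fine) and those inside $[l+5]$, which you propose to handle ``by direct inspection of the explicit graph-based families.'' The paper instead proves a clean general statement (Lemma~\ref{lem:standard_ac_is_max}): if $\abs{\mathcal F}<l+1$, then for any $(l+1)$-set $X\notin\mathcal F$, $\abs{\Delta X\cap\Delta A}\leq 1$ for each $A\in\mathcal F$ forces $\Delta X\not\subseteq\Delta\mathcal F$, so $X$ is covered by some $l$-set in $\mathcal A$. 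This is uniform in $\mathcal F$, requires no case analysis of $\mathcal F_{l+1}(a,b,c)$, and is what makes the $t\leq k$ hypothesis do actual work; you should use it instead. Finally, the claim that ``$t\leq k$ is immediate from $\binom{n}{k}-m\leq k^2$'' is not correct as stated (a large $\mathcal F$ can have small $\abs{\Delta\mathcal F}-\abs{\mathcal F}$); the bound $\abs{\mathcal F}\leq k+1$ comes from the structural Lemma~\ref{lem:few_sets_out_of_big_level}, not from the size hypothesis directly.
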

In order to obtain a maximal antichain $\mathcal A$ whose size is close to $\binom{n}{k}$, it is
natural to take a small family $\mathcal F\subseteq\binom{[n]}{l+1}$, $l\in\{k,n-k\}$, say with
$\abs{\mathcal F}=t$ and set
$\mathcal A=\mathcal F\cup\binom{[n]}{l}\setminus\Delta\mathcal F$. Then
\[m=\abs{\mathcal F}+\binom{n}{k}-\abs{\Delta\mathcal F}=\binom{n}{k}-(s-t)\]
for some $s\in\sigma(t,l+1)$, and $\binom{n}{k}-m\in\sigma(t,l)$ follows since~(\ref{eq:shadow_spectrum}) implies
\begin{equation}\label{eq:sigma_recursion}
  \sigma(t,l)=\sigma(t,l-1)+t\quad\text{for all }t,l\text{ with }1\leq t\leq l.
\end{equation}

Recall that $S(n)$ denotes the set of sizes of maximal antichains in $B_n$. In the proof of \Cref{thm:large_sizes_reformulation} we will see that for $m\in S(n)$ with
$m\geq\binom{n}{k}-k^2$ there is always an antichain of size $m$ which has this form (except for
some very small values of $n$ which have to be treated separately). We start by proving the easier of the two
directions in \Cref{thm:large_sizes_reformulation}.

\begin{lemma}\label{lem:standard_ac_is_max}
  Let $1\le k \le n$. For every $\mathcal{F} \subseteq \binom{[n]}{k}$ with $\abs{\mathcal{F}}<k$,
  the antichain $\mathcal{A} = \mathcal{F} \cup \binom{[n]}{k-1} \setminus \Delta \mathcal{F}$ is
  maximal.
\end{lemma}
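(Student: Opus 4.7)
The plan has two parts: first verify that $\mathcal A$ is an antichain, then verify maximality by showing that every $X\subseteq[n]$ with $X\notin\mathcal A$ is comparable to some set in $\mathcal A$. The antichain property is immediate, since distinct sets of the same size are pairwise incomparable, and a $(k-1)$-set $B\in\binom{[n]}{k-1}\setminus\Delta\mathcal F$ cannot be contained in any $F\in\mathcal F$ without $B$ lying in $\Delta\mathcal F$.

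For maximality I would split on $|X|$ and produce a comparable element of $\mathcal A$ in each case. Three of the four cases are routine. If $|X|=k-1$ and $X\notin\mathcal A$, then by definition $X\in\Delta\mathcal F$, so $X$ is contained in some $F\in\mathcal F\subseteq\mathcal A$. If $|X|<k-1$, then either some $F\in\mathcal F$ contains $X$ (done), or no $F$ does, and in that case any $(k-1)$-superset $B$ of $X$ automatically avoids $\Delta\mathcal F$ (since $X\subset B\subset F$ would force $X\subset F$), hence lies in $\mathcal A$, and such a $B$ exists because $n\geq k-1$. If $|X|>k$, then either some $F\in\mathcal F$ is contained in $X$ (done), or every $F\in\mathcal F$ meets $X$ in at most $k-1$ elements and so contributes at most one $(k-1)$-subset of $X$ to $\Delta\mathcal F$; this leaves at most $|\mathcal F|<k\leq\binom{|X|}{k-1}$ ``bad'' $(k-1)$-subsets of $X$, so at least one of them lies in $\mathcal A$ and sits inside $X$.

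The main obstacle is the case $|X|=k$ with $X\notin\mathcal F$, which is the only place where the hypothesis $|\mathcal F|<k$ is used sharply. Here I need some $(k-1)$-subset of $X$ to lie outside $\Delta\mathcal F$. The plan is a pigeonhole argument: if all $k$ of these subsets were in $\Delta\mathcal F$, then for each $x\in X$ I could choose an $F_x\in\mathcal F$ with $X\setminus\{x\}\subset F_x$, but $|X|=k>|\mathcal F|$ forces $F_x=F_y$ for some distinct $x,y\in X$; this common set $F$ then contains $(X\setminus\{x\})\cup(X\setminus\{y\})=X$, and $|F|=k=|X|$ gives $F=X\in\mathcal F$, contradicting $X\notin\mathcal F$. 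Assembling the four cases proves maximality.
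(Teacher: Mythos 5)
Your proposal is correct, and the core of it is the same argument as the paper's. The paper reduces directly to a $k$-set $X\notin\mathcal F$ with $\Delta X\subseteq\Delta\mathcal F$ and observes that $\abs{\Delta X\cap\Delta A}\le 1$ for each $A\in\mathcal F$ (since two distinct $(k-1)$-subsets of $X$ would union to $X$, forcing $A=X$), so $\abs{\Delta X\cap\Delta\mathcal F}\le\abs{\mathcal F}<k=\abs{\Delta X}$; your pigeonhole step is exactly this counting argument read in reverse. The only difference is that you spell out the reduction by cases on $\abs{X}$ — which the paper leaves implicit — and your handling of those routine cases is correct.
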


\begin{proof}
  If $\mathcal A$ is not maximal, then there exists $X\in\binom{[n]}{k}\setminus\mathcal F$ with
  $\Delta X\subseteq\Delta\mathcal F$. But $\abs{\Delta X\cap\Delta A}\leq 1$ for every
  $A\in\mathcal F$, hence $\abs{\Delta X\cap\Delta\mathcal F}\leq\abs{\mathcal F}<k=\abs{\Delta X}$,
  a contradiction.
\end{proof}

\begin{lemma}\label{lem:large_sizes_if}
  Let $n$ be a positive integer, set $k=\lceil n/2\rceil$ and let $m$ be a positive integer with
  $\binom{n}{k}-k^2 \le m \le \binom{n}{k}$. If $m=\binom{n}{k}-s$ with
  $s\in\sigma(t,k)\cup\sigma(t,n-k)$ for some $t\in\{0,\dots,k\}$ then $m\in S(n)$.
\end{lemma}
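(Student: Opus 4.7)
The strategy is the construction already sketched right after the statement of \Cref{thm:main_result}: given $s\in\sigma(t,l)$ with $l\in\{k,n-k\}$, I aim to produce a family $\mathcal F\subseteq\binom{[n]}{l+1}$ with $\abs{\mathcal F}=t$ and $\abs{\Delta\mathcal F}=s+t$, and then set
\[\mathcal A=\mathcal F\cup\left(\binom{[n]}{l}\setminus\Delta\mathcal F\right).\]
The case $t=0$ is trivial ($\mathcal A=\binom{[n]}{k}$), so I assume $t\geq 1$.

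The shift relation~(\ref{eq:sigma_recursion}) guarantees that $s\in\sigma(t,l)$ implies $s+t\in\sigma(t,l+1)$, so \Cref{thm:shadow_spectrum} (specifically \Cref{lemma:ground_set_small}) produces such an $\mathcal F$ inside $\binom{[l+5]}{l+1}$, and hence inside $\binom{[n]}{l+1}$ as soon as $n\geq l+5$. A direct count then gives
\[\abs{\mathcal A}=t+\binom{n}{l}-(s+t)=\binom{n}{l}-s=\binom{n}{k}-s=m,\]
using $\binom{n}{l}=\binom{n}{k}$ whenever $l\in\{k,n-k\}$.

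That $\mathcal A$ is an antichain is immediate from the definition, so it only remains to argue maximality. \Cref{lem:standard_ac_is_max}, applied at the level $l+1$, ensures maximality of $\mathcal A$ as soon as $\abs{\mathcal F}=t<l+1$, i.e.\ $t\leq l$. The hypothesis $t\leq k$ makes this automatic whenever $l=k$, which handles all even $n$. The only residual configuration is therefore $n$ odd with $l=n-k=k-1$ and $t=k$.

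I expect this last configuration to be the main obstacle, since then $\abs{\mathcal F}=l+1$ and \Cref{lem:standard_ac_is_max} does not apply. My plan is first to prefer the choice $l=k$ whenever $s$ happens to lie in $\sigma(t,k)$ as well, since then $t\leq l$ is automatic. For the remaining subcase $s\in\sigma(k,k-1)\setminus\sigma(k,k)$ I would exploit the concrete structure of the families $\mathcal F_k(a,b,c)$ from \Cref{def:F(a,b,c)}: because these $\mathcal F$ are the complements of edges of the small graph $G(a,b,c)$, any $k$-set $X\notin\mathcal F$ contains a pair of vertices that are non-adjacent in $G(a,b,c)$, producing a $(k-1)$-subset of $X$ lying outside $\Delta\mathcal F$ and thereby blocking the addition of $X$ to $\mathcal A$. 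Finally, the very small values of $n$ for which $n<l+5$ (so \Cref{lemma:ground_set_small} no longer embeds $\mathcal F$ into $[n]$) are to be handled by ad hoc constructions, following the paper's stated convention that such boundary cases are verified directly rather than subsumed under the general argument.
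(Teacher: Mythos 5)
Your strategy is the paper's own: apply the shift relation~(\ref{eq:sigma_recursion}) and \Cref{lemma:ground_set_small} to produce $\mathcal F\subseteq\binom{[n]}{l+1}$ with $\abs{\mathcal F}=t$ and $\abs{\Delta\mathcal F}=s+t$, set $\mathcal A=\mathcal F\cup\left(\binom{[n]}{l}\setminus\Delta\mathcal F\right)$, invoke \Cref{lem:standard_ac_is_max}, and treat small $n$ by hand. You also correctly flag that \Cref{lem:standard_ac_is_max} requires $\abs{\mathcal F}<l+1$, so when $n$ is odd, $l=n-k=k-1$, and $t=k$, the lemma does not apply as stated --- a corner case on which the paper's own proof invokes \Cref{lem:standard_ac_is_max} without further comment.

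However, your patch for this subcase does not hold up. For $\mathcal F=\mathcal F_k(a,b,c)$ and a $k$-set $X=[k+2]\setminus e\notin\mathcal F$ (so $e$ is a non-edge of $G(a,b,c)$), the $(k-1)$-set $X\setminus\{v\}=[k+2]\setminus(e\cup\{v\})$ lies outside $\Delta\mathcal F$ precisely when $e\cup\{v\}$ contains no edge of $G(a,b,c)$, i.e.\ when $v$ is adjacent to \emph{neither} endpoint of $e$. That is much stronger than ``$X$ contains a pair of non-adjacent vertices'' (which holds trivially for any $k$-set, since $G(a,b,c)$ has only $a+b\le k$ edges). What is really needed is that for every non-edge $e=\{u,w\}$ some $v\in[k+2]\setminus e$ is adjacent to neither $u$ nor $w$, and verifying this requires a genuine case analysis on the position of $e$ in $G(a,b,c)$. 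Moreover, the family supplied by \Cref{lemma:ground_set_small} is not always of the form $\mathcal F_k(a,b,c)$: when $a+b<t$ it is built inductively by appending sets via \Cref{lemma:append}, whose structure is not controlled, so even if the graph-theoretic claim held it would cover only part of the subcase. A more robust route, which you hint at but do not carry out, is to show that every $s\in\sigma(k,k-1)$ also lies in $\sigma(t',k)$ or $\sigma(t',k-1)$ for some $t'$ small enough that \Cref{lem:standard_ac_is_max} applies directly; but that, too, needs to be proved rather than assumed.
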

In view of the discussion above, this lemma looks almost trivial, but a bit of work is still needed, because for $s\in\sigma(t,l)$,
Theorem~\ref{thm:shadow_spectrum} guarantees the existence of $\mathcal F\subseteq\binom{[n]}{l+1}$
with $\abs{\mathcal F}=t$ and $\abs{\Delta\mathcal F}=s+t$ only if $n\geq(l+1)+4$.
\begin{proof}(Lemma~\ref{lem:large_sizes_if})
  If $n\geq 10$, then $n\geq (k+1)+4$ and, by Theorem~\ref{thm:shadow_spectrum}, there exists a family
  $\mathcal F\subseteq\binom{[n]}{l+1}$, $l\in\{k,n-k\}$ with $\abs{\mathcal F}=t$ and
  $\abs{\Delta\mathcal F}=t+s$. The antichain
  $\mathcal A=\mathcal F\cup\binom{[n]}{l}\setminus\Delta\mathcal F$ has size $\abs{\mathcal A}=m$ and is maximal by Lemma~\ref{lem:standard_ac_is_max},
  and thus, $m\in S(n)$. We now assume $n\leq 9$. If $m\leq n$ then the antichain
  $\mathcal A=\{\{i\}\,:\,i=1,\dots,m-1\}\cup\{\{m,m+1,\dots,n\}\}$ has size $\abs{\mathcal
    A}=m$. As the antichain $\mathcal A=\binom{[n]}{k}$ has size $\binom{n}{k}$, we can assume
  $n<m<\binom{n}{k}$.  For $n\leq 3$, there is no such $m$, and for $n=4$,
  Theorem~\ref{thm:shadow_spectrum} implies that $m=5$ does not satisfy the assumption of the
  lemma. From now on, we specify a maximal flat antichain
  $\mathcal A\subseteq\binom{[n]}{l}\cup\binom{[n]}{l-1}$ by writing down its $l$-sets. For $n=5$,
  we need the sizes $m\in\{6,7,8\}$. These are obtained by the maximal antichain in
  $\binom{[5]}{3}\cup\binom{[5]}{2}$ with $3$-sets $\{1,2,3\}$ and $\{1,4,5\}$, the maximal antichain in
  $\binom{[5]}{4}\cup\binom{[5]}{3}$ with $4$-set $\{1,2,3,4\}$ and the maximal antichain in
  $\binom{[5]}{3}\cup\binom{[5]}{2}$ with $3$-set $\{1,2,3\}$. For $n\geq 6$, we use graphs similar to
  $G(a,b,c)$, sometimes extended by some additional vertex disjoint edges. More precisely, if $G$ is
  a triangle-free graph on the vertex set $[l+2]$ with $t$ edges and $c$ pairs of adjacent edges,
  then the maximal flat antichain on levels $l$ and $l-1$ whose $l$-sets are the sets
  $[l+2]\setminus e$, $e\in E(G)$, has size $\binom{n}{l-1}-t(l-1)+c$. We will use the graphs 
  with $t$ edges and $c$ pairs of adjacent edges with the following edge sets $E_{tc}$. For brevity, for an edge $\{a,b\}$ we just write $ab$.
  \begin{itemize}
  \item $E_{10}=\{12\}$,
  \item $E_{21}=\{12,13\}$, $E_{20}=\{12,34\}$,
  \item $E_{33}=\{12,13,14\}$, $E_{32}=\{12,23,34\}$ $E_{31}=\{12,13,45\}$, $E_{30}=\{12,34,56\}$
  \item $E_{44}=\{12,13,14,25\}$, $E_{43}=\{12,13,14,56\}$, $E_{42}=\{12,23,34,56\}$,
    $E_{41}=\{12,13,45,67\}$, $E_{40}=\{12,34,56,78\}$
  \item $E_{54}=\{12,13,14,25,67\}$, $E_{53}=\{12,13,14,56,78\}$, $E_{52}=\{12,23,34,56,78\}$,
    $E_{51}=\{12,13,45,67,89\}$
  \item $E_{65}=\{12,13,14,25,56\}$
  \end{itemize}
  \begin{description}
  \item[$n=6$] We need the sizes $m\in\{11,\dots,15, 17\}$. These are obtained by the maximal flat antichains on
    levels 4 and 3, corresponding to the graphs with the edge sets $E_{10}$, $E_{21}$, $E_{20}$, $E_{32}$, $E_{31}$, $E_{30}$.
  \item[$n=7$] We need the sizes $m\in\{19,\dots,32\}$.  The antichains on levels 5 and 4
    corresponding to the graphs with edge sets $E_{10}$, $E_{2c}$, $c\in\{0,1\}$, $E_{3c}$,
    $c\in\{0,\dots,3\}$, $E_{4c}$, $c\in\{1,2,3\}$ and $E_{54}$ have sizes $31$, $28$, $27$, $26$,
    $25$, $24$, $23$, $22$, $21$, $20$ and $19$. The missing sizes $32$, $30$ and $29$ are obtained
    by the antichains on levels 4 and 3 corresponding to the graphs with edge sets $E_{10}$,
    $E_{21}$ and $E_{20}$.
  \item[$n=8$] We need the sizes $m\in\{54,\dots,63, 66\}$. These are obtained by the maximal
    antichains on levels 5 and 4 corresponding to the edge sets $E_{10}$, $E_{21}$, $E_{20}$,
    $E_{3c}$, $c\in\{0,\dots,3\}$, and $E_{4c}$, $c\in\{0,\dots,4\}$.
  \item[$n=9$] We need the sizes $m\in\{101,\dots, 119, 121, 122\}$. The sizes of the maximal
    antichains on levels $6$ and $5$, and on levels 5 and 4, respectively, corresponding to the edge
    sets are indicated in Table~\ref{tab:MAC_sizes}, and this concludes the proof.\qedhere
    \begin{table}[htb]
      \centering
      \caption{The sizes of the maximal flat antichains corresponding to edge sets for $n=9$.}
      \label{tab:MAC_sizes}
      \begin{tabular}{lll} \toprule
        edge sets & levels 6 and 5 & levels 5 and 4\\ \midrule
        $E_{10}$ & $121$ & $122$\\
        $E_{20},\,E_{21}$ & $116$, $117$ & $118$, $119$\\
        $E_{30},\dots,E_{33}$ & $111,\dots,114$ & $114,\dots,117$\\
        $E_{40},\dots,E_{44}$ & $106,\dots,110$ & $110,\dots,114$\\
        $E_{51},\dots,E_{54}$ & $102,\dots,105$ & $107,\dots,110$\\
        $E_{65}$ & $101$ & $107$ \\ \bottomrule
      \end{tabular}
    \end{table}
  \end{description}
\end{proof}

The hard direction in Theorem~\ref{thm:large_sizes_reformulation} is the implication
\begin{equation}\label{eq:contrapositive}
  \binom{n}{k}-m\not\in\bigcup_{t=0}^k\left(\sigma(t,k)\cup\sigma(t,n-k)\right)\implies m\not\in S(n)
\end{equation}
for all $m$ with $\binom{n}{k}-k^2\leq m\leq\binom{n}{k}$. Lemma~\ref{lem:very_large} below allows us to assume $m\geq\binom{n}{k}-\binom{k+1}{2}+2$. In the proof of Lemma~\ref{lem:very_large} we will make use of the following explicit formula for $j^*(t)$.

\begin{lemma}\label{lem:jstar}
	$j^*(t)=\left\lceil\sqrt{2t}-5/2\right\rceil$.
\end{lemma}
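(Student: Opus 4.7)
The plan is to invert the defining inequality explicitly and then absorb a small correction into the ceiling. Expanding $\binom{j+3}{2}=(j+3)(j+2)/2$ and completing the square, the condition $\binom{j+3}{2}\ge t$ is equivalent to $(j+\tfrac{5}{2})^2\ge 2t+\tfrac14$. Since we are looking for the least \emph{non-negative} integer $j$, this gives
\[j^*(t)=\max\!\left\{0,\,\left\lceil\sqrt{2t+1/4}-\tfrac{5}{2}\right\rceil\right\},\]
and the right-hand side already equals $\lceil\sqrt{2t+1/4}-5/2\rceil$ once the latter is non-negative, i.e.\ for all but finitely many $t$ which can be checked by hand.

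It therefore suffices to prove
\[\left\lceil\sqrt{2t+1/4}-\tfrac{5}{2}\right\rceil=\left\lceil\sqrt{2t}-\tfrac{5}{2}\right\rceil.\]
Since $\sqrt{2t}\le\sqrt{2t+1/4}$, the two ceilings can only differ if some integer $m$ lies in the half-open interval $\bigl(\sqrt{2t}-\tfrac{5}{2},\,\sqrt{2t+1/4}-\tfrac{5}{2}\bigr]$. Setting $n=2m+5$, this says $n$ is an odd integer with $2\sqrt{2t}<n\le\sqrt{8t+1}$, so that $8t<n^2\le 8t+1$; because $n^2$ is an integer, this forces $n^2=8t+1$, i.e.\ it can happen only when $8t+1$ is an odd perfect square.

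The main (and really only) delicate step is to show that even in this exceptional case the two ceilings agree. When $n^2=8t+1$, one has $\sqrt{2t+1/4}-\tfrac{5}{2}=(n-5)/2=m$ exactly, so the right ceiling equals $m$. For the left one, note $(2m+3)^2=(2m+5)^2-(8m+16)<(2m+5)^2-1=8t$ (valid since $m\ge -1$ in any case of interest), which rearranges to $\sqrt{2t}-\tfrac{5}{2}>m-1$; combined with $\sqrt{2t}-\tfrac{5}{2}<m$ this gives $\lceil\sqrt{2t}-\tfrac{5}{2}\rceil=m$ as well. This closes the argument, and no further obstacles are expected; the computation is essentially a careful bookkeeping of a single quadratic root.
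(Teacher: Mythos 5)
Your argument is correct and is essentially the paper's proof: invert the defining quadratic to get $j^*(t)=\bigl\lceil(\sqrt{8t+1}-5)/2\bigr\rceil$, then use a parity argument to pass from $\sqrt{8t+1}$ to $\sqrt{8t}$ inside the ceiling. The one detour is your criterion for when the two ceilings disagree: for $x\le y$, $\lceil x\rceil\ne\lceil y\rceil$ holds iff an integer lies in $[x,y)$, not $(x,y]$; the correct interval gives $8t\le(2l+5)^2<8t+1$, hence $(2l+5)^2=8t$, which is already impossible because an odd square cannot be even, so the case $n^2=8t+1$ that you carefully work through never needs to arise. Your version of the criterion is nonetheless a valid ``only if'' in this setting because $\sqrt{2t}-5/2$ is never an integer, so the detour is harmless; you also correctly flag the truncation at zero for small $t$ (which the paper's own proof glosses over, and which in fact makes the stated identity fail at $t=1$), but the brief ``checked by hand'' remark would still need the sharper statement $j^*(t)=\max\{0,\lceil\sqrt{2t}-5/2\rceil\}$ to cover that case.
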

\begin{proof}
	By definition, $\binom{j+3}{2}\geq t$, hence $(j+3)(j+2)\geq
	2t$. This implies $j^2+5j+6-2t\geq 0$, and then
	\[j\geq-\frac52+\sqrt{\frac{25}{4}-6+2t}=\frac{\sqrt{8t+1}-5}{2}.\]
	From this, we obtain
	\[j^*(t)=\left\lceil\frac{\sqrt{8t+1}-5}{2}\right\rceil\geq\frac{\sqrt{8t}-5}{2}=\sqrt{2t}-\frac52.\]
	If $j^*(t)>\left\lceil\sqrt{2t}-5/2\right\rceil$ then
	\[\sqrt{2t}-\frac52\leq l<\frac{\sqrt{8t+1}-5}{2}\]
	for some integer $l$. But this implies $8t\leq(2l+5)^2<8t+1$, which is impossible, because the
	number in the middle is an odd integer.
\end{proof}

\begin{lemma}\label{lem:very_large}
  Let $n$ be a positive integer, set $k=\lceil n/2\rceil$ and let $m$ be a positive integer with
  $\binom{n}{k}-k^2 \le m \le \binom{n}{k}$. If $\binom{n}{k}-m\not\in\bigcup_{t=0}^k\sigma(t,k)$ then $m\geq\binom{n}{k}-\binom{k+1}{2}+2$.
\end{lemma}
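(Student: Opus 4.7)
The plan is to prove the contrapositive: given $s:=\binom{n}{k}-m$ with $\binom{k+1}{2}-1\le s\le k^2$, exhibit $t\in\{1,\dots,k\}$ such that $s\in\sigma(t,k)$. By \Cref{thm:shadow_spectrum} this amounts to finding $t$ with $r:=tk-s\in I(t)$. I take $t:=\lceil s/k\rceil$, so $r\in\{0,\dots,k-1\}$; the hypothesis $s\ge\binom{k+1}{2}-1$ then forces $t\ge t^*$, where a short computation gives $t^*=(k+1)/2$ for odd $k\ge 3$, $t^*=k/2+1$ for even $k\ge 4$, and $t^*=1$ for $k=2$.

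The case $k\le 2$ is checked by inspection (the range has at most three elements). So assume $k\ge 3$ and split into two subcases.

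If $t=t^*$, then $s\le t^*k$ forces $r\le t^*k-\binom{k+1}{2}+1$, equal to $1$ for odd $k$ and to $k/2+1$ for even $k\ge 4$. For these small $r$, an explicit triple $(a,b,c)$ with $a\ge b\ge c\ge 0$, $r=\binom{a}{2}+\binom{b}{2}+c$, and $a+b\le t^*$ is produced by enumeration; for instance $r=0,1,2,3,4,5,6$ are realized by $(1,0,0),(2,0,0),(2,1,1),(3,0,0),(3,1,1),(3,2,1),(4,0,0)$, and one verifies directly that $a+b\le t^*$ in each case. If instead $t\ge t^*+1$, then $r\le k-1$ and I claim $[0,k-1]\subseteq I(t)$. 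By \Cref{prop:interval_description}, $I(t)\supseteq I_{j^*(t)}(t)=\bigl[0,\binom{t-j^*(t)}{2}+\binom{j^*(t)+1}{2}\bigr]$, so it suffices to prove $\binom{t-j}{2}+\binom{j+1}{2}\ge k-1$ with $j:=j^*(t)$. The definition of $t^*$ together with $t\ge t^*+1$ yields $k\le 2t-3$, so one reduces to $\binom{t-j}{2}+\binom{j+1}{2}\ge 2t-4$, which rearranges to $t^2-(2j+5)t+2j(j+1)+8\ge 0$. This quadratic in $t$ has discriminant $-4j^2+12j-7$, which is negative unless $j\in\{1,2\}$; in those two exceptional cases its real roots are integers lying at or below the smallest $t$ compatible with $j^*(t)=j$, so the inequality holds throughout the relevant range.

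The main obstacle is the routine case check in Case 1; Case 2 reduces cleanly once one notices that the definition of $t^*$ gives the crucial estimate $k\le 2t-3$, after which the discriminant computation handles all values of $j^*(t)$ at once.
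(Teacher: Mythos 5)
Your overall plan — take the contrapositive, pick $t=\lceil s/k\rceil$ so that $r=tk-s\in\{0,\dots,k-1\}$, and use \Cref{thm:shadow_spectrum} to reduce to $r\in I(t)$ — is sound and in fact slightly slicker than the paper's, which instead fixes $t$ ranging over $\lceil(k+1)/2\rceil,\dots,k$ and argues that the intervals $[tk-f(t),tk]$ overlap. Your Case 2 is correct: the estimate $k\le 2t-3$, the reduction to $f(t)\ge 2t-4$, and the discriminant computation for the quadratic $t^2-(2j+5)t+2j(j+1)+8$ all check out, including the treatment of the two exceptional values $j\in\{1,2\}$.

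However, Case 1 has a genuine gap for even $k$. You correctly bound $r\le t^*k-\binom{k+1}{2}+1$, which is $1$ for odd $k$ but $k/2+1$ for even $k$, i.e., it grows linearly in $k$. Exhibiting explicit triples for $r=0,\dots,6$ covers only even $k\le 10$; for $k\ge 12$ the relevant values of $r$ extend beyond your list, so ``produced by enumeration'' is not a finite procedure and does not constitute a proof. What is actually needed is the claim $[0,k/2+1]\subseteq I(t^*)$ for even $k\ge 4$, equivalently (via \Cref{prop:interval_description}) $f(t^*)\ge t^*$, where $f(t)=\binom{t-j^*(t)}{2}+\binom{j^*(t)+1}{2}$. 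This can be established by the same kind of quadratic-discriminant argument you use in Case 2: one reduces to $t^2-(2j+3)t+2j^2+2j\ge 0$ with $j=j^*(t)$, whose discriminant $-4j^2+4j+9$ is negative for $j\ge 3$, and the cases $j\in\{0,1,2\}$ are checked directly against the ranges of $t$ compatible with $j^*(t)=j$. The paper itself handles this step by tabulating $f(t)$ for $3\le t\le 10$ (covering $5\le k\le 26$) and proving $f(\lceil k/2\rceil)\ge k$ for $k\ge 27$; one way or another, the uniform estimate must appear explicitly rather than being delegated to ``enumeration.''
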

\begin{proof}
  The claim follows from the inclusion
  \begin{equation}\label{eq:interval}
    \left[\binom{k+1}{2}-1,\ k^2\right]\subseteq\bigcup_{t=0}^k\sigma(t,k).
  \end{equation}
  For $1\leq k\leq 4$, we verify this by hand: $\{0,1\}\subseteq\sigma(0,1)\cup\sigma(1,1)$,
  $[2,4]\subseteq\sigma(1,2)\cup\sigma(2,2)$, $[5,9]\subseteq\sigma(2,3)\cup\sigma(3,3)$, and
  $[9,16]\subseteq\sigma(3,4)\cup\sigma(4,4)$. In order to prove~(\ref{eq:interval}) for $k\geq 5$,
  we use $\sigma(t,k)\supseteq \{tk-x : x \in I_{j^*(t)}(t)\}=\left[tk-f(t),\,tk\right]$, where
  \[f(t)=\binom{t-\lceil\sqrt{2t}-5/2\rceil}{2}+\binom{\lceil\sqrt{2t}-5/2\rceil+1}{2},\]
  and we take the union only over $t\geq\left\lceil(k+1)/2\right\rceil$. In other words, we verify the
  following inequalities:
  \begin{enumerate}[(i)]
  \item For $t=\left\lceil\frac{k+1}{2}\right\rceil$, $tk-f(t)\leq \binom{k+1}{2}-1$, and
  \item for $\left\lceil\frac{k+1}{2}\right\rceil<t\leq k$, $tk-f(t)\leq(t-1)k+1$.
  \end{enumerate}
  Rearranging these inequalities, we want to verify that
  \[f\left(\left\lceil\frac{k+1}{2}\right\rceil\right)\geq
    \begin{cases}
      1 &\text{if $k$ is odd},\\
      \frac{k}{2}+1&\text{if $k$ is even}.
    \end{cases}\]
  and $f(t)\geq k-1$ for $\left\lceil\frac{k+1}{2}\right\rceil<t\leq k$. For
  $5\leq k\leq 26$, this follows from Table~\ref{tab:f} and the fact that $f$ is increasing.
  \begin{table}[htb]
    \caption{The values $f(t)$ for $3\leq t\leq 10$.}
    \label{tab:f}
    \centering
    \begin{tabular}{crrrrrrrr} \toprule
      $t$ &3&4&5&6&7&8&9&10 \\
      $f(t)$ &3&4&7 & 11 & 13 & 18 & 24 & 31 \\ \bottomrule
    \end{tabular}
  \end{table}
  For $k\geq 27$ we show that $f(\lceil k/2\rceil)\geq k$, and this is sufficient by the
  monotonicity of $f$. We start with the bound
  \[f(\lceil k/2\rceil)\geq\binom{\lceil k/2\rceil-\lceil\sqrt{2\lceil
        k/2\rceil}-5/2\rceil}{2}\geq\frac12\left(\frac{k}{2}-\sqrt k\right)\left(\frac{k}{2}-\sqrt
      k-1\right).\]
  Simplifying the inequality
  $\frac12\left(\frac{k}{2}-\sqrt k\right)\left(\frac{k}{2}-\sqrt k-1\right)\geq k$ we find that it
  is equivalent to
  \[0\leq k\sqrt{k}-4k-6\sqrt{k}+4=\sqrt
    k\left(\sqrt{k}-2-\sqrt{10}\right)\left(\sqrt{k}-2+\sqrt{10}\right)+4,\]
  which is true for every $k\geq\left\lceil(2+\sqrt{10})^2\right\rceil=27$.
\end{proof}

From Lemma~\ref{lem:very_large} it follows that in order to complete the proof of
Theorem~\ref{thm:large_sizes_reformulation} by establishing~(\ref{eq:contrapositive}), we need to
prove the following lemma.
\begin{lemma}\label{lem:very_large_sizes_only_if}
  Let $n$ be a positive integer, set $k=\lceil n/2\rceil$ and let $m$ be a positive integer with
  $\binom{n}{k}-\binom{k+1}{2}+2 \le m \le \binom{n}{k}$. If $m\in S(n)$ then $\binom{n}{k}-m\in\bigcup_{t=0}^k\left(\sigma(t,k)\cup\sigma(t,n-k)\right)$.
\end{lemma}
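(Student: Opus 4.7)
The plan is to take a maximal antichain $\mathcal{A}$ of size $m$ in $B_n$ satisfying the hypothesis and to show that the defect $\delta := \binom{n}{k}-m$ lies in $\bigcup_{t=0}^{k}(\sigma(t,k)\cup\sigma(t,n-k))$. First, I would decompose $\mathcal{A}=\mathcal{D}\sqcup\mathcal{M}\sqcup\mathcal{U}$ into the parts at levels below, at, and above $k$, and observe that an element at level $k-j$ or $k+j$ with $j\geq 2$ contributes at least $\binom{n-k+j}{j}-1$ or $\binom{k+j}{j}-1$ to $\delta$, which already exceeds $\binom{k+1}{2}-2$. This forces $\mathcal{D}\subseteq\binom{[n]}{k-1}$ and $\mathcal{U}\subseteq\binom{[n]}{k+1}$, so $\mathcal{A}$ lives on three consecutive levels.

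Next, I would derive a clean shadow identity. The antichain condition forces $A\not\subseteq B$ for every $A\in\mathcal{D}$ and $B\in\mathcal{U}$, hence $\nabla A\cap\Delta B=\emptyset$ inside $\binom{[n]}{k}$; combined with maximality (which forces $\mathcal{M}=\binom{[n]}{k}\setminus(\nabla\mathcal{D}\cup\Delta\mathcal{U})$), this yields
\[
\delta = |\nabla\mathcal{D}|+|\Delta\mathcal{U}| - d - u,
\]
where $d=|\mathcal{D}|$ and $u=|\mathcal{U}|$. Applying the Kruskal--Katona lower bounds on both shadow sizes then shows $d+u\leq k$: otherwise $\delta\geq\binom{k+1}{2}$, contradicting the hypothesis.

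For $n$ even, $n-k=k$ and $n-k+1=k+1$; complementation turns $\mathcal{D}\subseteq\binom{[n]}{k-1}$ into $\bar{\mathcal{D}}\subseteq\binom{[n]}{k+1}$ with $|\nabla\mathcal{D}|=|\Delta\bar{\mathcal{D}}|$. Placing $\bar{\mathcal{D}}$ and $\mathcal{U}$ on disjoint shifted ground sets (legitimate because \Cref{lemma:ground_set_small} provides representatives with small ground sets) yields a uniform family $\mathcal{F}$ of $d+u$ $(k+1)$-sets satisfying $|\Delta\mathcal{F}|=|\nabla\mathcal{D}|+|\Delta\mathcal{U}|$, whence $\delta=|\Delta\mathcal{F}|-|\mathcal{F}|\in\sigma(d+u,k+1)-(d+u)=\sigma(d+u,k)$ by~\eqref{eq:sigma_recursion}, completing the even case.

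For $n$ odd the complement $\bar{\mathcal{D}}$ lives at level $k$, not $k+1$, and the clean one-shot combination above breaks. Using \Cref{thm:shadow_spectrum} to write $|\nabla\mathcal{D}|=dk-x_1$ with $x_1\in I(d)$ and $|\Delta\mathcal{U}|=u(k+1)-x_2$ with $x_2\in I(u)$, the defect can be expressed as $\delta=(d+u)k-(x_1+x_2+d)=(d+u)(k-1)-(x_1+x_2-u)$, so it suffices to show that at least one of $x_1+x_2+d$ or $x_1+x_2-u$ (when nonnegative) lies in $I(d+u)$. This verification is the main obstacle: a priori either quantity can fall in a gap of $I(d+u)$, and closing the case requires the explicit interval description of \Cref{prop:interval_description} together with the Kruskal--Katona restrictions on the achievable pairs $(x_1,x_2)$ to show that whenever $x_1+x_2+d$ hits a gap, $x_1+x_2-u$ lands inside an interval of $I(d+u)$ and vice versa, yielding $\delta\in\sigma(d+u,k)\cup\sigma(d+u,n-k)$.
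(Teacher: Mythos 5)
Your overall architecture — localize to three consecutive levels, derive the identity $\delta = |\nabla\mathcal D|+|\Delta\mathcal U|-d-u$, bound $d+u$, then merge the two shadows into a single uniform family — is the same skeleton as the paper's chain Lemmas~\ref{lem:A_is_in_the_middle_1}--\ref{lem:two_levels}. But there are two genuine gaps.

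First, the three-level localization. You argue via a \emph{per-element} defect contribution: an element at level $k\pm j$ with $j\ge 2$ ``contributes at least $\binom{k+j}{j}-1$ (resp.\ $\binom{n-k+j}{j}-1$) to $\delta$.'' This is not a valid argument as stated — the defect $\binom{n}{k}-|\mathcal A|$ is a global quantity and does not decompose additively over elements of $\mathcal A\setminus\binom{[n]}{k}$ — and for odd $n$ the claimed bound is actually \emph{false}. For $n=2k-1$ the level $k-2$ is only one step below the co-peak level $k-1$, and a maximal antichain contained in levels $\{k-2,k-1\}$ containing a single $(k-2)$-set has size $\binom{n}{k-1}-(k+1)+1=\binom{n}{k}-k$, i.e.\ $\delta=k\ll\binom{k+1}{2}-1$. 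So a $(k-2)$-set does not by itself force a large defect; what the paper actually shows (\Cref{lem:A_is_in_the_middle_2}) is that a $(k-2)$-set and a $(k+1)$-set cannot \emph{coexist}, and then complements to place $\mathcal A$ on $\{k-1,k,k+1\}$. Even in the even case, the localization to three levels in the paper is a global argument (iterated shadow replacement plus Lovász's continuous Kruskal--Katona bound in \Cref{lem:Delta_minus_size}), not a per-element count. Your proposal needs to replace the per-element heuristic by a real argument here.

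Second, and more importantly, the odd-case merge. You correctly identify that one must show $x_1+x_2+d\in I(d+u)$ or $x_1+x_2-u\in I(d+u)$, but you explicitly stop short of proving it, calling it ``the main obstacle.'' This is in fact the crux. The paper resolves it via \Cref{lem:two_levels_suffice}, which proves $\sigma(t_1,k)+\sigma(t_2,k-1)\subseteq\sigma(t_1+t_2,k)$ for $t_1\ge 2$ (and a weaker statement for $t_1=1$), and the proof is \emph{constructive}: it glues two of the graphs $G(a,b,c)$ and $G(a',b',c')$ along pendant vertices to build an explicit family $\mathcal G_k(a,b,c,a',b',c')$ whose shadow size is exactly the required sum, together with several case splits to guarantee enough pendant vertices and to handle $a\le 2$. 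Merely appealing to \Cref{prop:interval_description} and ``Kruskal--Katona restrictions on $(x_1,x_2)$'' will not by itself close the case — for some achievable pairs $(x_1,x_2)$ neither $x_1+x_2+d$ nor $x_1+x_2-u$ lands in an interval of $I(d+u)$ by a purely numerical argument; it is the extra freedom in choosing the glued graph (not just the numbers $x_1,x_2$) that makes the sum realizable. Without supplying this construction (or an alternative), the odd case remains open.
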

Before proving Lemma~\ref{lem:very_large_sizes_only_if}, we briefly summarize the proof of
Theorem~\ref{thm:large_sizes_reformulation}, assuming the lemmas that have been stated so far.
\begin{proof}[Proof of Theorem~\ref{thm:large_sizes_reformulation}]
  The ``if''-direction of the claimed equivalence is Lemma~\ref{lem:large_sizes_if}. For the
  ``only-if''-direction, we can assume, by Lemma~\ref{lem:very_large}, that
  $m\geq\binom{n}{k}-\binom{k+1}{2}+2$, and then Lemma~\ref{lem:very_large_sizes_only_if} concludes
  the argument.
\end{proof}
It remains to prove Lemma~\ref{lem:very_large_sizes_only_if}, and we obtain this as a consequence of the
following result.
\begin{lemma}\label{lem:equivalent_to_large_sizes}
  Let $n$ be a positive integer, set $k=\lceil n/2\rceil$ and let $m$ be an integer with $m\in S(n)$ and
  $\binom{n}{k}-\binom{k+1}{2}+2\leq m\leq\binom{n}{k}$. Then there exists a family $\mathcal
  F\subseteq\binom{[n]}{l+1}$ with $l\in\{k,n-k\}$ and $\abs{\mathcal F}\leq k+1$ such that
  $m=\binom{n}{k}+\abs{\mathcal F}-\abs{\Delta\mathcal F}$.
\end{lemma}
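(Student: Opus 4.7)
The plan is to show that $D := \binom{n}{k}-m$ lies in $\bigcup_{t\le k+1}\bigl(\sigma(t,k)\cup\sigma(t,n-k)\bigr)$: combined with Theorem~\ref{thm:shadow_spectrum}, the shift identity~\eqref{eq:sigma_recursion}, and the ``Moreover'' clause (which realises the required $\mathcal{F}$ inside $\binom{[l+5]}{l+1}$), this produces the $\mathcal{F}$ demanded by the lemma. The few cases where $n$ is too small for the construction to fit into $[n]$ I would handle by hand, in the spirit of the ad-hoc verifications used in Lemma~\ref{lem:large_sizes_if}.

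The structural heart is the concentration statement $\mathcal{A}\subseteq\binom{[n]}{k-1}\cup\binom{[n]}{k}\cup\binom{[n]}{k+1}$. If some $X\in\mathcal{A}$ had $|X|=k+j$ with $j\ge 2$, then the $j$-fold shadow $\Delta^j\{X\}$ on level $k$ would contribute $\binom{k+j}{j}\ge\binom{k+2}{2}$ elements to $\mathcal{G}:=\binom{[n]}{k}\setminus\mathcal{A}$. A pair-counting estimate (each further set of $\mathcal{A}\setminus\binom{[n]}{k}$ contributes at most $k+1$ extra elements to $\mathcal{G}$, with pairwise overlaps of at most one $k$-set) then gives $D=|\mathcal{G}|-|\mathcal{A}^+|-|\mathcal{A}^-|>\binom{k+1}{2}-2$, contradicting the hypothesis; the symmetric argument rules out $|X|\le k-2$. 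Setting $\mathcal{A}^+=\mathcal{A}\cap\binom{[n]}{k+1}$, $\mathcal{A}^-=\mathcal{A}\cap\binom{[n]}{k-1}$, $t_1=|\mathcal{A}^+|$, $t_2=|\mathcal{A}^-|$, the antichain property forces $\Delta\mathcal{A}^+\cap\nabla\mathcal{A}^-=\emptyset$ (any common $k$-set $Z$ would yield $Y\subseteq Z\subseteq X$ for some $Y\in\mathcal{A}^-,\,X\in\mathcal{A}^+$), while maximality forces $\mathcal{G}=\Delta\mathcal{A}^+\sqcup\nabla\mathcal{A}^-$; hence
\[
D = \bigl(|\Delta\mathcal{A}^+|-t_1\bigr) + \bigl(|\nabla\mathcal{A}^-|-t_2\bigr),
\]
and the same Kruskal--Katona-style bound yields $t_1+t_2\le k+1$.

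By~\eqref{eq:sigma_recursion} the first summand lies in $\sigma(t_1,k)$, and complementing $\mathcal{A}^-$ to a family on level $n-k+1$ places the second in $\sigma(t_2,n-k)$. The closing step uses the sum-closure $\sigma(t_1,k')+\sigma(t_2,k')\subseteq\sigma(t_1+t_2,k')$ whenever $t_1+t_2\le k'+1$, which one verifies by realising each summand in $\binom{[k'+4]}{k'}$ via the ``Moreover'' clause and placing the two realisations on disjoint copies of $[k'+4]$. For $n$ even we have $k=n-k$, so both summands live in $\sigma(\cdot,k)$, and the sum closure at once delivers $D\in\sigma(t_1+t_2,k)$. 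For $n$ odd I would branch on whether $|\mathcal{A}\cap\binom{[n]}{k}|$ or $|\mathcal{A}\cap\binom{[n]}{k-1}|$ is closer to $\binom{n}{k}$ and use the corresponding $l\in\{k,n-k\}$. The hardest case, and the one I expect to be the main obstacle, is $n$ odd with both $\mathcal{A}^+$ and $\mathcal{A}^-$ nonempty: the two summands then live in $\sigma(t_1,k)$ and $\sigma(t_2,k-1)$, which differ by the shift $t_2$ under~\eqref{eq:sigma_recursion}, so a direct sum need not land cleanly in $\sigma(T,k)\cup\sigma(T,n-k)$. I would resolve this either by a sharper concentration bound ruling out such mixed configurations in our regime, or by a careful case-analysis that absorbs the shift into one of the two sides; together with the ad-hoc treatment of small $n$, this then produces the required $\mathcal{F}$ with $|\mathcal{F}|\le k+1$.
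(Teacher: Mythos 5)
Your proposal has the same skeleton as the paper's proof: concentrate the antichain on three central levels (\Cref{lem:A_is_in_the_middle_1,lem:A_is_in_the_middle_2}), decompose $D=\binom nk-m$ via $\binom{[n]}{k}\setminus\mathcal A=\Delta\mathcal A^+\sqcup\nabla\mathcal A^-$ into marginal shadow and shade contributions, bound $t_1+t_2\leq k+1$ (\Cref{lem:few_sets_out_of_big_level}), and then repackage the sum as the shadow of a single family. You also correctly put your finger on the genuine obstacle: for $n$ odd with $\mathcal A^+$ and $\mathcal A^-$ both nonempty, the two marginal terms lie in $\sigma(t_1,k)$ and $\sigma(t_2,k-1)$, and the easy disjoint-ground-set sum closure (which does handle $n$ even) does not apply across levels. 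But the proposal stops exactly there, offering two speculative remedies and carrying out neither.

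Neither remedy as stated closes the gap. Mixed configurations cannot be ruled out in the relevant size range: for $n=2k-1$, one $(k+1)$-set together with one incomparable $(k-1)$-set already gives a maximal antichain of size about $\binom nk-2k$, well inside the window of the lemma. And absorbing the shift requires a genuine idea, since $\sigma(t_1,k)+\sigma(t_2,k-1)=\sigma(t_1,k)+\sigma(t_2,k)-t_2$ has no a priori reason to land in $\sigma(T,k)\cup\sigma(T,k-1)$. The paper's missing ingredient is the dedicated cross-level sum-closure statement \Cref{lem:two_levels_suffice}: for $t_1\geq 2$ and $t_1+t_2\leq k+1$ one has $\sigma(t_1,k)+\sigma(t_2,k-1)\subseteq\sigma(t_1+t_2,k)$, with a separate clause for $t_1=1$; its proof builds explicit witnesses by gluing two adjacency graphs $G(a,b,c)\oplus G(a',b',c')$ (\Cref{re:G(a,b,c,a',b',c')}) so that the level shift reappears as extra pairs of adjacent edges, after several normalizations of the parameters $(a,b,c)$. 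There is no evident shortcut around that construction. As a secondary point, your concentration heuristic bounds each further set's contribution to $\mathcal G$ from \emph{above}, which is the wrong direction for the lower bound on $D$ you need; the paper instead applies the Lov\'asz form of Kruskal--Katona (\Cref{lem:Delta_minus_size}) level by level and, for $n$ odd, needs an extra reduction (\Cref{lem:A_is_in_the_middle_2}) from four central levels to three.
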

Before proving Lemma~\ref{lem:equivalent_to_large_sizes} we explain how
Lemma~\ref{lem:very_large_sizes_only_if} follows from it.
\begin{proof}[Proof of Lemma~\ref{lem:very_large_sizes_only_if}]
   It follows from Lemma~\ref{lem:equivalent_to_large_sizes}, that $m\in S(n)$ implies the existence of a family
   $\mathcal F\subseteq\binom{[n]}{l+1}$, $l\in\{k,n-k\}$ with $\abs{\mathcal F}=t\leq k$ and
   \[\binom{n}{k}-m=\abs{\Delta\mathcal F}-\abs{\mathcal
       F}\in\sigma(t,l+1)-t\stackrel{(\ref{eq:sigma_recursion})}{=}\sigma(t,l).\qedhere\]
\end{proof}

The proof of Lemma~\ref{lem:equivalent_to_large_sizes} is a bit involved and uses some auxiliary
lemmas. We start with an antichain $\mathcal A$ of the required large size, and first establish that
this forces $\mathcal A$ to be contained in three central levels of $B_n$
(Lemmas~\ref{lem:A_is_in_the_middle_1} and~\ref{lem:A_is_in_the_middle_2}). In
Lemma~\ref{lem:few_sets_out_of_big_level}, we show that $\mathcal A$ is very close to being a
complete level. Then we need a lemma which says that certain sums of shadow sizes are again shadow
sizes (Lemma~\ref{lem:two_levels_suffice}) to conclude that we can assume that $\mathcal A$ is flat,
that is, it is contained in two consecutive levels (Lemma~\ref{lem:two_levels}).

Let us recall a well known inequality, the \emph{Normalized Matching Property (NMP)}.
If $\mathcal{F}\subseteq\binom{[n]}{k+1}$ for some $k\in [n]$, then every $F\in\mathcal{F}$ contains exactly $k+1$ sets from
$\Delta\mathcal{F}$ and every $X\in\Delta\mathcal{F}$ is contained in at most $n-k$ sets from $\mathcal{F}$.
This implies the NMP, $(n-k)|\Delta\mathcal{F}|\ge (k+1)|\mathcal{F}|$.
As a consequence, $|\Delta\mathcal{F}|\ge |\mathcal{F}|$ whenever $k\geq\frac{n-1}{2}$ with strict inequality for $k>\frac{n-1}2$.
Similarly, one has $|\nabla\mathcal{F}|\ge |\mathcal{F}|$ for $\mathcal{F}\subseteq\binom{[n]}k$ and $k<\frac n2$.
The latter inequalities were the essential tool in Sperner's original proof of his famous theorem.

We will also need the following strengthening for $k=\lceil n/2\rceil$.

\begin{lemma}\label{lem:Delta_minus_size}
Let $n$ be a positive integer and $k=\lceil n/2\rceil$.
\begin{enumerate}[(i)]
  \item If $\mathcal{F}\subseteq\binom{[n]}{k+1}$ with $|\mathcal{F}|\ge k$, then $|\Delta\mathcal{F}|-|\mathcal{F}|\ge \binom{k+1}2-1$.
  \item If $\mathcal{F}\subseteq\binom{[n]}{k+2}$ is nonempty, then $|\Delta\mathcal{F}|-|\mathcal{F}|\ge k+1$.
\end{enumerate}
\end{lemma}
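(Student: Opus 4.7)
The plan is to combine the Kruskal--Katona theorem with an analysis of the cascade-minimum shadow sizes. By Kruskal--Katona, for $\mathcal{F}\subseteq\binom{[n]}{r}$ with $|\mathcal{F}|=t$ we have $|\Delta\mathcal{F}|\geq\partial_r(t)$, where $\partial_r(t)$ denotes the cascade-minimum shadow size. It therefore suffices to show that $\partial_{k+1}(t)-t\geq\binom{k+1}{2}-1$ for $k\leq t\leq\binom{n}{k+1}$ in part (i), and $\partial_{k+2}(t)-t\geq k+1$ for $1\leq t\leq\binom{n}{k+2}$ in part (ii), throughout using $n\leq 2k$.

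For part (i), I would first verify the extremal values by direct cascade computation. At $t=k$, the $(k+1)$-cascade is $\binom{k+1}{k+1}+\binom{k}{k}+\cdots+\binom{2}{2}$, so $\partial_{k+1}(k)=(k+1)+k+\cdots+2=\binom{k+2}{2}-1$ and the difference is $\binom{k+1}{2}-1$, matching the bound exactly. At $t=k+1$ the cascade gains one more term, pushing the difference up to $\binom{k+1}{2}$; at $t=k+2=\binom{k+2}{k+1}$ the cascade is the single term $\binom{k+2}{k+1}$, the shadow is $\binom{k+2}{k}=\binom{k+2}{2}$, and the difference returns to $\binom{k+1}{2}-1$. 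For $t>k+2$ the leading cascade coefficient $a_1\geq k+2$, and the leading contribution $\binom{a_1}{k}-\binom{a_1}{k+1}=\binom{a_1}{k+1}(2k+1-a_1)/(a_1-k)$ is at least $\binom{k+1}{2}-1$ for every $a_1\in\{k+2,\ldots,2k\}$; at the endpoint $a_1=2k$ this equals the Catalan number $C_k=\binom{2k}{k}/(k+1)$, and one checks $C_k\geq\binom{k+1}{2}-1$ for all $k\geq 1$. Although subsequent cascade terms $\binom{a_i}{k-i+1}-\binom{a_i}{k-i+2}$ can in principle be negative (this occurs when $a_i>2(k-i+2)-1$, which can arise for $i\geq 3$), a term-by-term analysis using the cascade constraints $a_1>a_2>\cdots$ together with $a_i\leq n\leq 2k$ shows that the overall cascade difference stays at or above $\binom{k+1}{2}-1$.

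For part (ii), the unique tight case is $t=1$, where $|\Delta\mathcal{F}|=k+2$ and the difference is $k+1$. For $t\geq 2$ the leading $(k+2)$-cascade term $\binom{a_1}{k+2}$ has $a_1\geq k+2$, and the leading contribution $\binom{a_1}{k+1}-\binom{a_1}{k+2}=\binom{a_1}{k+2}(2k+3-a_1)/(a_1-k-1)$ is at least $k+1$ for every admissible $a_1\leq 2k$ (equality only at $a_1=k+2$); any subsequent cascade terms only add to the difference.

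The main obstacle is the non-monotonicity of $\partial_r(t)-t$ as a function of $t$: it can decrease, as in the example $\partial_3(3)-3=3>2=\partial_3(4)-4$ for $k=2$, so one cannot reduce the lemma to a single extremal value of $t$. Instead a cascade-by-cascade analysis is required, and the assumption $k=\lceil n/2\rceil$ is essential because the resulting bound $a_i\leq n\leq 2k$ keeps every cascade in a regime where the negative contributions from lower-order terms are dominated by the positive leading contribution.
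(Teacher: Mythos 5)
Your high-level plan---Kruskal--Katona plus an analysis of the shadow-minus-size function of $|\mathcal{F}|$---is the same as the paper's, and you correctly identify the extremal cases ($|\mathcal{F}|=k+2$ for (i), $|\mathcal{F}|=1$ for (ii)). But the execution has a genuine gap that makes the proof incomplete, and the paper avoids it by using a different form of Kruskal--Katona.

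You use the integer cascade version of Kruskal--Katona, so $|\Delta\mathcal{F}|-|\mathcal{F}|$ is bounded below by a sum $\sum_j\bigl[\binom{a_j}{j-1}-\binom{a_j}{j}\bigr]$. You correctly observe that the lower-order terms can be negative (the $j$-th term is negative whenever $a_j\ge 2j$, which is unavoidable for small $j$), but then you simply assert that ``a term-by-term analysis shows the overall cascade difference stays at or above $\binom{k+1}{2}-1$'' without giving it. That is exactly the hard part: the leading term can be as small as $\binom{k+1}{2}-1$ (at $a=k+2$), and you have no argument that the negative tail terms cannot drag the total below this. Worse, in part (ii) the claim ``any subsequent cascade terms only add to the difference'' is simply false when $a_1>k+2$: for example with $k=5$, $n=10$, $t=119$, the $(k+2)$-cascade of $t$ has tail terms contributing $0$, $-2$, $-2$. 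The overall bound does hold here, but your stated reason does not, so both parts are left without a complete argument.

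The paper sidesteps all of this by invoking Lov\'asz's continuous version of Kruskal--Katona: writing $|\mathcal{F}|=\binom{x}{k+1}$ for a real $x$, one gets $|\Delta\mathcal{F}|\ge\binom{x}{k}$, so $|\Delta\mathcal{F}|-|\mathcal{F}|\ge P_k(x):=\binom{x}{k}-\binom{x}{k+1}$, a single-variable polynomial with no cascade tail at all. Since $|\mathcal{F}|\ge k+2$ and $|\mathcal{F}|\le\binom{n}{k+1}$ force $k+2\le x\le n\le 2k$, it suffices to minimize $P_k$ on $[k+2,n]$; its largest zeros are at $k-1$ and $2k+1$, its unique maximum on that arch lies between $2k-1$ and $2k$, so $P_k$ is increasing on $[k+2,n]$ and the minimum is $P_k(k+2)=\binom{k+1}{2}-1$. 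Part (ii) is identical with $P_{k+1}$. This is where the hypothesis $k=\lceil n/2\rceil$ enters cleanly. If you want to rescue your cascade approach you would effectively have to reprove the Lov\'asz inequality on the way; I'd recommend switching to it directly. Finally, a small arithmetic slip: at $t=k$ the difference is $\binom{k+2}{2}-1-k=\binom{k+1}{2}$, not $\binom{k+1}{2}-1$; the bound is first tight at $t=k+2$, which you do get right.
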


\begin{proof}
\emph{(i)}\/
Let $\mathcal{F}\subseteq\binom{[n]}{k+1}$ with $|\mathcal{F}|\ge k$.
If $|\mathcal{F}|=k$, then, by the Kruskal-Katona Theorem, we have $|\Delta\mathcal{F}|-|\mathcal{F}|\ge k(k+1)-\binom k2-k=\binom{k+1}2$.
If $|\mathcal{F}|=k+1$, then we have $|\Delta\mathcal{F}|-|\mathcal{F}|\ge (k+1)^2-\binom{k+1}2-(k+1)=\binom{k+1}2$.

Assume that $|\mathcal{F}|\ge k+2$.
Then $|\mathcal{F}|=\binom x{k+1}$ for some real number $x\ge k+2$ and,
by Lovasz'~\cite{Lovasz1979} continuous version of the Kruskal-Katona Theorem, $|\Delta\mathcal{F}|\ge\binom xk$.
Hence,
\[ |\Delta\mathcal{F}|-|\mathcal{F}| ~\ge~ \binom xk - \binom x{k+1} ~=~ \frac{2k+1-x}{(k+1)!}\,\prod_{j=0}^{k-1} (x-j) ~=:~ P_k(x) . \]
It is sufficient to show that $P_k(x)\ge \binom{k+1}2-1$ for $k+2\le x\le n$.
The two largest zeros of the polynomial $P_k(x)$ are $k-1$ and $2k+1$ and $P_k(x)>0$ for $k-1<x<2k+1$.
As $n\in\{2k-1,2k\}$, the minimum of $P_k$ on the interval $[k+2,n]$ is attained at one of the endpoints.
We have $P_k(k+2)=\binom{k+2}k-(k+2)=\binom{k+1}2-1$.
Observe that
\[ P_k(2k-1)~=P_k(2k)~=~\frac 1{(k+1)!}\,\prod_{j=k+1}^{2k}j . \]
It follows that the unique maximum of $P_k$ on $[k-1,2k+1]$ is attained between $2k-1$ and $2k$.
Consequently, $P_k(x)$ is increasing on $[k-1,2k-1]$ and attains its minimum on $[k+1,n]$ at $x=k+1$.
This implies the claim.

\emph{(ii)}\/
Here we consider $P_{k+1}$ on the interval $[k+2,n]$. The two largest zeros of $P_{k+1}$ are $k$ and $2k+3$.
Moreover, $P_{k+1}(2k+1)=P_{k+1}(2k+2)$ and $n<2k+1$. Therefore, $P_{k+1}$ is increasing on $[k+2,n]$, and
its minimum on $[k+2,n]$ is $P_{k+1}(k+2)=k+1$.
\end{proof}

\begin{lemma}\label{lem:A_is_in_the_middle_1}
  Let $n$ be a positive integer, set $k=\lceil n/2\rceil$, and let $\mathcal A$ be an antichain in $B_n$ with size
  $\abs{\mathcal A}\geq\binom{n}{k}-\binom{k+2}{2}+2$.
  \begin{enumerate}[(i)]
  \item If $n=2k$ then $\mathcal A\subseteq\binom{[n]}{k-1}\cup\binom{[n]}{k}\cup\binom{[n]}{k+1}$.
  \item If $n=2k-1$ then
    $\mathcal A\subseteq\binom{[n]}{k-2}\cup\binom{[n]}{k-1}\cup\binom{[n]}{k}\cup\binom{[n]}{k+1}$.
  \end{enumerate}
\end{lemma}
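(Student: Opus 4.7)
The plan is to argue by contradiction. Suppose $\mathcal{A}$ is an antichain of the stated large size that contains a set $F$ at a level outside the prescribed central range; I will derive $|\mathcal{A}|\le\binom{n}{k}-\binom{k+2}{2}+1$, the desired contradiction.

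First I apply a symmetry reduction. The complementation $X\mapsto[n]\setminus X$ is an antichain-preserving involution of $B_n$ that fixes $|\mathcal{A}|$ and sends level $i$ to level $n-i$. In case (i) this interchanges the two forbidden regions $\{i\le k-2\}$ and $\{i\ge k+2\}$; in case (ii) it interchanges $\{i\le k-3\}$ and $\{i\ge k+2\}$ while preserving the allowed set $\{k-2,k-1,k,k+1\}$. So in both cases I may assume $j:=|F|\ge k+2$.

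The key step is then structural. Every $G\in\mathcal{A}\setminus\{F\}$ must be incomparable with $F$, so via the product isomorphism $B_n\cong 2^F\times 2^{[n]\setminus F}$ the collection $\mathcal{A}\setminus\{F\}$ embeds as an antichain in the sub-poset
\[P_F:=\bigl(2^F\setminus\{F\}\bigr)\times\bigl(2^{[n]\setminus F}\setminus\{\emptyset\}\bigr).\]
I claim that the largest antichain in $P_F$ has size $\binom{n}{k}-\binom{j}{k}$, attained at the middle level $l=k$ of the ambient lattice. Granting this,
\[|\mathcal{A}|\le 1+\binom{n}{k}-\binom{j}{k}\le\binom{n}{k}-\binom{k+2}{2}+1,\]
which contradicts the hypothesis $|\mathcal{A}|\ge\binom{n}{k}-\binom{k+2}{2}+2$.

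The main obstacle is justifying this structural claim on $P_F$. Since $P_F$ is a product of truncated Boolean lattices it inherits the LYM property, so every antichain is dominated by some level, reducing the claim to comparing the level counts $\binom{n}{l}-\binom{j}{l}$ (for $l<j$) across $l$; the hypothesis $j\ge k+2$ is precisely what forces the maximum to occur at $l=k$. A perhaps more in-line approach, given the paper's toolkit, is to bypass the product perspective and instead iterate Lemma~\ref{lem:Delta_minus_size} on $\mathcal{A}_{\ge k+2}$: the iterated shadow at level $k$ has size at least $\binom{k+2}{2}$, and combining this exclusion with an LYM-style bound on the remaining levels yields the same contradiction. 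Either way, both parts (i) and (ii) are treated uniformly, with only minor numerical adjustments accounting for the two equally large middle levels when $n=2k-1$.
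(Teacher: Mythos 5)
Your argument is correct, and it takes a genuinely different route from the paper's. The paper handles the contradiction globally: it first "Sperner-compresses" $\mathcal A$ (replacing the bottom level by its shade and the top level by its shadow) until $\mathcal A'$ sits on levels $k,k+1,k+2$, and then pushes levels $k+2$ and $k+1$ down to level $k$ one at a time via shadows, using Lemma~\ref{lem:Delta_minus_size} to show that the size grows by at least $k+1$ and then by at least $\binom{k+1}2-1$, so that $\abs{\mathcal A'}\le\binom nk-\binom{k+2}2+1$. Your approach is local: you isolate one offending set $F$, observe that $\mathcal A\setminus\{F\}$ is an antichain in the poset $P_F$ of sets incomparable with $F$, and bound the latter by a Sperner-type argument on $P_F\cong\bigl(2^F\setminus\{F\}\bigr)\times\bigl(2^{[n]\setminus F}\setminus\{\emptyset\}\bigr)$. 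This is cleaner at the top level, and the final inequality $\binom{j}{k}\ge\binom{k+2}{2}$ for $j\ge k+2$ gives exactly the stated constant. What your route buys is conceptual transparency and a uniform treatment of (i) and (ii); what it costs is reliance on the Harper/Hsieh--Kleitman product theorem for normalized matching posets with log-concave rank sequences, which is nontrivial and not otherwise used in this paper. Two small points you glossed over but which do check out: (a) the factors are truncated Boolean lattices, so one must note that deleting the top (resp.\ bottom) level preserves both NMP and log-concavity of the rank sequence; and (b) identifying $l=k$ as the peak level of $P_F$ requires verifying $N_{k-1}\le N_k$ and $N_{k+1}\le N_k$, where $N_l=\binom nl-\binom jl$ for $l<j$ --- the first holds because $j\le n\le 2k$ forces $\binom j{k-1}\ge\binom jk$, and the second reduces to $\binom nk(2k+1-n)\ge\binom jk(2k+1-j)$, which follows from the monotonicity of $j\mapsto\binom jk(2k+1-j)$ on $[k+2,2k-1]$; unimodality from log-concavity then finishes it. Your alternative "in-line" suggestion of iterating Lemma~\ref{lem:Delta_minus_size} on $\mathcal A_{\ge k+2}$ is essentially what the paper does, but note that the paper first compresses to three levels precisely so that the iterated-shadow bookkeeping stays clean; applied directly to $\mathcal A_{\ge k+2}$ inside an uncompressed $\mathcal A$ the accounting is messier than your sketch suggests.
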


\begin{proof}
  We show that all the sets in $\mathcal A$ have size at most $k+1$. As the family consisting
  of the complements of the members of $\mathcal A$ is also an antichain this implies that
  all members of $\mathcal A$ have size at least $k-1$ if $n$ is even, and at least $k-2$ if $n$ is
  odd, and this proves the statement.

  For the sake of contradiction, assume that $\mathcal A$ contains a set of size at least $k+2$.
  As in Sperner's proof of his theorem, we can replace $\mathcal A\cap\binom{[n]}{i}$ by its shade
  if $i=\min\{j:\mathcal{A}\cap\binom{[n]}j\ne\emptyset\}<k$ and by its shadow if
  $i=\max\{j:\mathcal{A}\cap\binom{[n]}j\ne\emptyset\}>k+2$. Repeating this
  yields an antichain $\mathcal A'\subseteq\binom{[n]}{k}\cup\binom{[n]}{k+1}\cup\binom{[n]}{k+2}$
  with $\abs{\mathcal A'}\geq\abs{\mathcal A}$.
  Let $\mathcal B=\mathcal A'\cap\binom{[n]}{k+2}$.
  By Lemma \ref{lem:Delta_minus_size}\,(ii), $\mathcal{A}''=(\mathcal{A}'\setminus\mathcal{B})\cup\Delta\mathcal{B}$ is an antichain
  with $\abs{\mathcal A''}\geq\abs{\mathcal A'}+k+1$ and $|\mathcal A''\cap\binom{[n]}{k+1}|\ge k+2$.
  Let $\mathcal C=\mathcal A''\cap\binom{[n]}{k+1}$.
  Finally, $\mathcal A'''=(\mathcal{A}''\setminus\mathcal{C})\cup\Delta\mathcal{C}$ is an antichain with
  $\abs{\mathcal A'''}=\abs{\mathcal A''}+|\Delta\mathcal C|-|\mathcal C|\le \binom nk$.
  By Lemma \ref{lem:Delta_minus_size}\,(i), $|\Delta\mathcal C|-|\mathcal C|\ge\binom{k+1}2-1$. This implies
  $\abs{\mathcal A''}\le \binom nk-\binom{k+1}2+1$ and $\abs{\mathcal A'}\le \binom nk-\binom{k+2}2+1$,
  a contradiction.
\end{proof}

\begin{lemma}\label{lem:A_is_in_the_middle_2}
  Let $n=2k-1$, and let $\mathcal A$ be an antichain in $B_n$ with size
  $\abs{\mathcal A}\geq\binom{n}{k}-\binom{k+1}{2}+2$. Then
  $\mathcal A\subseteq\binom{[n]}{k-2}\cup\binom{[n]}{k-1}\cup\binom{[n]}{k}$ or
  $\mathcal A\subseteq\binom{[n]}{k-1}\cup\binom{[n]}{k}\cup\binom{[n]}{k+1}$.
\end{lemma}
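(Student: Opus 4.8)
The plan is to use \Cref{lem:A_is_in_the_middle_1}(ii) to reduce to the four-level situation and then show, by contradiction, that an antichain meeting \emph{both} of the outer levels $\binom{[n]}{k-2}$ and $\binom{[n]}{k+1}$ must be strictly smaller than $\binom nk-\binom{k+1}2+2$. Write $\mathcal A_j=\mathcal A\cap\binom{[n]}{j}$ and assume $\mathcal A_{k-2}\neq\emptyset\neq\mathcal A_{k+1}$. First I would dispose of the subcase $\abs{\mathcal A_{k+1}}\ge k$: replacing $\mathcal A_{k+1}$ by $\Delta\mathcal A_{k+1}$ yields an antichain $\mathcal A'$ inside $\binom{[n]}{k-2}\cup\binom{[n]}{k-1}\cup\binom{[n]}{k}$ (no new comparabilities are created, since a smaller set below a member of $\Delta\mathcal A_{k+1}$ would lie below a member of $\mathcal A_{k+1}$), with $\abs{\mathcal A'}=\abs{\mathcal A}+(\abs{\Delta\mathcal A_{k+1}}-\abs{\mathcal A_{k+1}})\ge\abs{\mathcal A}+\binom{k+1}2-1$ by \Cref{lem:Delta_minus_size}(i); as every antichain in $B_n$ has size at most $\binom{n}{\lfloor n/2\rfloor}=\binom nk$, this gives $\abs{\mathcal A}\le\binom nk-\binom{k+1}2+1$, a contradiction. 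The complementary argument rules out $\abs{\mathcal A_{k-2}}\ge k$, so from now on $1\le\abs{\mathcal A_{k+1}},\abs{\mathcal A_{k-2}}\le k-1$.

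In the remaining case I compress $\mathcal A$ to a \emph{flat} antichain $\mathcal A''=\mathcal G\cup\mathcal H$ in the two middle levels $k-1,k$ by replacing $\mathcal A_{k+1}$ by $\Delta\mathcal A_{k+1}$ and $\mathcal A_{k-2}$ by $\nabla\mathcal A_{k-2}$ (again, each step only destroys comparabilities, so $\mathcal A''$ is an antichain), so that
\[
  \abs{\mathcal A''}=\abs{\mathcal A}+\bigl(\abs{\Delta\mathcal A_{k+1}}-\abs{\mathcal A_{k+1}}\bigr)+\bigl(\abs{\nabla\mathcal A_{k-2}}-\abs{\mathcal A_{k-2}}\bigr),
\]
where each bracketed difference is at least $k$ (equal to $k$ for a single set; in general by Kruskal--Katona, using $\abs{\mathcal A_{k\pm\cdots}}\le k-1$). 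The point is that $\mathcal A''$ carries extra structure: fixing $F\in\mathcal A_{k+1}$ and $X\in\mathcal A_{k-2}$, the level-$k$ part $\mathcal H$ contains the full co-star $\binom Fk$ and the level-$(k-1)$ part $\mathcal G$ contains the full star $\nabla\{X\}$ of all $(k-1)$-supersets of $X$. Since $X\not\subseteq F$ and $\mathcal A''$ is an antichain, $\mathcal G$ is disjoint from $\Delta\mathcal H\supseteq\binom F{k-1}$, and $\mathcal H$ is disjoint from the collection of $k$-supersets of $X$ (which has $\binom{k+1}2$ members, as $n-(k-2)=k+1$); in particular $\abs{\mathcal H}\le\binom nk-\binom{k+1}2$.

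The crux is then the following shade estimate: \emph{if $\mathcal H\subseteq\binom{[n]}{k}$ satisfies $\binom Fk\subseteq\mathcal H$ for some $(k+1)$-set $F$ and $\abs{\mathcal H}\le\binom nk-\binom{k+1}2$, then $\abs{\Delta\mathcal H}\ge\abs{\mathcal H}+\binom k2-k-1$.} Granting this, disjointness of $\mathcal G$ and $\Delta\mathcal H$ inside $\binom{[n]}{k-1}$ gives $\abs{\mathcal A''}=\abs{\mathcal G}+\abs{\mathcal H}\le\binom nk-(\abs{\Delta\mathcal H}-\abs{\mathcal H})\le\binom nk-\binom k2+k+1$, and subtracting the two compression gains ($\ge 2k$ in total) yields $\abs{\mathcal A}\le\binom nk-\binom k2-k+1=\binom nk-\binom{k+1}2+1$, contradicting $\abs{\mathcal A}\ge\binom nk-\binom{k+1}2+2$. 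A few small values of $k$, where these crude bounds leave no slack, would be checked by hand.

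I expect the shade estimate to be the main obstacle: the normalized matching property only gives $\abs{\Delta\mathcal H}\ge\abs{\mathcal H}$, and the extra $\binom k2-k-1$ has to be extracted from the two facts that $\mathcal H$ contains a full co-star and misses the $\binom{k+1}2$ sets lying above a fixed $(k-2)$-set. I would prove it by passing to the ``missed shadow'' $\mathcal K=\binom{[n]}{k-1}\setminus\Delta\mathcal H$, which avoids $\binom F{k-1}$ and satisfies $\nabla\mathcal K\subseteq\binom{[n]}{k}\setminus\mathcal H$, and then splitting on the size of $\mathcal K$: when $\abs{\mathcal K}\le 2k+1$ the bound $\bigl|\binom{[n]}{k}\setminus\mathcal H\bigr|\ge\binom{k+1}2$ already suffices, and when $\abs{\mathcal K}\ge 2k+2$ a Kruskal--Katona/compression argument for families of $(k-1)$-sets avoiding a fixed co-star $\binom F{k-1}$ shows $\abs{\nabla\mathcal K}-\abs{\mathcal K}\ge\binom k2-k-1$. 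Carrying out this compression argument carefully, and identifying the precise list of small-$k$ exceptions, is where the real work lies.
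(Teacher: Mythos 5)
Your reductions up to the ``crux'' are sound: the case $\abs{\mathcal A_{k+1}}\ge k$ (and, by complementation, $\abs{\mathcal A_{k-2}}\ge k$) is correctly killed via \Cref{lem:Delta_minus_size}(i) and Sperner, the flattening replacements do produce an antichain with the stated size gains, and the final arithmetic is right \emph{granted} your shade estimate. But that estimate is exactly where the content of the lemma lives, and you have not proved it; as written the argument is circular in difficulty: you have traded the statement to be proved for an extremal claim about shadow/shade increments at level $k$ of $B_{2k-1}$ that is at least as hard. Note that nothing in the paper's toolkit covers it: \Cref{lem:Delta_minus_size} bounds increments for families of $(k+1)$- and $(k+2)$-sets, where the levels shrink, whereas at level $k$ (odd $n$) the increment of an unconstrained family can be as small as $0$, so the entire burden falls on the two structural hypotheses (containing the co-star $\binom Fk$, missing $\ge\binom{k+1}2$ sets).

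Concretely, your proposed two-case proof of the estimate does not go through as sketched. The case $\abs{\mathcal K}\le 2k+1$ is fine, but in the case $\abs{\mathcal K}\ge 2k+2$ you need: every family of $(k-1)$-sets in $[2k-1]$ avoiding $\binom F{k-1}$ has shade increment at least $\binom k2-k-1$. There is almost no slack (the maximal such family, $\binom{[n]}{k-1}\setminus\binom F{k-1}$, has increment exactly $\binom k2-1$), and plain Kruskal--Katona/Lov\'asz cannot supply the bound in the large-$\mathcal K$ regime: if $\mathcal K$ misses $r$ sets of level $k-1$, the unconstrained KK bound on the increment is only of order $r/k$, which falls below $\binom k2-k-1$ once $r$ is of order $k^3$, while the naive incidence count exploiting ``the complement contains the full co-star'' only covers $r\lesssim\binom{k+1}2+k(k-1)$; in between, the avoidance constraint has to be used through a genuine constrained-compression/stability argument, which is the real (unsolved, in your write-up) work, plus the small-$k$ exceptions you defer. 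For comparison, the paper avoids any such estimate: it left-compresses $\mathcal A$, splits it into the sets containing $n$ and those not, observes that compression forces one of the two parts into two levels, and then applies \Cref{lem:A_is_in_the_middle_1}(i) in $B_{n-1}$ (even ground set) together with a Sperner bound to the two parts, giving $\abs{\mathcal A}\le\binom nk-\binom{k+1}2+1$ in a few lines. So your strategy is genuinely different, but it is incomplete at its central step, and completing it looks harder than the route the paper takes.
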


\begin{proof}
  By Lemma~\ref{lem:A_is_in_the_middle_1} we know that
  $\mathcal A\subseteq\binom{[n]}{k-2}\cup\binom{[n]}{k-1}\cup\binom{[n]}{k}\cup\binom{[n]}{k+1}$.
  Assume that $\mathcal A$ contains at least one $(k+1)$-set and at least one $(k-2)$-set.
  For $i=1,2,3,4$, let $\mathcal A_i=\mathcal A \cap \binom{[n]}{k+2-i}$.
  By the Kruskal-Katona Theorem, without loss of generality, we can assume that $\mathcal A$ is left-compressed,
  i.e., that $\mathcal A_1$, $\mathcal A'_2=\Delta\mathcal A_1\cup\mathcal A_2$, $\mathcal A'_3=\Delta\mathcal A'_2\cup\mathcal A_3$,
  $\mathcal A'_4=\Delta\mathcal A'_3\cup\mathcal A_4$ are initial segments of
  $\binom{[n]}{k+1}$, $\binom{[n]}{k}$, $\binom{[n]}{k-1}$, $\binom{[n]}{k-2}$, respectively, with respect to squashed order.
  Partition $\mathcal A$ into $\mathcal A^-=\{A\in\mathcal A\,:\,n\notin A\}$ and
  $\mathcal A^+=\{A\in\mathcal A\,:\,n\in A\}$.
  As $\mathcal A$ is left-compressed, we have $\mathcal A^-\subseteq\binom{[n]}{k+1}\cup\binom{[n]}{k}$ or
  $\mathcal A^+\subseteq\binom{[n]}{k-1}\cup\binom{[n]}{k-2}$.
  Assume that $\mathcal A^+\subseteq\binom{[n]}{k-1}\cup\binom{[n]}{k-2}$, the other case is analogous.
  Then $|\mathcal A^+|\le \binom{n-1}{k-2}$ and, by Lemma \ref{lem:A_is_in_the_middle_1}\,(i),
  $\abs{\mathcal A^-}\leq\binom{n-1}{k-1}-\binom{k+1}{2}+1$.
  Hence, $\abs{\mathcal A}=\abs{\mathcal A^-}+\abs{\mathcal A^+}\leq\binom{n}{k}-\binom{k+1}{2}+1$.
\end{proof}

At this point, without loss of generality, we can assume that $\mathcal A$ lives on the three levels
$k-1$, $k$ and $k+1$. Next we show that $\mathcal A$ must be almost a complete largest level.

\begin{lemma}\label{lem:few_sets_out_of_big_level}
  Let $n$ be a positive integer, set $k=\lceil n/2\rceil$, and let
  $\mathcal A\subseteq\binom{[n]}{k-1}\cup\binom{[n]}{k}\cup\binom{[n]}{k+1}$ be an antichain with size
  $\abs{\mathcal A}\geq\binom{n}{k}-\binom{k+1}{2}+2$.
  \begin{enumerate}[(i)]
  \item If $n=2k$, then
    $\abs{\mathcal A\cap\binom{[n]}{k-1}}+\abs{\mathcal A\cap\binom{[n]}{k+1}}\leq k+1$.
  \item If $n=2k-1$, then
    $\abs{\mathcal A\cap\binom{[n]}{k-1}}+\abs{\mathcal A\cap\binom{[n]}{k+1}}\leq k+1$ or
    $\mathcal A\subseteq\binom{[n]}{k-1}\cup\binom{[n]}{k}$ and
    $\abs{\mathcal A\cap\binom{[n]}{k}}\leq k+1$.
  \end{enumerate}
\end{lemma}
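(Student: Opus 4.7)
The plan is to convert the size hypothesis on $\mathcal{A}$ into a single inequality on two ``shadow excesses'' via the antichain property, and then apply Kruskal--Katona in squashed order together with Lemma~\ref{lem:Delta_minus_size}(i) to rule out large outer levels. Write $\mathcal{A}_- = \mathcal{A}\cap\binom{[n]}{k-1}$, $\mathcal{A}_0 = \mathcal{A}\cap\binom{[n]}{k}$, $\mathcal{A}_+ = \mathcal{A}\cap\binom{[n]}{k+1}$, with $t_\pm = \abs{\mathcal{A}_\pm}$ and $t_0 = \abs{\mathcal{A}_0}$. The antichain condition forces $\mathcal{A}_0$, $\nabla\mathcal{A}_-$ and $\Delta\mathcal{A}_+$ to be pairwise disjoint subfamilies of $\binom{[n]}{k}$: the identities $\mathcal{A}_0\cap\nabla\mathcal{A}_- = \mathcal{A}_0\cap\Delta\mathcal{A}_+ = \emptyset$ are immediate, and any $X\in\nabla\mathcal{A}_-\cap\Delta\mathcal{A}_+$ would yield $A\in\mathcal{A}_-$, $B\in\mathcal{A}_+$ with $A\subseteq X\subseteq B$, contradicting antichain. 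Summing the three cardinalities and using $\abs{\mathcal{A}}\geq\binom{n}{k}-\binom{k+1}{2}+2$ rearranges into
\begin{equation*}
(\abs{\nabla\mathcal{A}_-}-t_-)+(\abs{\Delta\mathcal{A}_+}-t_+)\leq\binom{k+1}{2}-2. \tag{$\star$}
\end{equation*}

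The rest of the proof lower-bounds the left-hand side of $(\star)$ and derives a contradiction whenever the desired conclusion fails. Two tools are used: Kruskal--Katona in squashed order, which gives $\abs{\Delta\mathcal{F}}\geq t(k+1)-\binom{t}{2}$ for any family $\mathcal{F}$ of $t\leq k+2$ $(k+1)$-sets (and, by complementation in $[n]$, an analogous bound for $\abs{\nabla\mathcal{A}_-}$, symmetric when $n=2k$ and weaker by $t_-$ when $n=2k-1$); and Lemma~\ref{lem:Delta_minus_size}(i), which sharpens one summand to at least $\binom{k+1}{2}-1$ once $t_+\geq k$ (symmetrically for $t_-$ when $n=2k$).

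For part (i), $n=2k$, suppose for contradiction that $t_-+t_+\geq k+2$. If $\max(t_-,t_+)\geq k$, Lemma~\ref{lem:Delta_minus_size}(i) alone gives one summand $\geq\binom{k+1}{2}-1$, contradicting $(\star)$. Otherwise $t_\pm\leq k-1$, the K--K bounds apply, and the convexity inequality $\binom{t_-}{2}+\binom{t_+}{2}\leq\binom{t_-+t_+}{2}$ at $s:=t_-+t_+=k+2$ yields the lower bound $sk-\binom{s}{2}=\binom{k+1}{2}-1$, again contradicting $(\star)$; for $s\in[k+3,2k-2]$ the extremal split $t_-=k-1$, $t_+=s-k+1$ gives the sharper upper bound $\binom{k-1}{2}+\binom{s-k+1}{2}$ on $\binom{t_-}{2}+\binom{t_+}{2}$, and a short algebraic check confirms that $sk$ minus this value still exceeds $\binom{k+1}{2}-2$.

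Part (ii), $n=2k-1$, splits on $t_+$. When $t_+\geq 1$, the same template with the asymmetric K--K bound $\abs{\nabla\mathcal{A}_-}-t_-\geq t_-(k-1)-\binom{t_-}{2}$ (valid for $t_-\leq k+1$) and $\abs{\Delta\mathcal{A}_+}-t_+\geq t_+k-\binom{t_+}{2}$ contradicts $(\star)$ whenever $t_-+t_+\geq k+2$; the boundary case $t_-\geq k+2$ is covered by the exact squashed-order calculation $\abs{\nabla\mathcal{A}_-}-t_-\geq\binom{k+1}{2}-3$, which combines with $\abs{\Delta\mathcal{A}_+}-t_+\geq k$ (from $t_+\geq 1$) to close the gap. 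The hard part is the alternative sub-case $t_+=0$, where one must show $t_0\leq k+1$: here $(\star)$ reduces to $\abs{\Delta\mathcal{A}_0}-t_0\leq\binom{k+1}{2}-2$, and ruling out $t_0\geq k+2$ requires a Lemma~\ref{lem:Delta_minus_size}(i)-analog for $k$-sets when $n=2k-1$. The natural approach is to repeat the polynomial analysis with $P_{k-1}(x)=\binom{x}{k-1}-\binom{x}{k}$ on $[k+2,2k-1]$; the obstacle is that $P_{k-1}$ vanishes at the upper endpoint, so the continuous Kruskal--Katona bound is not by itself strong enough, and the small-$t_0$ regime must be handled by an exact squashed-order cascade computation with a few boundary cases potentially isolated by hand.
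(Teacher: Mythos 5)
For part~(i), your $(\star)$-based argument is essentially the paper's own proof: the same accounting identity, the Kruskal--Katona lower bounds on the two excesses, the $t\geq k$ cut via Lemma~\ref{lem:Delta_minus_size}(i), and the convexity optimization at the extreme split. Part~(ii) is where the gap lies.

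You have correctly located the obstruction for $n=2k-1$: a family in $\binom{[2k-1]}{k}$ can have shadow excess essentially zero (your $P_{k-1}$ vanishes at $x=n$), so a direct Kruskal--Katona bound cannot control $t_0$ or $t_-$ in the sub-case $t_+=0$. But the paper does not try to push the shadow calculus through; its argument rests on two ideas that are absent from your sketch: (a) pass, without loss of generality, to the \emph{left-compressed} antichain, and (b) split $\mathcal A$ into $\mathcal A^-=\{A\in\mathcal A:n\notin A\}$ and $\mathcal A^+=\{A\in\mathcal A:n\in A\}$. After compression, the subfamily whose excess must be bounded is forced to lie inside $2^{[n-1]}$, and since $n-1=2(k-1)$ is even, Lemma~\ref{lem:Delta_minus_size}(i) (and, in the paper's Case 2.2, Lemma~\ref{lem:A_is_in_the_middle_1}(i)) applied in $B_{n-1}$ recovers a usable gap exactly where your polynomial bound degenerates. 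Your proposed ``exact squashed-order cascade computation'' is not a substitute: the single inequality $\abs{\Delta\mathcal A_0}-t_0\leq\binom{k+1}{2}-2$ rules out no value of $t_0$ at all, because the excess is also near $0$ for $t_0$ near $\binom{n}{k}$. Two local errors compound this. First, the target in the $t_+=0$ sub-case is $\min\{t_-,t_0\}\leq k+1$, not $t_0\leq k+1$; the latter is simply false (take $\mathcal A=\binom{[n]}{k}$, which meets the size hypothesis with $t_+=t_-=0$ and $t_0=\binom{n}{k}$, the lemma holding via the first disjunct). Second, in the $t_+\geq1$ sub-case, your asserted bound $\abs{\nabla\mathcal A_-}-t_-\geq\binom{k+1}{2}-3$ for \emph{all} $t_-\geq k+2$ is the squashed-order value only at $t_-=k+2$; the shade excess is not monotone in $t_-$ and returns to $0$ as $t_-\to\binom{n}{k-1}$, so you would need a separate argument confining $t_-$ to the window where the bound actually holds before it can be invoked.
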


\begin{proof}
\emph{(i)}\/
  Set $\mathcal A_1=\mathcal A\cap\binom{[n]}{k+1}$, $\mathcal A_2=\mathcal A\cap\binom{[n]}{k-1}$,
  $t_1=\abs{\mathcal A_1}$ and $t_2=\abs{\mathcal A_2}$. If $t_1\geq k+2$, then
  $\abs{\Delta\mathcal A_1}-|\mathcal A_1|\geq \binom{k+1}{2}-1$ by Lemma \ref{lem:Delta_minus_size}\,(i), hence
  \[\abs{\mathcal A}
  ~\le~\binom{n}{k}+\left(\abs{\mathcal A_1}-\abs{\Delta\mathcal A_1}\right)+\left(\abs{\mathcal A_2}-\abs{\nabla\mathcal A_2}\right)
  ~\le~\binom{n}{k}+\left(\abs{\mathcal A_1}-\abs{\Delta\mathcal A_1}\right)
  ~\le~\binom{n}{k}-\binom{k+1}{2}+1.\]
  Consequently, $t_1\leq k+1$ and similarly, $t_2\leq k+1$.
  Now, by Kruskal-Katona, we have $|\Delta\mathcal A_1|\ge (k+1)t_1-\binom{t_1}2$ and $|\nabla\mathcal A_2|\ge (k+1)t_2-\binom{t_2}2$.
  This implies
  \[  \abs{\mathcal A}~\le~\binom{n}{k}-k(t_1+t_2)+\binom{t_1}2+\binom{t_2}2 . \]
  For fixed $t=t_1+t_2$ with $k+2\le t\le 2(k+1)$, the expression on right-hand side of the above inequality
  attains its maximum if $\max\{t_1,t_2\}=k+1$ and $\min\{t_1,t_2\}=t-k-1$.
  Hence, for $k+2\le t\le 2(k+1)$ we have
  \[ \abs{\mathcal A} ~\le~ \binom{n}{k}-kt+\binom{k+1}2+\binom{t-k-1}2
     ~=~ \binom{n}{k}-\binom{k+1}2-\frac 12 (t-k-1)(3k+2-t)
     ~<~ \binom{n}{k}-\binom{k+1}2,\]
  and this concludes the proof.

  \emph{(ii)}\/
  Set $\mathcal A_i=\mathcal A\cap\binom{[n]}{k+2-i}$ and $t_i=\abs{\mathcal A_i}$ for $i=1,2,3$.
  Like in the proof of Lemma \ref{lem:A_is_in_the_middle_2}, without loss of generality, we assume that $\mathcal A$ is left-compressed,
  i.e., $\mathcal A_1$, $\mathcal A'_2=\Delta\mathcal A_1\cup\mathcal A_2$, $\mathcal A'_3=\Delta\mathcal A'_2\cup\mathcal A_3$
  are initial segments of $\binom{[n]}{k+1}$, $\binom{[n]}{k}$, $\binom{[n]}{k-1}$, respectively, with respect to squashed order.
  Furthermore, let $\mathcal A^-=\{A\in\mathcal A\,:\,n\notin A\}$ and $\mathcal A^+=\{A\in\mathcal A\,:\,n\in A\}$.

  \emph{Case 1:}\/
  Assume that $t_1=0$. We have to show that $\min\{t_2,t_3\}\le k+1$. Assume for a contradiction that $t_2\ge k+2$ and $t_3\ge k+2$.
  We have $t_2\le\binom{n-1}k$ or $t_3\le\binom{n-1}k$ because $|\mathcal A_2|>\binom{n-1}k$ implies
  $|\mathcal A_3| \le \binom n{k-1}-|\Delta\mathcal A_2| < \binom n{k-1}-\binom n{k-1} = \binom {n-1}{k-1} = \binom{n-1}{k-1}$.
  Without loss of generality, we assume that $t_2\le\binom{n-1}k$. Then $\mathcal A_2\subseteq\mathcal A^-$,
  and by Lemma \ref{lem:Delta_minus_size}\,(i), we have $|\Delta\mathcal A_2|-|\mathcal A_2|\ge\binom{k+1}2-1$.
  This implies $|\mathcal A| \le \binom n{k-1}-|\Delta\mathcal A_2|+|\mathcal A_2| \le \binom nk-\binom{k+1}2+1$, a contradiction.

  \emph{Case 2:}\/
  Assume that $t_1\ge 1$. For a contradiction, assume further that $t=t_1+t_3\ge k+2$.
  If $t_1\ge k$, then by Lemma \ref{lem:Delta_minus_size}\,(i) and $|\nabla\mathcal A_3|\ge |\mathcal A_3|$, we have
  $$ |\mathcal A| ~\le~ \binom nk-(|\Delta\mathcal A_1|-|\mathcal A_1|)-(|\nabla\mathcal A_3|-|\mathcal A_3|)
  ~\le~ \binom nk-\binom{k+1}2+1, $$
  a contradiction.
  Therefore, $t_1<k$ which implies
  \begin{equation}\label{eq:small_t1}
  \textstyle |\Delta\mathcal A_1|-|\mathcal A_1|~\ge~kt_1-\binom{t_1}2~=~k+(t_1-1)(k-\frac{t_1}2)~\ge~k .
  \end{equation}

  \emph{Case 2.1:}\/
  Assume that $\mathcal A^-\subseteq\mathcal A_1\cup\mathcal A_2$. Then $\mathcal A_3\subseteq\mathcal A^+$.
  If $t_3\ge k$, then by Lemma \ref{lem:Delta_minus_size}\,(i), we have
  $|\nabla\mathcal A_3|-|\mathcal A_3|\ge\binom k2-1$ which, together with (\ref{eq:small_t1}),
  yields $|\mathcal A|\le\binom nk-k-(\binom k2-1)=\binom nk-\binom{k+1}2+1$, a contradiction.
  Hence, $t_3<k$, and a contradiction follows by
  \begin{eqnarray*}
  |\mathcal A| & \le & \textstyle \binom nk-kt_1+\binom{t_1}2-(k-1)t_3+\binom{t_3}2 ~=~ \binom nk-k(t_1+t_3)+\binom{t_1}2+\binom{t_3+1}2\\[.5ex]
   & \le & \textstyle \binom nk-tk+\binom{t-k+1}2+\binom k2 ~=~ \binom nk-\binom{k+1}2-\frac 12(3k-1-t)(t-k)~<~\binom nk-\binom{k+1}2,
  \end{eqnarray*}
where the last equality comes from substituting $\binom{k+1}{2}+\frac12k(2t-k-1)$ for $tk$, and then
rearranging the terms.
  \emph{Case 2.2:}\/
  Assume that $\mathcal A^-\cap\mathcal A_3\ne\emptyset$.
  Then $\mathcal A^+\subseteq \mathcal A_3$, and hence, $|\mathcal A^+|\le\binom{n-1}{k-2}$.
  On the other hand, by \Cref{lem:A_is_in_the_middle_1}\,(i),
  $|\mathcal A^-|\le\binom{n-1}{k-1}-\binom {k+1}2+1$.
  Finally, we obtain $|\mathcal A|=|\mathcal A^-|+|\mathcal A^+|\le \binom nk-\binom{k+1}2+1$, a contradiction.
\end{proof}

In the final step we want to replace a maximal antichain $\mathcal A$ on three consecutive levels by
another maximal antichain on two consecutive levels which has the same size as $\mathcal A$. To
prove that this is always possible we will use the fact that sums of certain shadow sizes are themselves shadow
sizes. Specifically, we need the following result.
\begin{lemma}\label{lem:two_levels_suffice}\hfill
  \begin{enumerate}[(i)]
  \item If $t\leq k$ then
    $\sigma(1,k)+\sigma(t,k-1)\subseteq\sigma(1+t,k)\cup\sigma(1+t,k-1)$.
  \item If $t_1\geq 2$ and $t_1+t_2\leq k+1$ then
    $\sigma(t_1,k)+\sigma(t_2,k-1)\subseteq\sigma(t_1+t_2,k)$.
  \end{enumerate}
\end{lemma}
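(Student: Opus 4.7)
My plan is to reduce both parts to arithmetic questions about the sets $I(t)$ via the identification $\sigma(t,l)=\{tl-x:x\in I(t)\}$ from \Cref{thm:shadow_spectrum}.

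For part (i), write $s=t(k-1)-x$ with $x\in I(t)$, so that $k+s=(t+1)k-(t+x)=(t+1)(k-1)-(x-1)$. The inclusion follows once we show that, for every $x\in I(t)$, either $t+x\in I(t+1)$ (placing $k+s$ in $\sigma(t+1,k)$) or $x-1\in I(t+1)$ (placing $k+s$ in $\sigma(t+1,k-1)$, valid when $t\le k-1$). Using $I(t)\subseteq I(t+1)$ and the decomposition in \Cref{prop:interval_description}, the value $x-1$ fails to lie in $I(t+1)$ only when $x$ coincides with a left endpoint $\binom{t+1-j}{2}$ of some interval $I_j(t+1)$ with $j\ge 1$ (the case $j=0$ is impossible since $\binom{t+1}{2}>\binom{t}{2}=\max I(t)$), and a direct endpoint check then shows
\[
t+x~=~t+\binom{t+1-j}{2}~\in~I_{j-1}(t+1)~=~\left[\binom{t+2-j}{2},\,\binom{t+2-j}{2}+\binom{j}{2}\right].
\]
The boundary case $x=0$ reduces to $t\in I_{j^*(t+1)}(t+1)$, which holds because the right endpoint $\binom{t+1-j^*(t+1)}{2}+\binom{j^*(t+1)+1}{2}$ is at least $\lfloor(t+1)^2/4\rfloor\ge t$. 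The residual case $t=k$, where $\sigma(k+1,k-1)$ lies outside the scope of \Cref{thm:shadow_spectrum}, would be handled by directly exhibiting the required shadow size via $\mathcal F_{k-1}(a,b,c)$ augmented by shadow-disjoint $(k-1)$-sets, choosing $(a,b,c)$ so that $\binom{a}{2}+\binom{b}{2}+c=x-1$ and the vertex set of $G(a,b,c)$ fits in $[k+1]$ (by re-selecting the representation if necessary).

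For part (ii), I would use a combinatorial construction. Given $\mathcal F\subseteq\binom{[N]}{k}$ realizing $s_1$ and $\mathcal G\subseteq\binom{[N]}{k-1}$ realizing $s_2$ with the key inclusion $\mathcal G\subseteq\Delta\mathcal F$, pick a fresh element $y\notin[N]$ and set $\mathcal H=\mathcal F\cup\{G\cup\{y\}:G\in\mathcal G\}$. A direct calculation shows
\[\Delta\mathcal H~=~\Delta\mathcal F\,\cup\,\{G'\cup\{y\}:G'\in\Delta\mathcal G\},\]
a disjoint union since the second part necessarily contains $y$ while $\Delta\mathcal F$ does not, and the copies of $\mathcal G$ appearing in the shadows of the lifted $k$-sets are absorbed by $\Delta\mathcal F$. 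Hence $|\mathcal H|=t_1+t_2$ and $|\Delta\mathcal H|=s_1+s_2$, as required. To arrange the inclusion $\mathcal G\subseteq\Delta\mathcal F$, I would start from a canonical realization $\mathcal G=\mathcal F_{k-1}(a_2,b_2,c_2)\cup\mathcal G'$ of $s_2$ and construct $\mathcal F$ by first adjoining, for each $G\in\mathcal G$, a $k$-superset $F_G=G\cup\{x_G\}$ (using fresh auxiliary elements when $t_2\le t_1$, or consolidating covers of pairs $G_i,G_j$ with $|G_i\cup G_j|=k$ when $t_2>t_1$), and then padding with additional $k$-sets to reach $|\mathcal F|=t_1$ and the target shadow size $|\Delta\mathcal F|=s_1$.

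The delicate step is this final construction of $\mathcal F$: the covers $F_G$ automatically incur extra shadow overlaps, and one must verify that the remaining degrees of freedom—supplied by the padding $k$-sets and the flexibility in choosing which $G$'s share a common cover—suffice to hit the exact shadow size $s_1$. The hypothesis $t_1\ge 2$ is precisely what provides this freedom; the inclusion in (ii) fails when $t_1=1$ (as one sees by extending the analysis from part (i), since $\sigma(1,k)=\{k\}$ is too rigid), and the bound $t_1+t_2\le k+1$ keeps everything in the regime where \Cref{thm:shadow_spectrum} applies. I expect this step to require case analysis on the parameters $(a_1,b_1,c_1)$ and $(a_2,b_2,c_2)$, or equivalently on the intervals of $I(t_1)$ and $I(t_2)$ containing $x_1$ and $x_2$.
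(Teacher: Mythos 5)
Your plan for part~(i) is a genuinely different route from the paper's. You phrase the question in terms of the interval decomposition from \Cref{prop:interval_description} and check endpoint membership ($x-1$ versus $t+x$), whereas the paper does a direct case analysis on $(a,b,c)$ depending on whether $c\geq 1$, $c=0$ and $b\geq 2$, or $c=0$ and $b\leq 1$, exhibiting an explicit family in the last case. Your main argument is correct: the endpoint check $t+\binom{t+1-j}{2}\in I_{j-1}(t+1)$ verifies cleanly, and the boundary case $x=0$ reduces to $f(t+1)\geq t$, which holds (though your claimed bound $\lfloor(t+1)^2/4\rfloor$ isn't the cleanest way to see it). However, the residual case $t=k$ is genuinely hand-waved. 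When $t=k$ the option ``$x-1\in I(t+1)\Rightarrow s+k\in\sigma(t+1,k-1)=\sigma(k+1,k-1)$'' steps outside the range of \Cref{thm:shadow_spectrum} and \Cref{lem:subset}, and the remark ``by re-selecting the representation if necessary'' is exactly where the real work is: when $a+b=k+1$ and $c$ is small, the graph $G(a,b,c)$ does not fit on $[k+1]$, and whether a replacement representation always exists must be proved. (This boundary also deserves more care in the paper; you are not alone in glossing it.)

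For part~(ii), however, there is a genuine gap, not just an unfinished computation. Your lifting construction $\mathcal H=\mathcal F\cup\{G\cup\{y\}:G\in\mathcal G\}$ is a pleasant idea and the shadow identity $\abs{\Delta\mathcal H}=\abs{\Delta\mathcal F}+\abs{\Delta\mathcal G}$ is correct under the hypothesis $\mathcal G\subseteq\Delta\mathcal F$, but that hypothesis cannot always be arranged. Take $t_1=2$, $t_2=3$, $k\geq 5$, $s_1=2k-1=\min\sigma(2,k)$, and $s_2=3(k-1)=\max\sigma(3,k-1)$. Any $\mathcal F$ realizing $(2,2k-1)$ consists of two $k$-sets $A,B$ with $\abs{A\cup B}=k+1$, so $\Delta\mathcal F\subseteq\binom{A\cup B}{k-1}$. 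Any $\mathcal G$ realizing $(3,3k-3)$ consists of three pairwise shadow-disjoint $(k-1)$-sets, and if $\mathcal G\subseteq\binom{A\cup B}{k-1}$ this forces the three complementary pairs in $A\cup B$ to be pairwise disjoint, each containing the element of $A\setminus B$ or of $B\setminus A$ — only two pairs can do that, not three. So no admissible $(\mathcal F,\mathcal G)$ exists. Your consolidation fallback (``covers of pairs $G_i,G_j$ with $\abs{G_i\cup G_j}=k$'') does not help either, since shadow-disjoint $(k-1)$-sets have $\abs{G_i\cup G_j}\geq k+1$. The target $s_1+s_2=5k-4$ \emph{is} in $\sigma(5,k)$ (indeed $4=\binom{3}{2}+\binom{2}{2}\in I(5)$), but your construction cannot reach it. The paper's graph-gluing $\mathcal G_k(a,b,c,a',b',c')$ avoids this obstruction precisely because the two pieces are glued \emph{before} complementing, so the interaction terms $a'+b'$ are engineered into the graph; the ``$a\leq 2$'' branch of the paper's proof (using $\mathcal G_k(2,0,0,a',b',c')$ here) handles your problem case directly. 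To salvage your approach you would at minimum need to allow $\mathcal G\not\subseteq\Delta\mathcal F$ and account for the uncovered part of $\mathcal G$ contributing new shadow, which changes the bookkeeping substantially.
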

In the proof of Lemma~\ref{lem:two_levels_suffice} we will use graphs obtained by gluing $G(a,b,c)$
and $G(a',b',c')$, as defined in Section~\ref{sec:shadow_spectrum}, in the following way.
\begin{definition}
  Let $a,b,c,a',b',c'$ be integers with $a\ge b \ge c\ge 0, a'\ge b' \ge c' \ge 0$, and $a,a'\ge
  1$. Set $G=G(a,b,c)$ and $G'=G(a',b',c')$. Moreover, assume that $G$ has at least one pendant
  vertex if $b'=0$ and at least two pendant vertices if $b'\geq 1$.
  \begin{itemize}
  \item For $b'=0$, let $G \oplus G'$ denote a graph that is obtained from $G$
    and $G'$ by identifying one pendant vertex of $G$ with the center of the $a'$-star $G'$.
  \item For $b'\ge 1$, let $G \oplus G'$ denote a graph that is obtained from $G$ and $G'$ by
    identifying two pendant vertices of $G$ with the centers of the two stars forming $G'$.
  \end{itemize}
\end{definition}
The assumption on the number of pendant vertices in $G$ is satisfied whenever any of the following
statements is true:
\begin{multicols}{3}
\begin{enumerate}[(i)]
\item $a+b-2c\geq 2$, or
\item $b'=0$, $a+b-2c\geq 1$, or
\item $b=1$, $a-2c\geq 0$.
\end{enumerate}
\end{multicols}
The construction for $b'\geq 1$ and $a+b-2c\geq 2$ is illustrated in Figure~\ref{fig:gluing}. The
given description does not characterize the graph $G\oplus G'$ up to isomorphism, as we do not
specify which of the pendant vertices of $G$ are used for gluing $G'$. For our purpose this is not
important as we will only use the number of edges and the number of pairs of adjacent edges. The
graph $G(a,b,c) \oplus G(a',b',c')$ has $a+a'+b'-c'+1$ vertices if $b=0$ and $a+b-c+a'+b'-c'+2$
vertices if $b\geq 1$. The number of edges is always $a+b+a'+b'$ and there are
\[\binom{a}{2}+\binom{b}{2}+c+\binom{a'}{2}+\binom{b'}{2}+c' + a' + b'\]
pairs of adjacent edges.
\begin{definition}\label{def:F(a,b,c,a',b',c')}
  For integers $k$ and $a,b,c,a',b',c'$ with $a\ge b \ge c\ge 0$ and $a'\ge b' \ge c'\ge 0$, let
  \[\mathcal{G}_k(a,b,c,a',b',c') = \left\{[k+2]\setminus e\,:\, e \textnormal{ is an edge of }
    G(a,b,c)\oplus G(a',b',c')\right\}.\]
\end{definition}
\begin{figure}[htb]
      \centering
    \begin{tikzpicture}[scale=1.5,every node/.style={draw,circle,fill,outer sep=1pt,inner
        sep=1pt}]
           \node (a) at (0,1.5) {};
      \node (b) at (3,1.5) {};
      \node (v1) at (-.5,0) {};
      \node[draw=none,fill=none] at (0,0) {$\ldots$};
      \node (v2) at (.5,0) {};
      \node (v3) at (1,0) {};
      \node[draw=none,fill=none] at (1.5,0) {$\ldots$};
      \node (v4) at (2,0) {};
      \node (v5) at (2.5,0) {};
      \node[draw=none,fill=none] at (3,0) {$\ldots$};
      \node (v6) at (3.5,0) {};
      \draw[thick] (a) -- (v1);
      \draw[thick] (a) -- (v2);
      \draw[thick] (a) -- (v3);
      \draw[thick] (a) -- (v4);
      \draw[thick] (b) -- (v3);
      \draw[thick] (b) -- (v4);
      \draw[thick] (b) -- (v5);
      \draw[thick] (b) -- (v6);
      \draw [thick,decoration={brace,mirror,raise=0.1cm},decorate] (.95,0) -- (2.05,0)
      node [fill=none,draw=none,pos=0.5,anchor=north,yshift=-0.2cm] {$c$};
      \draw[thick,domain=-120:-25] plot ({0.4*cos(\x)},{1.5+0.4*sin(\x)});
      \node[fill=none,draw=none] at (-.3,1.2) {$a$};
      \draw[thick,domain=-150:-60] plot ({3+0.4*cos(\x)},{1.5+0.4*sin(\x)});
      \node[fill=none,draw=none] at (3.3,1.2) {$b$};
      \node (w1) at (-.5,-1.5) {};
      \node[draw=none,fill=none] at (0,-1.5) {$\ldots$};
      \node (w2) at (.5,-1.5) {};
      \node (w3) at (1,-1.5) {};
      \node[draw=none,fill=none] at (1.5,-1.5) {$\ldots$};
      \node (w4) at (2,-1.5) {};
      \node (w5) at (2.5,-1.5) {};
      \node[draw=none,fill=none] at (3,-1.5) {$\ldots$};
      \node (w6) at (3.5,-1.5) {};
      \draw[thick] (v1) -- (w1);
      \draw[thick] (v1) -- (w2);
      \draw[thick] (v1) -- (w3);
      \draw[thick] (v1) -- (w4);
      \draw[thick] (v5) -- (w3);
      \draw[thick] (v5) -- (w4);
      \draw[thick] (v5) -- (w5);
      \draw[thick] (v5) -- (w6);
      \draw [thick,decoration={brace,mirror,raise=0.1cm},decorate] (.95,-1.5) -- (2.05,-1.5)
      node [fill=none,draw=none,pos=0.5,anchor=north,yshift=-0.2cm] {$c'$};
      \draw[thick,domain=-107:-20] plot ({-.5+.4*cos(\x)},{0.4*sin(\x)});
      \node[fill=none,draw=none] at (-.7,-.35) {$a'$};
      \draw[thick,domain=-145:-40] plot ({2.5+.4*cos(\x)},{0.4*sin(\x)});
      \node[fill=none,draw=none] at (2.95,-.25) {$b'$};
    \end{tikzpicture}
    \caption{A graph $G(a,b,c)\oplus G(a',b',c')$.}\label{fig:gluing}
\end{figure}
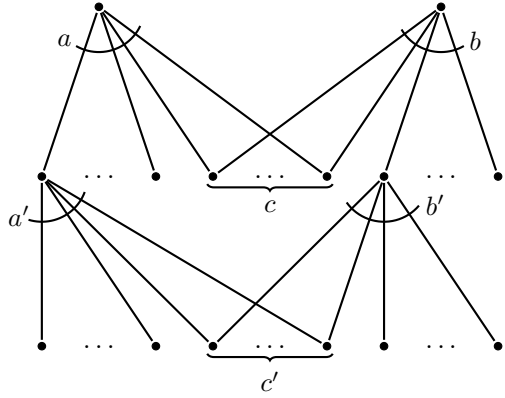
\begin{remark}\label{re:G(a,b,c,a',b',c')}
  If $b=0$ and $k\geq a+a'+b'-c'-1$ or $b\geq 1$ and $k\geq a+b-c+a'+b'-c'$ then the vertex set of
  $G(a,b,c)\oplus G(a',b',c')$ is contained in $[k+2]$, and consequently,
  $\mathcal G_k(a,b,c,a',b',c')$ is a family of $k$-sets with
  $\abs{\mathcal{G}_k(a,b,c,a',b',c')} = a+b+a'+b'$. Moreover, in this situation, the size of its shadow
  equals its size multiplied by $k$ reduced by the number of pairs of sets that have a shadow
  element in common. As the latter equals the number of pairs of adjacent edges of
  $G(a,b,c)\oplus G(a',b',c')$, we obtain
  \[\abs{\Delta \mathcal{G}_k(a,b,c,a',b',c')} = (a+b+a'+b')k - \binom{a}{2} - \binom{b}{2} - c
    -\binom{a'}{2}-\binom{b'}{2}-c'-a'-b'.\]
  We also note that
  $\abs{\Delta \mathcal{G}_k(a,b,c,a',b',c')}=\abs{\Delta \mathcal{F}_k(a,b,c)} + \abs{\Delta
    \mathcal{F}_{k-1}(a',b',c')}$.
\end{remark}
\begin{proof}[Proof of Lemma~\ref{lem:two_levels_suffice}]
  For (i), assume $t\leq k$ and let $s \in \sigma(t,k-1)$. By~(\ref{eq:shadow_spectrum}), there are
  integers $a \ge b \ge c \ge 0$ with $1\le a+b \le t$ and $s =
  t(k-1)-\binom{a}{2}-\binom{b}{2}-c$. If $c\geq 1$, then
  \[s+k = (t+1)(k-1) - \binom{a}{2} - \binom{b}{2} - (c-1) \in \sigma(t+1,k-1).\]
  If $c=0$ and $b\geq 2$, then
  \[s+k = (t+1)(k-1) - \binom{a}{2} - \binom{b}{2} +1 = (t+1)(k-1)-\binom{a}{2}-\binom{b-1}{2}-(b-2)
    \in \sigma(t+1,k-1).\]
  If $c=0$ and $b\leq 1$, then
  \[s+k = t(k-1)-\binom{a}{2}+k =(t+1)k-\binom{a}{2}-t = (t+1)k-\binom{a+1}{2}-(t-a),\]
  and for
  \[\mathcal F=\left\{[k+1]\setminus\{i\}\,:\,i\in\{1,2,\dots,a+1\}\right\}\cup\left\{\{2,\dots,k\}\cup\{k+1+i\}\,:\,i\in[t-a]\right\}\]
  we obtain
  \[\abs{\Delta\mathcal F}=(a+1)k-\binom{a+1}{2}+(t-a)(k-1)=(t+1)k-\binom{a+1}{2}-(t-a)=s,\]
  and this concludes the proof of (i).

  For (ii), let $s_1 \in \sigma(t_1,k)$ and $s_2 \in
  \sigma(t_2,k-1)$. By~(\ref{eq:shadow_spectrum}), there are integers $a\geq b\geq c\geq 0$ and
  $a'\geq b'\geq c'\geq 0$ with $1\leq a+b\leq t_1$ , $1\leq a'+b'\leq t_2$,
  \[s_1 = t_1k - \binom{a}{2} - \binom{b}{2} - c \qquad \text{and} \qquad s_2 = t_2(k-1) -
    \binom{a'}{2} - \binom{b'}{2} - c'.\]
  First, we assume $a\geq 3$, and argue that without loss of generality $a+b-2c\geq 2$. If $a+b-2c=0$, then $a=b=c$, and from
  \[\binom{a}{2} + \binom{a}{2} + a = \binom{a+1}{2} + \binom{a-1}{2} + (a-1),\]
  it follows that we can use $(a+1,a-1,a-1)$ instead of $(a,a,a)$. Similarly, if $a+b-2c=1$, then
  $b=c=a-1$, and from
  \[\binom{a}{2} + \binom{a-1}{2} + a-1 = \binom{a+1}{2} + \binom{a-2}{2} + a-3,\]
  it follows that we can use $(a+1,a-2,a-3)$ instead of $(a,a-1,a-1)$.

  So we assume $a+b-2c \ge 2$ and show that there is a $(t_1+t_2)$-family $\mathcal{F}$ of $k$-sets
  such that $|\Delta \mathcal{F}| = s_1+s_2$. Without loss of generality, we assume that $b=0$ or
  $c\geq 1$. This is possible, because if $b\geq 1$ and $c=0$ then
  $\binom{b}{2}+c=\binom{b-1}{2}+(b-1)$, and we can use $(a,b-1,b-1)$ instead of $(a,b,0)$. If $b=0$
  then $a+a'+b'-c'\leq t_1+t_2\leq k+1$, and if $c\geq 1$ then
  $(a+b-c)+(a'+b'-c') \le (t_1-1) + t_2 \le k$. In both cases, by Remark~\ref{re:G(a,b,c,a',b',c')}
  there is a family $\mathcal F'=\mathcal{G}_k(a,b,c,a',b',c')\subseteq\binom{[k+2]}{k}$ with
  $\abs{\mathcal F'}=a+b+a'+b'$ and
    \[\abs{\Delta\mathcal F'}=(a+b+a'+b')k - \binom{a}{2} - \binom{b}{2} - c
      -\binom{a'}{2}-\binom{b'}{2}-c'-a'-b'.\]
    Adding to $\mathcal F'$ a $(t_1-a-b)$-family of
    $k$-sets with marginal shadow $k$, and a $(t_2-a'-b')$-family of $k$-sets with marginal shadow
    $k-1$ we obtain a $(t_1+t_2)$-family $\mathcal F$ of $k$-sets with
    \[\abs{\Delta\mathcal F}=\abs{\Delta\mathcal F'} + (t_1-a-b)k + (t_2-a'-b')(k-1)=s_1+s_2.\]

    It remains to consider the case $a\leq 2$. Then $t_1k-4 \leq s_1 \leq t_1k$. For the values
    $t_1k-l$, $l=4,3,2,1$, we use the same construction as above with
    $\mathcal F' = \mathcal{G}_k(3,1,1,a',b',c')$, $\mathcal{G}_k(3,0,0,a',b',c')$,
    $\mathcal{G}_k(2,1,1,a',b',c')$, and $\mathcal{G}_k(2,0,0,a',b',c')$, respectively. For
    $s_1=t_1k$ we use $\mathcal F'=\mathcal{G}_k(1,1,0,a',b',c')$ if $a'+b'\leq t_2-1$ or $c'\geq 1$
    (which ensures $2+a'+b'-c'\leq t_1+t_2-1\leq k$). If $a'+b'=t_2$ and $c'=0$, then
  \[s_1+s_2 = (t_1+t_2)k - \binom{a'}{2} - \binom{b'}{2} - (a'+b') = (t_1+t_2)k - \binom{a'+1}{2} -
    \binom{b'}{2} - b'\in\sigma(t_1+t_2,k),\]
  where we used $a'+1+b'=t_2+1\leq t_1+t_2 $.
\end{proof}
We use Lemma~\ref{lem:two_levels_suffice} to show that the antichains on three levels, which appear
in Lemma~\ref{lem:few_sets_out_of_big_level}, can be replaced by flat antichains.
\begin{lemma}\label{lem:two_levels}
  Let $n$ be a positive integer, $k=\lceil n/2\rceil$, let $\mathcal A\subseteq\binom{[n]}{k-1}\cup\binom{[n]}{k}\cup\binom{[n]}{k+1}$ be a
  maximal antichain, and set $t_1=\abs{\mathcal A\cap\binom{[n]}{k+1}}$,
  $t_2=\abs{\mathcal A\cap\binom{[n]}{k-1}}$. If $t_1+t_2\leq k+2$ then there exists
  $\mathcal F\subseteq\binom{[n]}{k+1}$ or $\mathcal F\subseteq\binom{[n]}{n-k+1}$ with
  \[\abs{\mathcal A}=\abs{\mathcal F}+\binom{n}{k}-\abs{\Delta\mathcal F}.\]
\end{lemma}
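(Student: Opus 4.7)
The plan is to find a single-level family with prescribed size and shadow size, exploiting the antichain structure together with Lemma~\ref{lem:two_levels_suffice}. First I would set $\mathcal A_1=\mathcal A\cap\binom{[n]}{k+1}$, $\mathcal A_2=\mathcal A\cap\binom{[n]}{k-1}$, $s_1=\abs{\Delta\mathcal A_1}$, $s_2=\abs{\nabla\mathcal A_2}$, and observe that the antichain condition forces $\Delta\mathcal A_1\cap\nabla\mathcal A_2=\emptyset$: any $X$ in the intersection would witness $B\subseteq X\subseteq A$ for some $A\in\mathcal A_1$, $B\in\mathcal A_2$. Combined with the maximality identity $\mathcal A\cap\binom{[n]}{k}=\binom{[n]}{k}\setminus(\Delta\mathcal A_1\cup\nabla\mathcal A_2)$, this produces the clean formula $\abs{\mathcal A}=t_1+t_2+\binom{n}{k}-s_1-s_2$. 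I would also record that $s_1\in\sigma(t_1,k+1)$, and that complementing $\mathcal A_2$ into $\binom{[n]}{n-k+1}$ yields $s_2\in\sigma(t_2,n-k+1)$.

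The target reformulates as finding $\mathcal F\subseteq\binom{[n]}{l+1}$ for some $l\in\{k,n-k\}$ with $\abs{\Delta\mathcal F}-\abs{\mathcal F}=s_1+s_2-t_1-t_2$. I would aim for the natural choice $\abs{\mathcal F}=t_1+t_2$ and $\abs{\Delta\mathcal F}=s_1+s_2$, reducing the task to showing $s_1+s_2\in\sigma(t_1+t_2,l+1)$ for some admissible $l$. The trivial cases $t_1=0$ (take $\mathcal F=\mathcal A_2^c$) and $t_2=0$ (take $\mathcal F=\mathcal A_1$) are immediate from the size formula.

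For $n=2k-1$, so that $s_2\in\sigma(t_2,k)$, I would apply Lemma~\ref{lem:two_levels_suffice} after substituting $k\to k+1$: part~(ii) delivers $s_1+s_2\in\sigma(t_1+t_2,k+1)$ when $t_1\geq2$, and part~(i) delivers $s_1+s_2\in\sigma(t_1+t_2,k+1)\cup\sigma(t_1+t_2,k)$ when $t_1=1$. Since $\binom{[n]}{n-k+1}=\binom{[n]}{k}$ here, the two options correspond exactly to the two allowed flat levels. For $n=2k$, both $s_1$ and $s_2$ belong to $\sigma(\cdot,k+1)$ and Lemma~\ref{lem:two_levels_suffice} does not apply directly, so I would substitute a disjoint-union construction: by Lemma~\ref{lemma:ground_set_small} applied to $(k+1)$-sets, there exist $\mathcal F_i\subseteq\binom{[k+5]}{k+1}$ with $\abs{\mathcal F_i}=t_i$ and $\abs{\Delta\mathcal F_i}=s_i$, and after relabeling $\mathcal F_2$ to live on a disjoint translate of $[k+5]$ the union witnesses $s_1+s_2\in\sigma(t_1+t_2,k+1)$ directly; a further application of Lemma~\ref{lemma:ground_set_small} then yields $\mathcal F\subseteq\binom{[k+5]}{k+1}\subseteq\binom{[n]}{k+1}$ with $\abs{\mathcal F}=t_1+t_2$ and $\abs{\Delta\mathcal F}=s_1+s_2$, provided $n\geq k+5$.

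The main obstacle is the small-$n$ residue, i.e., values with $n<k+5$ for which Lemma~\ref{lemma:ground_set_small} fails to embed into $\binom{[n]}{k+1}$. Following the paper's stated practice of handling small cases ad hoc, I would dispose of the finitely many remaining $(n,t_1,t_2)$ configurations by direct verification, using Kruskal--Katona left-compression and complement symmetry to cut the casework down and then exhibiting the required $\mathcal F$ by hand (or by computer) in each case.
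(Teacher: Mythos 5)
Your proposal follows essentially the same route as the paper's proof: same decomposition into $\mathcal A_1,\mathcal A_2$, same size formula $\abs{\mathcal A}=\binom{n}{k}+t_1+t_2-\abs{\Delta\mathcal A_1}-\abs{\nabla\mathcal A_2}$, same passage to $\sigma(t_1,k+1)$ and $\sigma(t_2,n-k+1)$, and the same two-case split (disjoint-union additivity of $\sigma$ for $n=2k$, Lemma~\ref{lem:two_levels_suffice} for $n=2k-1$), finishing with Lemma~\ref{lemma:ground_set_small} to pull the abstract family back into $[n]$. You are in fact slightly more careful than the paper in two places: the $t_1=0$ case should indeed take $\mathcal F=\mathcal A_2^c$ on level $n-k+1$ (the paper's displayed choice $\mathcal F=\mathcal A\cap\binom{[n]}{k}$ is on the wrong level when $n=2k$ and appears to be a typo), and the ground-set bound really is $[k+5]$ rather than the paper's stated $[k+4]$ when $\mathcal F$ lives on level $k+1$, which is why your $n\geq k+5$ caveat and the acknowledgment of a small-$n$ residue are the right things to flag.
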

\begin{proof}
  Let $\mathcal A_1=\mathcal A\cap\binom{[n]}{k+1}$ and $\mathcal A_2=\mathcal
  A\cap\binom{[n]}{k-1}$. If $t_2=0$ then $\mathcal A=\mathcal
  F\cup\binom{[n]}{k}\setminus\Delta\mathcal F$ for $\mathcal F=\mathcal A_1$, and if $t_1=0$ then
  $\mathcal A=\mathcal F\cup\binom{[n]}{k}\setminus\Delta\mathcal F$ for $\mathcal F=\mathcal
  A\cap\binom{[n]}{k}$. So we assume $t_1\geq 1$ and $t_2\geq 1$, and then
  \[\abs{\mathcal A}=t_1+t_2+\abs{\binom{[n]}{k}\setminus\left(\Delta\mathcal A_1\cup\nabla\mathcal
        A_2\right)}=\binom{n}{k}+t_1+t_2-\abs{\Delta\mathcal A_1}-\abs{\nabla\mathcal A_2}.\]
  Since
  $\abs{\Delta\mathcal A_1}\in\sigma(t_1,k+1)$ and $\abs{\nabla\mathcal A_2}\in\sigma(t_2,n-k+1)$, it
  follows that $\abs{\Delta\mathcal A_1}+\abs{\nabla\mathcal A_2}\in\sigma(t_1+t_2,k+1)$ if $n=2k$, and with Lemma~\ref{lem:two_levels_suffice},
  \[\abs{\Delta\mathcal A_1}+\abs{\nabla\mathcal A_2}\in\sigma(t_1+t_2,k)\cup\sigma(t_1+t_2,k+1)\]
  if $n=2k-1$.
  As a consequence, there exists a
  $(t_1+t_2)$-family $\mathcal F$ with $\mathcal F\subseteq\binom{\nats}{k+1}$ or
  $\mathcal F\subseteq\binom{\nats}{k}$ and
  $\abs{\Delta\mathcal F}=\abs{\Delta\mathcal A_1}+\abs{\nabla\mathcal A_2}$. Using the second part
  of Theorem~\ref{thm:shadow_spectrum}, we can assume that the members of $\mathcal F$ are subsets of
  $[k+4]\subseteq[n]$, as required.
\end{proof}
Now we have all the ingredients to prove Lemma~\ref{lem:equivalent_to_large_sizes}.
\begin{proof}[Proof of Lemma~\ref{lem:equivalent_to_large_sizes}]
As $m\in S(n)$, there exists a maximal antichain $\mathcal A$ in $B_n$ with $|\mathcal A|=m$.
By Lemmas \ref{lem:A_is_in_the_middle_1} and \ref{lem:A_is_in_the_middle_2},
$\mathcal A\subseteq \binom{[n]}{k-1}\cup\binom{[n]}{k}\cup\binom{[n]}{k+1}$ or $n=2k-1$ and
$\mathcal A\subseteq \binom{[n]}{k-2}\cup\binom{[n]}{k-1}\cup\binom{[n]}{k}$.
In the latter case, the family consisting of the complements of the members of $\mathcal A$ is a maximal antichain
in $\binom{[n]}{k-1}\cup\binom{[n]}{k}\cup\binom{[n]}{k+1}$, i.e., without loss of generality, we
can assume that $\mathcal A\subseteq \binom{[n]}{k-1}\cup\binom{[n]}{k}\cup\binom{[n]}{k+1}$.
By Lemma \ref{lem:few_sets_out_of_big_level}, at most $k+1$ sets in $\mathcal A$ are not in $\binom{[n]}{k}$.
Now, by Lemma \ref{lem:two_levels}, there exists a family $\mathcal F\subseteq\binom{[n]}{l+1}$ with $l\in\{k,n-k\}$
such that $m=\binom nk+|\mathcal F|-|\Delta\mathcal F|$.
Finally, we have $|\mathcal F|\le k+1$ by Lemma \ref{lem:few_sets_out_of_big_level}.
\end{proof}

\section{The lower part of the spectrum}\label{sec:lower_part}

In this section we prove \Cref{thm:main_result}(ii). It will suffice by induction to
construct antichains of large sizes $m$. More specifically, we set
\[w(n)=\binom{n}{\lceil n/2\rceil}-\left\lceil\frac{n}{2}\right\rceil\left\lceil\frac{n+1}{2}\right\rceil\]
and establish the following claim.
\begin{claim}\label{lem:induction_step}
  If $n\geq 7$, then $[w(n-1)+2,w(n)]\subseteq S(n)$.
\end{claim}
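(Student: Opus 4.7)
The plan is to exhibit, for each $m \in [w(n-1)+2, w(n)]$, a maximal antichain in $B_n$ of size $m$. Following the theme announced in the introduction, several parameterized families of maximal antichains will be constructed so that the union of their size spectra covers the whole interval.

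The elementary building block is the flat construction from \Cref{lem:standard_ac_is_max}: for a family $\mathcal{F} \subseteq \binom{[n]}{l+1}$ with $|\mathcal{F}| = t < l+1$, the antichain
\[
\mathcal{A}(\mathcal{F}) = \mathcal{F} \cup \left(\binom{[n]}{l} \setminus \Delta \mathcal{F}\right)
\]
is maximal, and its size equals $\binom{n}{l} + t - |\Delta \mathcal{F}|$. By \Cref{thm:shadow_spectrum} together with \Cref{prop:interval_description}, the admissible values of $|\Delta\mathcal{F}|$ for fixed $t$ form the set $\sigma(t,l+1)$, which is a union of at most $j^*(t)+1$ intervals. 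Hence, as $t$ and the structure of $\mathcal{F}$ vary, $\mathcal{A}(\mathcal{F})$ realizes a union of intervals of sizes. A symmetric dual construction built around the upper shade $\nabla$ yields analogous families.

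Setting $k = \lceil n/2\rceil$, I would apply these constructions on several consecutive level pairs around the middle of $B_n$, together with their complements under $S \mapsto [n] \setminus S$. The right endpoint $w(n)$ is attained by the construction on the topmost relevant level pair with parameters pushed so that $|\mathcal{F}|$ is maximal subject to \Cref{lem:standard_ac_is_max} and $|\Delta\mathcal{F}|$ is taken at the top of $\sigma(|\mathcal{F}|,l+1)$. From there, one decreases $|\Delta\mathcal{F}|$ through the intervals described by \Cref{prop:interval_description}, then decreases $t$ by one and repeats, sweeping out a union of intervals going downward. When a given pair of levels is exhausted, one switches to the next pair below.

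The main obstacle is the bookkeeping at the junctions: one must verify, using \Cref{prop:interval_description}, that successive intervals (both within one pair of levels and across a transition to a different pair) overlap, and that the union collectively reaches all the way down to $w(n-1)+2$. Via the closed form $j^*(t) = \left\lceil\sqrt{2t} - 5/2\right\rceil$ from \Cref{lem:jstar}, these overlap conditions reduce to algebraic inequalities between $\binom{n}{l}$, $tl$, and small quadratic expressions in $t$. These inequalities can be expected to hold for $n$ sufficiently large, while for a finite list of small $n$ (starting at $n=7$) ad hoc constructions in the spirit of the proof of \Cref{lem:large_sizes_if} close any residual gaps.
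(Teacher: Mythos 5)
Your proposal has a genuine gap: the flat construction $\mathcal{A}(\mathcal{F}) = \mathcal{F} \cup \bigl(\binom{[n]}{l} \setminus \Delta\mathcal{F}\bigr)$ with $|\mathcal{F}| = t \le l$ only produces sizes in the narrow band $\bigl[\binom{n}{l} - tl,\ \binom{n}{l}\bigr] \subseteq \bigl[\binom{n}{l} - l^2,\ \binom{n}{l}\bigr]$ near the binomial coefficient $\binom{n}{l}$. But the target interval $[w(n-1)+2,\ w(n)]$ has length of order $\binom{n}{\lceil n/2\rceil}/\lceil n/2\rceil$, which is exponentially larger than $l^2$, and the gap $\binom{n}{l}-\binom{n}{l-1}$ between consecutive binomials near the middle is likewise of order $\binom{n}{l}/l$. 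So the bands your construction reaches on consecutive level pairs are separated by enormous gaps that neither the dual shade construction nor complementation can bridge, and no amount of ad hoc patching for small $n$ can close them; the problem persists for every $n$.

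The paper resolves this with a coarse ingredient that your proposal is missing: maximal \emph{squashed} flat antichains. On a fixed level pair $(k-1,k)$, the antichains $\mathcal{F}(k,m)\cup\bigl(\binom{[n]}{k-1}\setminus\Delta\mathcal{F}(k,m)\bigr)$, where $\mathcal{F}(k,m)$ is the initial segment of $m$ $k$-sets in squashed order and $m$ ranges widely, give a set $Y(n,k)$ of sizes spanning essentially from $\min\{\binom{n}{k-1},\binom{n}{k}\}-C_{\min\{k-1,n-k\}}$ up to $\max\{\binom{n}{k-1},\binom{n}{k}\}$, with consecutive members of $Y(n,k)$ differing by at most $\max\{k-1,n-k\}$ (\Cref{lem:squashed_gaps}). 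The perturbation you describe is then applied, but only inside $\binom{[k+3]}{k}$: for a squashed antichain $\mathcal{A}'\in\mathcal{M}(n,k)$ one replaces $\binom{[k+3]}{k}$ by $\mathcal{F}\cup\bigl(\binom{[k+3]}{k}\setminus\Delta\mathcal{F}\bigr)$ with small $\mathcal{F}\subseteq\binom{[k+3]}{k+1}$. By \Cref{thm:shadow_spectrum} this fine control covers an interval of length at least $k$ for most $k$ (\Cref{lem:last_shadow_interval}), which beats the step bound on $Y(n,k)$, so the union over $\mathcal{A}'$ is one long interval $I(n,k)$. Your final step — verifying that the $I(n,k)$ for varying $k$ overlap and together cover $[w(n-1)+2,w(n)]$, by explicit inequalities for large $n$ and small-case checks — is essentially what the paper does in \Cref{lem:lower_bound,lem:upper_bound,lem:bridges} and \Cref{lem:induction_step_small_n}. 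The fine perturbation and the overlap bookkeeping survive from your sketch, but the key missing idea is to sweep over all maximal squashed flat antichains, not just perturb the full level.
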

Assuming this claim, the proof of Theorem~\ref{thm:main_result}(ii) is easy.
\begin{proof}[Proof of Theorem~\ref{thm:main_result}(ii) (assuming Claim~\ref{lem:induction_step})]
  We proceed by induction on $n$. As we want to use Claim~\ref{lem:induction_step} for the induction
  step, we have to establish the result for $n\leq 6$ as the base case.
  For $1\leq m\leq n$, a maximal antichain of size $m$ is given by
  $\{\{1\},\{2\},\dots,\{m-1\},\{m,\dots,n\}\}$. For $n\leq 5$, this is already sufficient, because
  then $w(n)\leq n$. As $w(6)=14$, we need maximal antichains of sizes
  $m=7,8,\dots,14$ for $n=6$. Such are given by:
  \begin{description}
  \item[$m=7$] $\mathcal A=\{\{1,2,3\},\{1,2,4\},\{1,2,5\},\{3,4\},\{3,5\},\{4,5\},\{6\}\}$,
  \item[$m=8$] $\mathcal A=\binom{[4]}{2}\cup\{\{5\},\{6\}\}$,
  \item[$m=9$]
    $\mathcal
    A=\{\{1,2,5\},\{1,2,6\},\{3,4,5\},\{3,4,6\},\{5,6\},\{1,3\},\{1,4\},\{2,3\},\{2,4\}\}$,
  \item[$m=10$]
    $\mathcal
    A=\{\{1,2\},\{1,3\},\{1,4\},\{2,5\},\{3,6\}\}\cup\binom{[6]}{3}\setminus\nabla\{\{1,2\},\{1,3\},\{1,4\},\{2,5\},\{3,6\}\}$,
  \item[$m=11$]
    $\mathcal
    A=\{\{1,2\},\{1,3\},\{1,4\},\{2,5\},\{3,5\}\}\cup\binom{[6]}{3}\setminus\nabla\{\{1,2\},\{1,3\},\{1,4\},\{2,5\},\{3,5\}\}$,
  \item[$m=12$]
    $\mathcal
    A=\{\{1,2\},\{2,3\},\{4,5\}\}\cup\binom{[6]}{3}\setminus\nabla\{\{1,2\},\{2,3\},\{4,5\}\}$,
  \item[$m=13$]
    $\mathcal
    A=\{\{1,2\},\{2,3\},\{3,4\}\}\cup\binom{[6]}{3}\setminus\nabla\{\{1,2\},\{2,3\},\{3,4\}\}$,
  \item[$m=14$]
    $\mathcal A=\{\{1,2\},\{3,4\}\}\cup\binom{[6]}{3}\setminus\nabla\{\{1,2\},\{3,4\}\}$.
  \end{description}
  For the induction step, we assume $n\geq 7$ and $[1,w(n-1)]\subseteq S(n-1)$. Noting that
  $1\in S(n)$ for every $n$, and adding the singleton $\{n\}$ to each of the maximal antichains with
  sizes $m\in[1,w(n-1)]$ in $B_{n-1}$, we obtain $[1,w(n-1)+1]\subseteq S(n)$, and we use
  Claim~\ref{lem:induction_step} to conclude $[1,w(n)]\subseteq S(n)$ as required.
\end{proof}

The rest of the section is devoted to the proof of Claim~\ref{lem:induction_step}. In
Subsection~\ref{sec:construction} we describe a construction for maximal antichains on three
consecutive levels $k-1,k,k+1$, we prove that this yields an interval $I(n,k)$ of maximal antichain
sizes (\Cref{lem:interval_n_k}), and we provide bounds for the endpoints of these intervals
(\Cref{lem:size_of_I(nk)}). In Section~\ref{sec:large_n} we show that for $n\geq 20$, the intervals
$I(n,k)$ for varying $k$ overlap and taking their union establishes
Claim~\ref{lem:induction_step}. Finally, in Section~\ref{sec:small_n} we use a separate construction
to fill the gaps between the intervals $I(n,k)$ for $7\leq n\leq 19$, thus completing the proof of
Claim~\ref{lem:induction_step}.

\subsection{The main construction}\label{sec:construction}
Before going into the details of the construction we outline the general idea. An antichain
$\mathcal A\subseteq\binom{[n]}{k}\cup\binom{[n]}{k-1}$ is a \emph{maximal squashed flat antichain}
if it has the form $\mathcal A=\mathcal F\cup\binom{{n}}{k-1}\setminus\Delta\mathcal F$ where
$\mathcal F\subseteq\binom{[n]}{k}$ is an initial segment in squashed order. From a maximal
squashed flat antichain $\mathcal A'\subseteq\binom{[n]}{k}\cup\binom{[n]}{k-1}$ with
$\binom{[k+3]}{k}\subseteq\mathcal A'$, we can obtain a new maximal antichain $\mathcal A$ in the
following way. We replace $\binom{[k+3]}{k}$ in $\mathcal{A}'$ by
$\mathcal{F}\cup\binom{[k+3]}{k}\setminus\Delta\mathcal{F}$ where
$\mathcal{F}\subseteq\binom{[k+3]}{k+1}$ contains only few sets (about $k/2$). Using
\Cref{thm:shadow_spectrum}, we can vary $\mathcal F$ without changing its cardinality to get $k$
consecutive values for the size of $\Delta \mathcal{F}$.  For a few small values of $n$ we only get
$k-1$ or $k-2$ consecutive shadow sizes, and we close the resulting gaps by extra constructions.
Hence, from every maximal squashed $\mathcal A'\subseteq\binom{[n]}{k}\cup\binom{[n]}{k-1}$, we
obtain an interval of $k$ consecutive sizes of maximal antichains in $B_n$. We then vary
$\mathcal A'$.  We say that two distinct maximal squashed antichains
$\mathcal{A}_1,\mathcal{A}_2\subseteq\binom{[n]}{k}\cup\binom{[n]}{k-1}$ are \emph{consecutive}\/ if
there is no maximal squashed antichain $\mathcal{A}_3\subseteq\binom{[n]}{k}\cup\binom{[n]}{k-1}$
with
$|\mathcal{A}_1\cap\binom{[n]}{k}|< |\mathcal{A}_3\cap\binom{[n]}{k}| <
|\mathcal{A}_2\cap\binom{[n]}{k}|$.  (Note that for consecutive $\mathcal{A}_1,\mathcal{A}_2$ it is
still possible that there is a maximal squashed antichain $\mathcal{A}_3$ with size $\abs{\mathcal
  A_3}$ between $\abs{\mathcal{A}_1}$ and $\abs{\mathcal{A}_2}$.) Observing that the sizes of consecutive maximal
squashed flat antichains differ by at most $k+1$, we deduce that the maximal antichains on levels
$k+1$, $k$ and $k-1$ obtained from all $\mathcal A'$ yield an interval $I(n,k)$ of sizes of
maximal antichains.

For an integer $k$ with $\lfloor n/2 \rfloor \le k \le n-3$ we denote by $\mathcal{M}(n,k)$ the set
of all maximal squashed flat antichains $\mathcal{A}$ with
$\binom{[k+3]}{k} \subseteq \mathcal{A} \subseteq \binom{[n]}{k}\cup \binom{[n]}{k-1}$. Moreover, we
let $Y(n,k)=\{\abs{\mathcal{A}}\,:\,\mathcal{A} \in \mathcal{M}(n,k)\}$ be the set of their
sizes. The following result is proved using the construction outlined above.
\begin{lemma}\label{lem:construction}
  Let $n$, $k$ and $t$ be integers with $n\geq 7$, $\frac{n-1}{2}\le k \le n-3$ and $1\leq
  t\leq k$. Then for every $m\in Y(n,k)$ and $s\in \sigma(t,k)$ there is a maximal antichain
  $\mathcal A \subseteq \binom{[k+3]}{k+1} \cup \binom{[n]}{k} \cup \binom{[n]}{k-1}$ with
  $\abs{\mathcal A}=m-s$.
\end{lemma}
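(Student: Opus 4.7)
The plan is to follow the construction outlined just before the lemma statement. Given $m\in Y(n,k)$, I pick a maximal squashed flat antichain $\mathcal A'\in\mathcal M(n,k)$ with $|\mathcal A'|=m$, so that $\mathcal A'=\mathcal F'\cup\left(\binom{[n]}{k-1}\setminus\Delta\mathcal F'\right)$ for a squashed initial segment $\mathcal F'\subseteq\binom{[n]}{k}$ containing $\binom{[k+3]}{k}$. I then choose a family $\mathcal F\subseteq\binom{[k+3]}{k+1}$ with $|\Delta\mathcal F|-|\mathcal F|=s$ and define
\[
\mathcal A=\left(\mathcal A'\setminus\binom{[k+3]}{k}\right)\cup\mathcal F\cup\left(\binom{[k+3]}{k}\setminus\Delta\mathcal F\right),
\]
which has size $|\mathcal A|=m+|\mathcal F|-|\Delta\mathcal F|=m-s$.

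To produce such an $\mathcal F$, I view $(k+1)$-subsets of $[k+3]$ as complements of edges of $K_{k+3}$, so that $|\Delta\mathcal F|-|\mathcal F|$ becomes $|E|k-p(E)$, where $E$ is the corresponding edge set and $p(E)$ counts pairs of adjacent edges; the graphs $G(a,b,c)$ from Section~\ref{sec:shadow_spectrum}, possibly augmented by further edges that share vertices with the stars when a matching of the required size does not fit into $K_{k+3}$, realize every $s\in\sigma(t,k)$ with $t\le k$. The antichain property of $\mathcal A$ is then immediate: since $\binom{[k+3]}{k}\subseteq\mathcal A'$ forces $\mathcal A'\cap\binom{[k+3]}{k-1}=\emptyset$, every $(k-1)$-set of $\mathcal A'$ contains an element outside $[k+3]$ and cannot sit inside any $F\in\mathcal F\subseteq\binom{[k+3]}{k+1}$.

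The bulk of the work, and the main obstacle, is verifying maximality of $\mathcal A$ in $B_n$. For $Y\in B_n\setminus\mathcal A$ that was covered in $\mathcal A'$ by some $Z\notin\binom{[k+3]}{k}$, the element $Z$ still lies in $\mathcal A$ and covers $Y$; if the only cover was some $Z\in\binom{[k+3]}{k}\cap\Delta\mathcal F$, then $Z\subset F$ for some $F\in\mathcal F\subseteq\mathcal A$, and one must argue that $F$, a surviving $k$-set in $\binom{[k+3]}{k}\setminus\Delta\mathcal F$, or a $(k-1)$-set already in $\mathcal A'$, covers $Y$. The most delicate case is $Y\in\binom{[k+3]}{k+1}\setminus\mathcal F$: no $(k-1)$-subset of $Y$ lies in $\mathcal A$, so the cover must be a $k$-subset of $Y$ in $\binom{[k+3]}{k}\setminus\Delta\mathcal F$, which translates into the saturation condition that for every non-edge $\{i,j\}$ of the underlying graph $G$, $N_G[i]\cup N_G[j]\ne[k+3]$; this is the essential constraint on $\mathcal F$, and is guaranteed by the presence of an isolated vertex in the small graphs $G$ used here. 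For $Y\in\binom{[n]}{k+1}$ with $Y\not\subseteq[k+3]$, the squashed structure of $\mathcal F'$ is used to exhibit a $(k-1)$-subset of $Y$ that survives in $\mathcal A\cap\binom{[n]}{k-1}$, and the most intricate bookkeeping arises when $\mathcal F'$ extends strictly beyond $\binom{[k+3]}{k}$.
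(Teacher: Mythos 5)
Your construction and size accounting agree with the paper's. The substance of the lemma is the maximality of $\mathcal A$ in $B_n$, and it is here that your argument has a genuine gap.

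For the case $Y\subseteq[k+3]$, your translation into the condition $N_G[i]\cup N_G[j]\ne[k+3]$ for every non-edge $\{i,j\}$ is correct, but a cleaner and more robust route is the one the paper takes via Lemma~\ref{lem:standard_ac_is_max} applied in $B_{k+3}$ at level $k+1$: for $Y\in\binom{[k+3]}{k+1}\setminus\mathcal F$ one has $\abs{\Delta Y\cap\Delta F}\le 1$ for each $F\in\mathcal F$, hence $\abs{\Delta Y\cap\Delta\mathcal F}\le\abs{\mathcal F}=t\le k<k+1=\abs{\Delta Y}$, so some $k$-subset of $Y$ survives in $\binom{[k+3]}{k}\setminus\Delta\mathcal F$. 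This uses only $t\le k$ and nothing about the shape of the graph, so in particular it does not depend on an isolated vertex persisting after the ad hoc augmentations of $G(a,b,c)$ that you invoke to make $\mathcal F$ fit in $[k+3]$.

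The case you flag as ``intricate bookkeeping,'' namely $Y\not\subseteq[k+3]$ with $\abs{Y}\ge k+1$ and all covers of $Y$ in $\mathcal A'$ lying in $\Delta\mathcal F$, is left unresolved, and the squashed order is not the right tool for it. Write $\mathcal A'=\mathcal G\cup\bigl(\binom{[n]}{k-1}\setminus\Delta\mathcal G\bigr)$ with $\mathcal G$ the squashed initial segment, fix $y\in Y\setminus[k+3]$, and choose any $k$-set $A\subset Y$ with $y\in A$. Since $y\notin[k+3]$ we have $A\notin\binom{[k+3]}{k}\supseteq\Delta\mathcal F$. If $A\in\mathcal G$, then $A\in\mathcal G\setminus\Delta\mathcal F\subseteq\mathcal A$ covers $Y$. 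If $A\notin\mathcal G$, then the maximality of $\mathcal A'$ (which is built into the definition of $\mathcal M(n,k)$) supplies a $(k-1)$-subset $B\subset A$ with $B\notin\Delta\mathcal G$, for otherwise $\mathcal A'\cup\{A\}$ would still be an antichain; this $B$ lies in $\mathcal A'\cap\binom{[n]}{k-1}\subseteq\mathcal A$ and $B\subset A\subset Y$. It is precisely this dichotomy, not any fine structure of the initial segment, that closes the maximality argument; without it your proof is incomplete.
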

\begin{proof}
  Fix $m\in Y(n,k)$, $s\in\sigma(t,k)$, and $\mathcal A'\in\mathcal{M}(n,k)$ with
  $\abs{\mathcal A'}=m$. By \Cref{thm:shadow_spectrum} and \Cref{rem:small_ground_set}, $s+t\in\sigma(t,k+1)$, and
  there is a family $\mathcal F\subseteq\binom{[k+3]}{k+1}$ with $\abs{\mathcal F}=t$ and
  $\abs{\Delta\mathcal F}=s+t$. By \Cref{lem:standard_ac_is_max}
  $\mathcal F'=\mathcal F\cup\binom{[k+3]}{k}\setminus\Delta\mathcal F$ is a maximal antichain in
  $B_{k+3}$. As a consequence,
  $\mathcal A=\left[\mathcal A'\setminus\binom{[k+3]}{k}\right]\cup\mathcal F'$ is a maximal
  antichain in $B_n$ with
  \[\abs{\mathcal A}=\abs{A'}-\binom{k+3}{k}+\abs{\mathcal F'}=m-\binom{k+3}{k}+\abs{\mathcal
      F}-\binom{k+3}{3}+\abs{\Delta\mathcal F}=m+t-(s+t)=m-s.\qedhere\]
\end{proof}
By \Cref{thm:shadow_spectrum} and \Cref{prop:interval_description}, for $t\leq k+1$,
\[\left[tk - \binom{t-j^*(t)}{2} - \binom{j^*(t)+1}{2},\,tk\right] \subseteq \sigma(t,k),\]
where $j^*(t)=\lceil \sqrt{2t}-5/2\rceil$ is the smallest non-negative integer $j$ with
$\binom{j+3}{2}\geq t$. Thus we have the following consequence of \Cref{lem:construction}.
\begin{corollary}\label{cor:mini_intervals}
  Let $n\geq 7$ and $k$ be integers with $\frac{n-1}{2}\le k \le n-3$. Let $m \in Y(n,k)$ and set
  $t=\lfloor \frac{k+3}{2} \rfloor$, $j=j^*(t)$. Then
  \[\left[m-tk,\,m-tk+\binom{t-j}{2}+\binom{j+1}{2}\right]\subseteq S(n).\]
\end{corollary}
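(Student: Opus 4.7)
The plan is to combine Lemma~\ref{lem:construction} with the explicit interval description of $\sigma(t,k)$ provided by Theorem~\ref{thm:shadow_spectrum} together with Proposition~\ref{prop:interval_description}. First I would verify that the chosen value $t=\lfloor(k+3)/2\rfloor$ satisfies the hypothesis $1\le t\le k$ of Lemma~\ref{lem:construction}. Since $n\ge 7$ and $k\ge(n-1)/2$, we have $k\ge 3$, and an elementary inequality gives $\lfloor(k+3)/2\rfloor\le k$ for all $k\ge 3$ (the only tight case is $k=3$, where $t=3=k$). In particular $t\le k<k+1$, so Theorem~\ref{thm:shadow_spectrum} describes $\sigma(t,k)$ completely.

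Next I would extract from that description the single long interval that forms the right end of $\sigma(t,k)$. By Proposition~\ref{prop:interval_description}, $I(t)=\bigcup_{i=0}^{j^*(t)}I_i(t)$, and the interval
\[I_{j^*(t)}(t)=\left[0,\,\binom{t-j^*(t)}{2}+\binom{j^*(t)+1}{2}\right]\]
is contained in $I(t)$. Subtracting from $tk$ and using $\sigma(t,k)=\{tk-x:x\in I(t)\}$, we obtain
\[\left[tk-\binom{t-j}{2}-\binom{j+1}{2},\,tk\right]\subseteq\sigma(t,k),\]
where $j=j^*(t)$. This is precisely the inclusion displayed immediately before the corollary, so once the parameter check is in place it is used as a black box.

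Finally, I would invoke Lemma~\ref{lem:construction}: for every $s\in\sigma(t,k)$ there exists a maximal antichain $\mathcal A\subseteq\binom{[k+3]}{k+1}\cup\binom{[n]}{k}\cup\binom{[n]}{k-1}$ of size $m-s$. Letting $s$ range through the interval above, the quantity $m-s$ traces out the interval $[m-tk,\,m-tk+\binom{t-j}{2}+\binom{j+1}{2}]$, and every value in this interval is thereby realized as the size of a maximal antichain in $B_n$, i.e. lies in $S(n)$.

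There is no genuine obstacle here; the corollary is simply the specialization of Lemma~\ref{lem:construction} to the longest run of consecutive shadow sizes that Theorem~\ref{thm:shadow_spectrum} guarantees, namely the rightmost block $tk-I_{j^*(t)}(t)$. The only mildly delicate point is checking that the hypothesis $t\le k$ of Lemma~\ref{lem:construction} is satisfied for every $k$ in the stated range, which is what the preliminary case analysis handles.
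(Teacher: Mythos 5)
Your proposal is correct and follows essentially the same route as the paper: the paper likewise obtains the corollary by noting that Theorem~\ref{thm:shadow_spectrum} and Proposition~\ref{prop:interval_description} give $\left[tk-\binom{t-j}{2}-\binom{j+1}{2},\,tk\right]\subseteq\sigma(t,k)$ for $t\le k+1$, and then applying Lemma~\ref{lem:construction} to convert each $s$ in that interval into a maximal antichain of size $m-s$. Your explicit check that $t=\lfloor(k+3)/2\rfloor$ satisfies $1\le t\le k$ (using $k\ge 3$) is a small but worthwhile addition that the paper leaves implicit.
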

In the next lemma we bound the lengths of the intervals in \Cref{cor:mini_intervals}.
\begin{lemma}\label{lem:last_shadow_interval}
  Let $k\geq 3$, $t=\left\lfloor\frac{k+3}{2}\right\rfloor$, $j=j^*(t)$, and set
  $L=\binom{t-j}{2}+\binom{j+1}{2}$.
  \begin{enumerate}[(i)]
  \item If $k\geq 9$ or $k\in\{3,7\}$, then $L\geq k$.
  \item If $k\in\{4,5,8\}$, then $L=k-1$.
  \item If $k=6$, then $L=4$.
  \end{enumerate}
\end{lemma}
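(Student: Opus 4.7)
The plan is to handle \Cref{lem:last_shadow_interval} as a pure verification combined with a short asymptotic estimate, using only the explicit formula $j^*(t)=\lceil\sqrt{2t}-5/2\rceil$ supplied by \Cref{lem:jstar}.

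First I would dispose of the small cases $k\in\{3,4,5,6,7,8\}$ by direct computation. Evaluating $t=\lfloor(k+3)/2\rfloor$ and then $j=j^*(t)$ gives the pairs $(t,j)=(3,0),(3,0),(4,1),(4,1),(5,1),(5,1)$, respectively, whence $L=3,3,4,4,7,7$. This immediately yields parts (ii) and (iii), together with the exceptional cases $k\in\{3,7\}$ of part (i).

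For the range $k\geq 9$ in part (i), the plan is to prove the stronger bound $\binom{t-j}{2}\geq k$, which is already enough since $L\geq\binom{t-j}{2}$. Since $t=\lfloor(k+3)/2\rfloor\geq(k+2)/2$ and since $\lceil x\rceil\leq x+1$, \Cref{lem:jstar} gives $j\leq\sqrt{2t}-3/2$, so
\[
t-j\geq\frac{k+2}{2}-\sqrt{k+3}+\frac{3}{2}=\frac{k+5}{2}-\sqrt{k+3}.
\]
As $k\to\infty$ the right-hand side grows like $k/2$, so $(t-j)(t-j-1)\geq 2k$ holds comfortably once $k$ is moderately large. I would fix a concrete threshold (say $k\geq 20$) past which the algebra is transparent, and then verify the remaining values $k\in\{9,10,\dots,19\}$ by an explicit table, which produces $L=11,11,13,13,18,18,24,24,31,31,34$, each at least the corresponding $k$.

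The only obstacle is bookkeeping: tracking the parity of $k$ when evaluating $t$ and the places where $j^*(t)$ jumps as $t$ crosses a triangular threshold. No deeper combinatorial input is needed; once \Cref{lem:jstar} is in hand, the entire statement reduces to elementary inequalities and a finite check.
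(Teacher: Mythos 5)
Your proposal is correct and follows essentially the same route as the paper: dispose of small $k$ by direct evaluation of $t$, $j^*(t)$, $L$, and for large $k$ use the explicit formula of Lemma~\ref{lem:jstar} to prove the stronger inequality $\binom{t-j}{2}\geq k$. (The paper checks $3\leq k\leq 32$ by hand and takes $k\geq 33$ for the estimate, using the looser bound $j\leq\sqrt{2t}\leq\sqrt{k+3}$; you use the tighter $j\leq\sqrt{2t}-3/2$ and propose the cutoff $k\geq 20$, which I verified also works, since $\bigl(\tfrac{k+5}{2}-\sqrt{k+3}\bigr)\bigl(\tfrac{k+3}{2}-\sqrt{k+3}\bigr)\geq 2k$ holds and is increasing from $k=20$ on. You should still carry out this algebra explicitly rather than assert it is ``transparent,'' but the plan and the small-case values $L=3,3,4,4,7,7$ for $k=3,\dots,8$ and $L=11,11,13,13,18,18,24,24,31,31,34$ for $k=9,\dots,19$ are all correct.)
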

\begin{proof}
   For $3\leq k\leq 32$, the statement can be verified by calculating $L$. We now assume $k\geq
   33$. Using $\frac{k+2}{2}\leq t\leq\frac{k+3}{2}$, we bound $j$ by
   \[j=\left\lceil\sqrt{2t}-\frac52\right\rceil\leq\sqrt{2t}\leq\sqrt{k+3}.\]
   Then
   \[
L\geq\binom{t-j}{2}\geq\frac12\left(\frac{k+2}{2}-\sqrt{k+3}\right)\left(\frac{k}{2}-\sqrt{k+3}\right)\geq\frac{1}{2}\left(\frac{k}{2}-\sqrt{k+3}\right)^2=\frac{k^2}{8}-\frac12k\sqrt{k+3}+\frac{k+3}{2},\]
   and
   \begin{multline*}
     L-k\geq\frac{k^2}{8}-\frac12k\sqrt{k+3}-\frac{k}{2}=\frac{k}{8}\left(k-4\sqrt{k+3}-4\right)\geq\frac{k}{8}\left(k-4\sqrt{k+3}-9\right)\\
    =\frac{k}{8}\left((k+3)-4\sqrt{k+3}-12\right) =\frac{k}{8}\left(\sqrt{k+3}+2\right)\left(\sqrt{k+3}-6\right)\geq 0,
   \end{multline*}
   where the last inequality follows from $k+3\geq 36$.
\end{proof}
To show that the union of the intervals from \Cref{cor:mini_intervals} is an interval, we need an
upper bound for the gaps between the elements of $Y(n,k)$. This is provided in the next lemma.
\begin{lemma}\label{lem:squashed_gaps}
  The sizes of any two consecutive maximal squashed flat antichains on levels $k$ and $k-1$ in $B_n$
  differ by at most $\max\{k-1,n-k\}$. In particular, for $k \ge \frac{n+1}{2}$ any two consecutive
  elements of $Y(n,k)$ differ by at most $k-1$, for $k=\frac{n}{2}$ they differ by at most $k$, and
  for $k=\frac{n-1}{2}$ they differ by at most $k+1$.
\end{lemma}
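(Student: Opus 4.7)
My plan is: let $\mathcal{F}_1\subsetneq\mathcal{F}_2\subseteq\binom{[n]}{k}$ be the squashed initial segments of the two consecutive maximal flat antichains, with $p=|\mathcal{F}_2|-|\mathcal{F}_1|$ and $\mathcal{F}_2\setminus\mathcal{F}_1=\{Y_1<_S\cdots<_S Y_p\}$; I write $Y_1=\{y_1<\cdots<y_k\}$ and $W_{y_j}:=Y_1\setminus\{y_j\}$. Throughout I rely on the standard fact (Kruskal-Katona) that the shadow of a squashed initial segment in $\binom{[n]}{k}$ is itself a squashed initial segment in $\binom{[n]}{k-1}$, together with the elementary observation that the maximum element of $\Delta Y$ in squashed order is $Y^-:=Y\setminus\{\min Y\}$, so that $\Delta Y\subseteq\mathcal{G}$ iff $Y^-\in\mathcal{G}$ whenever $\mathcal{G}$ is a squashed initial segment. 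Writing $r_i$ for the number of new $(k-1)$-shadow elements contributed when $Y_i$ is inserted, I have $|\mathcal{A}_2|-|\mathcal{A}_1|=p-\sum_{i=1}^p r_i$. When $p=1$, maximality of $\mathcal{A}_1$ forces $r_1\in\{1,\ldots,k\}$, and the absolute difference $|1-r_1|$ is at most $k-1\leq\max\{k-1,n-k\}$.

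For $p\geq 2$, I first identify the ``addable'' $k$-sets after step~1, that is, those $Y\notin\mathcal{F}_1\cup\{Y_1\}$ with $\Delta Y\subseteq\Delta(\mathcal{F}_1\cup\{Y_1\})$. By maximality of $\mathcal{A}_1$, $Y^-$ must lie in $\Delta Y_1\setminus\Delta\mathcal{F}_1$, forcing $Y^-=W_{y_j}$ for some $j$ and $Y=W_{y_j}\cup\{m\}$ with $m<\min W_{y_j}$. A squashed-order comparison rules out $j\geq 2$ (since $\min W_{y_j}=y_1$ would force $m<y_1$ and hence $Y<_S Y_1$, contradicting $Y\in\{Y_2,\ldots,Y_p\}$), leaving $j=1$ and $m\in(y_1,y_2)$: there are exactly $y_2-y_1-1$ addable sets $W_{y_1}\cup\{m\}$, and they form a contiguous block in squashed order immediately after $Y_1$. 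Maximality of $\mathcal{A}_2$ forces $\mathcal{F}_2$ to contain every addable, giving $p=y_2-y_1$, and since each $Y_i$ with $i\geq 2$ satisfies $Y_i^-=W_{y_1}\in\Delta(\mathcal{F}_1\cup\{Y_1\})$, the $Y^-$ criterion yields $r_i=0$ at every intermediate step. Hence $|\mathcal{A}_2|-|\mathcal{A}_1|=p-r_1$.

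For the concluding bound, $y_1\geq 1$ and $y_2\leq y_k-(k-2)\leq n-k+2$ yield $p=y_2-y_1\leq n-k+1$, whence $p-r_1\leq p-1\leq n-k$; conversely $r_1-p\leq k-1$ since $r_1\leq k$ and $p\geq 1$. Thus $|p-r_1|\leq\max\{k-1,n-k\}$, and the ``in particular'' claims follow immediately by evaluating $\max\{k-1,n-k\}$ in the three ranges $k\geq(n+1)/2$, $k=n/2$, and $k=(n-1)/2$. I expect the main obstacle to be the structural characterization of addable sets when $p\geq 2$: the squashed-order analysis that pins them down depends crucially on the weak monotonicity of the map $Y\mapsto Y^-$ and on the Kruskal-Katona guarantee that the intermediate shadows $\Delta(\mathcal{F}_1\cup\{Y_1,\ldots,Y_i\})$ remain squashed initial segments throughout.
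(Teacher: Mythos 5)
Your argument follows the same route as the paper's: work with squashed initial segments $\mathcal F_1 \subsetneq \mathcal F_2$, use the Kruskal-Katona fact that the shadow of a squashed initial segment is itself a squashed initial segment, and exploit the monotone criterion ``$\Delta Y \subseteq \mathcal G$ iff $Y^- \in \mathcal G$'' (the paper phrases the same thing as ``$\Delta' A_{m+1} \neq \emptyset$ iff $1 \in A_{m+1}$''). Your two cases $p=1$ and $p\ge 2$ match the paper's split on whether $\Delta' A_{m+2}$ is empty. So this is not a genuinely different proof.

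There is, however, a real logical gap in the $p\ge 2$ case. You write ``Maximality of $\mathcal A_2$ forces $\mathcal F_2$ to contain every addable, giving $p=y_2-y_1$.'' The first clause only gives $p\ge y_2-y_1$. To get $p\le y_2-y_1$ you must show that $\mathcal F_1\cup\{Y_1\}\cup\{\text{addables}\}$ already yields a maximal flat antichain, so that $\mathcal F_2$, being the \emph{next} one after $\mathcal F_1$, coincides with it. That verification is exactly what the claim $r_i=0$ for $i\ge 2$ accomplishes: the addables contribute no new shadow, so the shadow of $\mathcal F_1\cup\{Y_1\}\cup\{\text{addables}\}$ equals $\Delta(\mathcal F_1\cup\{Y_1\})$, and by your own characterization of addables no further $k$-set has its full shadow inside. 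But in your write-up the $r_i=0$ claim is deduced \emph{after} you have already asserted $p=y_2-y_1$, and indeed the assertion ``each $Y_i$ with $i\ge 2$ satisfies $Y_i^-=W_{y_1}$'' presupposes that $Y_2,\ldots,Y_p$ are precisely the addables — i.e., it presupposes $p = y_2-y_1$. So as written the argument is circular; it needs to be reordered (prove $r_i=0$ for the addables, conclude they give a maximal antichain, then deduce $p=y_2-y_1$). A second small point worth adding: since $\mathcal A_1$ is a maximal squashed antichain, the set $Y_1$ must contain $1$ (otherwise $\Delta' Y_1 = \emptyset$ and $\mathcal A_1$ would not be maximal), so in fact $y_1=1$, not merely $y_1\ge 1$. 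This also forces $r_1=1$ whenever $p\ge 2$; your weaker bounds $1\le r_1\le k$ happen to suffice for the stated inequality, but noting $y_1=1$ is the observation that makes the whole structure transparent and mirrors the paper's argument more faithfully.
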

\begin{proof}
  Let $A_m$ be the $m$-th $k$-set in squashed order, and let $\mathcal F(k,m)=\{A_1,\dots,A_m\}$ be
  the collection of the first $m$ $k$-sets in squashed order. Let $\Delta' A_m$ denote the marginal
  shadow of $A_m$, that is, $\Delta' A_m =\Delta A_m\setminus\Delta\mathcal F(k,m-1)$. For
  $m<\binom{n}{k}$, the collection
  $\mathcal A(k,m,n)=\mathcal F(k,m)\cup\binom{[n]}{k-1}\setminus\Delta\mathcal F(k,m)$ is a maximal
  antichain in $B_n$ if and only if $\Delta' A_{m+1}\neq\emptyset$.
  Note that $\Delta' A_{m+1}\neq\emptyset$ if and only if $1\in A_{m+1}$ because if $1\in A_{m+1}$ then
  $A_{m+1}\setminus\{1\}\in\Delta'A_{m+1}$ and if $1\notin A_{m+1}$ and $x\in A_{m+1}$, then
  $A_{m+1}\setminus\{x\}\in \Delta(A_{m+1}\setminus\{x\}\cup\{1\})$.

  Suppose that $\mathcal A=\mathcal A(k,m,n)$ is a maximal squashed antichain and let
  $\mathcal A'=\mathcal A(k,m',n)$ be the next maximal squashed antichain. We will conclude the
  proof by verifying that $\abs{\mathcal A}-(k-1)\leq\abs{\mathcal A'}\leq\abs{\mathcal A}+(n-k)$.
  If $\Delta' A_{m+2}\neq\emptyset$ then $m'=m+1$, and the claim follows from $\abs{\mathcal A'}=\abs{\mathcal
  A}+1-\abs{\Delta' A_{m+1}}$.
  If $\Delta'A_{m+2}=\emptyset$, then $1\in A_{m+1}$, $2\notin A_{m+1}$ and $A_{m+2}=A_{m+1}\setminus\{1\}\cup\{2\}$.
  It follows that $\Delta'A_{m+1}=\{A_{m+1}\setminus\{1\}\}$. Moreover, with $y:=\min A_{m'}$ we have
  $A_{m+i}=A_{m'}\setminus\{y\}\cup\{i\}$ for $i=1,2,\dots,m'-m$. Hence, $2\le m-m'=y\le n-k-1$,
  $\abs{\Delta' A_{m+1}}=1$ and $\Delta'A_{m+2}=\dots=\Delta' A_{m'}=\emptyset$.
  The claim follows with $\abs{\mathcal A'}=\abs{\mathcal A}+(m'-m)-\abs{\Delta'A_{m+1}\cup\dots\cup\Delta'A_{m'}}$.
\end{proof}

\begin{lemma}\label{lem:interval_n_k}
  Let $n\geq 7$, $\lfloor n/2\rfloor\leq k\leq n-3$, $t=\lfloor\frac{k+3}{2}\rfloor$ and
  $j=j^*(t)$. Then $I(n,k)\subseteq S(n)$, where
  \[I(n,k)=\left[\min Y(n,k)-tk,\,\max Y(n,k)-tk+\binom{t-j}{2}+\binom{j+1}{2}\right].\]
\end{lemma}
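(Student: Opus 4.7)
The plan is to realize $I(n,k)$ as the union $\bigcup_{m\in Y(n,k)}J(m)$ of the short intervals $J(m)=[m-tk,\,m-tk+L]$ provided by \Cref{cor:mini_intervals}, where $L=\binom{t-j}{2}+\binom{j+1}{2}$, and then to show that this union covers $I(n,k)$. Since each $J(m)\subseteq S(n)$ and the left and right endpoints of the $J(m)$'s match those of $I(n,k)$, the only thing to verify is that consecutive sub-intervals leave no gap: whenever $m_1<m_2$ are consecutive elements of $Y(n,k)$, we need $m_2-m_1\leq L+1$.

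This reduces to pitting the gap bound of \Cref{lem:squashed_gaps} against the sub-interval length from \Cref{lem:last_shadow_interval}. Specifically, $m_2-m_1$ is at most $k-1$ if $k\geq(n+1)/2$, at most $k$ if $k=n/2$, and at most $k+1$ if $k=(n-1)/2$, so the required condition becomes $L\geq k-2$, $L\geq k-1$, and $L\geq k$, respectively. By \Cref{lem:last_shadow_interval}(i), $L\geq k$ holds for $k\geq 9$ and for $k\in\{3,7\}$, so all three sub-cases go through uniformly in this regime and the lemma follows immediately.

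The main obstacle is the treatment of the remaining values $k\in\{4,5,6,8\}$, where $L$ can fall short of one of the required bounds. The saving grace is that for each such $k$ only finitely many $n$ are admissible (the constraint $\lfloor n/2\rfloor\leq k\leq n-3$ forces $k+3\leq n\leq 2k+1$), so the problem reduces to a finite case analysis. For each of these finitely many pairs $(n,k)$, one enumerates $Y(n,k)$ by tracing the squashed order directly; I expect that in most of these cases the actual consecutive differences in $Y(n,k)$ turn out to be smaller than the worst-case bound of \Cref{lem:squashed_gaps}, so that the sub-intervals $J(m)$ still overlap and tile $I(n,k)$. Any genuinely missing intermediate size must be produced by an ad hoc construction — e.g.\ by invoking \Cref{lem:construction} with a different value of $t$, or by replacing the family $\mathcal F\subseteq\binom{[k+3]}{k+1}$ in the main construction with a different antichain of the same cardinality but adjusted shadow size. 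Once this finite verification is carried out, the lemma follows.
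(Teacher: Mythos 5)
Your plan matches the paper's proof exactly in structure: realize $I(n,k)$ as the union of the mini-intervals from \Cref{cor:mini_intervals}, establish overlap by comparing the gap bound of \Cref{lem:squashed_gaps} against the sub-interval length $L$ from \Cref{lem:last_shadow_interval}, and handle the exceptional small values of $k$ separately. For $k\geq 9$ or $k\in\{3,7\}$, your observation that $L\geq k$ suffices uniformly against the worst-case gap of $k+1$ is precisely the argument in the paper, and it is correct.

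The gap is in the treatment of $k\in\{4,5,6,8\}$, where you correctly reduce to a finite check but do not carry it out and hedge with ``I expect.'' The paper does the work. For $k\in\{4,5,8\}$ (where $L=k-1$), it notes that only $n=2k+1$ is problematic, and there the unique gap of length $k+1$ in $Y(n,k)$ sits between the last two maximal squashed flat antichains, of sizes $\binom{n}{k}-(k+1)$ and $\binom{n}{k}$; so the single missing size is $\binom{n}{k}-tk-1$. For $k=6$ (where $L=4$), it identifies $n=12$ and $n=13$ as problematic and, via a computer search of $Y(13,6)$, pins down exactly three missing sizes. All of these are then filled by applying \Cref{lem:construction} with $m=\binom{n}{k}\in Y(n,k)$ and exhibiting the missing size as $\binom{n}{k}-s$ for an explicit $s\in\sigma(t',k)$ (e.g.\ $13\in\sigma(4,4)$, $21\in\sigma(5,5)$, $\{25,26\}\subseteq\sigma(5,6)$, $32\in\sigma(6,6)$, $41\in\sigma(6,8)$). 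Your proposal signals the right patching mechanism but leaves the ad hoc construction hypothetical; without the explicit membership verifications in the relevant $\sigma(t',k)$ (or an equivalent concrete argument) the proof is not complete. This is honest unfinished work rather than a wrong turn, but it is a genuine gap: the claim for those five or so pairs $(n,k)$ does not yet follow from what you have written.
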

\begin{proof}
  If $k\geq 9$ or $k\in\{3,7\}$, then by \Cref{lem:squashed_gaps,lem:last_shadow_interval}
  \[I(n,k) = \bigcup_{m \in Y(n,k)} \left[m-tk,\,m-tk+\binom{t-j}{2}+\binom{j+1}{2}\right],\]
   and the claim follows by \Cref{cor:mini_intervals}.
  For $k\in\{4,5,8\}$, the claim follows if
  $n\leq 2k$, and for $n=2k+1$ there is exactly one gap of length $k+1$ in $Y(n,k)$: the difference between
  the sizes of the last two maximal squashed flat antichains which have sizes $\binom{n}{k}-(k+1)$
  and $\binom{n}{k}$, respectively. As a consequence, the only element of $I(n,k)$ which is not
  covered by \Cref{cor:mini_intervals} is $\binom{n}{k}-tk-1$. For $k=6$, we still need to do
  something for $n=12$ and $n=13$. For $n=12$, we only need a maximal antichain of size
  $\binom{12}{6}-25$ by the same argument as above. For $n=13$, we need to look for the gaps in
  $Y(13,6)$ which have length $6$ or $7$. As before, there is only one gap of size 7. Writing
  down the elements of $Y(n,k)$ in the order in which they appear when we run through the antichains
  in $\mathcal M(n,k)$ ordered by the number of $k$-sets, then there are a few more differences of size
  6. Fortunately, in most cases these gaps are filled by maximal squashed antichains that occur
  earlier or later. Using a computer, we verify that when we write down the elements of $Y(n,k)$ in
  increasing order, the only gaps of size at least six appear between the three largest values, that is, between
  $\binom{13}{6}-13$, $\binom{13}{6}-7$ and $\binom{13}{6}$. As a consequence, we only need the sizes
  $\binom{13}{6}-25$, $\binom{13}{6}-26$ and $\binom{13}{6}-32$. Applying \Cref{lem:construction}
  with $\binom{n}{k}\in Y(n,k)$, it is sufficient to verify that the missing sizes can be written in
  the form $\binom{n}{k}-s$ for some $s\in\sigma(t',k)$ with $t'\leq k$. The proof is completed by observing that
  \begin{itemize}
   \item $\binom{9}{4}-13\in S(9)$ because $13\in\sigma(4,4)$,
   \item $\binom{11}{5}-21\in S(11)$ because $21\in\sigma(5,5)$,
   \item $\left\{\binom{13}{6}-25,\,\binom{13}{6}-26,\,\binom{13}{6}-32\right\}\subseteq
    S(13)$ because $\{25,26\}\subseteq\sigma(5,6)$ and $32\in\sigma(6,6)$,
   \item $\binom{12}{6}-25\in S(12)$ because $25\in\sigma(5,6)$,
   \item $\binom{17}{8}-41\in S(17)$ because $41\in\sigma(6,8)$.\qedhere
\end{itemize}
\end{proof}
We use results from~\cite{Griggs2021} to bound $\min Y(n,k)$. To state these results, let $C_l$ be
the sum of the first $l$ Catalan numbers: $C_l=\sum_{i=1}^l\frac{1}{i+1}\binom{2i}{i}$.
\begin{lemma}[Section 4.1 in \cite{Griggs2021}]\label{lem:results_order_paper} For $1<k<n$, the
  minimum size of a maximal squashed flat antichain is
  \[\min\left\{\binom{n}{k-1},\binom{n}{k}\right\}-C_{\min\{k-1,n-k\}}.\]
  The largest number of $k$-sets in a maximal squashed flat antichain of minimum size is
  $\binom{a_k}{k}+\dots+\binom{a_2}{2}$ where
  \begin{equation}\label{eq:last_min}
    a_i=
    \begin{cases}
      2i-2 & \text{for }i\leq n-k+1,\\
      n-k-1+i & \text{for }i\geq n-k+2.
    \end{cases}
  \end{equation}
\end{lemma}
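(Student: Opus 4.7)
The plan is to parametrize maximal squashed flat antichains by the Kruskal--Katona cascade representation of $m=|\mathcal{F}(k,m)|$, optimize a simple single-variable function term by term, and verify that the resulting cascade corresponds to a maximal antichain.

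Writing $m=\sum_{i=s}^{k}\binom{a_i}{i}$ in its unique cascade form with $a_k>a_{k-1}>\cdots>a_s\geq s$, the Kruskal--Katona theorem gives $|\Delta\mathcal{F}(k,m)|=\sum_{i=s}^{k}\binom{a_i}{i-1}$, so
\[
  |\mathcal{A}(k,m,n)|=\binom{n}{k-1}-\sum_{i=s}^{k}\left[\binom{a_i}{i-1}-\binom{a_i}{i}\right].
\]
The proof of \Cref{lem:squashed_gaps} shows that $\mathcal{A}(k,m,n)$ is maximal iff $1\in A_{m+1}$, and via the combinatorial number system this is equivalent to the cascade having no $\binom{a_1}{1}$ term, i.e., $s\geq 2$. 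Since passing from $s\geq 3$ to $s=2$ (with $a_2=2$) adds the strictly positive term $\binom{2}{1}-\binom{2}{2}=1$ to the sum, I would fix $s=2$ and maximize the remaining sum term by term.

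Set $f_i(a):=\binom{a}{i-1}-\binom{a}{i}$. The ratio $\binom{a}{i-1}/\binom{a}{i}=i/(a-i+1)$ shows that $f_i$ is strictly increasing for $a\leq 2i-2$ and strictly decreasing for $a\geq 2i-1$, with maximum $\tfrac{1}{i}\binom{2i-2}{i-1}$ (the $(i-1)$-th Catalan number) attained at both $a=2i-3$ and $a=2i-2$. Because a genuine two-level antichain requires $\mathcal{F}(k,m)\subsetneq\binom{[n]}{k}$, the cascade satisfies $a_k\leq n-1$, which together with strict decrease yields $a_i\leq n-k+i-1$ for all $i$. Hence the feasible per-term optimum is $a_i=\min\{2i-2,\,n-k+i-1\}$, exactly as in the lemma. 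One checks that this sequence is strictly increasing with $a_2=2$, so it forms a valid cascade with $s=2$ and the corresponding antichain is maximal. (The single-level candidates $\binom{[n]}{k}$ and $\binom{[n]}{k-1}$ must also be considered, but both have size exceeding the two-level candidate below.)

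Summing contributions, the uncapped range $i\leq n-k+1$ contributes $\sum_{i=2}^{\min\{k,\,n-k+1\}}C_{i-1}^{\mathrm{Cat}}=C_{\min\{k-1,\,n-k\}}$ in the paper's notation. If $k>n-k+1$, a hockey-stick computation applied separately to $\sum_{i=n-k+2}^{k}\binom{n-k+i-1}{i-1}$ and $\sum_{i=n-k+2}^{k}\binom{n-k+i-1}{i}$ shows that the capped range contributes exactly $\binom{n}{k-1}-\binom{n}{k}$. Combining the two ranges gives the claimed minimum size
\[
  \min\!\left\{\binom{n}{k-1},\binom{n}{k}\right\}-C_{\min\{k-1,\,n-k\}}.
\]
For the final statement, since $\binom{a}{i}$ is strictly increasing in $a$, choosing the larger of the two optima $a_i=2i-2$ in the uncapped range maximizes $m=\sum\binom{a_i}{i}$ among all minimum-size antichains (in the capped range the optimum is already unique), yielding the formula for the $a_i$. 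I expect the main delicate point to be the hockey-stick telescoping in the capped range, where indices must be aligned carefully so that the boundary correction works out to exactly $\binom{n}{k-1}-\binom{n}{k}$.
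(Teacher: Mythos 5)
This lemma is not proved in the present paper; it is imported as a result from Section~4.1 of~\cite{Griggs2021}, so there is no in-paper proof to compare against. Evaluated on its own terms, your proof is essentially correct and gives a clean, self-contained derivation: you parametrize maximal squashed flat antichains via the Kruskal--Katona cascade, reduce maximality to the condition $s\geq 2$ (i.e.\ no $\binom{a_1}{1}$ term), show that the separable objective $\sum_i\bigl[\binom{a_i}{i-1}-\binom{a_i}{i}\bigr]$ is maximized term by term at $a_i=\min\{2i-2,\,n-k+i-1\}$, verify that this choice forms a valid strictly increasing cascade so the term-by-term optimum is genuinely attained, and then evaluate the two pieces of the sum (the uncapped range yields $C_{\min\{k-1,n-k\}}$; the capped range telescopes via the hockey-stick identity to $\binom{n}{k-1}-\binom{n}{k}$, which I verified). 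The uniqueness analysis for the second statement (each uncapped $a_i\in\{2i-3,2i-2\}$, each capped $a_i$ forced) is also sound.

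One small inaccuracy worth fixing: you state that $f_i(a)=\binom{a}{i-1}-\binom{a}{i}$ is \emph{strictly} increasing for $a\leq 2i-2$, but you also (correctly) observe that the maximum is attained at both $a=2i-3$ and $a=2i-2$. These are incompatible. The correct description, from $f_i(a+1)-f_i(a)=\binom{a}{i-2}-\binom{a}{i-1}$, is that $f_i$ is strictly increasing for $a\leq 2i-3$, flat between $a=2i-3$ and $a=2i-2$, and strictly decreasing for $a\geq 2i-2$. This does not affect any subsequent step, since you only use the location of the maximizers and the monotonicity on either side of them, but the exposition should be corrected. You might also make explicit that the lower bound $a_i\geq i$ (needed for $a_i^*$ to lie in the feasible range) follows from $a_2\geq 2$, strict increase, and $n>k$; you use this implicitly.
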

From \Cref{lem:results_order_paper} we obtain the following values for $\min Y(n,k)$.
\begin{lemma}\label{lem:min_Y} If $5 \le k \le n-4$ then $\displaystyle\min Y(n,k)=\min\left\{\binom{n}{k-1},\binom{n}{k}\right\}-C_{\min\{k-1,n-k\}}$.
\end{lemma}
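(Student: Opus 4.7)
The plan is to realize $\min Y(n,k)$ by exhibiting a single maximal squashed flat antichain that both achieves the global minimum over all maximal squashed flat antichains on levels $k,k-1$ and happens to contain $\binom{[k+3]}{k}$. The inequality $\min Y(n,k)\ge\min\{\binom{n}{k-1},\binom{n}{k}\}-C_{\min\{k-1,n-k\}}$ is immediate from \Cref{lem:results_order_paper}, because $\mathcal{M}(n,k)$ is a subset of all maximal squashed flat antichains on these two levels.

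For the matching upper bound I would invoke the second assertion of \Cref{lem:results_order_paper}: there exists a maximal squashed flat antichain $\mathcal{A}_0=\mathcal{F}_0\cup(\binom{[n]}{k-1}\setminus\Delta\mathcal{F}_0)$ of minimum size with $|\mathcal{F}_0|=N$, where $N=\binom{a_k}{k}+\binom{a_{k-1}}{k-1}+\dots+\binom{a_2}{2}$ and the $a_i$ are given by~(\ref{eq:last_min}). Since $\mathcal{F}_0$ is an initial segment of $\binom{[n]}{k}$ in squashed order, and the first $\binom{m}{k}$ $k$-sets in squashed order are exactly $\binom{[m]}{k}$ for every $m$ (any $k$-subset of $[m]$ precedes any $k$-set containing some element larger than $m$, since the maximum of their symmetric difference lies in the latter), the containment $\binom{[k+3]}{k}\subseteq\mathcal{F}_0$ reduces to the numerical inequality $N\ge\binom{k+3}{k}$. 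Once that is verified, $\mathcal{A}_0\in\mathcal{M}(n,k)$ and the two bounds coincide.

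The only thing left is to check $N\ge\binom{k+3}{k}$, and it is already enough to look at the top term: $\binom{a_k}{k}\ge\binom{k+3}{k}$, i.e.\ $a_k\ge k+3$. From~(\ref{eq:last_min}), if $k\le n-k+1$ then $a_k=2k-2$, and $2k-2\ge k+3$ holds because $k\ge 5$; if $k\ge n-k+2$ then $a_k=n-1$, and $n-1\ge k+3$ holds because $n\ge k+4$. Both boundary cases ($k=5$ in the first, $n=k+4$ in the second) are tight, which is precisely why the hypothesis $5\le k\le n-4$ cannot be relaxed by this argument. Since the only ingredients are \Cref{lem:results_order_paper} and this one-line numerical check, I do not anticipate any serious obstacle.
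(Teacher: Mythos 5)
Your proposal is correct and follows essentially the same route as the paper: invoke \Cref{lem:results_order_paper} and verify that the number of $k$-sets in the minimizer is at least $\binom{k+3}{k}$ by checking $a_k\ge k+3$ in the two regimes of~(\ref{eq:last_min}). You spell out a few steps the paper leaves implicit (the trivial lower-bound direction, and why an initial segment of size $\ge\binom{k+3}{k}$ contains $\binom{[k+3]}{k}$), but the argument is the same.
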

\begin{proof}
  To deduce this from \Cref{lem:results_order_paper}, we verify
  $\binom{a_k}{k}+\dots+\binom{a_2}{2}\geq\binom{k+3}{k}$, where the $a_i$ are given
  by~(\ref{eq:last_min}). If $k\leq n-k+1$ then
  $\binom{a_k}{k}+\dots+\binom{a_2}{2}\geq\binom{a_k}{k}=\binom{2k-2}{k}\geq\binom{k+3}{k}$, where
  the inequality follows from $k\geq 5$. If $k\geq n-k+2$, then
  $\binom{a_k}{k}+\dots+\binom{a_2}{2}\geq\binom{a_k}{k}=\binom{n-1}{k}\geq\binom{k+3}{k}$, where the
  inequality follows from $n-1\geq k+3$.
\end{proof}

We are now ready to express the intervals $I(n,k)$ in terms of $n$ and $k$.
\begin{lemma}\label{lem:size_of_I(nk)}
  Let $k \ge 5$, assume $\frac{n-1}{2} \le k \le n-4$, and set
  $t=\left\lfloor\frac{k+3}{2}\right\rfloor$, $j=j^*(t)$.
  \begin{enumerate}[(i)]
  \item If $k > \frac{n+1}{2}$ then
 \[I(n,k) \supseteq \left[ \binom{n}{k} - C_{n-k} -tk,\,\binom{n}{k-1} + \binom{k+3}{k} -
     \binom{k+3}{k-1} -tk + \binom{t-j}{2} + \binom{j+1}{2} \right].\]
\item If $k \le \frac{n+1}{2}$ then $\displaystyle I(n,k) = \left[ \binom{n}{k-1} - C_{k-1} -tk,\,\binom{n}{k} -tk + \binom{t-j}{2} + \binom{j+1}{2}\right]$.
\end{enumerate}
Moreover, $I(7,3)=[16,29]$, $I(8,4)=[41,61]$, and $I(9,4)=[69,117]$.
\end{lemma}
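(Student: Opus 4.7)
The plan is to apply \Cref{lem:interval_n_k}, which gives $I(n,k) = [\min Y(n,k) - tk,\, \max Y(n,k) - tk + L]$ with $L = \binom{t-j}{2} + \binom{j+1}{2}$, so the task reduces to pinning down $\min Y(n,k)$ and $\max Y(n,k)$ in the two regimes. The lower endpoint comes from \Cref{lem:min_Y}: the inequality $k > (n+1)/2$ is equivalent to both $n-k < k-1$ and $\binom{n}{k} < \binom{n}{k-1}$, so in case (i) the formula gives $\min Y(n,k) = \binom{n}{k} - C_{n-k}$, while in case (ii) it symmetrically yields $\min Y(n,k) = \binom{n}{k-1} - C_{k-1}$.

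For the upper endpoint I will argue by cases. In case (ii), the family $\mathcal{F} = \binom{[n]}{k}$ is a squashed initial segment containing $\binom{[k+3]}{k}$ whose associated antichain is $\binom{[n]}{k}$, maximal in $B_n$ and of size $\binom{n}{k}$, so $\binom{n}{k}\in Y(n,k)$. To see this is the maximum, I apply the Normalized Matching Property: for any $\mathcal{F}\subseteq\binom{[n]}{k}$, $|\Delta\mathcal{F}|\ge \tfrac{k}{n-k+1}|\mathcal{F}|$, so the antichain $\mathcal{F}\cup\bigl(\binom{[n]}{k-1}\setminus\Delta\mathcal{F}\bigr)$ has size at most $\binom{n}{k-1}+|\mathcal{F}|\cdot\tfrac{n-2k+1}{n-k+1}$; under $k\leq(n+1)/2$ the coefficient of $|\mathcal{F}|$ is nonnegative, so the bound is maximised at $|\mathcal{F}|=\binom{n}{k}$, where via $\binom{n}{k-1}=\tfrac{k}{n-k+1}\binom{n}{k}$ it simplifies to exactly $\binom{n}{k}$. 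In case (i), only the $\supseteq$ bound is required, so I exhibit the single antichain $\mathcal{A}_0=\binom{[k+3]}{k}\cup\bigl(\binom{[n]}{k-1}\setminus\binom{[k+3]}{k-1}\bigr)$, corresponding to $\mathcal{F}_0=\binom{[k+3]}{k}$, which has size $\binom{k+3}{k}+\binom{n}{k-1}-\binom{k+3}{k-1}$. I verify that $\mathcal{A}_0$ is maximal in $B_n$: any $k$-set $A\notin\binom{[k+3]}{k}$ meets $[n]\setminus[k+3]$, so removing an element of $A\cap[k+3]$ (which exists since $k\leq n-3$ forces $|A\cap[k+3]|\geq 1$ for $A\subseteq[n]$ of size $k$ with some element outside $[k+3]$, by a pigeonhole/size argument) produces a $(k-1)$-set in $\binom{[n]}{k-1}\setminus\binom{[k+3]}{k-1}\subseteq\mathcal{A}_0$; and every $(k-1)$-subset of $[k+3]$ is covered by some $k$-subset of $[k+3]$ already in $\mathcal{A}_0$. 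Hence $\max Y(n,k)\geq\binom{k+3}{k}+\binom{n}{k-1}-\binom{k+3}{k-1}$, as required.

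For the three exceptional triples $(n,k)\in\{(7,3),(8,4),(9,4)\}$ the hypothesis $k\geq 5$ of \Cref{lem:min_Y} fails, so I compute $Y(n,k)$ by direct enumeration. The elements of $\mathcal{M}(n,k)$ are parametrised by squashed initial segments $\binom{[k+3]}{k}\subseteq\mathcal{F}\subseteq\binom{[n]}{k}$, and since $\binom{[n]}{k}\setminus\binom{[k+3]}{k}$ contains only a handful of additional $k$-sets in each case, one sweeps through them in squashed order and tracks $|\mathcal{A}|=|\mathcal{F}|+\binom{n}{k-1}-|\Delta\mathcal{F}|$, using the fact from the proof of \Cref{lem:squashed_gaps} that the marginal shadow at each step is nonempty iff the new set contains $1$. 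Substituting the resulting $\min Y$ and $\max Y$ into $I(n,k)=[\min Y-tk,\,\max Y-tk+L]$ with $t=\lfloor(k+3)/2\rfloor$, $j=j^*(t)$, and $L=\binom{t-j}{2}+\binom{j+1}{2}$ yields the three stated intervals $[16,29]$, $[41,61]$, and $[69,117]$. The main obstacle is the NMP-based upper bound for $\max Y(n,k)$ in case (ii); everything else reduces either to an explicit construction with a short maximality check, or to a bounded enumeration.
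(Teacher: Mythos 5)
Your proposal is correct and follows essentially the same route as the paper: $\min Y(n,k)$ comes from \Cref{lem:min_Y} after resolving the minima via the case split on $k$ versus $(n+1)/2$, the right endpoint of case (i) comes from the antichain built on $\mathcal{F}_0=\binom{[k+3]}{k}$, the right endpoint of case (ii) comes from $\mathcal{F}=\binom{[n]}{k}$, and the three small triples are handled by direct enumeration over $\mathcal{M}(n,k)$. The NMP argument for $\max Y(n,k)\le\binom nk$ in case (ii) is a valid explicit verification of something the paper leaves implicit (it also follows from Sperner, since $\frac{n-1}2\le k\le\frac{n+1}2$ forces $k\in\{\lfloor n/2\rfloor,\lceil n/2\rceil\}$), and your maximality check for $\mathcal{A}_0$ is a similarly reasonable extra detail. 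One minor slip: a $k$-set $A\not\subseteq[k+3]$ must meet $[k+3]$ not because $k\le n-3$, but because $k>\frac{n+1}2$ gives $|[n]\setminus[k+3]|=n-k-3<k$; the inequality $k\le n-3$ only ensures $[k+3]\subseteq[n]$.
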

\begin{proof}
  For $k\geq 5$, by \Cref{lem:min_Y},
  \[\min I(n,k)=\min\left\{\binom{n}{k-1},\,\binom{n}{k}\right\}-C_{\min\{k-1,n-k\}}-tk,\]
  and this gives the left ends of the intervals in (i) and (ii). For $k>\frac{n+1}{2}$, the right
  end of the interval comes from
  $\binom{[k+3]}{k}\cup\binom{[n]}{k-1}\setminus\binom{[k+3]}{k-1}\in\mathcal M(n,k)$. For
  $k\leq\frac{n+1}{2}$, $\binom{[n]}{k}\in\mathcal M(n,k)$ implies
  \[\max I(n,k)= \binom{n}{k} -tk + \binom{t-j}{2} + \binom{j+1}{2}.\]
  This is also valid for $k\in\{3,4\}$, and gives the right ends of the intervals for
  $(n,k)\in\{(7,3),(8,4),(9,4)\}$. For the left ends of these intervals we do an exhaustive search
  over $\mathcal M(n,k)$ and find the following antichains. For $(n,k)=(7,3)$, the maximal squashed
  flat antichain with $21$ $3$-sets has size $25$. For $(n,k)=(8,4)$, the maximal squashed flat
  antichain with $37$ $4$-sets has size $53$. For $(n,k)=(9,4)$, the maximal squashed flat antichain
  with $37$ $4$-sets has size $81$.
\end{proof}

\subsection{Proof of Claim \ref{lem:induction_step} for large $n$}\label{sec:large_n}
In this section, we prove Claim \ref{lem:induction_step} for $n\geq 20$.
\begin{lemma}\label{lem:induction_large_n}
  If $n\geq 20$ then $[w(n-1)+2,w(n)]\subseteq S(n)$.
\end{lemma}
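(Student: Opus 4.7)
My plan is to exhibit $[w(n-1)+2,\,w(n)]$ as a union of the intervals $I(n,k) \subseteq S(n)$ provided by \Cref{lem:interval_n_k}. Set $k_0 = \lfloor n/2 \rfloor$ and take $k$ to range over $[k_0, K(n)]$ for some $K(n) \leq n-4$ to be chosen. It suffices to verify:
\begin{enumerate}[(a)]
\item $\max I(n, k_0) \geq w(n)$;
\item $\min I(n, K(n)) \leq w(n-1) + 2$;
\item $\min I(n, k) \leq \max I(n, k+1) + 1$ for every $k \in \{k_0,\dots,K(n)-1\}$.
\end{enumerate}
Under these three conditions, $\bigcup_{k=k_0}^{K(n)} I(n,k)$ is an interval containing $[w(n-1)+2,\,w(n)]$, and the claim follows from $I(n,k)\subseteq S(n)$.

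For (a), since $k_0 \leq (n+1)/2$, \Cref{lem:size_of_I(nk)}(ii) gives $\max I(n, k_0) = \binom{n}{k_0} - t k_0 + L_t$ where $t = \lfloor (k_0+3)/2 \rfloor$ and $L_t = \binom{t-j^*(t)}{2} + \binom{j^*(t)+1}{2} \geq 0$. Using $\binom{n}{k_0} = \binom{n}{\lceil n/2 \rceil}$ and $t k_0 \leq k_0 (k_0+3)/2 \leq \lceil n/2 \rceil \lceil (n+1)/2 \rceil$, this yields $\max I(n, k_0) \geq w(n)$ directly from the definition of $w(n)$.

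The bulk of the work is condition (c). I split by whether $k, k+1$ belong to case (i) or (ii) of \Cref{lem:size_of_I(nk)}. In the case~(ii) regime (both $k,k+1 \leq (n+1)/2$), a direct subtraction gives
\[G_k := \min I(n,k) - \max I(n,k+1) = \binom{n}{k-1} - \binom{n}{k+1} - C_{k-1} + t'(k+1) - tk - L_{t'};\]
since $k-1 \geq \lfloor n/2\rfloor - 1$, the Catalan sum $C_{k-1}$ is exponentially large and swamps the $O(k^2)$ polynomial corrections, giving $G_k \leq 0$. In the case~(i) regime (both $k, k+1 > (n+1)/2$), the $\binom{n}{k}$ terms cancel, and $G_k$ reduces to $-C_{n-k} + \binom{k+4}{4} - \binom{k+4}{3} + O(k^2)$; the polynomial difference is $\Theta(k^4)$, so $G_k \leq 0$ forces $C_{n-k} \gtrsim k^4$, i.e., $n-k \gtrsim 4\log_2 k$, thereby bounding how large $K(n)$ may be. The lone transition $k = \lceil(n-1)/2\rceil$ is dispatched by plugging directly into the two different formulas in the lemma, noting that the gap $\binom{n}{k-1}-\binom{n}{k}$ acquires an extra favourable sign.

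For (b), I choose $K(n) = \lceil n/2 \rceil + c\lceil\sqrt{n}\rceil$ for a suitable constant $c$, so that by concentration of binomial coefficients, $\binom{n}{K(n)} \leq \tfrac{1}{2}\binom{n}{\lfloor n/2\rfloor} - O(n^2)$, which exceeds $w(n-1) + 2 + C_{n-K(n)} + tK(n)$ by a comfortable margin. The \emph{main obstacle} is reconciling (b) and (c): (b) wants $K(n)$ large, while (c) demands $n-K(n)$ large enough that $C_{n-K(n)}$ dominates a $\Theta(K(n)^4)$ correction. For $n \geq 200$, say, the two requirements $K(n) = \lceil n/2 \rceil + O(\sqrt n)$ and $n - K(n) = \Omega(\log n)$ are effortlessly compatible; for the intermediate range $20 \leq n \leq 200$ one must either track the constants in the Catalan and binomial estimates sharply, or else fall back on direct numerical verification of the required gap inequalities, in line with the authors' stated strategy of treating small cases ad hoc.
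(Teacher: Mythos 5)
Your proposal follows the same overall strategy as the paper's: realize $[w(n-1)+2,w(n)]$ as a union of the intervals $I(n,k)\subseteq S(n)$ from \Cref{lem:interval_n_k}, verify the two endpoint conditions and the chain of overlap conditions, and defer a bounded range of small $n$ to direct verification. The concrete parameters differ. The paper takes $k$ from $\lceil n/2\rceil$ up to $\lceil 9n/10\rceil$ and establishes the three conditions as \Cref{lem:upper_bound,lem:lower_bound,lem:bridges} for $n\geq 200$; the crude upper endpoint $\lceil 9n/10\rceil$ reduces the analogue of your condition~(b) to a soft exponential comparison of the type $1.4^n < 2^{49n/100}$ in \Cref{lem:lower_bound}, because $\binom{n}{k}$ is then far down the tail. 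Your tighter choice $K(n)=\lceil n/2\rceil+O(\sqrt n)$ can also be made to work, but needs the genuine concentration estimate $\binom{n}{K(n)}\lesssim\tfrac12\binom{n}{\lfloor n/2\rfloor}$ with constants pinned down, since the Catalan term $C_{n-K(n)}$ is of lower order than $\binom{n}{K(n)}$ in this regime and only helps. Your lower endpoint $k_0=\lfloor n/2\rfloor$ (rather than $\lceil n/2\rceil$) is admissible for \Cref{lem:interval_n_k,lem:size_of_I(nk)} and actually makes condition~(a) a one-line arithmetic check, bypassing the $\binom{t-j^*(t)}{2}\geq k$ argument of \Cref{lem:upper_bound}. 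One small inaccuracy: your ``case~(ii)--(ii) regime'' consists of at most the single value $k=\lfloor n/2\rfloor$ (and only for odd $n$; it is empty for even $n$), and the (ii)$\to$(i) transition occurs at $k=\lceil n/2\rceil$, not at $\lceil(n-1)/2\rceil$; your estimates there are nonetheless correct in substance.
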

\begin{proof}
  For $20\leq n\leq 199$, we use \Cref{lem:size_of_I(nk),lem:interval_n_k} (and a computer) to
  verify the statement. For $n\geq 200$, we conclude using the following inequalities which are
  proved below:
  \begin{itemize}
  \item For $k=\left\lceil\frac{9n}{10}\right\rceil$, $\min I(n,k)<w(n-1)$ (\Cref{lem:lower_bound}).
  \item For $k=\lceil n/2\rceil$, $\max I(n,k)\geq w(n)$ (\Cref{lem:upper_bound}).
  \item For $\left\lceil\frac{n+2}{2}\right\rceil\leq
  k\leq\left\lceil\frac{9n}{10}\right\rceil$, $\max I(n,k)\geq\min I(n,k-1)$ (\Cref{lem:bridges}).\qedhere
  \end{itemize}
\end{proof}
\begin{lemma}\label{lem:lower_bound}
  For $n\geq 200$ and $k=\left\lceil\frac{9n}{10}\right\rceil$, $\min I(n,k)<w(n-1)$.
\end{lemma}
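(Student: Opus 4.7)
The plan is to reduce the inequality $\min I(n,k) < w(n-1)$ to a comparison between a binomial coefficient near the edge of Pascal's triangle and one near its center, which differ by an exponential factor.

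First, I verify that the hypotheses of \Cref{lem:size_of_I(nk)}(i) are met for $n\geq 200$ and $k=\lceil 9n/10\rceil$: we have $k\geq 5$, $k\leq n-3$ (since $9n/10+1\leq n-3$ when $n\geq 40$), and $k>(n+1)/2$ (since $9n/10>(n+1)/2$ when $n\geq 2$). The lemma then yields
\[\min I(n,k) = \binom{n}{k}-C_{n-k}-tk \leq \binom{n}{k}.\]
On the other hand, using $\lceil (n-1)/2\rceil\lceil n/2\rceil \leq n^2$ and $\lceil (n-1)/2 \rceil = \lfloor n/2 \rfloor$, the quantity $w(n-1)$ satisfies
\[w(n-1) \geq \binom{n-1}{\lfloor(n-1)/2\rfloor}-n^2.\]

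Thus it is enough to establish, for every $n\geq 200$, that
\[\binom{n}{\lceil 9n/10\rceil}+n^2 < \binom{n-1}{\lfloor(n-1)/2\rfloor}.\]
This is a tailor-made application of standard entropy estimates. The right-hand side is at least $2^{n-1}/n$, since the $n$ binomial coefficients $\binom{n-1}{j}$ sum to $2^{n-1}$ and the middle one is the largest. For the left-hand side I use the entropy bound $\binom{n}{\alpha n}\leq 2^{nH(\alpha)}$ applied with $\alpha = 1/10$: a direct calculation gives $H(1/10)<0.47<1/2$, so $\binom{n}{\lceil 9n/10\rceil} = \binom{n}{\lfloor n/10 \rfloor} \leq 2^{0.47\,n}$. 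The inequality $2^{0.47n}+n^2 < 2^{n-1}/n$ then reduces to $2^{0.53n-1}>n(n^2+2^{0.47n})$, which is comfortably true for $n\geq 200$ (e.g.\ at $n=200$, the left side exceeds $2^{105}$ while the right side is less than $2 \cdot 200 \cdot 2^{94} < 2^{103}$).

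The only real obstacle is the bookkeeping of floors, ceilings, and the entropy constant; but since the exponential gap between $2^{0.47n}$ and $2^{n-1}/n$ grows like $2^{0.53n}$, the tiny polynomial correction $n^2$ as well as the discarded $C_{n-k}+tk$ are negligible, and no tight estimate is required. If desired, the single base case $n=200$ can be verified numerically, and the ratio $\binom{n}{\lceil 9n/10 \rceil}/\binom{n-1}{\lfloor (n-1)/2 \rfloor}$ is easily seen to be eventually decreasing in $n$, giving a uniform conclusion for all $n\geq 200$.
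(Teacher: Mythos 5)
Your proposal is correct and follows essentially the same route as the paper: both reduce to showing $\binom{n}{\lceil 9n/10\rceil}+n^2 < \binom{n-1}{\lfloor(n-1)/2\rfloor}$ and then compare the two sides via elementary exponential bounds (you use the entropy bound $\binom{n}{\alpha n}\leq 2^{nH(\alpha)}$ and the pigeonhole bound $\binom{n-1}{\lfloor(n-1)/2\rfloor}\geq 2^{n-1}/n$, whereas the paper uses $\binom{n}{n-k}\leq(en/(n-k))^{n-k}<1.4^n$ and $\binom{n-1}{\lfloor(n-1)/2\rfloor}\geq 2^{\lfloor(n-1)/2\rfloor}$, but these are interchangeable). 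One small slip: \Cref{lem:size_of_I(nk)} requires $k\leq n-4$, not $k\leq n-3$; this is still satisfied for $n\geq 200$ since $9n/10+1\leq n-4$ iff $n\geq 50$, so the argument goes through unchanged.
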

\begin{proof}
  By \Cref{lem:size_of_I(nk)}, we have to show
  \[\binom{n}{k}-C_{n-k}-\left\lfloor\frac{k+3}{2}\right\rfloor k\leq w(n-1).\]
  We prove the stronger inequality
  $\binom{n}{k}\leq\binom{n-1}{\left\lfloor\frac{n-1}{2}\right\rfloor}-n^2$. We bound the left-hand side by
  \[\binom{n}{k}=\binom{n}{n-k}\leq\left(\frac{en}{n-k}\right)^{n-k}\leq\left(\frac{200e}{19}\right)^{n/10}<1.4^n.\]
  For the right-hand side, we start with
  \[\binom{n-1}{\left\lfloor\frac{n-1}{2}\right\rfloor}\geq 2^{\left\lfloor(n-1)/2\right\rfloor}\geq
    2^{\frac{99n}{200}}.\]
  Combining this with $\displaystyle n^2\leq 2^{\frac{98n}{200}}\leq 2^{\frac{98n}{200}}\left(2^{\frac{n}{200}}-1\right)$,
  we obtain $\displaystyle\binom{n-1}{\left\lfloor\frac{n-1}{2}\right\rfloor}-n^2\geq 2^{\frac{49n}{100}}>1.4^n$.
\end{proof}
\begin{lemma}\label{lem:upper_bound}
  For $n\geq 200$ and $k=\lceil n/2\rceil$, $\max I(n,k)\geq w(n)$.
\end{lemma}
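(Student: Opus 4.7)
The plan is to reduce the claim to a trivial arithmetic inequality via the closed-form description of $I(n,k)$ provided by Lemma~\ref{lem:size_of_I(nk)}. Since $n \ge 200$, we have $k = \lceil n/2 \rceil \ge 100$, so $k \ge 5$, $(n-1)/2 \le k \le n-4$, and crucially $k \le (n+1)/2$, so we may invoke part (ii) of Lemma~\ref{lem:size_of_I(nk)}. This gives
\[
\max I(n,k) = \binom{n}{k} - tk + \binom{t-j}{2} + \binom{j+1}{2}
\]
where $t = \lfloor (k+3)/2\rfloor$ and $j = j^*(t)$. Comparing this with $w(n) = \binom{n}{k} - \lceil n/2 \rceil \lceil (n+1)/2 \rceil = \binom{n}{k} - k\lceil (n+1)/2\rceil$, the claim $\max I(n,k) \ge w(n)$ is equivalent to
\[
tk - \binom{t-j}{2} - \binom{j+1}{2} \le k\left\lceil\frac{n+1}{2}\right\rceil.
\]

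Since $\binom{t-j}{2} + \binom{j+1}{2} \ge 0$, it suffices to show $tk \le k\lceil (n+1)/2\rceil$, i.e., $t \le \lceil (n+1)/2\rceil$. I would verify this by splitting on the parity of $n$: if $n = 2k-1$ is odd, then $\lceil (n+1)/2 \rceil = k$, and the bound $\lfloor (k+3)/2\rfloor \le k$ holds whenever $k \ge 3$; if $n = 2k$ is even, then $\lceil (n+1)/2\rceil = k+1$, and $\lfloor (k+3)/2\rfloor \le k+1$ holds for all $k \ge 1$. Both hold comfortably since $k \ge 100$.

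There is essentially no obstacle here: the real work went into establishing the formula for $\max I(n,k)$ in Lemma~\ref{lem:size_of_I(nk)}. Once that is in hand, the inequality follows from the crude estimate $tk \le k\lceil (n+1)/2\rceil$ after discarding the non-negative binomial correction. The hypothesis $n \ge 200$ is not really needed for this particular step — any $n \ge 9$ would do — but it is the common hypothesis used by the three lemmas (Lemmas~\ref{lem:lower_bound}, \ref{lem:upper_bound}, \ref{lem:bridges}) that combine to yield Lemma~\ref{lem:induction_large_n}.
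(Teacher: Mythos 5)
Your proof is correct, and it takes a genuinely simpler route than the paper's. Both proofs start by invoking Lemma~\ref{lem:size_of_I(nk)}(ii) to write $\max I(n,k) = \binom{n}{k} - tk + \binom{t-j}{2} + \binom{j+1}{2}$, but then diverge. You reduce the target $\max I(n,k)\ge w(n)$ to $tk \le k\lceil (n+1)/2\rceil$ by discarding both non-negative binomial terms, and then verify $t = \lfloor (k+3)/2\rfloor \le \lceil (n+1)/2\rceil$ by an easy parity check; the right-hand side of the equivalent inequality $\binom{t-j}{2}+\binom{j+1}{2}\ge k\bigl(t-\lceil (n+1)/2\rceil\bigr)$ is actually non-positive, so the inequality is trivial. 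The paper, by contrast, proves the strictly stronger bound $\max I(n,k)\ge\binom{n}{k}-k\lceil (k+1)/2\rceil$, with $\lceil (k+1)/2\rceil$ (roughly $k/2$) replacing $\lceil (n+1)/2\rceil$ (roughly $k$); whether this substitution is a typo or deliberate, it makes the right-hand side $k\bigl(\lfloor (k+3)/2\rfloor-\lceil (k+1)/2\rceil\bigr)$ equal to $0$ or $k$, and for odd $k$ the paper must therefore invoke the non-trivial estimate $\binom{t-j}{2}\ge k$ inherited from the proof of Lemma~\ref{lem:last_shadow_interval}, which is what forces the $k\ge 33$ (hence $n\ge 200$) hypothesis. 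Your observation that the hypothesis $n\ge 200$ is not exercised in this lemma is accurate: once Lemma~\ref{lem:size_of_I(nk)}(ii) applies (i.e.\ $k\ge 5$, so $n\ge 9$), the inequality holds trivially.
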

\begin{proof}
  Set $t=\left\lfloor\frac{k+3}{2}\right\rfloor$ and $j=j^*(t)$. By \Cref{lem:size_of_I(nk)}, we
  have to show
  \[\binom{n}{k}-tk+\binom{t-j}{2}+\binom{j+1}{2}\geq \binom{n}{k}-k\left\lceil\frac{k+1}{2}\right\rceil.\]
  We prove the stronger inequality $\displaystyle\binom{n}{k}-tk+\binom{t-j}{2}\geq\binom{n}{k}-k\left\lceil\frac{k+1}{2}\right\rceil$,
  or, equivalently,
  \[\binom{t-j}{2}\geq
    k\left(\left\lfloor\frac{k+3}{2}\right\rfloor-\left\lceil\frac{k+1}{2}\right\rceil\right).\]
  If $k$ is even then the right-hand side is $0$, and there is nothing to do. For odd $k$, the
  right-hand side is equal to $k$, and we can show $\binom{t-j}{2}\geq k$ as in the proof of
  \Cref{lem:last_shadow_interval}. The assumption $k\geq 33$ made in this proof follows from $n\geq 200$.
\end{proof}
\begin{lemma}\label{lem:bridges}
  For $n\geq 200$ and $\left\lceil\frac{n+2}{2}\right\rceil\leq k\leq\left\lceil\frac{9n}{10}\right\rceil$, $\max I(n,k)\geq\min I(n,k-1)$.
\end{lemma}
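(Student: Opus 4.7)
The plan is to handle the two cases that arise from the piecewise description of $I(n,\cdot)$ in \Cref{lem:size_of_I(nk)}. Under the hypothesis $\lceil(n+2)/2\rceil \leq k \leq \lceil 9n/10\rceil$ we have $k > (n+1)/2$, so $I(n,k)$ is always given by case~(i) of that lemma. For $I(n,k-1)$ there are two subcases: the \emph{general} subcase $k > \lceil(n+2)/2\rceil$, where $k-1 > (n+1)/2$ and case~(i) applies again; and the \emph{boundary} subcase $k = \lceil(n+2)/2\rceil$, where $k-1 \leq (n+1)/2$ so case~(ii) applies.

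In the general subcase, setting $t = \lfloor(k+3)/2\rfloor$, $t' = \lfloor(k+2)/2\rfloor$, and $j = j^*(t)$, the shared term $\binom{n}{k-1}$ cancels and the inequality $\max I(n,k) \geq \min I(n,k-1)$ reduces to
\[
  C_{n-k+1} + t'(k-1) + \binom{t-j}{2} + \binom{j+1}{2} + \binom{k+3}{k} \;\geq\; \binom{k+3}{k-1} + tk.
\]
Every term on the right is at most a polynomial in $n$ of degree $4$; in particular, using $\binom{k+3}{k-1} = \binom{k+3}{4}$, the dominant contribution is at most $(n+3)^4/24$. On the left, the Catalan sum $C_{n-k+1}$ grows exponentially in $m := n-k+1$, and the hypothesis $k \leq \lceil 9n/10\rceil$ forces $m \geq n/10$. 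The standard bound $C_m \geq \binom{2m}{m}/(m+1) \geq 4^m/(3m^{3/2})$ (valid for $m \geq 2$) then gives $C_{n-k+1} \geq 4^{n/10}/(3(n/10)^{3/2})$, which dominates the polynomial upper bound once $n \geq 200$.

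In the boundary subcase $k = \lceil(n+2)/2\rceil$, case~(ii) of \Cref{lem:size_of_I(nk)} replaces $\binom{n}{k-1}$ by $\binom{n}{k-2}$ and $C_{n-k+1}$ by $C_{k-2}$ in $\min I(n,k-1)$. The inequality becomes
\[
  \left(\binom{n}{k-1} - \binom{n}{k-2}\right) + C_{k-2} + t'(k-1) + \binom{t-j}{2} + \binom{j+1}{2} + \binom{k+3}{k} \;\geq\; \binom{k+3}{k-1} + tk.
\]
Since $k-1 = \lceil n/2\rceil$, the parenthesized difference is non-negative (equal to $0$ for odd $n$ by the symmetry $\binom{n}{(n+1)/2}=\binom{n}{(n-1)/2}$, and equal to the Catalan number $\binom{n}{n/2}/(n/2+1)$ for even $n$), while $C_{k-2} = C_{\lceil n/2\rceil - 1}$ dwarfs the polynomial right-hand side for $n \geq 200$, so this subcase holds with enormous slack.

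The main obstacle is purely numerical: verifying that the exponential lower bound $4^{n/10}/(3(n/10)^{3/2})$ exceeds $(n+3)^4/24 + tk$ for every $n \geq 200$. This reduces to an inequality of the shape $2^{n/5} \geq C\, n^{11/2}$ for an absolute constant $C$, which can be checked at $n = 200$ and then propagated by monotonicity, since the left-hand side grows much faster than the right.
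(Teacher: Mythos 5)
Your proposal is correct and follows essentially the same strategy as the paper: split on whether $k=\lceil(n+2)/2\rceil$ (so case (ii) of Lemma~\ref{lem:size_of_I(nk)} applies to $I(n,k-1)$) or $k>\lceil(n+2)/2\rceil$ (case (i)), cancel the common $\binom{n}{k-1}$ term, and in each subcase reduce to an exponential-versus-polynomial comparison in which a Catalan-sum term dominates a degree-$4$ polynomial in $n$. The only differences are cosmetic: in the boundary subcase you lean on the size of $C_{k-2}$ whereas the paper bounds $\binom{n}{k-1}-\binom{n}{k-2}\geq n^4$ directly, and in the general subcase you quote the elementary bound $\binom{2m}{m}/(m+1)\geq 4^m/(3m^{3/2})$ rather than the Dutton reference the paper cites.
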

\begin{proof}
 Set $t=\left\lfloor\frac{k+3}{2}\right\rfloor$, $t'=\left\lfloor\frac{k+2}{2}\right\rfloor$ and
 $j=j^*(t)$. By \Cref{lem:size_of_I(nk)}, we have to show that
 \[\binom{n}{k-1} + \binom{k+3}{k} -
     \binom{k+3}{k} -tk + \binom{t-j}{2} + \binom{j+1}{2}\geq\min I(n,k-1).\]
  For $k=\frac{n+2}{2}$, this is
  \[\binom{n}{k-1}+\binom{k+3}{k}-\binom{k+3}{k-1}-tk+\binom{t-j}{2}+\binom{j+1}{2}\geq\binom{n}{k-2}-C_{k-2}-t'(k-1).\]
  From $n^5\leq\binom{n}{k-1}$, we obtain
  \[n^4\leq\frac1k\binom{n}{k-1}=\binom{n}{k-1}-\binom{n}{k-2},\]
  and this can be used to bound the left-hand side:
  \[\binom{n}{k-1}+\binom{k+3}{k}-\binom{k+3}{k-1}-tk+\binom{t-j}{2}+\binom{j+1}{2}\geq\binom{n}{k-1}-n^4\geq\binom{n}{k-2},\]
  which is obviously larger than the right-hand side. Now we assume $k\geq\left\lfloor\frac{n+4}{2}\right\rfloor$, and the claim will follow from
  \[\binom{n}{k-1}+\binom{k+3}{k}-\binom{k+3}{k-1}-tk+\binom{t-j}{2}+\binom{j+1}{2}
    \geq\binom{n}{k-1}-C(n-k+1)-t'(k-1).\]
  The left-hand side is larger than $\binom{n}{k-1}-n^4$ and the right-hand side is smaller than
  \[\binom{n}{k-1}-C_{n-k+1}\leq\binom{n}{k-1}-C_{\lfloor n/10\rfloor}\leq\binom{n}{k-1}-\frac{1}{\lfloor n/10\rfloor+1}\binom{2\lfloor n/10\rfloor}{\lfloor n/10\rfloor}.\]
    Therefore, it is sufficient to
  verify $\frac{1}{l+1}\binom{2l}{l}\geq n^4$ for $l=\lfloor n/10\rfloor$. We use the bound
  \[\frac{1}{l+1}\binom{2l}{l}\geq \frac{4^l}{(l+1)\left(\pi l\frac{4l}{4l-1}\right)^{1/2}}\]
  from~\cite{Dutton1986}. From $n\geq 200$, it follows that $l\geq 20$, and this implies $4^l\geq
  25000l^{11/2}$. Bounding the denominator by
  \[(l+1)\left(\pi l\frac{4l}{4l-1}\right)^{1/2}\leq\frac{23l}{22}\left(\pi
      l\frac{88}{87}\right)^{1/2}\leq 2l^{3/2},\]
  we obtain
  \[\frac{4^l}{(l+1)\left(\pi l\frac{4l}{4l-1}\right)^{1/2}}\geq 12500 l^4\geq (11l)^4\geq n^4,\]
  which concludes the proof.
\end{proof}

\subsection{Proof of Claim \ref{lem:induction_step} for small $n$}\label{sec:small_n}
In this Section, we prove Claim \ref{lem:induction_step} for $7\leq n\leq 19$. We need a few auxiliary results. Let
\[S(n,k)=\left\{\abs{\mathcal A}\,:\,\mathcal A\subseteq\binom{[n]}{k-1}\cup\binom{[n]}{k}\text{ is
      a maximal antichain in }B_n\right\}.\]

\begin{lemma}\label{lem:recursion}
  For $k\geq 2$ and $n\geq k+1$, $\displaystyle S(n,k)\supseteq S(n-1,k)+\binom{n-1}{k-2}$ and
  $\displaystyle S(n,k)\supseteq S(n-1,k-1)+\binom{n-1}{k}$.
\end{lemma}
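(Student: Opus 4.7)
The plan relies on two explicit constructions, one for each inclusion, that augment a maximal antichain $\mathcal{A}$ in $B_{n-1}$ using the new ground element $n$.

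For the inclusion $S(n,k) \supseteq S(n-1,k) + \binom{n-1}{k-2}$, given a maximal antichain $\mathcal{A} \subseteq \binom{[n-1]}{k-1} \cup \binom{[n-1]}{k}$ in $B_{n-1}$, I would set
\[\mathcal{B} = \mathcal{A} \cup \left\{B \cup \{n\} : B \in \binom{[n-1]}{k-2}\right\}.\]
Then $\mathcal{B} \subseteq \binom{[n]}{k-1} \cup \binom{[n]}{k}$, the two parts are disjoint (only the second contains $n$), and $|\mathcal{B}| = |\mathcal{A}| + \binom{n-1}{k-2}$. For the inclusion $S(n,k) \supseteq S(n-1,k-1) + \binom{n-1}{k}$, given a maximal antichain $\mathcal{A} \subseteq \binom{[n-1]}{k-2} \cup \binom{[n-1]}{k-1}$ in $B_{n-1}$, the dual construction is
\[\mathcal{B} = \left\{A \cup \{n\} : A \in \mathcal{A}\right\} \cup \binom{[n-1]}{k},\]
again a subfamily of $\binom{[n]}{k-1} \cup \binom{[n]}{k}$ of size $|\mathcal{A}| + \binom{n-1}{k}$.

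In each case, verifying the antichain property splits into three routine sub-cases: the sub-antichain inside (the shifted copy of) $\mathcal{A}$ is inherited, the added layer consists of sets of a single size, and cross-pairs are separated by whether they contain $n$ together with an easy size comparison.

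The substance of the proof is the maximality check in $B_n$. For any $X \subseteq [n]$ I would split on whether $n \in X$. In the first construction, if $n \notin X$ then the maximality of $\mathcal{A}$ in $B_{n-1}$ immediately supplies a witness in $\mathcal{A} \subseteq \mathcal{B}$, while if $n \in X$, writing $X' = X \setminus \{n\}$, I can either extend $X'$ to a $(k-2)$-subset $B$ of $[n-1]$ (giving $X \subseteq B \cup \{n\}$) when $|X'| \le k-3$, or restrict $X'$ to a $(k-2)$-subset $B$ (giving $B \cup \{n\} \subseteq X$) when $|X'| \ge k-1$; the case $|X'| = k-2$ makes $X$ itself an element of $\mathcal{B}$. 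The extension step uses $n-1 \ge k$. In the second construction, if $n \notin X$ one verifies comparability using only the full level $\binom{[n-1]}{k} \subseteq \mathcal{B}$, again invoking $n \ge k+1$ to extend $X$ when $|X|<k$.

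The point requiring the most care is the maximality check for the second construction when $n \in X$ and $|X'| \le k-1$, because here neither the $k$-subsets in $\mathcal{B}$ nor pure size arguments suffice. One must invoke the maximality of $\mathcal{A}$ in $B_{n-1}$ applied to $X'$: a containment $A \subseteq X'$ lifts to $A \cup \{n\} \subseteq X$, a containment $A \supseteq X'$ lifts to $X \subseteq A \cup \{n\}$, and the edge case $X' \in \mathcal{A}$ yields $X = X' \cup \{n\} \in \mathcal{B}$. Both directions translate cleanly through the map $A \mapsto A \cup \{n\}$, which is the key observation making the construction work.
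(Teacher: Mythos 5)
Your proposal is correct and uses exactly the same two constructions as the paper (which simply states the constructions and asserts maximality without elaboration); your case analysis for the antichain and maximality checks is a correct fleshing-out of what the paper leaves to the reader.
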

\begin{proof}
  Let $\mathcal A'\subseteq\binom{[n-1]}{[k-1]}\cup\binom{[n-1]}{[k]}$ be a maximal antichain in
  $B_{n-1}$. Then
  \[\mathcal A=\mathcal
    A'\cup\left\{A\cup\{n\}\,:\,A\in\binom{[n-1]}{k-2}\right\}\subseteq\binom{[n]}{k-1}\cup\binom{[n]}{k}\]
  is a maximal antichain in $B_n$, and this implies the first inclusion. For the second one, let
  $\mathcal A'\subseteq\binom{[n-1]}{[k-2]}\cup\binom{[n-1]}{[k-1]}$ be a maximal antichain in
  $B_{n-1}$. Then
  \[\mathcal A=\left\{A\cup\{n\}\,:\,A\in\mathcal
      A'\right\}\cup\binom{[n-1]}{k}\subseteq\binom{[n]}{k-1}\cup\binom{[n]}{k}\]
  is a maximal antichain in $B_n$.
\end{proof}

\begin{definition}
  A flat antichain $\mathcal A\subseteq\binom{[n]}{k-1}\cup\binom{[n]}{k}$ is called
  \emph{$\{1,2\}$-separated} if $\{1,2\}\subseteq A$ for every $k$-set $A\in\mathcal A$
  $\lvert A\cap\{1,2\}\rvert\leq 1$ for every $(k-1)$-set $A\in\mathcal A$.
\end{definition}

\begin{observation}\label{obs:12_separated_recursion}
  Let $\mathcal A'\subseteq\binom{[n-1]}{k-1}\cup\binom{[n-1]}{k}$ and
  $\mathcal A''\subseteq\binom{[n-1]}{k-2}\cup\binom{[n-1]}{k-1}$ be $\{1,2\}$-separated
  maximal antichains in $B_{n-1}$.  Then
  $\mathcal A=\mathcal A'\cup\{A\cup\{n\}\,:\,A\in\mathcal
  A''\}\subseteq\binom{[n-1]}{k-1}\cup\binom{[n-1]}{k}$ is a $\{1,2\}$-separated maximal antichain
  in~$B_n$.
\end{observation}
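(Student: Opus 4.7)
My plan is to verify four properties of the family $\mathcal A$: containment in $\binom{[n]}{k-1}\cup\binom{[n]}{k}$ (immediate from the construction), the antichain property, $\{1,2\}$-separation, and maximality. To organise the argument I will split each source by level: set $\mathcal F'=\mathcal A'\cap\binom{[n-1]}{k}$, $\mathcal G'=\mathcal A'\cap\binom{[n-1]}{k-1}$, $\mathcal F''=\mathcal A''\cap\binom{[n-1]}{k-1}$, and $\mathcal G''=\mathcal A''\cap\binom{[n-1]}{k-2}$, so that the $k$-th level of $\mathcal A$ is $\mathcal F'\cup\{B\cup\{n\}:B\in\mathcal F''\}$ and the $(k-1)$-st level is $\mathcal G'\cup\{B\cup\{n\}:B\in\mathcal G''\}$. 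The two $\{1,2\}$-separation hypotheses translate into: every set in $\mathcal F'\cup\mathcal F''$ contains $\{1,2\}$, while every set in $\mathcal G'\cup\mathcal G''$ meets $\{1,2\}$ in at most one point. In particular $\mathcal F''\cap\mathcal G'=\emptyset$, and this single disjointness will drive the whole proof.

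The $\{1,2\}$-separation of $\mathcal A$ is then automatic, since adjoining $n\notin\{1,2\}$ preserves both ``contains $\{1,2\}$'' and ``meets $\{1,2\}$ in at most one point''. For the antichain property, any containment internal to $\mathcal A'$ or to the shifted copy of $\mathcal A''$ is forbidden by the source antichain; a mixed containment $A_1\subsetneq A_2$ must have $A_1\not\ni n$ and $A_2\ni n$, and the size constraints $\abs{A_1}\in\{k-1,k\}$ together with $\abs{A_2\setminus\{n\}}\in\{k-2,k-1\}$ force $\abs{A_1}=k-1$ and $A_1=A_2\setminus\{n\}$, placing $A_1$ in $\mathcal G'\cap\mathcal F''=\emptyset$, a contradiction.

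For maximality I will use the characterization recalled in the introduction: a flat antichain $\mathcal F\cup\mathcal G$ on levels $k,k-1$ of $B_n$ is maximal if and only if $\mathcal G=\binom{[n]}{k-1}\setminus\Delta\mathcal F$ and $\mathcal F=\binom{[n]}{k}\setminus\nabla\mathcal G$. I will split each of these two equations according to whether a set contains $n$: the halves with $n\notin$ reduce verbatim to the maximality equations for $\mathcal A'$ in $B_{n-1}$, and the halves with $n\in$ reduce, after stripping $n$, to the maximality equations for $\mathcal A''$. The bookkeeping produces two extra inclusions that must be verified, namely $\mathcal F''\subseteq\Delta\mathcal F'$ (because $B$ lies in the shadow of $B\cup\{n\}$ for every $B\in\mathcal F''$) and $\mathcal G'\subseteq\nabla\mathcal G''$ (because $G\cup\{n\}$ lies in the shade of $G$ for every $G\in\mathcal G'$). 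Using the maximality equations for $\mathcal A'$ and $\mathcal A''$ respectively, these become $\mathcal F''\cap\mathcal G'=\emptyset$ and $\mathcal G'\cap\mathcal F''=\emptyset$, so both follow from the disjointness already established. The main obstacle, such as it is, is keeping the shadow/shade bookkeeping straight across the partition of $2^{[n]}$ by containment of $n$; once this decomposition is laid out cleanly, the entire verification collapses to the single fact $\mathcal F''\cap\mathcal G'=\emptyset$, and no further combinatorial input is needed.
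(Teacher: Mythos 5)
Your proof is correct. The paper states this as an unproved Observation, so there is no proof to compare against; what you supply is exactly the routine verification the authors presumably had in mind. Your decomposition is well organised: the $\{1,2\}$-separation and antichain checks are straightforward, and for maximality you correctly invoke the characterization $\mathcal G=\binom{[n]}{k-1}\setminus\Delta\mathcal F$, $\mathcal F=\binom{[n]}{k}\setminus\nabla\mathcal G$, split each equation by whether a set contains $n$, and correctly identify the two ``glue'' conditions $\mathcal F''\subseteq\Delta\mathcal F'$ and $\mathcal G'\subseteq\nabla\mathcal G''$ which, via the maximality equations for $\mathcal A'$ and $\mathcal A''$, both collapse to the disjointness $\mathcal F''\cap\mathcal G'=\emptyset$ furnished by $\{1,2\}$-separation. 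One small caveat in the phrasing: it is not quite right to say that ``the halves with $n\notin$ reduce verbatim to the maximality equations for $\mathcal A'$'' --- only one of the two $n\notin$ halves does, and dually only one of the two $n\in$ halves reduces verbatim to $\mathcal A''$'s equations; the other half in each pair carries the extra inclusion. But you acknowledge the extra inclusions immediately afterward, so the substance is right and the argument is complete.
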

This can be used in an induction to establish the following result.
\begin{lemma}\label{lem:separated_intervals}
  Let $n$, $k$ and $m$ be integers with $k\geq 2$, $n\geq k+1$, and
  \[\binom{n-1}{k-1}\leq m\leq \binom{n}{k-1}-2\binom{n-3}{k-3}-\binom{n-4}{k-5}.\]
  Then there exists a $\{1,2\}$-separated antichain
  $\mathcal A\subseteq\binom{[n]}{k-1}\cup\binom{[n]}{k}$ with $\abs{\mathcal A}=m$.
\end{lemma}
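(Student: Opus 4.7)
The plan is to exhibit a $\{1,2\}$-separated antichain of any given size in the stated range using only $(k-1)$-sets, without using any $k$-sets at all. This bypasses the inductive proof suggested by Observation~\ref{obs:12_separated_recursion}, although that alternative also works.

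First, I would set
\[ \mathcal G_0 = \left\{G \in \binom{[n]}{k-1} : 2 \notin G\right\} \quad\text{and}\quad \mathcal H = \left\{G \in \binom{[n]}{k-1} : 2 \in G,\ 1 \notin G\right\}, \]
giving $|\mathcal G_0| = \binom{n-1}{k-1}$ and $|\mathcal H| = \binom{n-2}{k-2}$, and observe that every element of $\mathcal G_0 \cup \mathcal H$ meets $\{1,2\}$ in at most one element. For an integer $m$ with $\binom{n-1}{k-1} \le m \le \binom{n-1}{k-1} + \binom{n-2}{k-2}$, I would pick any $\mathcal H' \subseteq \mathcal H$ with $|\mathcal H'| = m - \binom{n-1}{k-1}$ and take $\mathcal A = \mathcal G_0 \cup \mathcal H'$. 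Since $\mathcal A$ consists only of $(k-1)$-sets, it is automatically an antichain, and since no $k$-sets are present, the $\{1,2\}$-separation requirement on $k$-sets is vacuous while the requirement on $(k-1)$-sets holds by choice of $\mathcal G_0$ and $\mathcal H$. Thus $\mathcal A$ is a $\{1,2\}$-separated antichain of size exactly~$m$.

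To close, I need only verify that the lemma's upper bound $\binom{n}{k-1} - 2\binom{n-3}{k-3} - \binom{n-4}{k-5}$ does not exceed $\binom{n-1}{k-1} + \binom{n-2}{k-2}$. By Pascal's rule, $\binom{n}{k-1} - \binom{n-1}{k-1} - \binom{n-2}{k-2} = \binom{n-1}{k-2} - \binom{n-2}{k-2} = \binom{n-2}{k-3}$, so the required inequality is $\binom{n-2}{k-3} \le 2\binom{n-3}{k-3} + \binom{n-4}{k-5}$. Applying Pascal's rule to write $\binom{n-2}{k-3} = \binom{n-3}{k-3} + \binom{n-3}{k-4}$, then $\binom{n-3}{k-4} = \binom{n-4}{k-4} + \binom{n-4}{k-5}$ and $\binom{n-3}{k-3} = \binom{n-4}{k-3} + \binom{n-4}{k-4}$, the inequality reduces to $\binom{n-4}{k-3} \ge 0$, which is immediate.

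There is no real obstacle in this approach; the main insight is that the stated upper bound is strictly smaller than the largest possible $\{1,2\}$-separated antichain consisting purely of $(k-1)$-sets, so one does not need any $k$-sets. An alternative proof by induction on $n$ via Observation~\ref{obs:12_separated_recursion} also works, and has the side-benefit of producing \emph{maximal} $\{1,2\}$-separated antichains: a short computation with Pascal's rule shows that the sums $s_1 + s_2$ of sizes taken from the inductive ranges for $B_{n-1}$ on level pairs $(k-1,k)$ and $(k-2,k-1)$ fill exactly the interval $[\binom{n-1}{k-1},\ \binom{n}{k-1} - 2\binom{n-3}{k-3} - \binom{n-4}{k-5}]$, which presumably explains the precise form of the upper bound stated in the lemma.
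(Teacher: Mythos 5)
Your direct construction is correct and proves the lemma exactly as it is stated: the two families $\mathcal G_0$ and $\mathcal H$ live entirely in level $k-1$, the $\{1,2\}$-separation conditions are immediate (vacuous for $k$-sets, by design for $(k-1)$-sets), and your Pascal's-rule computation reducing the required bound $\binom{n}{k-1}-2\binom{n-3}{k-3}-\binom{n-4}{k-5}\leq\binom{n-1}{k-1}+\binom{n-2}{k-2}$ to $\binom{n-4}{k-3}\ge 0$ is right. It is also genuinely simpler than the paper's inductive argument.

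The gap is precisely the one you flag in your final paragraph, and it is not merely cosmetic. What the paper's proof actually constructs, for each $m$ in the range, is a \emph{maximal} $\{1,2\}$-separated antichain (the base cases are checked to be maximal, and \Cref{obs:12_separated_recursion} preserves maximality), and this maximality is essential downstream: \Cref{lem:separated_intervals} is invoked in the proof of \Cref{lem:bridges_2} to conclude that the interval $[\binom{n-1}{k-1},\binom{n}{k-1}-2\binom{n-3}{k-3}-\binom{n-4}{k-5}]$ is contained in $S(n,k)$, and $S(n,k)$ is by definition the set of sizes of \emph{maximal} antichains on levels $k-1$ and $k$. The word ``maximal'' appears to have been accidentally dropped from the lemma's statement. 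Your antichain $\mathcal G_0\cup\mathcal H'$ is contained entirely in level $k-1$ and is not all of $\binom{[n]}{k-1}$ (e.g.\ it omits $\{2,3,\dots,k\}$), so any omitted $(k-1)$-set can be added and the antichain is never maximal in the relevant range. So your argument proves the literal statement but not the statement the paper proves and uses, and the inductive route through \Cref{obs:12_separated_recursion}, which you correctly identify and which is exactly what the paper does, is required here rather than just an alternative.

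Your closing remark about where the bound $\binom{n}{k-1}-2\binom{n-3}{k-3}-\binom{n-4}{k-5}$ comes from is also on the mark: it is exactly the right endpoint of the interval one gets by adding the two inductive ranges from $B_{n-1}$ (levels $(k-1,k)$ and $(k-2,k-1)$) via \Cref{obs:12_separated_recursion}, which matches the displayed simplification in the paper's proof.
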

\begin{proof}
  For $k=2$, we have $m\in\{n-1,n\}$. Then $\binom{[n]}{1}$ is a $\{1,2\}$-separated maximal
  antichain of size $n$, and $\{\{1,2\}\}\cup\left(\binom{[n]}{1}\setminus\{\{1\},\,\{2\}\}\right)$
  is a $\{1,2\}$-separated maximal antichain of size $n-1$. For $k\geq 3$, $n=k+1$, we have
  $k\leq m\leq 2k-2$. For $l\in\{3,4,\dots,n\}$,
  \[\mathcal A= \left\{[n]\setminus\{j\}\,:\,3\leq j\leq l\right\}\ \cup\ \{\{3,4,\dots,n\}\}\ \cup\
    \{\{i\}\cup([3,n]\setminus\{j\})\,:\,i\in\{1,2\},\,j\in\{l+1,\dots,n\}\}\] is a maximal
  $\{1,2\}$-separated antichain with $\abs{\mathcal A}=l-2+2(n-l)+1=2k-l+1$. For $n\geq k+2$, we
  proceed by induction, and note that \Cref{obs:12_separated_recursion} implies that we
  get all sizes $m$ with
  \[\binom{n-2}{k-1}+\binom{n-2}{k-2}\leq
    m\leq\binom{n-1}{k-1}-2\binom{n-4}{k-3}-\binom{n-5}{k-5}+\binom{n-1}{k-2}-2\binom{n-4}{k-4}-\binom{n-5}{k-6},\]
  and this simplifies to the claimed range.
\end{proof}
\begin{lemma}\label{lem:bridges_2}
  For $\displaystyle 3\leq k\le 10$ and $2k\leq n\leq 20$, $\displaystyle\left[\binom{n-1}{k-1},\binom{n}{k-1}\right]\subseteq S(n,k)$.
\end{lemma}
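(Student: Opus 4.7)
The plan is to induct on $n$ for fixed $k$, treating the base case $n=2k$ separately. In each inductive step the target interval will be realized as the union of two overlapping sub-intervals, one produced by Lemma~\ref{lem:separated_intervals} and one coming from a lower instance of the claim via Lemma~\ref{lem:recursion}.

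For the inductive step $n\geq 2k+1$, Lemma~\ref{lem:separated_intervals} gives
\[
\left[\binom{n-1}{k-1},\,\binom{n}{k-1}-2\binom{n-3}{k-3}-\binom{n-4}{k-5}\right]\subseteq S(n,k),
\]
covering the lower portion of the target interval. The induction hypothesis furnishes $[\binom{n-2}{k-1},\binom{n-1}{k-1}]\subseteq S(n-1,k)$, and applying Lemma~\ref{lem:recursion} shifts this up by $\binom{n-1}{k-2}$ to yield $[\binom{n-2}{k-1}+\binom{n-1}{k-2},\,\binom{n}{k-1}]\subseteq S(n,k)$, which covers the upper portion. A short calculation using Pascal's rule reduces the condition that these two intervals overlap to the numerical inequality $\binom{n-3}{k-3}+\binom{n-4}{k-5}\leq \binom{n-3}{k-2}+1$, and this holds for every admissible pair with $n\geq 2k+1$ (a brief case check over the finite range).

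For the base case $n=2k$ the inductive step is unavailable, and the analogous overlap inequality can fail for small $k$. Here I would additionally exploit the complementation symmetry $S(n,k)=S(n,n-k+1)$ (complementing every set in a maximal antichain on levels $k-1,k$ produces a maximal antichain on levels $n-k,n-k+1$ of the same size), which permits a second application of Lemma~\ref{lem:separated_intervals} to the pair $(n,n-k+1)=(2k,k+1)$ and thereby enlarges the lower sub-interval; and Lemma~\ref{lem:recursion} combined with the instance of the present claim for $(n-1,k-1)=(2k-1,k-1)$, obtained by a second, downward induction on $k$ with base $k=3$, to supply the upper sub-interval. When a small gap still remains after merging these three sources, it can be closed by iterating: apply the same procedure to derive a larger interval inside $S(n-1,k)$ or $S(n-1,k-1)$ and then feed the result back through Lemma~\ref{lem:recursion}.

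The main obstacle is precisely the base case $n=2k$: the direct combination of the three sources above leaves narrow gaps in a few low-$k$ instances and those gaps must be closed by the iteration just described (for example, already at $(n,k)=(10,5)$ one needs one further round to reach the two missing values $180$ and $181$). Because the parameter range is finite ($3\leq k\leq 10$, $2k\leq n\leq 20$), the entire argument amounts to a bounded sequence of numerical verifications and explicit constructions, and, in keeping with the paper's general policy on small-$n$ situations, can be carried out by hand or with computer assistance.
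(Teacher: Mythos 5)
Your overall strategy matches the paper's: a double induction (on $k$, and for fixed $k$ on $n$), using \Cref{lem:separated_intervals} for the lower end of the target interval and \Cref{lem:recursion} plus the induction hypothesis for the upper end, with bounded numerical checks. The inductive step $n-1\to n$ for $n\geq 2k+1$ is identical to the paper's, and your Pascal-rule simplification of the overlap condition to $\binom{n-3}{k-3}+\binom{n-4}{k-5}\leq\binom{n-3}{k-2}+1$ is correct. The one genuinely different ingredient is the complementation symmetry $S(n,k)=S(n,n-k+1)$: the paper does not use it. For $n=2k$ it gives $S(2k,k)=S(2k,k+1)$, and applying \Cref{lem:separated_intervals} on the $(2k,k+1)$ side does strictly enlarge the lower sub-interval (e.g.\ for $(n,k)=(10,5)$ from $[126,167]$ to $[126,176]$), so it is a legitimate and somewhat more elegant tool. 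By contrast, the paper handles $n=2k$ by explicitly listing \emph{four} sub-intervals (from \Cref{lem:separated_intervals} directly, from \Cref{lem:recursion}(i) applied to $S(2k-1,k)$ together with \Cref{lem:separated_intervals}, from chaining \Cref{lem:recursion}(i) then (ii) down to $S(2k-2,k-1)$, and from \Cref{lem:recursion}(ii) down to $S(2k-1,k-1)$) and verifying by computer that their union is all of $[\binom{2k-1}{k-1},\binom{2k}{k-1}]$ for each $4\le k\le 10$.

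Where your write-up is looser than the paper's is precisely in that base case: you leave ``iterate through \Cref{lem:recursion}'' as a fallback without pinning down which chains are actually needed. That fallback is in principle adequate (your iteration would rediscover the paper's bullets~2 and~3), but as stated it is a plan rather than a proof, and the finite check you defer to is the substance of the argument. Also, your sample computation at $(n,k)=(10,5)$ appears off: with exactly the three sources you list, the covered set is $[126,176]\cup[182,210]$, leaving the five values $\{177,\dots,181\}$ uncovered, not just $\{180,181\}$ (you recover $\{177,178,179\}$ only once you also shift \Cref{lem:separated_intervals} for $(9,5)$ by $\binom{9}{3}$ via \Cref{lem:recursion}(i), which is the paper's second bullet). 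Finally, ``downward induction on $k$'' should read ``upward,'' since you recurse from $k-1$ to $k$ with base $k=3$, and that base itself needs the small explicit antichains the paper constructs for $(k,n)=(3,6)$.
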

\begin{proof}
  We proceed by induction on $k$, and for fixed $k$ by induction on $n$. For the base case
  $(k,n)=(3,6)$, we have to verify $[10,15]\subseteq S(6,3)$. For $m\in[10,14]$ we can use the
  maximal antichains on levels $2$ and $3$ in $B_6$ that are given in the proof of
  \Cref{thm:main_result}(ii) just before \Cref{sec:construction}, and for $m=15$ we use the maximal
  antichain $\binom{[6]}{2}$. Next, we look at the induction step from $(k,n-1)$ to $(k,n)$ for
  $n\geq 2k+1$. From the induction hypothesis and \Cref{lem:recursion},
  \[S(n,k)\supseteq\left[\binom{n-2}{k-1},\,\binom{n-1}{k-1}\right]+\binom{n-1}{k-2}=\left[\binom{n-2}{k-1}+\binom{n-1}{k-2},\,\binom{n}{k-1}\right],\]
  and the claim follows with \Cref{lem:separated_intervals} and
  \[\binom{n}{k-1}-2\binom{n-3}{k-3}-\binom{n-4}{k-5}\geq\binom{n-2}{k-1}+\binom{n-1}{k-2}\]
  for all $(k,n)$ with $4\leq k\leq 10$ and $2k+1\leq n\leq 20$. To complete the induction argument, we check that the claim for $k\geq 4$ and $n=2k$ is implied by
  the statement for $k-1$. We need four ingredients:
  \begin{itemize}
  \item By \Cref{lem:separated_intervals},
    \[\left[\binom{2k-1}{k-1},\,
        \binom{2k}{k-1}-2\binom{2k-3}{k-3}-\binom{2k-4}{k-5}\right]\subseteq S(2k,k).\]
  \item By \Cref{lem:recursion,lem:separated_intervals},
    \[S(2k,k)\supseteq\left[\binom{2k-2}{k-1},\,\binom{2k-1}{k-1}-2\binom{2k-4}{k-3}-\binom{2k-5}{k-5}\right]+\binom{2k-1}{k-2}.\]
  \item By \Cref{lem:recursion}, $S(2k,k) \supseteq S(2k-1,k) + \binom{2k-1}{k-2} \supseteq S(2k-2,k-1)+\binom{2k-2}{k}+\binom{2k-1}{k-2}$,
    and with induction,
    \[S(2k,k)\supseteq\left[\binom{2k-3}{k-2},\,\binom{2k-2}{k-2}\right]+\binom{2k-2}{k}+\binom{2k-1}{k-2}.\]
  \item By \Cref{lem:recursion} and induction,
    \[S(2k,k)\supseteq\left[\binom{2k-2}{k-2},\,\binom{2k-1}{k-2}\right]+\binom{2k-1}{k}=\left[\binom{2k-2}{k-2}+\binom{2k-1}{k},\,\binom{2k}{k-1}\right].\]
  \end{itemize}
  We check that for $4\leq k\leq 10$, the union of these four intervals is
  $[\binom{2k-1}{k-1},\,\binom{2k}{k-1}]$, as required.
\end{proof}
We need one more simple construction before we can complete the argument for $n\leq 19$.
\begin{lemma}\label{lem:final_case}
  Let $n \ge 2$ be an integer. If $m \in S(n-1)$ and $m > \binom{n-2}{\lfloor \frac{n-2}{2} \rfloor} +1$, then $m+n-1 \in S(n)$.
\end{lemma}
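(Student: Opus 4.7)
The plan is to take the given maximal antichain $\mathcal A'$ in $B_{n-1}$ of size $m$ and extend it to a maximal antichain in $B_n$ by adjoining the $n-1$ two-element sets $\{i,n\}$ for $i\in[n-1]$. The crucial preliminary observation, which is where the hypothesis $m>\binom{n-2}{\lfloor(n-2)/2\rfloor}+1$ is used, is that $\mathcal A'$ contains no singleton. Indeed, if $\{i\}\in\mathcal A'$ for some $i\in[n-1]$, then every other member of $\mathcal A'$ must be incomparable with $\{i\}$, hence avoid $i$, so it lives in $2^{[n-1]\setminus\{i\}}\setminus\{\emptyset\}$ and forms an antichain there; by Sperner's theorem this forces $m\leq 1+\binom{n-2}{\lfloor(n-2)/2\rfloor}$, contradicting the hypothesis. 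The same hypothesis gives $m\geq 3$, so in particular $\emptyset\notin\mathcal A'$ (the only maximal antichain containing $\emptyset$ is $\{\emptyset\}$, of size $1$).

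Set $\mathcal A=\mathcal A'\cup\{\{i,n\}:i\in[n-1]\}$. The two parts are disjoint (elements of $\mathcal A'$ do not contain $n$), so $|\mathcal A|=m+n-1$. To verify that $\mathcal A$ is an antichain, note that any two distinct sets $\{i,n\}$ and $\{j,n\}$ are incomparable, and for $A\in\mathcal A'$ and any $\{i,n\}$ we have $\{i,n\}\not\subseteq A$ (since $n\notin A$) while $A\subseteq\{i,n\}$ would force $A\in\{\emptyset,\{i\}\}$, both excluded above.

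For maximality, take any $B\subseteq[n]$ with $B\notin\mathcal A$. If $n\notin B$, then $B\subseteq[n-1]$ and, by maximality of $\mathcal A'$ in $B_{n-1}$, $B$ is comparable with some member of $\mathcal A'\subseteq\mathcal A$. If $n\in B$, then either $B=\{n\}\subsetneq\{1,n\}\in\mathcal A$, or $|B|\geq 3$ in which case any $i\in B\setminus\{n\}$ gives $\{i,n\}\subsetneq B$ with $\{i,n\}\in\mathcal A$; the remaining case $|B|=2$ with $n\in B$ is already ruled out since $B$ would then lie in $\mathcal A$. Thus $\mathcal A$ is a maximal antichain in $B_n$ of size $m+n-1$, as required.

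The only genuine obstacle is the singleton-exclusion step, which is precisely what the numerical hypothesis is designed to enforce via Sperner's theorem; everything else is routine verification of the antichain and maximality properties.
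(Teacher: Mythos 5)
Your proof is correct and follows exactly the same approach as the paper's: take a maximal antichain $\mathcal A'$ of size $m$ in $B_{n-1}$, use Sperner's theorem (together with the numerical hypothesis) to rule out singletons (and $\emptyset$), then adjoin the $n-1$ sets $\{i,n\}$ and check that the result is a maximal antichain. The paper states this very tersely ("by Sperner's Theorem, $\mathcal{A}$ cannot contain a singleton") and you have simply filled in the routine verifications that the paper elides; there is no substantive difference.
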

\begin{proof}
  Let $m \in S(n-1)$ with $m > \binom{n-2}{\lfloor \frac{n-2}{2} \rfloor} +1$, and $\mathcal{A}$ be
  a maximal antichain in $B_{n-1}$ with size $m$. Then, by Sperner's Theorem, $\mathcal{A}$ cannot
  contain a singleton. Thus, $\mathcal{A} \cup \{\{i,n\}\,:\, i \in [n-1]\}$ is a maximal antichain in
  $B_n$ of size $m+n-1$.
\end{proof}

Finally, we combine \Cref{lem:interval_n_k,lem:size_of_I(nk),lem:bridges_2,lem:final_case} to prove
Claim \ref{lem:induction_step} for $7\leq n\leq 19$.
\begin{lemma}\label{lem:induction_step_small_n}
  For all $n\in\{7,\dots,19\}$, $\left[w(n-1)+2,w(n)\right] \subseteq S(n)$.
\end{lemma}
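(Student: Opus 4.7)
The plan is to prove the claim by induction on $n \in \{7, 8, \ldots, 19\}$, at each stage exhibiting a finite collection of intervals contained in $S(n)$ whose union covers the target interval $[w(n-1)+2,\, w(n)]$. At each step the inductive hypothesis $[1, w(n-1)] \subseteq S(n-1)$ is available: for $n = 7$ this is precisely the base case $n \le 6$ established at the start of Section \ref{sec:lower_part}; for $n \ge 8$ it follows by combining the present lemma for $n-1$ with the singleton-adding trick used in the deduction of \Cref{thm:main_result}(ii) (which, applied to any $m \in S(n-1)$, gives $m+1 \in S(n)$).

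Three sources of intervals are available. First, Lemma \ref{lem:interval_n_k} together with the explicit formulas in Lemma \ref{lem:size_of_I(nk)} yields, for each $k$ with $\lceil n/2\rceil \le k \le n-3$, an explicit interval $I(n,k) \subseteq S(n)$, and these intervals sweep out the upper end of the spectrum as $k$ varies. Second, Lemma \ref{lem:bridges_2} supplies the bridging interval $\bigl[\binom{n-1}{k-1},\, \binom{n}{k-1}\bigr] \subseteq S(n)$ for every $k$ with $3 \le k \le 10$ and $2k \le n$, which is positioned precisely to fill gaps between consecutive $I(n,k)$ near the binomial coefficients. Third, Lemma \ref{lem:final_case} applied to the inductive hypothesis lifts $[1, w(n-1)] \subseteq S(n-1)$ to the translated interval $\bigl[\binom{n-2}{\lfloor (n-2)/2\rfloor}+n+1,\; w(n-1)+n-1\bigr] \subseteq S(n)$, covering the lower portion of the target range.

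For each of the thirteen values of $n$, the verification reduces to tabulating the endpoints of the intervals supplied by these three sources, sorting them, and checking that consecutive intervals overlap and together exhaust $[w(n-1)+2,\, w(n)]$. This is a finite computation: for $n \in \{7, 8, 9\}$ it can be carried out by hand using the explicit exceptional values $I(7,3)=[16,29]$, $I(8,4)=[41,61]$, $I(9,4)=[69,117]$ together with the small-$n$ induction hypothesis, and for $10 \le n \le 19$ it is easily verified by computer.

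The main obstacle is that, unlike in Section \ref{sec:large_n} where the overlaps between $I(n,k)$ and $I(n,k-1)$ are controlled by clean asymptotic inequalities (Lemmas \ref{lem:lower_bound}–\ref{lem:bridges}), for small $n$ these overlaps can fail and genuine gaps may appear between the $I(n,k)$. The role of sources (b) and (c) is precisely to close those gaps; one checks, $n$ by $n$, that the redundancy built into the three sources suffices. No new combinatorial input is required beyond what has already been established.
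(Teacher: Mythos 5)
Your overall strategy is the same as the paper's: for each $n \in \{7,\dots,19\}$, cover $[w(n-1)+2, w(n)]$ by the union of explicit intervals coming from $I(n,k)$ (via Lemmas~\ref{lem:interval_n_k} and~\ref{lem:size_of_I(nk)}), from the bridges $[\binom{n-1}{k-1},\binom{n}{k-1}]$ (via Lemma~\ref{lem:bridges_2}), and from Lemma~\ref{lem:final_case}, then check overlap by hand or by computer. So far so good.

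However, your concluding assertion that ``the redundancy built into the three sources suffices'' and ``no new combinatorial input is required'' is not verified and is in fact \emph{false} at $n=10$. The paper's own accounting for $n=10$ is instructive. The target is $[113,237]$. Source (a) gives $I(10,5)\cup I(10,6) = [164,236]$; source (b) gives $[84,120]\cup[126,210]$; and your formulation of source (c) — lifting $(\binom{8}{4}+1, w(9)] = (71,111]$ by $+9$ — gives $[81,120]$. The union is $[81,120]\cup[126,236]$, which misses both $[121,125]$ and $\{237\}$. The paper handles these with two ingredients your proposal omits: it applies Lemma~\ref{lem:final_case} not to the bare inductive hypothesis $[1,w(9)]$ but to the stronger fact $[112,116]\subseteq S(9)$ (which comes out of $I(9,4)=[69,117]$, proved in the $n=9$ step and going beyond $w(9)=111$), giving $[121,125]\subseteq S(10)$; and it exhibits an explicit ad-hoc maximal antichain of size $237$, namely $\mathcal F\cup\binom{[10]}{5}\setminus\Delta\mathcal F$ with $\mathcal F = \{\{1,2,3,4,5,6\},\{1,2,3,4,7,8\},\{1,2,3,4,9,10\}\}$. (Alternatively one can reach $237 = \binom{10}{5}-15$ through Theorem~\ref{thm:main_result}(i), since $15\in\sigma(3,5)$ and $237 > \binom{10}{5}-25$, but that is yet another source not listed in your inventory.) So a genuine extra input \emph{is} required at $n=10$, and the assertion that three sources always close the gaps is the one step of your proposal that a careful (computer-aided or otherwise) case check would reveal to be wrong. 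The remedy is straightforward — strengthen source (c) to lift the full information about $S(n-1)$ obtained in the previous step rather than just $[1,w(n-1)]$, and supply the extra construction for $m=237$ — but as written the proof has a hole.
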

\begin{proof} For each value of $n$ we list the sizes coming from the various lemmas.
  \begin{description}
  \item[$n=7$] We need the interval $[w(6)+2,w(7)]=[16,23]$, and \Cref{lem:size_of_I(nk)} yields
    $I(7,3)=[16,29]$.
  \item[$n=8$] We need $[w(7)+2,w(8)]=[25,58]$. From \Cref{lem:size_of_I(nk)} we have $I(8,4)=[41,61]$, and from \Cref{lem:bridges_2},
    $\left[\binom{7}{2},\binom{8}{2}\right]\cup\left[\binom{7}{3},\binom{8}{3}\right]=[21,28]\cup[35,56]$. Finally,
    Lemma~\ref{lem:final_case} with $[22,27]\subseteq S(7)$ yields $[29,34]\subseteq S(8)$.
  \item[$n=9$] We need $[w(8)+2,w(9)]=[60,111]$. From \Cref{lem:size_of_I(nk)} we have $I(9,4)=[69,117]$, and from \Cref{lem:bridges_2},
    $\left[\binom{8}{3},\binom{9}{3}\right]=[56,84]$.
  \item[$n=10$] We need $[w(9)+2,w(10)]=[113,237]$. From \Cref{lem:size_of_I(nk)}
    we have
    \[I(10,5)\cup I(10,6)=[164,236],\]
    and from \Cref{lem:bridges_2},
    $\left[\binom{9}{3},\binom{10}{3}\right]\cup\left[\binom{9}{4},\binom{10}{4}\right]=[84,120]\cup[126,210]$. From
    \Cref{lem:final_case} and $[112,116]\subseteq S(9)$, we deduce $[121,125]\subseteq S(10)$, and
    finally, a maximal antichain of size $237$ is given by $\mathcal A=\mathcal
    F\cup\binom{[10]}{5}\setminus\Delta\mathcal F$ where $\mathcal F=\{\{1, 2, 3, 4, 5, 6\},\,\{1, 2, 3, 4, 7, 8\},\,\{1, 2, 3, 4, 9, 10\}\}$.
  \item[$n=11$] We need $[w(10)+2,w(11)]=[239,438]$. From \Cref{lem:size_of_I(nk)}
    we have
    \[I(11,5)\cup I(11,7)=[273,446],\]
    and from \Cref{lem:bridges_2}, $\left[\binom{10}{4},\binom{11}{4}\right]=[210,330]$.
  \item[$n=12$] We need $[w(11)+2,w(12)]=[440,900]$. From \Cref{lem:size_of_I(nk)}
    we have
    \[I(12,6)\cup I(12,7)=[693,904],\]
    and from \Cref{lem:bridges_2}, $\left[\binom{11}{4},\binom{12}{4}\right]\cup\left[\binom{11}{5},\binom{12}{5}\right]=[330,792]$.
  \item[$n=13$] We need $[w(12)+2,w(13)]=[902,1688]$. From \Cref{lem:size_of_I(nk)}
    we have
    \[I(13,7)\cup I(13,8)=[1138,1688],\]
    and from \Cref{lem:bridges_2}, $\left[\binom{12}{5},\binom{13}{5}\right]=[792,1287]$.
  \item[$n=14$] We need $[w(13)+2,w(14)]=[1690,3404]$. From \Cref{lem:size_of_I(nk)}
    we have
    \[I(14,7)\cup I(14,8)=[2767,3404],\]
    and from \Cref{lem:bridges_2}, $\left[\binom{13}{5},\binom{14}{5}\right]\cup\left[\binom{13}{6},\binom{14}{6}\right]=[1287,3003]$.
  \item[$n=15$] We need $[w(14)+2,w(15)]=[3406,6395]$. From \Cref{lem:size_of_I(nk)}
    we have
    \[I(15,8)\cup I(15,9)=[4755,6402],\]
    and from \Cref{lem:bridges_2}, $\left[\binom{14}{6},\binom{15}{6}\right]=[3003,5005]$.
  \item[$n=16$] We need $[w(15)+2,w(16)]=[\num{6397},\num{12830}]$. From \Cref{lem:size_of_I(nk)}
    we have
    \[I(16,8)\cup I(16,10)=[\num{7752},\num{12837}],\]
    and from \Cref{lem:bridges_2}, $\left[\binom{15}{6},\binom{16}{6}\right]=[\num{5005}, \num{8008}]$.
  \item[$n=17$] We need $[w(16)+2,w(17)]=[\num{12832},\num{24265}]$. From \Cref{lem:size_of_I(nk)}
    we have
    \[ I(17,9)\cup I(17,10)=[\num{18763},\num{24267}],\]
    and from \Cref{lem:bridges_2}, $\left[\binom{16}{7},\binom{17}{7}\right]=[\num{11440}, \num{19448}]$.
  \item[$n=18$] We need $[w(17)+2,w(18)]=[\num{24267},\num{48575}]$. From \Cref{lem:size_of_I(nk)}
    we have
    \[ I(18,9)\cup I(18,10)\cup I(18,11)=[\num{31122},\num{48577}],\]
    and from \Cref{lem:bridges_2}, $\left[\binom{17}{7},\binom{18}{7}\right]=[\num{19448}, \num{31824}]$.
  \item[$n=19$] We need $[w(18)+2,w(19)]=[\num{48577},\num{92318}]$. From \Cref{lem:size_of_I(nk)}
    we have
    \[ I(19,10)\cup I(19,11)\cup I(19,12)=[\num{49679},\num{92329}],\]
    and from \Cref{lem:bridges_2}, $\left[\binom{18}{7},\binom{19}{7}\right]=[\num{31824}, \num{50388}]$.\qedhere
  \end{description}
\end{proof}

\section{The largest missing value in the shadow spectrum}\label{sec:asymptotics}
In this section, we prove \Cref{thm:asymptotics}. Recall that $j^*(t)$ is defined as the
smallest non-negative integer $j$ with $\binom{j+3}{2}\geq t$ and that $j^*(t)=\left\lceil\sqrt{2t}-5/2\right\rceil$ by \Cref{lem:jstar}.

Next, we make the observation that every integer $s\geq k^2$ is the shadow size for some
family of $k$-sets. Recall that $\Sigma(k)=\bigcup_{t=1}^\infty\sigma(t,k)$ is the set of all these
shadow sizes.
\begin{lemma}\label{lem:all_large_s}
  If $k\geq 3$ and $s$ are integers with $s\geq k^2$ then $s\in\Sigma(k)$.
\end{lemma}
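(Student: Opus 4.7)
The plan has two steps. First I will show that $[k^2,\,k(k+1)]\subseteq\sigma(k+1,k)$, and then I will extend to arbitrary $s\geq k^2$ by appending shadow-disjoint $k$-sets, each of which enlarges the shadow by exactly $k$.

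For the first step, set $f(t)=\binom{t-j^*(t)}{2}+\binom{j^*(t)+1}{2}$. By \Cref{thm:shadow_spectrum} and \Cref{prop:interval_description}, the interval $I_{j^*(t)}(t)=[0,f(t)]$ lies in $I(t)$, and so $\sigma(k+1,k)\supseteq[k(k+1)-f(k+1),\,k(k+1)]$. Thus Step 1 reduces to verifying $f(k+1)\geq k$ for every $k\geq 3$. For $3\leq k\leq 9$ this is immediate from \Cref{tab:f} (together with the computation $f(4)=\binom{3}{2}+\binom{2}{2}=4$); for $k\geq 10$ the inequality is straightforward since \Cref{lem:jstar} gives $j^*(k+1)\leq\sqrt{2k+2}-3/2$, so that $\binom{k+1-j^*(k+1)}{2}$ grows quadratically in $k$ and eventually dominates $k$.

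For the second step, fix $s\geq k^2$ and set $t=\max\{k+1,\lceil s/k\rceil\}$ and $r=tk-s$. Then $r\in[0,k]$, with $r=k$ only in the boundary case $s=k^2$, and in every case $r\leq f(k+1)$ by Step 1; in particular $k(k+1)-r\in\sigma(k+1,k)$. By \Cref{lemma:ground_set_small}, I can choose a family $\mathcal F_0\subseteq\binom{[k+4]}{k}$ with $\abs{\mathcal F_0}=k+1$ and $\abs{\Delta\mathcal F_0}=k(k+1)-r$. If $t=k+1$, take $\mathcal F=\mathcal F_0$. Otherwise, on a ground set disjoint from $[k+4]$, take $t-(k+1)$ pairwise disjoint $k$-sets forming a family $\mathcal G$, and set $\mathcal F=\mathcal F_0\cup\mathcal G$. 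Then $\abs{\mathcal F}=t$ and
\[\abs{\Delta\mathcal F}=\bigl(k(k+1)-r\bigr)+(t-k-1)k=tk-r=s,\]
so $s\in\sigma(t,k)\subseteq\Sigma(k)$.

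The main obstacle is Step 1, specifically the inequality $f(k+1)\geq k$ for all $k\geq 3$; Step 2 is pure bookkeeping built from \Cref{thm:shadow_spectrum} and the explicit ground-set bound of \Cref{lemma:ground_set_small} together with the standard padding by shadow-disjoint sets.
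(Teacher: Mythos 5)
Your proposal is correct and uses essentially the same strategy as the paper's own proof: exhibit an interval of at least $k$ consecutive integers inside $\sigma(t,k)$ for a small $t$ (which reduces, via Proposition~\ref{prop:interval_description} and Theorem~\ref{thm:shadow_spectrum}, to showing $f(t)\geq k$), then pad with pairwise shadow-disjoint $k$-sets to reach any larger $s$. The only cosmetic differences are that you anchor at $t=k+1$ and show $[k^2,k(k+1)]\subseteq\sigma(k+1,k)$ with an explicit construction, while the paper anchors at $t=k$, shows $[k^2-k,k^2]\subseteq\sigma(k,k)$, and then uses the inclusion $\sigma(t+1,k)\supseteq\sigma(t,k)+k$ inductively; and your verification of $f(k+1)\geq k$ for $k\geq 10$ is stated informally whereas the paper carries out an explicit estimate for $k\geq 8$. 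Neither difference is substantive.
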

\begin{proof}
  Let $\mathcal F$ be a family of $k$-sets with $\abs{\mathcal F}=t$, and let $A$ be a $k$-set that
  contains at least two elements not in any of the members of $\mathcal F$. Then $\mathcal
  F'=\mathcal F\cup\{A\}$ is a family of $k$-sets with $\abs{\mathcal F'}=t+1$ and
  $\abs{\Delta\mathcal F'}=\abs{\Delta\mathcal F}+k$. As a consequence,
  $\sigma(t+1,k)\supseteq\sigma(t,k)+k$. If we can show that $[k^2-k,k^2]\subseteq\sigma(k,k)$, then
  it follows by induction on $t$ that $[(t-1)k,tk]\subseteq\sigma(t,k)$ for all $t\geq k$, and this
  is clearly sufficient for our claim. By \Cref{thm:shadow_spectrum} and \Cref{prop:interval_description},
  \[\left[k^2-\binom{k-j^*(k)}{2}-\binom{j^*(k)+1}{2},k^2\right]\subseteq\sigma(k,k),\]
  and it suffices to verify $\binom{k-j^*(k)}{2}+\binom{j^*(k)+1}{2}\geq k$. For $k\leq 7$, this
  can be checked by hand, and for $k\geq 8$ we have $\sqrt{k}\geq2\sqrt2$, and with \Cref{lem:jstar}, 
  \[\binom{k-j^*(k)}{2}-k\geq\frac12\left(k-\sqrt{2k}\right)\left(k-\sqrt{2k}-1\right)-k\geq\frac12k^2-\sqrt2k^{3/2}\geq\frac12k^{3/2}(k-2\sqrt2)\geq 0.\qedhere\]
\end{proof}

Combining \Cref{thm:shadow_spectrum}, \Cref{prop:interval_description} and \Cref{lem:all_large_s},
the value $\psi(k)=\max\nats\setminus\Sigma(k)$ is the integer immediately before some set
$\{tk-x\,:\,x\in I_{j^*(t)}(t)\}$, and the $t$ must be chosen so that this number is too big to be
the shadow size of a $(t-1)$-family of $k$-sets. In other words, taking $t$ to be maximal with the
property that $tk-\binom{t-j^*(t)}{2}-\binom{j^*(t)+1}{2}>(t-1)k+1$, we will argue that (for $k\geq 21$)
$\psi(k)=tk-\binom{t-j^*(t)}{2}-\binom{j^*(t)+1}{2}-1$. Equivalently, we are looking for the
smallest $t^*=t^*(k)$ such that there is no gap between $\{tk-x\,:\,x\in I_{j^*(t^*)}(t^*)\}$ and
$\{(t+1)k-x\,:\,x\in I_{j^*(t^*+1)}(t^*+1)\}$. To be more precise, we define
\[f(t) = \binom{t-j^*(t)}{2}+\binom{j^*(t)+1}{2},\]
so that $I_{j^*(t)}(t)=\left[0,f(t)\right]$, and then
\[t^*=t^*(k) = \min\left\{t\,:\,(t+1)k-f(t+1)\leq tk+1\right\}= \max\left\{t\,:\,f(t)\leq
    k-2\right\}.\]
\begin{example}
  For $k=50$, we have
  \begin{align*}
    \sigma(10,50) &= [455, 455]\cup [463, 464]\cup [469, 500],\\
    \sigma(11,50) &= [495, 495]\cup [504, 505]\cup [511, 514]\cup [516, 550],\\
    \sigma(12,50) &= [534, 534]\cup [544, 545]\cup [552, 555]\cup [558, 600],\\
    \sigma(13,50) &= [572, 572]\cup [583, 584]\cup [592, 595]\cup [599, 650],\\
    \sigma(14,50) &= [609, 609]\cup [621, 622]\cup [631, 634]\cup [639, 700].
  \end{align*}
  We see that $t^*(50)=12$ and $\psi(50)=557$.
\end{example}
For $t=t^*(k)$ and $j=j^*(t)$, we want to show that $\psi(k)=tk-f(t)-1$. One part of the argument is
the verification that $tk-f(t)-1\not\in\sigma(t+1,k)$. For this step we will need the following
bound.
\begin{lemma}\label{lem:nonoverlap}
  For $k\geq 374$ and $t=t^*(k)$, $tk-f(t)\leq(t+1)k-\binom{t+1}{k}$.
\end{lemma}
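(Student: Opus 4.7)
The strategy is to reduce the stated inequality to a purely algebraic statement about $j^*(t)$ via a short identity, then verify that statement using the defining property of $j^*$. Let $j = j^*(t)$ and set $a = t - j$, $b = j + 1$, so that $a + b = t + 1$ and, by definition of $f$, $f(t) = \binom{a}{2} + \binom{b}{2}$. A direct computation gives the identity
\[
  \binom{t+1}{2} - f(t) = \binom{a+b}{2} - \binom{a}{2} - \binom{b}{2} = ab,
\]
so the inequality $tk - f(t) \leq (t+1)k - \binom{t+1}{2}$ (the natural Kruskal--Katona bound for the minimum of $\sigma(t+1,k)$, which is what the statement must be comparing against) is equivalent to $ab \leq k$.

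To bound $ab$, I would combine the defining condition $f(t) \leq k - 2$ of $t = t^*(k)$ with the identity $2ab = (a^2 + b^2) - (a - b)^2$. The inequality $f(t) \leq k - 2$ rearranges to $a^2 + b^2 \leq 2(k-2) + (a + b)$, and substituting gives
\[
  2ab \leq 2(k-2) + (a+b) - (a-b)^2.
\]
Hence $ab \leq k$ is equivalent to $(a-b)^2 \geq (a+b) - 4$; using $a + b = t + 1$ and $a - b = t - 2j - 1$, it suffices to prove
\[
  (t - 2j - 1)^2 \geq t - 3.
\]

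For the last step, I would use only the characterization of $j = j^*(t)$ as the smallest integer with $\binom{j+3}{2} \geq t$: by minimality, $\binom{j+2}{2} < t$, which gives $j < (\sqrt{8t+1} - 3)/2$ and hence $t - 2j - 1 > t + 2 - \sqrt{8t+1}$. The right-hand side is nonnegative for $t \geq 3$, so squaring is legitimate; the inequality $(t - 2j - 1)^2 \geq t - 3$ then follows from the stronger estimate $(t + 2 - \sqrt{8t+1})^2 \geq t - 3$, which after clearing square roots twice reduces to the cubic $t^3 - 7t^2 - 16t - 16 \geq 0$. This cubic is positive for $t \geq 9$, and since $k \geq 374$ forces $t = t^*(k) \geq t^*(374) = 33$, we are comfortably in this range.

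The main obstacle is this final polynomial inequality, but it is a one-variable calculation and essentially routine. A pleasant feature of the approach is that only the upper bound $f(t) \leq k - 2$ is needed; the complementary bound $f(t+1) \geq k - 1$, which is also available from the definition of $t^*$, does not enter the argument.
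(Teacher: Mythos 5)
Your proof is correct and takes a genuinely different route from the paper. The paper works purely analytically: it lower-bounds $f(t)$ by $\tfrac12 t^2-\sqrt2\,t^{3/2}$ (via \Cref{lem:jstar}), uses $f(t)\le k$ and the size assumption on $t$ to get $k-\sqrt2\,t^{3/2}\ge t/2$, and then adds these estimates to recover $k+f(t)\ge\binom{t+1}{2}$. You instead exploit the exact algebraic identity $\binom{a+b}{2}-\binom{a}{2}-\binom{b}{2}=ab$ (with $a=t-j^*(t)$, $b=j^*(t)+1$), which converts the target inequality into the clean statement $ab\le k$; feeding in the single assumption $f(t)\le k-2$ reduces this to the rational inequality $(t-2j-1)^2\ge t-3$, and the characterization of $j^*$ by $\binom{j+2}{2}<t$ turns that into a one-variable polynomial condition. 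Both approaches are valid, but yours buys a tighter and more transparent threshold: you only need $t\ge 9$, whereas the paper's chain requires $\sqrt t\ge 2\sqrt2+3$, i.e., $t\ge 34$. (In fact $t^*(374)=33$, as you note, so the paper's own chain of inequalities is just short at $k=374$; the lemma still holds there, and your argument covers it comfortably, since $t^*(k)\ge 9$ already for $k\ge 26$.)

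Two small points of presentation. First, the phrase ``$ab\le k$ is equivalent to $(a-b)^2\ge(a+b)-4$'' is not literally an equivalence; from $2ab\le 2(k-2)+(a+b)-(a-b)^2$, the condition $(a-b)^2\ge(a+b)-4$ is a \emph{sufficient} condition for $ab\le k$, which is all you need — you should say ``it suffices to show.'' Second, after the two squarings one actually lands on the quartic $t^4-10t^3+5t^2+32t+48\ge 0$, which factors as $(t-3)(t^3-7t^2-16t-16)$; for $t\ge 3$ this is nonnegative precisely when your cubic is, so the reduction to $t^3-7t^2-16t-16\ge 0$ is correct but worth stating with the factor $(t-3)$ shown, so the reader sees where the $t\ge3$ restriction (already needed to legitimize the first squaring) is used again.
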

\begin{proof}
  Rearranging terms, we need to show $k+f(t)\geq\binom{t+1}{2}$. Lemma~\ref{lem:jstar} implies
  the bound
  \begin{equation}\label{eq:fbound}
    f(t)\geq\binom{t-j^*(t)}{2}\geq\frac12\left(t-j^*(t)\right)\left(t-j^*(t)-1\right)\geq\frac12t^2-\sqrt2t^{3/2}.
  \end{equation}
  From $k\geq 374$ we obtain $t\geq 34$ and $\sqrt{t}\geq2\sqrt2+3$, which implies
  \[\frac12t^2-2\sqrt2t^{3/2}-\frac12t=\frac12t\left(t^{1/2}-2\sqrt2-3\right)\left(t^{1/2}-2\sqrt2+3\right)\geq
    0.\]
  We rearrange terms, use $f(t)\leq k$ and (\ref{eq:fbound}):
  \[\sqrt2t^{3/2}+\frac12t\leq\frac12t^2-\sqrt2t^{3/2}\leq f(t)\leq k,\]
  hence $k-\sqrt2t^{3/2}\geq t/2$. Finally, using~(\ref{eq:fbound}) again,
  \[k+f(t)\geq k+\frac12t^2-\sqrt2t^{3/2}\geq \frac12t^2+\frac12t=\binom{t+1}{2},\]
  as required.
\end{proof}

The next lemma is the precise version of the above description of $\psi(k)$ in terms of $t^*=t^*(k)$.
\begin{lemma}\label{lem:psi}
  For $k\geq 374$ and $t^*=t^*(k)$, $\psi(k)=t^*k-f(t^*)-1$.
\end{lemma}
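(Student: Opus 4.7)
The plan is to prove two complementary statements: (a) the value $s := t^*k - f(t^*) - 1$ is not in $\Sigma(k)$, and (b) every integer in $[t^*k - f(t^*), \infty)$ lies in $\Sigma(k)$; together these give $\psi(k) = s$.

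For (a), we argue by cases on the cardinality $t$ of the candidate family. When $t \leq t^* - 1$, the bound $f(t^*) \leq k - 2$ gives $s \geq (t^* - 1)k + 1$, which exceeds $\max \sigma(t, k) = tk$, so $s \notin \sigma(t, k)$. When $t = t^*$, we appeal to \Cref{thm:shadow_spectrum} and \Cref{prop:interval_description}, which describe $\sigma(t^*, k)$ as a union of intervals $\{t^*k - x \,:\, x \in I_j(t^*)\}$, and show that $x = f(t^*) + 1$ sits strictly in the gap between $I_{j^*(t^*)}(t^*) = [0, f(t^*)]$ and $I_{j^*(t^*) - 1}(t^*)$; after cancellation this reduces to $t^* - j^*(t^*) \geq \binom{j^*(t^*) + 1}{2} + 2$, which is an elementary rewriting of the definitional bound $t^* \geq \binom{j^*(t^*) + 2}{2} + 1$. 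When $t \geq t^* + 1$, we combine \Cref{lem:nonoverlap} with the bound $\min \sigma(t, k) \geq (t^* + 1)k - \binom{t^* + 1}{2}$; this bound holds for $t \leq k + 1$ by \Cref{thm:shadow_spectrum} together with the monotonicity of $tk - \binom{t}{2}$ on the range $t \leq k$, and it extends to $t \geq k + 2$ because deleting a set from an extremal family shows that $\min \sigma(t, k)$ is non-decreasing in $t$.

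For (b), we chain together the rightmost subintervals $[tk - f(t), tk] \subseteq \sigma(t, k)$ supplied by \Cref{thm:shadow_spectrum} for $t \leq k + 1$. A brief case analysis on whether $j^*(t + 1) = j^*(t)$ or $j^*(t + 1) = j^*(t) + 1$ shows that $f$ is non-decreasing, and the defining property of $t^*$ yields $f(t + 1) \geq k - 1$ for every $t \geq t^*$. Hence $(t + 1)k - f(t + 1) \leq tk + 1$, so consecutive intervals leave no integer gap, and the union over $t = t^*, t^* + 1, \dots, k$ is the single interval $[t^*k - f(t^*), k^2]$. Combining this with $[k^2, \infty) \subseteq \Sigma(k)$ from \Cref{lem:all_large_s} yields $[t^*k - f(t^*), \infty) \subseteq \Sigma(k)$.

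The main obstacle is the case $t = t^*$ in part (a): showing $s \notin \sigma(t^*, k)$ requires locating $s$ precisely among the subintervals of the shadow spectrum via the explicit formulas for $I_j(t^*)$, which is more delicate than the rough bounds used in the other cases. The remaining cases are routine applications of the lemmas already established, with the assumption $k \geq 374$ entering only through \Cref{lem:nonoverlap}.
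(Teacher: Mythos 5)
Your proof is correct and follows essentially the same approach as the paper's, decomposing $\Sigma(k)$ into contributions from $t < t^*$, $t = t^*$, and $t > t^*$, and then chaining the rightmost subintervals $[tk - f(t), tk]$ for $t \geq t^*$. The main difference is one of detail: where the paper dismisses the case $t = t^*$ with ``Clearly, $s \notin \sigma(t^*,k)$,'' you supply the actual calculation showing $f(t^*)+1$ lies strictly between $I_{j^*(t^*)}(t^*)$ and $I_{j^*(t^*)-1}(t^*)$ via the inequality $t^* \geq \binom{j^*(t^*)+2}{2}+1$, and where the paper implicitly relies on monotonicity of $\min\sigma(t,k)$ in $t$ and on \Cref{lem:all_large_s}, you state these steps explicitly.
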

\begin{proof}
  Let $s=t^*k-f(t^*)-1$. From $f(t^*)\leq k-2$ it follows that $s>(t^*-1)k$, hence
  $s\not\in\bigcup_{t=1}^{t^*-1}\sigma(t,k)$. Clearly, $s\not\in\sigma(t^*,k)$, and moreover
  Lemma~\ref{lem:nonoverlap} implies $s<\min\sigma(t,k)$ for all $t\geq t^*+1$. We
  conclude $s\not\in\Sigma(k)$. From $(t+1)k-f(t+1)\leq tk+1$ for all $t\geq t^*$ it follows that
  \[\left[t^*k-\binom{t^*-j^*\left(t^*\right)}{2}-\binom{j^*\left(t^*\right)+1}{2},k^2\right]\subseteq\Sigma(k).\qedhere\]
\end{proof}
To complete the proof of Theorem~\ref{thm:asymptotics}(\ref{asymptotics:item_i}), we need to bound
the expression for $\psi(k)$ provided in Lemma~\ref{lem:psi}. We start with the function $f$.
\begin{lemma}\label{lem:f_asym}
  $\displaystyle f(t) =\frac12t^2-\sqrt2t^{3/2}+O(t)$.
\end{lemma}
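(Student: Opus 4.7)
The plan is to substitute the explicit formula $j^*(t)=\lceil\sqrt{2t}-5/2\rceil$ from \Cref{lem:jstar} into the definition of $f$ and expand each binomial coefficient, carefully tracking all error terms of order $O(t)$ or smaller. The entire argument is a straightforward asymptotic calculation; the only thing to be careful about is how the ceiling in $j^*(t)$ feeds into the lower-order terms.

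First I would write $j^*(t)=\sqrt{2t}+\epsilon(t)$, where $|\epsilon(t)|\leq 3$ for all $t\geq 1$ (the $-5/2$ and the ceiling together contribute a bounded error). Then I would expand
\[
\binom{t-j^*(t)}{2}=\tfrac12\bigl(t-\sqrt{2t}+O(1)\bigr)\bigl(t-\sqrt{2t}+O(1)\bigr)-\tfrac12\bigl(t-\sqrt{2t}+O(1)\bigr),
\]
and multiply out. The cross term yields $-t\sqrt{2t}=-\sqrt{2}\,t^{3/2}$, the $(\sqrt{2t})^2=2t$ term is absorbed into the $O(t)$ error, the $O(1)\cdot t$ terms contribute $O(t)$, and the linear correction from the ``$-1$'' in the binomial is also $O(t)$. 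This gives $\binom{t-j^*(t)}{2}=\tfrac12 t^2-\sqrt{2}\,t^{3/2}+O(t)$.

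Second, I would bound the smaller term: since $j^*(t)\leq\sqrt{2t}+O(1)$, we have
\[
\binom{j^*(t)+1}{2}\leq\tfrac12\bigl(\sqrt{2t}+O(1)\bigr)\bigl(\sqrt{2t}+O(1)\bigr)=t+O(\sqrt{t})=O(t).
\]
Adding the two estimates gives $f(t)=\tfrac12 t^2-\sqrt{2}\,t^{3/2}+O(t)$, as claimed.

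The argument has no substantive obstacle; the ``hard part'' is merely to make sure the $O(1)$ coming from the ceiling in $j^*$ does not propagate into the $t^{3/2}$ coefficient, which it does not because it only interacts with the leading $t$ to produce an $O(t)$ error, not an $O(t^{3/2})$ error. A clean way to present this is to state the two estimates above as separate displayed lines and then add them.
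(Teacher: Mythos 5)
Your proof is correct and takes essentially the same route as the paper: both substitute the explicit formula $j^*(t)=\lceil\sqrt{2t}-5/2\rceil$ from \Cref{lem:jstar} to replace $j^*(t)$ by $\sqrt{2t}+O(1)$ and then expand the two binomial coefficients, tracking the $O(t)$ error. The paper phrases it as a two-sided sandwich with explicit constants $\sqrt{2t}-5/2<j^*(t)<\sqrt{2t}-3/2$ rather than a single $O(1)$ error term, but the computation and the conclusion are the same.
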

\begin{proof}
  By Lemma~\ref{lem:jstar}, $\sqrt{2t}-5/2<j^*(t)<\sqrt{2t}-3/2$, hence
  \[\binom{t-\sqrt{2t}+3/2}{2}+\binom{\sqrt{2t}-1/2}{2}<\binom{t-j^*(t)}{2}+\binom{j^*(t)+1}{2}<\binom{t-\sqrt{2t}+5/2}{2}+\binom{\sqrt{2t}-3/2}{2},\]
  and this implies
  \[\binom{t-j^*(t)}{2}+\binom{j^*(t)+1}{2}=\frac12\left(t-\sqrt{2t}\right)^2+O(t)=\frac12t^2-\sqrt2 t^{3/2}+O(t).\qedhere\]
\end{proof}
We set $t^*(k)=\max\left\{t\,:\,f(t)\leq k-2\right\}$, and look at the asymptotics of this function.
\begin{lemma}\label{lem:tstar_asymp}
  $\displaystyle t^*(k)=\sqrt{2k}+\sqrt[4]{8k}+O(1)$.
\end{lemma}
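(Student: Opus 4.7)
The plan is to invert the asymptotic expansion of $f$ given in \Cref{lem:f_asym}. By the definition of $t^*(k)$ as the largest integer with $f(t) \leq k-2$, together with the fact that $f$ is eventually increasing (which is clear from the formula $f(t) = \tfrac{1}{2}t^2 - \sqrt{2}t^{3/2} + O(t)$), we have both $f(t^*) \leq k-2$ and $f(t^*+1) > k-2$. Since $f(t^*+1) - f(t^*) = O(t^*)$, this pins down $f(t^*) = k + O(\sqrt{k})$ once we know $t^* = O(\sqrt{k})$.

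First I would obtain a crude bound $t^* = \sqrt{2k} + O(k^{1/4})$. From $f(t^*) \leq k$ and the expansion $f(t) = \tfrac{1}{2}t^2 - \sqrt{2}t^{3/2} + O(t)$, one gets $\tfrac{1}{2}(t^*)^2 \leq k + \sqrt{2}(t^*)^{3/2} + O(t^*)$, which forces $t^* \leq \sqrt{2k}\,(1+o(1))$ and hence $(t^*)^{3/2} = O(k^{3/4})$. Plugging this back yields $t^* = \sqrt{2k} + O(k^{1/4})$.

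Next I would do the second-order refinement. Write $t^* = \sqrt{2k} + \delta$ with $\delta = O(k^{1/4})$. Expanding,
\begin{align*}
  \tfrac{1}{2}(t^*)^2 &= k + \sqrt{2k}\,\delta + \tfrac{1}{2}\delta^2 = k + \sqrt{2k}\,\delta + O(\sqrt{k}),\\
  \sqrt{2}\,(t^*)^{3/2} &= \sqrt{2}\,(2k)^{3/4}\bigl(1 + \delta/\sqrt{2k}\bigr)^{3/2} = 2^{5/4}k^{3/4} + O(\sqrt{k}),
\end{align*}
since $\delta/\sqrt{k} = O(k^{-1/4})$ makes the correction $O(k^{3/4} \cdot k^{-1/4}) = O(\sqrt{k})$. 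Combining with $f(t^*) = k + O(\sqrt{k})$ and the $O(t^*) = O(\sqrt{k})$ remainder from \Cref{lem:f_asym} gives $\sqrt{2k}\,\delta - 2^{5/4}k^{3/4} = O(\sqrt{k})$, so $\delta = 2^{3/4}k^{1/4} + O(1) = \sqrt[4]{8k} + O(1)$.

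There is no real obstacle; the only care needed is to make sure that the $O(t)$ error in \Cref{lem:f_asym}, the discretization gap between $f(t^*)$ and $k-2$, and the quadratic correction $\tfrac{1}{2}\delta^2$ all lie inside the $O(\sqrt{k})$ budget, after which division by $\sqrt{2k}$ produces an $O(1)$ error. Thus $t^*(k) = \sqrt{2k} + \sqrt[4]{8k} + O(1)$, as claimed.
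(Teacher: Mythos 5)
Your proposal is correct and follows essentially the same two-stage bootstrapping strategy as the paper's proof: first extract the leading term $t^* = \sqrt{2k}+O(k^{1/4})$ from $f(t^*)=k+O(\sqrt{k})$ and the expansion in \Cref{lem:f_asym}, then substitute $t^*=\sqrt{2k}+\delta$ and compare terms to pin down $\delta = 2^{3/4}k^{1/4}+O(1)$. The paper phrases the two corrections as $g(k)$ and then $h(k)=O(1)$ rather than working with a single $\delta$, but the computations are the same.
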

\begin{proof}
  From Lemma~\ref{lem:f_asym} it follows that $f(t+1)=\frac12t^2-\sqrt2t^{3/2}+O(t)$, and since by
  definition
  \[f\left(t^*(k)\right)\leq k-2<f\left(t^*(k)+1\right),\]
  we deduce
  \[\frac12t^*(k)^2-\sqrt2\,t^*(k)^{3/2}+O\left(t^*(k)\right)=k.\]
  This implies $t^*(k)=\sqrt{2k}+g(k)$ with $g(k)=o\left(\sqrt k\right)$, and
  \begin{equation}\label{eq:target}
    \frac12t^*(k)^2-\sqrt2\,t^*(k)^{3/2}=k+O\left(\sqrt k\right).
  \end{equation}
  The left-hand side is
  \begin{multline*}
    \frac12t^*(k)^2-\sqrt2\,t^*(k)^{3/2}=\frac12\left(\sqrt{2k}+g(k)\right)^2-\sqrt2\,\left(\sqrt{2k}+g(k)\right)^{3/2}\\
    =k+\sqrt{2k}g(k)+\frac12g(k)^2-2^{5/4}k^{3/4}+o\left(k^{3/4}\right).
  \end{multline*}
  This implies first $g(k)=O\left(k^{1/4}\right)$, and then $g(k)=2^{3/4}k^{1/4}+h(k)$ with
  $h(k)=o\left(k^{1/4}\right)$.
  Now the left-hand side of~(\ref{eq:target}) is
  \begin{multline*}
    \frac12t^*(k)^2-\sqrt2\,t^*(k)^{3/2}=\frac12\left(\sqrt{2k}+2^{3/4}k^{1/4}+h(k)\right)^2-\sqrt2\,\left(\sqrt{2k}+O\left(k^{1/4}\right)\right)^{3/2}\\
    =k+2\sqrt{2k}\,h(k)+O\left(k^{1/2}\right).
  \end{multline*}
  Now $h(k)=O(1)$ by~(\ref{eq:target}), and the result follows.
\end{proof}
Theorem~\ref{thm:asymptotics}(\ref{asymptotics:item_i}) will be proved by combining
Lemmas~\ref{lem:psi},~\ref{lem:f_asym} and~\ref{lem:tstar_asymp}. For the second part, we observe
that for even $n$, say $n=2k$, \Cref{thm:main_result} implies $\phi(n)=\binom{n}{k}-\psi(k)$,
and then with Theorem~\ref{thm:asymptotics}(\ref{asymptotics:item_i}),
$\phi(n)=\binom{n}{k}-\frac12n^{3/2}-\frac{1}{\sqrt2}n^{5/4}+O(n)$. For odd $n$, say $n=2k-1$, we
have to work a bit harder, because \Cref{thm:main_result} implies only
$\phi(n)=\binom{n}{k}-s$, where $s$ is the largest integer less than $k^2$ which is not in
$\Sigma(k)\cup\Sigma(k-1)$.
\begin{lemma}\label{lem:non_shadow_size}
 If $t=j^*(k-1)+1$ and $s=t(k-1)-\binom{t}{2}-1$, then $s\not\in\Sigma(k)\cup\Sigma(k-1)$. Consequently, for $n\in\{2k,2k-1\}$,
  \[\binom{n}{k}-t(k-1)+\binom{t}{2}+1\not\in S(n).\]
\end{lemma}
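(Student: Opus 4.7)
The plan is to reduce the ``consequently'' clause to the first assertion via \Cref{thm:large_sizes_reformulation}, and to prove the first assertion by a case split on $t'$. I assume throughout that $k\ge 3$; for $k=2$ one has $s=0$ and the claim is vacuous.

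First I would unpack the hypothesis. From the defining property of $j^*$ (together with $k\ge 3$) we have $\binom{t+1}{2}\le k-2$ and $\binom{t+2}{2}\ge k-1$. The first bound gives $k\ge t+2$ and, after a one-line verification, $2t\le k$. I would also record the identity $s=tk-\binom{t+1}{2}-1$ and the elementary observation that $\max I(t')=\binom{t'}{2}$ for every $t'\ge 1$, attained at $(a,b,c)=(t',0,0)$.

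For $1\le t'\le k$ (extending to $t'\le k+1$ when $l=k$), \Cref{thm:shadow_spectrum} expresses $\sigma(t',l)=\{t'l-x:x\in I(t')\}$, so $s\in\sigma(t',l)$ is equivalent to $t'l-s\in I(t')$. A direct computation yields
\[t'(k-1)-s=(t'-t)(k-1)+\binom{t}{2}+1,\qquad t'k-s=(t'-t)k+\binom{t+1}{2}+1.\]
For $t'<t$ both expressions are negative, using $k>\binom{t+1}{2}+1$; for $t'=t$ they equal $\binom{t}{2}+1$ and $\binom{t+1}{2}+1$, both strictly greater than $\max I(t)=\binom{t}{2}$; and for $t'=t+j$ with $j\ge 1$ one shows $t'l-s>\binom{t'}{2}$ by reducing, via the identity $\binom{t+j}{2}-\binom{t}{2}=jt+\binom{j}{2}$, to straightforward quadratic inequalities in $j$ that are easily verified throughout the allowed range. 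Hence $s\notin\sigma(t',l)$ on this range.

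For the remaining values of $t'$, \Cref{thm:shadow_spectrum} is silent, so I would invoke Kruskal-Katona: the minimum shadow size $\min\sigma(t',l)$ is non-decreasing in $t'$, and the cascade representations $k+1=\binom{k+1}{k}$ and $k+1=\binom{k}{k-1}+\binom{k-2}{k-2}$ give $\min\sigma(k+1,k)=\binom{k+1}{2}$ and $\min\sigma(k+1,k-1)=\binom{k+1}{2}-2$. It then suffices to verify $s<\binom{k+1}{2}-2$, which follows from $s\le tk-2$ and $tk<\binom{k+1}{2}$ (equivalent to $2t\le k$, already established). Finally, for the consequence I would apply \Cref{thm:large_sizes_reformulation} with $m=\binom{n}{k}-s$ and $n-k\in\{k-1,k\}$: $m\in S(n)$ would force $s\in\bigcup_{t'=0}^{k}(\sigma(t',k)\cup\sigma(t',n-k))\subseteq\{0\}\cup\Sigma(k)\cup\Sigma(k-1)$, contradicting $s\ge 1$ together with the first assertion; the applicability condition $s<k^2$ follows from $s\le tk\le k\sqrt{2(k-1)}<k^2$ for $k\ge 3$. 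The main obstacle is the regime $t'\ge k+1$, where Kruskal-Katona must be invoked directly.
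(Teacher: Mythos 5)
Your proof is correct, but it takes a much more elaborate route than the paper's, which handles the first assertion with just two inequalities. The paper observes that $(t-1)k < s < t(k-1) - \binom{t}{2}$: the left inequality shows $s$ exceeds $\max\sigma(t',l) = t'l$ for every $t' \le t-1$ and $l\in\{k-1,k\}$, and the right one (together with the monotonicity of $\min\sigma(t',l)$ in $t'$, which you also use, and the elementary comparison $tk-\binom t2 > t(k-1)-\binom t2$) shows $s$ lies below $\min\sigma(t',l)$ for every $t'\ge t$. Your four-way split on $t'$, invoking the full description of $\sigma(t',l)$ from \Cref{thm:shadow_spectrum} and reducing the mid-range to concave quadratic inequalities in $j=t'-t$, establishes the same conclusion but with far more machinery: once you appeal to monotonicity of $\min\sigma(t',l)$ to dispatch $t'\ge k+1$, that same appeal collapses your entire middle-range analysis to the single inequality $s < t(k-1)-\binom{t}{2}$, and the cascade computations for $t'=k+1$ become superfluous. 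Your restriction to $k\ge 3$ is a worthwhile (if minor) tightening — the paper's derivation of $\binom{t+1}{2}<k-1$ silently fails at $k=2$ — but ``vacuous'' isn't quite the right word: for $k=2$ one has $s=0$, the first assertion is trivially true, yet the ``consequently'' clause actually fails because $\binom{n}{k}\in S(n)$.
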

\begin{proof}
  We prove this by checking that $s$ is in the gap between $\sigma(t-1,k)$ and $\sigma(t,k)$ and
  also in the gap between $\sigma(t-1,k-1)$ and $\sigma(t,k-1)$. It follows from
  $s<t(k-1)-\binom{t}{2}$ that all the elements of
  $\sigma(t,k)\cup\sigma(t,k-1)$ are larger than $s$. To conclude the proof it is
  sufficient to verify the inequality $\left(t-1\right)k<s$. From $t-1=j^*(k-1)$ and the definition
  of $j^*$, we obtain $\binom{t+1}{2}<k-1$, hence
  \[\left(t-1\right)k=t(k-1)+t-k<t(k-1)+t-\binom{t+1}{2}-1=s.\qedhere\]
\end{proof}

\begin{proof}[Proof of Theorem~\ref{thm:asymptotics}]
  To prove part~(\ref{asymptotics:item_i}), we combine Lemmas~\ref{lem:psi},~\ref{lem:f_asym} and~\ref{lem:tstar_asymp}:
  \[\psi(k)=t^*k-f(t^*)-1=\left(\sqrt{2k}+\sqrt[4]{8k}\right)k+O(k)=\sqrt2k^{3/2}+\sqrt[4]{8}k^{5/4}+O(k).\]
  For part~(\ref{asymptotics:item_ii}), let $k=\lceil n/2\rceil$ and set
  $m_1=\binom{n}{k}-\psi(k)-1$. By part~(\ref{asymptotics:item_i}),
  \[m_1=\binom{n}{k}-(\sqrt2+o(1))k^{3/2}=\binom{n}{k}-\left(\frac12+o(1)\right)n^{3/2},\]
  and by \Cref{thm:main_result} together with the definition of $\psi(k)$, $\left[1,m_1\right]\subseteq S(n)$.
  For the other direction, let $t=j^*(k-1)+1$ and $m_2=\binom{n}{k}-t(k-1)+\binom{t}{2}+1$. By
  Lemma~\ref{lem:non_shadow_size}, $m_2\not\in S(n)$, and by Lemma~\ref{lem:jstar},
  \[m_2=\binom{n}{k}-\left(\sqrt{2k}+O(1)\right)(k-1)+O(k)=\binom{n}{k}-\left(\frac12+o(1)\right)n^{3/2}.\qedhere\]  
\end{proof}

\section{Open problems}\label{sec:open}
As mentioned in the introduction, our main result is a characterization of the possible sizes of
families in $B_n$ which are maximal with respect to the property of not containing a pair of sets,
one of which is contained in the other. In~\cite{Gerbner2012}, the saturation number
$\operatorname{sat}(n,P)$ for a poset $P$ has been introduced as the smallest size of a saturated
$P$-free family in $B_n$, and these numbers have been studied for various posets $P$,
see~\cite{Keszegh_2021} for an overview. For $P=C_{k+1}$ (a chain of length $k+1$),
it was shown in~\cite{Gerbner2012} that there is a value $\operatorname{sat}(k)$ such that
$\operatorname{sat}(n,C_{k+1})=\operatorname{sat}(k)$ for all sufficiently large $n$, and it is
known~\cite[Theorem 4]{Morrison_2014} that $\operatorname{sat}(k)=2^{(1+o(1))ck}$ for some $c$ with $\frac12\leq
c<1$. The largest size of a $C_{k+1}$-free family, $\operatorname{ex}(n,C_{k+1})$ is the sum of the
$k$ largest binomial coefficients. In analogy to \Cref{q:MAC_sizes}, it might be interesting to
investigate the following.
\begin{problem}
  For which integers $m$ with $\operatorname{sat}(k)\leq m\leq\operatorname{ex}(n,C_{k+1})$ does there exist a saturated
  $C_{k+1}$-free family of size $m$?
\end{problem}

In Theorem~\ref{thm:shadow_spectrum}, we characterize the shadow spectrum $\sigma(t,k)$ for
$t\leq k+1$. It is an interesting problem to study the set $\sigma(t,k)$ for general $t$. Some
initial results in this direction have been obtained in~\cite{Leck1995}. Let
$F(t,k)$ be the minimum shadow size of a $t$-family of $k$-sets, so that
$\sigma(t,k)\subseteq[F(t,k),tk]$. In~\cite{Leck1995}, the following sufficient conditions for
$s\not\in\sigma(s,t)$ are established:
\begin{itemize}
\item If there is an integer $a>k$ with $F(t-1,k)\leq\binom{a}{k-1}<s\leq F(t-1,k)+k-2$ then
  $s\not\in\sigma(t,k)$.
\item For an integer $s$ with $F(t,3)\leq s\leq 3t$, $s\not\in\sigma(t,3)$ if and only if
  $s=\binom{a}{2}+1$ for some integer $a\geq 4$ with $\binom{a}{3}-a+4\leq t\leq\binom{a}{3}$. 
\item For an integer $s$ with $F(t,4)\leq s\leq 4t$, each of the following three conditions implies $s\not\in\sigma(t,4)$:
  \begin{enumerate}[(i)]
  	
  \item $s=\binom{a}{3}+1$ for some integer $a\geq 5$ with $\binom{a}{4}-2a+9\leq
    t\leq\binom{a}{4}-a+4$,
  \item $s\in\left\{\binom{a}{3}+1,\,\binom{a}{3}+2\right\}$ for some integer $a\geq 5$ with $\binom{a}{4}-a+5\leq
    t\leq\binom{a}{4}$,
  \item $s=\binom{a}{3}+\binom{b}{2}+1$ for some integers $a>b\geq 4$ with $\binom{a}{4}+\binom{b}{3}-b+4\leq
    t\leq\binom{a}{4}+\binom{b}{3}$.
  \end{enumerate}
\end{itemize}
It would be interesting to prove the following generalization.
\begin{conjecture}[Vermutung 2.3 in~\cite{Leck1995}]
  Let $s$ be an integer and suppose there are integers $a_k>a_{k-1}>\dots>a_r>r\geq 3$ with
  \[F(t-1,k)\leq\binom{a_k}{k-1}+\binom{a_{k-1}}{k-2}+\dots+\binom{a_r}{r-1}<s\leq F(t-1,k)+r-2.\]
  Then $s\not\in\sigma(t,k)$.
\end{conjecture}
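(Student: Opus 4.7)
My plan is to proceed by contradiction: suppose $\mathcal{F}\subseteq\binom{\nats}{k}$ satisfies $\abs{\mathcal{F}}=t$ and $\abs{\Delta\mathcal{F}}=s$ with $s$ in the forbidden window. For each $A\in\mathcal{F}$, set $\mathcal{F}_A=\mathcal{F}\setminus\{A\}$ and let $n_A=\abs{\Delta A\setminus\Delta\mathcal{F}_A}$ be the marginal shadow of $A$. Since $\abs{\mathcal{F}_A}=t-1$, the Kruskal-Katona theorem gives $\abs{\Delta\mathcal{F}_A}\geq F(t-1,k)$, so
\[
n_A=s-\abs{\Delta\mathcal{F}_A}\leq s-F(t-1,k)\leq r-2.
\]
Thus every $A\in\mathcal{F}$ has at least $k-r+2$ of its $(k-1)$-subsets covered by some other member of $\mathcal{F}$; the family is ``almost closed'' under taking single-set shadows.

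To exploit this I would combine the marginal-shadow inequality with a compression argument. Standard left-compressions weakly decrease $\abs{\Delta\mathcal{F}}$ while preserving $\abs{\mathcal{F}}=t$, so one may assume $\mathcal{F}$ is left-compressed. For left-compressed families the attainable shadow sizes move through specific cascade values dictated by Kruskal-Katona, and the plan is to establish the following \emph{shadow-gap lemma}: whenever $\abs{\mathcal{G}}=t-1$ and $A$ is a $k$-set with $A\notin\mathcal{G}$, the shadow size $\abs{\Delta(\mathcal{G}\cup\{A\})}$ either does not exceed $\binom{a_k}{k-1}+\binom{a_{k-1}}{k-2}+\dots+\binom{a_r}{r-1}$, or is at least $F(t-1,k)+r-1$. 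Given this lemma, the conjecture follows immediately, since adding the single set $A\in\mathcal{F}$ back to $\mathcal{F}_A$ produces $\abs{\Delta\mathcal{F}}=s$, which by hypothesis falls strictly between these two bounds. The lemma itself I would prove by induction on the cascade depth $k-r+1$: the base case $r=k$ reduces to analyzing how squashed-order initial segments grow, and the inductive step peels off the heaviest cascade term $\binom{a_k}{k-1}$ by passing to the link of a suitably chosen heavy element in~$\Delta\mathcal{G}$.

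The main obstacle will be proving the shadow-gap lemma in full generality. The explicit cases $k\in\{3,4\}$ in~\cite{Leck1995} rely on a tight classification of Kruskal-Katona extremal and near-extremal $(t-1)$-families, together with a case analysis of how a single added set $A$ can attach to them; for general $k$ and general cascade depth the near-extremal structure is far richer. A second source of difficulty is that the marginal-shadow bound $n_A\leq r-2$ must hold \emph{simultaneously} for every $A\in\mathcal{F}$, yet different choices of $A$ can yield $\mathcal{F}_A$ with rather different extremal shapes. Making the induction on cascade depth respect this simultaneous constraint, so that the hypotheses on the $a_i$'s propagate correctly when one peels off the top cascade term, is where the main combinatorial work is likely to lie; a clean reformulation of the lemma in terms of the colex structure of $\Delta\mathcal{G}$ (perhaps via Frankl's shift-resistance or the equality cases of Kruskal-Katona) seems the most promising route to avoid a case-explosion.
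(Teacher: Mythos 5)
This statement is not a theorem of the paper at all: it is \emph{Vermutung~2.3} from Leck's 1995 thesis, quoted in the ``Open problems'' section with the explicit remark ``It would be interesting to prove the following generalization.'' The paper offers no proof, so there is nothing to compare your attempt against, and any successful argument here would be new research rather than a reconstruction.

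As for the proposal itself, it does not constitute a proof. Your opening observation is correct and is the natural first step: for any $A\in\mathcal F$, writing $\mathcal F_A=\mathcal F\setminus\{A\}$ and $n_A=\lvert\Delta A\setminus\Delta\mathcal F_A\rvert$, the identity $n_A=s-\lvert\Delta\mathcal F_A\rvert$ together with $\lvert\Delta\mathcal F_A\rvert\ge F(t-1,k)$ gives $n_A\le r-2$ for every $A$. However, the ``shadow-gap lemma'' you then invoke --- that for any $(t-1)$-family $\mathcal G$ and any added $k$-set $A\notin\mathcal G$ the shadow size $\lvert\Delta(\mathcal G\cup\{A\})\rvert$ avoids the window $\bigl(\binom{a_k}{k-1}+\dots+\binom{a_r}{r-1},\,F(t-1,k)+r-2\bigr]$ --- is not a reduction of the conjecture but a verbatim restatement of it, since every $t$-family arises exactly once as ``a $(t-1)$-family plus one set.'' You have relocated the entire difficulty into the lemma and, as you candidly note, you do not see how to prove it; the proposed induction on cascade depth and appeal to near-extremal Kruskal--Katona structure are plausible directions, but they remain heuristics. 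In short: the marginal-shadow bound is a sound and useful starting point, but the proposal stops exactly where the real work would begin.
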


\section*{Acknowledgment}
We are grateful to two anonymous referees for their constructive comments which helped us to improve the
presentation of the arguments, and for pointing out references which were useful for putting our
results into the context of related work.

\printbibliography

\end{document}